\theoremstyle{plain}
\newtheorem{theorem}{Theorem}[section]
\newtheorem{corollary}[theorem]{Corollary}
\newtheorem{definition}[theorem]{Definition}
\newtheorem{definition-proposition}[theorem]{Definition-Proposition}
\newtheorem{lemma}[theorem]{Lemma}
\newtheorem{proposition}[theorem]{Proposition}
\newtheorem{remark}[theorem]{Remark}
\newtheorem{hypothesis}[theorem]{Hypothesis}
\newtheorem{example}[theorem]{Example}
\newtheorem*{theorem-no-num}{Theorem}
\newtheorem*{proposition-no-num}{Proposition}
\newtheorem*{corollary-no-num}{Corollary}
\begin{document}
	\title{Hida theory for Shimura varieties of Hodge type}
	\author{Xiaoyu ZHANG}
	\email{xiaoyu.zhang@uni-due.de}
	\address{Universit\"{a}t Duisburg-Essen,
		Fakult\"{a}t f\"{u}r Mathematik,
		Mathematikcarr\'{e}e
		Thea-Leymann-Straße 9,
		45127 Essen,
		Germany}
	\subjclass[2020]{11F33,11F55,11F60}

	\begin{abstract}
		In this article, we generalize the work of H.Hida and V.Pilloni
		to construct $p$-adic families of $\mu$-ordinary
		modular forms on Shimura varieties
		of Hodge type
		$Sh(G,X)$
		associated to a Shimura datum
		$(G,X)$ where $G$ is a connected reductive group over
		$\mathbb{Q}$
		and is unramified at $p$,
		such that the adjoint quotient
		$G^\mathrm{ad}$ has no simple factors
		isomorphic to $\mathrm{PGL}_2$.
	\end{abstract}

	\maketitle
	
	\tableofcontents

	\section{Introduction}\label{Introduction}
	
	The theory of $p$-adic families of
	automorphic forms plays an important role
	in recent developments of algebraic number theory.
	The first example of such families was considered by
	Serre in \cite{Serre1972},
	where
	an Eisenstein series
	was interpolated continuously
	in a $p$-adic family by considering
	its $q$-expansion.
	The work of Hida
	(\cite{HidaIwasawaModules,Hida1986GaloisRepresentations})
	exploited the $p$-adic interpolation by a certain family
	(Hida family) of
	cuspidal automorphic eigenforms.
	The corresponding $p$-adic family of Galois representations
	led Mazur to develop the theory of
	Galois deformations
	(\cite{Mazur1989})
	and these works inspired further breakthroughs in
	modularity results
	by Wiles and Taylor-Wiles
	(\cite{Wiles1995,TaylorWiles1995}).
	There are other important applications of the work of
	Hida (Hida theory),
	such as the construction of $p$-adic $L$-functions
	in various settings
	(\cite{EischenHarrisLiSkinner2016,Liu2015})
	just as the initial motivation for
	\cite{Serre1972};
	the proof of certain cases of the Iwasawa Main Conjecture
	by
	Skinner-Urban
	(\cite{SkinnerUrban});
	the proof of the
	Mazur-Tate-Teitelbaum conjecture
	by Greenberg-Stevens
	(\cite{GreenbergStevens1993}),
	etc.
	
	This article deals with the construction of
	Hida theory for Shimura varieties of Hodge type,
	generalizing the works of
	\cite{Hida2002,Pilloni2012},
	where we work over the
	$\mu$-ordinary locus of the Shimura variety instead of
	the ordinary locus
	(which may be empty).
	Let
	$(G,X)$
	be a (mixed) Shimura datum
	where
	$G$ is a connected reductive group over 
	the rationals $\mathbb{Q}$
	and
	$X$ is one $G(\mathbb{R})$-conjugacy class of
	homomorphisms
	from the Deligne torus $\mathbb{S}$
	to
	$G_\mathbb{R}$
	satisfying certain conditions
	(see §\ref{Shimura varieties of Hodge type} for
	details).	
	For a compact open subgroup
	$K\subset G(\mathbb{A}_\mathrm{f})$,
	suppose that the Shimura variety
	$Sh_K(G,X)=G(\mathbb{Q})\backslash
	(X\times G(\mathbb{A}_\mathrm{f})/K)$
	has an integral model
	$Sh$ over
	$\mathcal{O}_\mathfrak{p}
	=
	\mathcal{O}_{E_\mathfrak{p}}$
	where 
	$E$ is the reflex field
	$E=E(G,X)$ of $(G,X)$
	and $\mathfrak{p}$ is a prime of $E$ over
	$p$,
	and assume moreover
	$Sh$
	has a toroidal compactification
	$Sh^\Sigma$
	with respect to a certain cone decomposition
	$\Sigma$.
	The general strategy of constructing $p$-adic families of
	automorphic forms is,
	according to
	\cite{Hida2002}:
	\begin{enumerate}
		\item 
		define an
		open dense ordinary locus
		$Sh^{\Sigma,\mathrm{ord}}$
		of the compactification
		$Sh^\Sigma$
		and construct a certain Igusa tower
		$\mathrm{Ig}_{m,n}$ over
		$Sh^{\Sigma,\mathrm{ord}}$;
		
		\item 
		construct the space
		$\mathbb{V}_{m,n}$
		of rational functions on the Igusa tower
		$\mathrm{Ig}_{m,n}$
		and relate these spaces to the space of
		classical automorphic forms
		$H^0(Sh,\mathcal{V}_\lambda)$
		of a certain automorphic sheaf
		$\mathcal{V}_\lambda$ associated to a weight
		$\lambda\in
		X^{\ast}(T)$,
		via the Hodge-Tate map;
		
		\item 
		construct a certain idempotent operator
		$e_G$
		acting on the spaces
		$\mathbb{V}_{m,n}$ and
		$H^0(Sh,\mathcal{V}_\lambda)$
		such that the subspaces
		$e_G\mathbb{V}_{m,n}$
		of $\mathbb{V}_{m,n}$
		satisfy certain finite dimensional property.
	\end{enumerate}

    In general, for a Shimura datum
    $(G,X)$,
    the ordinary locus
    $Sh^{\mathrm{ord}}$
    of the Shimura variety itself 
	$Sh$
	may be empty
	(in the PEL case, this locus is non-empty if and only if
	$E_\mathfrak{p}=\mathbb{Q}_p$,
	\textit{cf.}
	\cite[Theorem 1.6.3]{Wedhorn1999}
	).
	To carry out the first step in the above strategy,
	one needs a more general notion of ordinariness.
	The candidate we use in this article is
	$\mu$-ordinariness.
	Here $\mu$ refers to a cocharacter
	$\mu
	\colon
	\mathbb{G}_m
	\rightarrow
	G$
	associated to the Shimura datum
	$(G,X)$,
	which is, in vague terms,
	the Frobenius-twisted
	Hodge cocharacter
	induced from the points in $X$
	(see §\ref{mu-ordinary locus} for the precise formulation).
	Now we assume that our Shimura datum is of Hodge type,
	which means that there is an embedding
	$(G,X)
	\rightarrow
	(\mathrm{GSp}(V),S^\pm)$
	into some Siegel Shimura datum.
	For each geometric point
	$P$ of $Sh$,
	one can consider the specialization
	$\mathcal{A}_P$ of the abelian scheme
	$\mathcal{A}$
	(the pull-back of the universal abelian scheme from
	the Siegel Shimura variety associated to
	$(\mathrm{GSp}(V),S^\pm)$
	to
	$Sh$)
	over
	$Sh$ and one has the Frobenius morphism on the
	Dieudonn\'{e} module
	associated to
	$\mathcal{A}_P$.
	This morphism gives rise to a slope filtration on the
	Dieudonn\'{e} module and thus induces a cocharacter
	$\mu_P\in
	X_\ast(T)_\mathbb{Q}$,
	where
	$T$ is a maximal torus of $G$.
	Then the $\mu$-ordinary locus
	$Sh^\mu$
	of $Sh$
	is defined to be the reduced subscheme
	of $Sh$
	consisting of those $P$
	such that $\mu_P$ is equal to the Galois average
	$\overline{\mu}$ of
	$\mu$.
	One can show that this locus $Sh^\mu$ is in fact independent of the choice 
	of the embedding
	$(G,X)
	\rightarrow
	(\mathrm{GSp}(V),S^\pm)$
	and moreover,
	it is open and dense in
	$Sh$
	(rigorously speaking,
	one should work with the base-change
	$Sh_{\overline{\mathbb{F}}_p}$ of $Sh$
	instead of $Sh$ itself.
	We simplify the presentation in the introduction and refer the readers to
	§\ref{mu-ordinary locus and Hasse invariant}
	for more details and precise formulation).
	Using Hasse invariants,
	one can extend the $\mu$-ordinary locus
	from 
	$Sh$ to the compactification
	$Sh^\Sigma$.

	The construction of
	the Igusa tower
	over 
	$Sh^{\Sigma,\mu}$
	is similar to the PEL case:
	we consider the group
	$L_\mu$ of
	isomorphisms of the $p$-divisible group
	$\mathcal{A}_P[p^\infty]$
	associated to $\mathcal{A}_P$,
	but considering only the
	connected component
	(\textit{cf.} §\ref{p-adic modular forms}).
	From this one can define the Igusa tower
	$\mathrm{Ig}_{m,n}$
	with
	$m,n\geq1$.
	Then one has the space
	$\mathbb{V}_{m,n}
	=
	H^0(
	\mathrm{Ig}_{m,n},\mathcal{O}_{\mathrm{Ig}_{m,n}})$.
	We define the Hecke operators
	$\mathbb{T}$
	by first pulling back the Hecke correspondence
	from the Siegel Shimura variety and then
	divide the trace map associated to one projection morphism
	by a certain explicit power of $p$
	(\textit{cf.} §\ref{Hecke operators}).
	From this one can define the idempotent
	$\mu$-ordinary operator
	$e_{\widetilde{P}}$
	for the Levi subgroup
	$\widetilde{L}$ of a parabolic subgroup
	$\widetilde{P}$ of $G$
	with
	$\widetilde{L}
	\subset
	L_\mu$.
	Then one has the space of
	$\mu$-ordinary
	$p$-adic automorphic forms
	$e_{\widetilde{P}}
	\mathbb{V}^{\widetilde{P}_L^\mathrm{der}}_{m,n}$,
	invariant under the action of
	$\widetilde{P}_L^\mathrm{der}$
	(\textit{cf.} §\ref{Hida theory for Hodge Shimura}).
	Here 
	$\widetilde{P}_L^\mathrm{der}$ is
	the derived subgroup of
	$\widetilde{P}_L
	=
	\widetilde{P}\cap L$.

	Now we state the main results of this article.
	First we need some notations.
	We fix a Borel subgroup
	$B$ of $G$ and a maximal torus
	$T$ of $B$.
	For a parabolic subgroup
	$\widetilde{P}$ containing $B$ as above,
	write
	$\widetilde{T}_{\widetilde{P}}
	:=
	\widetilde{P}/\widetilde{P}^\mathrm{der}$
	for the quotient torus of $\widetilde{P}$.
	Using the natural projection
	$T\rightarrow \widetilde{T}_{\widetilde{P}}$,
	one can view the group of characters
	$X^\ast(\widetilde{T}_{\widetilde{P}})$
	as a subgroup of
	$X^\ast(T)$.
	For any character
	$\lambda
	\in
	X^\ast(\widetilde{T}_{\widetilde{P}})$,
	one can construct the space
	$H^0(Sh^{\Sigma,\mu},\mathcal{V}[\lambda])$
	of automorphic forms of weight $\lambda$,
	of level $K$
	(note that $Sh$ depends on the level group
	$K$).
	Let
	$\mathcal{O}_\mathfrak{p}$
	denote the ring of integers of
	$E_\mathfrak{p}$ and
	we write
	$\mathcal{W}_{\widetilde{P}}
	=
	\mathcal{O}_\mathfrak{p}
	[[\widetilde{T}_{\widetilde{P}}(\mathbb{Z}_p)]]$ and
	$\mathcal{W}^1_{\widetilde{P}}
	:=
	\mathcal{O}_\mathfrak{p}
	[[\mathrm{Ker}
	(\widetilde{T}_{\widetilde{P}}(\mathbb{Z}_p)
	\rightarrow
	\widetilde{T}_{\widetilde{P}}(\mathbb{F}_p))]]
	$
	for the weight space of cuspidal
	$\mu$-ordinary $p$-adic automorphic forms
	$\mathbb{M}(\widetilde{P})_\mathrm{cusp}
	=
	\mathrm{Hom}_{\mathcal{O}_\mathfrak{p}}
	(\lim\limits_{\overrightarrow{m}}
	e_{\widetilde{P}}\mathbb{V}_{\mathrm{cusp},m}^{\widetilde{P}_L^\mathrm{der}},
	E_\mathfrak{p}/\mathcal{O}_\mathfrak{p})$
	(\textit{cf.}
	(\ref{p-adic modular forms, big space})).
	
	\begin{theorem}
		Let
		$(G,X)$
		be as above such that
		$G$ is unramified at $p$,
		$G^\mathrm{ad}$
		has no factors isomorphic to
		$\mathrm{PGL}_{2/\mathbb{Q}}$.
		\begin{enumerate}
			\item 
			For any character
			$\lambda
			\in
			X^\ast(\widetilde{T}_{\widetilde{P}})$,
			the subspace
			$e_{\widetilde{P}}
			\mathbb{V}_\infty^{\widetilde{P}_L^\mathrm{der}}[\lambda^{-1}]$
			of
			$e_{\widetilde{P}}
			\mathbb{V}_\infty^{\widetilde{P}_L^\mathrm{der}}$
			on which the torus
			$\widetilde{T}_{\widetilde{P}}$
			acts by the character
			$\lambda$
			is a free $\mathcal{O}_\mathfrak{p}$-module
			of finite rank.
			Moreover,
			this rank depends only on
			the image of $\lambda$
			in the quotient
			$X^\ast(\widetilde{T}_{\widetilde{P}})/
			\mathbb{Z}N_G\lambda_G
			X^\ast(\widetilde{T}_{\widetilde{P}})$.
			Here
			$\lambda_G\in X^\ast(\widetilde{T}_{\widetilde{P}})$
			is a certain character non-trivial associated to some
			Hodge line bundle
			on the Shimura variety
			$Sh$
			(cf.(\ref{character lambda_G}))
			and
			$N_G\ge0$
			is the Hasse number
			(cf.(\ref{Hasse number N_G})).
			
			\item 
			For any $\lambda\in X^\ast(\widetilde{T}_{\widetilde{P}})$
			which is dominant as a character of $T$
			and $\langle\lambda,\alpha\rangle>0$
			for at least one positive coroot
			$\alpha$ of
			$(B,T)$,
			we have isomorphisms
			\[
			e_{\widetilde{P}}
			H^0(Sh^{\Sigma,\mu}_\infty,\mathcal{V}[\lambda^{-1}])
			\simeq
			e_{\widetilde{P}}
			\mathbb{V}_\infty^{\widetilde{P}_L^\mathrm{der}}[\lambda^{-1}].
			\]
			
			\item
			For any
			$\lambda\in
			X^\ast(\widetilde{T}_{\widetilde{P}})$
			and any integer $t\gg0$
			(depending on $\lambda$),
			we have an isomorphism between
			the spaces of cuspidal automorphic forms
			(here $C$ denotes the cusps):
			\[
			e_{\widetilde{P}}
			H^0(Sh_\infty^\mu,
			\mathcal{V}[(\lambda+t\lambda_G)^{-1}](-C))
			\simeq
			e_{\widetilde{P}}
			\mathbb{V}_{\mathrm{cusp},\infty}^{\widetilde{P}_L^\mathrm{der}}
			[(\lambda+t\lambda_G)^{-1}].
			\]
			
			\item
			The
			$\mathcal{W}_{\widetilde{P}}^1$-module
			$\mathbb{M}(\widetilde{P})_\mathrm{cusp}$
			is free of finite rank, and moreover for any
			$\lambda\in
			X^\ast(\widetilde{T}_{\widetilde{P}})$,
			one has the specialization isomorphism
			\[
			\mathbb{M}(\widetilde{P})_\mathrm{cusp}
			\otimes_{\mathcal{W}_{\widetilde{P}},\lambda}
			\mathcal{O}_\mathfrak{p}
			\simeq
			\mathrm{Hom}_{\mathcal{O}_\mathfrak{p}}
			(
			e_{\widetilde{P}}\mathbb{V}_{\mathrm{cusp},\infty}^{\widetilde{P}_L^\mathrm{der}}
			[\lambda^{-1}],
			\mathcal{O}_\mathfrak{p}
			).
			\]
		\end{enumerate}
	\end{theorem}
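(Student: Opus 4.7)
The plan is to follow the Hida--Pilloni strategy laid out in steps (1)--(3) of the introduction, with the $\mu$-ordinary Igusa tower $\mathrm{Ig}_{m,n}$ and the idempotent $e_{\widetilde{P}}$ playing the central roles. The four parts of the theorem are logically ordered: (1) is the fundamental finiteness for a single weight, (2) and (3) are classicality statements comparing $p$-adic and classical forms, and (4) packages the whole family into a module over the Iwasawa algebra $\mathcal{W}_{\widetilde{P}}^1$. I would prove them in that order, with (1) carrying the bulk of the analytic content and (2)--(4) being structural consequences.

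For (1), the core idea is a mod-$p$ d\'{e}vissage followed by Nakayama. Reducing $e_{\widetilde{P}}\mathbb{V}_\infty^{\widetilde{P}_L^\mathrm{der}}[\lambda^{-1}]$ modulo $p$, one must show that the resulting $\mathbb{F}_p$-vector space of weight-$\lambda$ ordinary forms on the Igusa tower is finite-dimensional. The mechanism is Hasse-invariant shifting: multiplication by a power of the Hasse invariant realising the character $\lambda_G$ on the Hodge line bundle (\ref{character lambda_G}) is invertible on the $\mu$-ordinary locus, and its $N_G$-th power lies in the image of the trace used in the definition of the Hecke operators of \S\ref{Hecke operators}, so that $e_{\widetilde{P}}$ turns it into an honest isomorphism
\[
e_{\widetilde{P}}\mathbb{V}_\infty^{\widetilde{P}_L^\mathrm{der}}[\lambda^{-1}]
\simeq
e_{\widetilde{P}}\mathbb{V}_\infty^{\widetilde{P}_L^\mathrm{der}}[(\lambda+N_G\lambda_G)^{-1}].
\]
This yields at once the claim that the rank depends only on the class of $\lambda$ modulo $\mathbb{Z} N_G \lambda_G X^\ast(\widetilde{T}_{\widetilde{P}})$, and it reduces the finiteness question to a single $\lambda$ chosen sufficiently positive, where the Hodge--Tate comparison (to be used in (2)) identifies the ordinary part with a subspace of a classical $H^0$ that is already known to be finite-dimensional. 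Lifting from $\mathbb{F}_p$ to $\mathcal{O}_\mathfrak{p}$ is then the standard Nakayama argument: the space is $p$-adically complete and separated, its reduction mod $p$ is finite-dimensional, and flatness of the Igusa tower over its base gives $p$-torsion freeness, so freeness of the same rank follows.

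For (2) and (3), the key input is the Hodge--Tate map on the universal ordinary $p$-divisible group over $\mathrm{Ig}_{m,n}$, which realises $\mathcal{V}[\lambda]$ as the $\widetilde{P}_L^\mathrm{der}$-invariants of an induced sheaf from $\mathcal{O}_{\mathrm{Ig}_{m,n}}$ in the weight $\lambda$. For $\lambda$ dominant with $\langle\lambda,\alpha\rangle>0$ on some positive coroot, a BGG-type argument combined with the vanishing of the higher direct images on the Igusa tower shows that the natural restriction map $e_{\widetilde{P}} H^0(Sh^{\Sigma,\mu}_\infty,\mathcal{V}[\lambda^{-1}]) \to e_{\widetilde{P}}\mathbb{V}_\infty^{\widetilde{P}_L^\mathrm{der}}[\lambda^{-1}]$ is an isomorphism; this is the ordinary-part classicality of Hida and Pilloni adapted to the $\mu$-ordinary, Hodge-type context, and the exclusion of simple $\mathrm{PGL}_2$ factors is precisely what guarantees the positivity estimate needed for this BGG step. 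For (3), one chooses $t \gg 0$ so that $\lambda + t\lambda_G$ lies deep in the strictly dominant cone; the cuspidal version of the comparison in (2) then applies, and the Hasse-invariant shift from (1) transports the conclusion back to arbitrary $\lambda$ after twisting.

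For (4), the module $\mathbb{M}(\widetilde{P})_\mathrm{cusp}$ is defined as a Pontryagin dual and is therefore compact over $\mathcal{W}_{\widetilde{P}}$, and \emph{a fortiori} over $\mathcal{W}_{\widetilde{P}}^1$. Taking $\lambda$-isotypic components commutes with the Pontryagin dualisation used in its definition, and the identification $\mathbb{M}(\widetilde{P})_\mathrm{cusp}\otimes_{\mathcal{W}_{\widetilde{P}},\lambda}\mathcal{O}_\mathfrak{p} \simeq \mathrm{Hom}_{\mathcal{O}_\mathfrak{p}}(e_{\widetilde{P}}\mathbb{V}_{\mathrm{cusp},\infty}^{\widetilde{P}_L^\mathrm{der}}[\lambda^{-1}],\mathcal{O}_\mathfrak{p})$ follows from a standard control/adjunction computation on the Iwasawa algebra together with part (1). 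Finite generation over $\mathcal{W}_{\widetilde{P}}^1$ then follows by topological Nakayama applied to the augmentation ideal, using that the generic fibre specialisation has finite rank by (1). Freeness is obtained by combining the $p$-torsion freeness (coming from the Igusa-tower flatness argument of (1)) with the uniform finiteness of all weight specialisations: a sufficiently dense set of arithmetic specialisations of the same rank forces the module to be free, as in Hida's original argument. The main obstacle I anticipate is in (1): proving the uniform finite-dimensionality of the mod-$p$ ordinary part in the Igusa direction $n\to\infty$ requires very careful tracking of how $e_{\widetilde{P}}$, the Hasse-invariant shift, and the Levi $\widetilde{L}\subset L_\mu$ interact, and it is exactly here that the absence of $\mathrm{PGL}_2$ factors and the unramifiedness of $G$ at $p$ are indispensable to make both the positivity estimate and the Hasse number $N_G$ behave as required.
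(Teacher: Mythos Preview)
Your outline captures the overall logic (Hasse-invariant shifting, Nakayama, Pontryagin duality for the control theorem), and part (4) is essentially how the paper proceeds. But there are two substantive discrepancies with the paper's argument that you should be aware of.

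\medskip

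\textbf{Part (1): the mechanism for the Hasse-invariant isomorphism.} You assert that multiplication by $H$ becomes an isomorphism on ordinary parts because ``its $N_G$-th power lies in the image of the trace'' defining the Hecke operators. This is not the mechanism, and I do not see how to make such a trace statement precise. What the paper actually does is: (i) show that $H$ commutes with the Hecke operators $\mathbb{T}_\epsilon$ (Lemma~\ref{Hasse invariant commutes with Hecke operators}); (ii) for sufficiently large weights, prove that every $e_{\widetilde{P}}$-ordinary cuspidal form \emph{vanishes on the non-$\mu$-ordinary locus}, via a degree/canonical-subgroup argument (Lemma~\ref{at most one subgroup} and the operators $\mathbb{T}_{H_i}$ built from the canonical filtration); (iii) combine this with the reducedness of the divisor $V(H)$ (Proposition~\ref{Hasse invariant is reduced}, which in turn rests on the explicit Newton-stratum computation of Proposition~\ref{maximal element in B(G,mu)}) to conclude that such forms are divisible by $H$. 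Step (ii) is the real analytic input and is entirely absent from your sketch; without it you cannot invert $H$ on the ordinary part, and the reduction to a single classical weight collapses.

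\medskip

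\textbf{Part (2): BGG versus the modified correspondence.} The paper does \emph{not} use a BGG-type argument here. The two halves of the isomorphism come from (a) the purely local representation-theoretic identity $e_{\widetilde{P}}\widetilde{R}_A[\lambda^{-1}]\simeq A[\lambda^{-1}]$ (Corollary~\ref{ordinary modified=ordinary}), globalised in Proposition~\ref{comparison of forms on Igusa and p-adic}, and (b) a descent from the Igusa tower to $Sh^{\Sigma,\mu}_\infty$ via a \emph{modified} Hecke correspondence $\mathbb{T}_\epsilon^\triangle$ (Corollary~\ref{descent from Igusa to Sh}), which forces the image to be $G(\mathcal{O}_\mathfrak{p}/\mathfrak{p})$-invariant. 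The strict-dominance hypothesis $\langle\lambda,\alpha\rangle>0$ enters because it makes $\lambda^{-1}(w^{-1}\epsilon_{\widetilde{P}}w\epsilon_{\widetilde{P}}^{-1})\in p\mathcal{O}_\mathfrak{p}$ for $w\notin\mathcal{W}_{\widetilde{L}}$, so that the discrepancy $\mathbb{T}_\epsilon-\mathbb{T}_\epsilon^\triangle$ is $p$-divisible. This is a Hecke-theoretic positivity, not a BGG one. Relatedly, your attribution of the ``no $\mathrm{PGL}_{2/\mathbb{Q}}$ factors'' hypothesis is off: it is invoked solely for Koecher's principle (extending $H$ to $Sh^{\mathrm{min}}$ and in the remark following the descent proof), not for any BGG estimate.
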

	The assumption
	that
	$G^\mathrm{ad}$
	has no factors isomorphic to
	$\mathrm{PGL}_{2/\mathbb{Q}}$
	is used when we want to apply
	Koecher principal
	(\textit{cf.}
	\cite[Theorem 5.2.11(5)]{Madapusi2012}).
	This is in fact not a serious restriction since the Hida theory for
	$\mathrm{PGL}_{2/\mathbb{Q}}$
	is already well-known
	(\cite{Hida1986GaloisRepresentations})
	and it is easy to combine our result with
	the case of
	$\mathrm{PGL}_{2/\mathbb{Q}}$
	to obtain Hida theory for any
	$G$
	(whose adjoint quotient may contain
	$\mathrm{PGL}_{2/\mathbb{Q}}$).	
	We do not give a uniform presentation for these two cases
	since the article is already very long and
	the inclusion of
	$\mathrm{PGL}_{2/\mathbb{Q}}$
	makes the presentation less readable 
	even though there are no essential technical difficulties involved.

	Similar constructions in the case of unitary Shimura varieties
	of PEL type can be found in
	\cite{BrascaRosso2017,EischenMantovan2017}.
	In \cite{BrascaRosso2017},
	the authors use a partial Hasse invariant to treat the problem of
	lifting automorphic forms from characteristic $p$ to characteristic $0$
	and they use a different weight space,
	which is more in the spirit of rigid geometry.
	In \cite{EischenMantovan2017},
	more results are obtained,
	including the construction of $p$-adic differential operators.

	Hida theory has important applications in modularity theorems
	of automorphic representations and Galois representations
	(\cite{Pilloni2012b,Wiles1995}).
	Our result for the spin orthogonal groups
	$\mathrm{Spin}_{n,2}$
	of signature $(n,2)$
	should also be of help to
	$p$-adic modularity results for
	abelian varieties of higher dimensions,
	just as the case of elliptic curves and abelian surfaces.
    Thus this article can be seen as the first step towards
    the $p$-adic modularity problems of abelian varieties,
    whose following steps
    we wish to carry out in the near
    future.

	Here is the organization of this article,
	where we also take the chance to indicate some of the obstacles
	that we overcome in generalizing
	\cite{Hida2002,Pilloni2012}.
	In §2 we recall the notion of Shimura varieties of Hodge type and
	fix various notations that will be used through out this article.
	In §3 we introduce the $\mu$-ordinary locus on this Shimura variety
	$Sh$ (of characteristic $p$) and then using
	the Hasse invariant to extend this locus to characteristic $0$.
	The main result is
	Proposition
	\ref{Hasse invariant is reduced},
	which is crucial to studying
	the lifting of automorphic forms from char $p$ to char $0$
	and also the control theorems.
	We reduce the problem to the case
	that $G$
	is simple of adjoint type
	and for each
	such $G$
	we give an explicit description of
	certain element
	inside
	$B(G,\mu)$
	in
	Proposition
	\ref{maximal element in B(G,mu)}.
	In §4 we construct the space of classical automorphic forms on the
	Shimura variety
	$Sh$ and the spaces of ($\mu$-ordinary) $p$-adic automorphic forms
	by introducing the Igusa tower above the
	$\mu$-ordinary locus
	$Sh^{\Sigma,\mu}$.
	The first space is an
	$L(\mathbb{Z}_p)$-torsor over
	the compactified Shimura variety
	$Sh^\Sigma$
	while the second space is
	an
	$L_\mu(\mathbb{Z}_p)$-torsor
	over
	the $\mu$-ordinary locus
	$Sh^{\Sigma,\mu}$.
	It is known that
	$L$ and $L_\mu$
	are inner twists of each other
	(
	this phenomenon seems not appear in preceding works
	like
	\cite{Hida2002,Pilloni2012,BrascaRosso2017,EischenMantovan2017}
	),
	and
	then we can embed the first space into the second
	using a natural specialization map
	(the so-called Hodge-Tate map).
	In §5 we carefully define the Hecke operators at $p$,
	the main result is
	Proposition
	\ref{Integrality of Hecke operator},
	which,
	using the Serre-Tate theory in the
	$\mu$-ordinary case,
	shows that by dividing out some explicit multiple,
	the Hecke operators
	$\mathbb{T}_{\epsilon_i}$
	preserve the integral structure of the space of
	($p$-adic) automorphic forms.
	Then we show that
	the space of
	$\mu$-ordinary automorphic forms on the Igusa tower
	descends to
	the $\mu$-ordinary locus
	$Sh^{\Sigma,\mu}$.
	In §6 we show that one can lift cuspidal automorphic forms
	of characteristic $p$ to
	characteristic $0$
	for sufficiently regular weights
	$\lambda$,
	which is one of the main ingredients in Hida theory.
	We first review the notion of toroidal compactification and
	then show that
	the mod $p$ map
	on the space of
	automorphic forms
	is surjective
	(Proposition \ref{surjectivity of mod p, mu-ordinary}).
	Then we proceed to show the finite-dimensionality of the space of
	$\mu$-ordinary automorphic forms.
	The main ingredient for this is
	Lemma
	\ref{at most one subgroup},
	which shows that
	there is at most one subgroup $H$ of
	$\mathcal{D}_x[p]$
	such that
	its height and degrees satisfy certain conditions.	
	In the last section
	§7 we summarize the results obtained in the previous sections and
	deduce the control theorems on the space of
	$p$-adic automorphic forms and the existence of
	Hida families.
	As the reader can see,
	the presentation and ideas of this article
	is heavily influenced by
	\cite{Pilloni2012}.

	\subsubsection*{Acknowledgement:}
	This work begins while
	the author was a PhD student at
	Universit\'{e} Sorbonne Paris Nord
	and
	he wants to thank
	his advisor
	Jacques Tilouine for
	suggesting this problem and
	for his constant support.
	He would also like to thank
	Andrea Conti,
	Mladen Dimitrov,
	Valentin Hernandez,
	Haruzo Hida,
	Ming-Lun Hsieh,
	Zheng Liu
	and
	Vincent Pilloni
	for useful discussions.

	\subsection*{Notations}
	\begin{enumerate}
		\item 
		We fix an odd prime number $p$.
		We fix field embeddings
		$\overline{\mathbb{Q}}
		\hookrightarrow
		\mathbb{C}$,
		$\overline{\mathbb{Q}}
		\hookrightarrow
		\overline{\mathbb{Q}}_p$
		as well as an isomorphism of fields
		$\mathbb{C}
		\simeq
		\overline{\mathbb{Q}}_p$
		compatible with the previous embeddings.
		We fix an algebraic closure
		$\overline{\mathbb{F}}_p$ of
		$\mathbb{F}_p$.
		For any field $k$
		of characteristic $p$,
		we write
		$W(k)$
		for its ring of Witt vectors
		and
		$W_n(k)$
		for its ring of Witt vectors of length $n\geq0$.
		In particular,
		we write
		$\mathbb{W}=W(\overline{\mathbb{F}}_p)$
		and
		$\mathbb{W}_n
		=W_n(\overline{\mathbb{F}}_p)$.
		Moreover,
		we identify
		the total fraction field
		of
		$\mathbb{W}$
		with
		the completion of the maximal
		unramified extension
		$\widehat{\mathbb{Q}^{\mathrm{ur}}_p}$
		of
		$\mathbb{Q}_p$.
		We write
		$\mathbb{A}=\mathbb{A}_\mathbb{Q}$
		for the ad\`{e}les of $\mathbb{Q}$
		and
		$\mathbb{A}_f$
		the finite ad\`{e}les.
		
		\item 
		Let $G$ be a connected reductive group over
		$\mathbb{Q}$,
		$B$ a Borel subgroup of $G$ defined over $\mathbb{Q}$
		and
		$T$ a maximal torus of $B$ (defined over $\mathbb{Q}$).
		We fix a root datum
		of
		$G$
		\[
		(X^\ast(T),\Phi^\ast(T),X_\ast(T),\Phi_\ast(T)),
		\]
		where
		$X^\ast(T)$
		is the group of characters of $T$,
		Let
		$\Phi^\ast(T)
		\subset X^\ast(T)\backslash0$
		be the subset of roots.
		We write
		$\Delta^\ast(T)$
		for the subset of
		$\Phi^\ast(T)$
		consisting of positive roots
		(with respect to $B$)
		and
		$\widetilde{\Delta}^\ast(T)$
		for the subset of
		$\Delta^\ast(T)$
		consisting of simple roots.
		Similarly we have
		the subset of positive coroots
		$\Delta_\ast(T)$
		and
		$\widetilde{\Delta}_\ast(T)$
		for the subset of
		$\Delta_\ast(T)$
		consisting of simple positive coroots.
		We denote by
		$X^\ast_\mathrm{dm}(T)$
		the subset of dominant characters of
		$X^\ast(T)$,
		$X^\ast_\mathrm{dd}(T)
		\subset
		X^\ast_\mathrm{dm}(T)$
		the subset consisting of
		$\lambda$
		such that
		$\langle\lambda,\mu\rangle>0$
		for at least one
		positive coroot
		$\mu\in
		\Delta_\ast(T)$.
		Similarly,
		$X_{\ast,\mathrm{dm}}(T)$ is 
		the subset of dominant cocharacters of
		$X_\ast(T)$,
		$X_{\ast,\mathrm{dd}}(T)$
		consists of dominant coroots
		$\mu$ such that
		$\langle\lambda,\mu\rangle
		>0$
		for at least one positive root
		$\lambda
		\in
		\Delta^\ast(T)$.	
		When the context is clear,
		we will omit $(T)$
		from these notations.
		Denote
		$\mathcal{W}_T$
		for the Weyl group of
		$(G,T)$.

	\end{enumerate}

	\section{Shimura varieties of Hodge type}
	\label{Shimura varieties of Hodge type}
	We recall the notion of
	Shimura varieties of Hodge type.
	The main reference is
	\cite{Milne1988}.
	
	\begin{definition}\label{Shimura varieties, definition}
		Let
		$(G,X)$
		be a Shimura datum,
		\textit{i.e.},
		$G$ is a connected reductive group over
		$\mathbb{Q}$
		and $X$
		a
		$G(\mathbb{R})$-conjugacy class
		of homomorphisms
		$\mathbb{S}:=
		\mathrm{Res}^\mathbb{C}_\mathbb{R}
		\mathbb{G}_m\to
		G_\mathbb{R}$
		satisfying the following conditions:
		\begin{enumerate}
			\item 
			there exists
			$x\in X$
			such that
			the Hodge structure
			$h_x
			\colon
			\mathbb{S}
			\to
			G_\mathbb{R}
			\to
			\mathrm{GL}(\mathfrak{g})$
			on
			the Lie algebra
			$\mathfrak{g}$ of
			$G$
			is of type
			$\{(-1,1),(0,0),(1,-1)\}$;
			
			\item 
			there exists $x\in X$
			such that
			$\mathrm{ad}h_x(i)$
			is a Cartan involution on
			$G^\mathrm{ad}_\mathbb{R}$
			where
			$G^\mathrm{ad}$
			is the adjoint quotient
			of $G$;
			
			\item 
			$G^\mathrm{ad}$
			has no factors defined
			over $\mathbb{Q}$
			whose real points
			form a compact group;
			
			\item 
			the identity component
			$Z(G)^0$
			of the centre
			$Z(G)$ of $G$
			splits over a CM-field.
			
			We shall make the following additional assumption,
			which is not part of the conditions of a Shimura datum but
			will be used in defining the $\mu$-ordinary locus of
			a toroidal compactification of
			the Shimura variety associated to $(G,X)$
			(cf.
			Definition \ref{various Hasse invariant}):

			\item 
			$G^\mathrm{ad}$
			has no factors isomorphic to
			$\mathrm{PGL}_{2/\mathbb{Q}}$.
		\end{enumerate}

	\end{definition}

	Let $K$
	be a compact open subgroup
	of
	$G(\mathbb{A}_f)$,
	then
	define
	\[
	\mathrm{Sh}_K(G,X)
	=
	G(\mathbb{Q})
	\backslash
	(X\times G(\mathbb{A}_f)/K),
	\]
	which is a finite disjoint union of
	locally symmetric spaces.
	Write
	$\mathcal{C}\subset
	G(\mathbb{A}_f)$
	for a set of representatives
	of
	the quotient
	$G(\mathbb{Q})
	\backslash
	G(\mathbb{A}_f)/K$
	and for each
	$g\in\mathcal{C}$,
	let
	$\Gamma_g'
	=gKg^{-1}\cap G(\mathbb{Q})_+$
	and
	$\Gamma_g$
	its image in
	$G^\mathrm{ad}(\mathbb{R})^+$.
	Let
	$X^+$
	be a connected component of
	$x$,
	then
	$\mathrm{Sh}_K(G,X)
	=\cup_{g\in\mathcal{C}}
	\Gamma_g\backslash X^+$.
	Write the projective limit
	\[
	\mathrm{Sh}(G,X)
	=\lim\limits_{\overleftarrow{K}}
	\mathrm{Sh}_K(G,X),
	\]
	which is a scheme over $\mathbb{C}$.

	\begin{example}
		Consider the following
		example:
		let
		$(V,\psi)$ be a
		non-degenerated
		symplectic
		vector space
		over $\mathbb{Q}$.
		The similitude symplectic
		group
		$\mathrm{GSp}(V,\psi)$
		whose $R$-points are given by
		\[
		\mathrm{GSp}(V,\psi)(R)
		=\{
		g\in\mathrm{GL}(V)(R)|
		\exists\,
		\nu\in
		R^\times,
		\psi(gv,gu)
		=\nu\psi(v,u),
		\,
		\forall\,
		v,u\in V
		\}.
		\]
		Let $S^\pm$
		be the set of
		Hodge structures of type
		$\{(-1,0),(0,-1)\}$
		on $V$
		such that
		$\pm2i\pi\psi$
		is a polarization.
		Then
		$(\mathrm{GSp}(V,\psi),S^\pm)$
		is a Shimura datum,
		called a Siegel Shimura datum.
	\end{example}

	Suppose that there is an embedding
	$\xi\colon
	(G,X)\hookrightarrow
	(\mathrm{GSp}(V,\psi),S^\pm)$
	for some symplectic space
	$(V,\psi)$,
	\textit{i.e.},
	$\mathrm{Sh}(G,X)$
	is a Shimura variety of
	\textbf{Hodge type}.
	Let
	$\mathfrak{t}=(t_\alpha)_{\alpha\in I}$
	be a family of tensors for $V$
	(\textit{i.e.}
	$t_\alpha\in
	V^\otimes
	:=
	\bigoplus_{r,s\in\mathbb{N}}
	V^{\otimes r}\otimes
	\mathrm{Hom}_\mathbb{Q}
	(V,\mathbb{Q})^{\otimes s}$)
	such that
	$G$ is the subgroup of
	$\mathrm{GSp}(V,\psi)\times\mathbb{G}_m$
	fixing all these tensors
	$(t_\alpha)$
	(see
	\cite{Kisin2010}
	for the proof of existence of these tensors).
	Recall the moduli interpretation of
	$\mathrm{Sh}(G,X)$
	(over $\mathbb{C}$):
	consider the triple
	$(A,\mathfrak{s},\eta)$
	consisting of an abelian variety
	$A$ over $\mathbb{C}$,
	a family
	$\mathfrak{s}=(s_\alpha)_{\alpha\in I}$
	of Hodge cycles on
	$A$
	(\textit{cf.}\cite[p.13]{Milne1988}) and an isomorphism
	$\eta
	\colon
	V(\mathbb{A}_f)
	\to
	H_f(A):=
	\otimes_\ell'
	H_\ell(A)$
	with
	$H_\ell(A)
	:=T_\ell(A)\otimes_\mathbb{Z}\mathbb{Q}$
	which takes
	$t_\alpha$
	to 
	$s_\alpha$
	for all $\alpha\in I$
	and
	there is an isomorphism
	$i\colon
	V
	\to
	H_B(A)=H_1(A(\mathbb{C}),\mathbb{Q})$
	sending
	$t_\alpha$
	to
	$s_\alpha$
	and
	the morphism
	$\mathbb{S}
	\to
	G_\mathbb{R},
	\,
	z\mapsto
	i^{-1}\circ
	h_M(z)\circ
	i$
	is an element in $X$.
	We say that two triples
	$(A,\mathfrak{s},\eta)$
	and
	$(A',\mathfrak{s}',\eta')$
	are equivalent
	if there is an isomorphism
	$\gamma
	\colon
	A\to
	A'$
	sending
	$s_\alpha$
	to
	$s_\alpha'$
	and
	$\gamma\circ\eta=\eta'$.
	Write
	$\mathcal{M}(G,X,\xi)$
	for the set of equivalence classes of
	these triples
	$(A,\mathfrak{s},\eta)$.
	Then we have a canonical bijection
	(\cite[Chapter II, Proposition 3.6]{Milne1988}):
	\[
	\Phi_\xi
	\colon
	\mathcal{M}(G,X,\xi)
	\to
	\mathrm{Sh}(G,X).
	\]

	Write
	$E=E(G,X)$
	for the reflex field of
	the Shimura datum
	$(G,X)$,
	then for any
	$K$ as above,
	there exists a unique
	canonical model
	$Sh_K(G,X)$
	over the reflex field $E$
	of
	$\mathrm{Sh}_K(G,X)$
	(\textit{cf.}\cite[§II.5]{Milne1988}).
	Again we write
	$Sh(G,X)$
	for the projective limit of
	$Sh_K(G,X)$
	over all possible $K$ as above,
	which is then the canonical model over
	$E$ of
	$\mathrm{Sh}(G,X)$.
	From now on,
	we consider some special
	compact open subgroups
	$K$,
	which can be written as
	$K=K^pK_p$
	with
	$K^p\subset
	G(\mathbb{A}_f^p)$
	and
	$K_p
	\subset
	G(\mathbb{Q}_p)$.
	Write
	$\mathrm{Ram}(G)$
	for the finite set of places
	of $\mathbb{Q}$
	where
	$G$ is ramified.
	We assume that
	\begin{hypothesis}\label{G is unramified at p}
		$G$ is unramified at $p$,
		\textit{i.e.}
		$p\notin\mathrm{Ram}(G)$.
	\end{hypothesis}
    With this hypothesis,
    we can and will fix a
    $\mathbb{Z}_p$-model
    for
    $G_{\mathbb{Q}_p}$.
    We denote this model again by
    $G$.
	We say that
	\begin{definition}
		$K=K^pK_p$ is
		$p$-hyperspecial
		if
		$K_p$ is hyperspecial in
		$G(\mathbb{Q}_p)$.
	\end{definition}
    The existence of
    a $p$-hyperspecial $K$ implies that
    the reflex field $E$ of the
    Shimura datum $(G,X)$
    is unramified at $p$
    (\cite[Corollary 4.7]{Milne1994}).
    In the following,
    for a place
    $\mathfrak{p}$ of $E$ over $p$,
    we will write
    \[
    \mathcal{O}_E,
    \text{ resp., }
    E_\mathfrak{p},
    \mathcal{O}_\mathfrak{p}
    =
    (\mathcal{O}_E)_\mathfrak{p}
    =
    \mathcal{O}_{E_\mathfrak{p}}
    \]
    for
    the ring of integers of $E$,
    resp.,
    the completion of $E$ at the place
    corresponding to the prime
    $\mathfrak{p}$,
    the ring of integers of $E_\mathfrak{p}$.
    Furthermore,
    we write
    $
    \kappa_\mathfrak{p}
    =
    \mathcal{O}_\mathfrak{p}/\mathfrak{p}
    $
    for the residual finite field
    of
    $\mathcal{O}_\mathfrak{p}$.

    Write
    $Sh_{K_p}(G,X)$
    for the projective limit of
    those
    $Sh_{K^pK_p}(G,X)$
    with
    $K_p$ fixed while
    $K^p$ varying.
    Let
    $\mathfrak{p}$
    be a prime of $E(G,X)$
    over $p$,
    then by
    \cite[Theorem 0]{Vasiu1999}
    and
    \cite[Theorem 1]{Kisin2010},
    $Sh_{K_p}(G,X)$
    has an integral model
    \begin{equation}\label{Sh, integral model for all level}
    Sh(G,X,K_p)
    \end{equation}
    over
    $\mathcal{O}_\mathfrak{p}$.
    This model inherits an action of
    $G(\mathbb{A}_f^p)$ and
    
    \begin{definition}\label{Sh, integral model for a fixed level}
    	We write
    	\[
    	Sh:=Sh(G,X,K)
    	\]
    	for the fixed points of
    	$Sh(G,X,K_p)$
    	by
    	$K^p$.
    	In the following,
    	we will fix one such
    	$K^p$ which is sufficiently small
    	(for example neat).
    \end{definition}

    Next we consider the toroidal
    compactifications of
    $Sh$.
    By
    \cite{Madapusi2012},
    for a certain cone decomposition
    $\Sigma$,
    one can construct
    the toroidal compactification
    $
    Sh^\Sigma
    $
    of
    $Sh$
    over $\mathcal{O}_\mathfrak{p}$.
    Moreover,
    the complement
    $
    C^\Sigma=
    Sh^\Sigma\backslash
    Sh
    $
    is a relative Cartier divisor
    over
    $\mathcal{O}_\mathfrak{p}$.
    Up to refining
    $\Sigma$,
    every complete local ring
    of
    $Sh^\Sigma$
    at a geometric point is isomorphic to
    a complete local ring
    of $Sh$.
    We can apply the Proj
    construction to a certain graded ring of
    automorphic forms on
    $Sh^\Sigma$
    and we get the minimal compactification
    of $Sh$:
    \[
    Sh^\mathrm{min}
    \]
    (independent of $\Sigma$)
    with a unique morphism
    \[
    \pi^\Sigma
    \colon
    Sh^\Sigma
    \to
    Sh^\mathrm{min}
    \]
    extending the identity morphism
    on
    $Sh$ and
    compatible with the stratifications on
    $Sh^\Sigma$
    and
    $Sh^\mathrm{min}$.
    With these integral models,
    we consider their reductions:
    for each integer $n$,
    we write
    \begin{align*}
    Sh^?_n
    &
    =Sh^?
    \otimes_{\mathcal{O}_\mathfrak{p}}
    \mathcal{O}_\mathfrak{p}/\mathfrak{p}^n,
    \quad
    ?=\emptyset,\Sigma,\mathrm{min},
    \\
    C^\Sigma_n
    &
    =
    Sh^\Sigma_n\backslash
    Sh_n
    =
    C^\Sigma
    \otimes_{\mathcal{O}_\mathfrak{p}}
    \mathcal{O}_\mathfrak{p}/\mathfrak{p}^n,
    \\
    \pi^\Sigma_n
    &
    =
    \pi^\Sigma
    \otimes_{\mathcal{O}_\mathfrak{p}}
    \mathcal{O}_\mathfrak{p}/\mathfrak{p}^n
    \colon
    Sh^\Sigma_n
    \to
    Sh^\mathrm{min}_n.
    \end{align*}
    By construction,
    we have
    $(\pi^\Sigma_n)_\ast
    (\mathcal{O}_{Sh^\Sigma_n})
    =
    \mathcal{O}_{Sh^\mathrm{min}_n}$.
    \begin{remark}
    	In the following,
    	we fix an embedding
    	$E\hookrightarrow
    	\overline{\mathbb{Q}}$
    	and thus we have a corresponding embedding
    	$\mathcal{O}_E
    	\hookrightarrow
    	\mathbb{W}$
    	since $E$ is unramified at $p$.
    	Moreover,
    	we use the same notations
    	$Sh^?,C^\Sigma,\pi^\Sigma$
    	to denote their base changes from
    	$\mathcal{O}_E$
    	to
    	$\mathbb{W}$
    	and similarly for the notations with subscript
    	$n$
    	(base changes from
    	$\mathcal{O}_E/\mathfrak{p}^n$
    	to
    	$\mathbb{W}_n$).
    \end{remark}

    For
    $(G,X)=(\mathrm{GSp}(V,\psi),S^\pm)$,
    we write
    $K_V=(K_V)_p(K_V)^p$
    for a compact open subgroup
    of
    $\mathrm{GSp}(V,\psi)(\mathbb{A}_f)$.
    We write
    $\mathcal{A}(V,\psi)$
    for the universal abelian
    scheme over
    $Sh(\mathrm{GSp}(V,\psi),S^\pm,K_V)$.
    Then write
    \[
    \pi
    \colon
    \mathcal{A}
    \to
    Sh
    \]
    for the universal abelian scheme
    over
    $Sh$
    via the pullback of
    $Sh\to
    Sh(\mathrm{GSp}(V,\psi),S^\pm,K_V)$
    (for
    $K=K_V\cap
    G(\mathbb{A}_f)$).
    Write
    $e
    \colon
    Sh
    \to
    \mathcal{A}$
    for the unit section and
    \[
    \omega_{Sh}
    :=
    \mathrm{det}(e^\ast\Omega_{\mathcal{A}/Sh}^1)
    \]
    for the Hodge line bundle on
    $Sh$.
    By
    \cite[Proposition 4.1]{KoskivirtaWedhorn2015},
    we know that
    $\omega_{Sh}$
    is ample over
    $Sh$.

    \section{$\mu$-ordinary locus and Hasse invariant}\label{mu-ordinary locus and Hasse invariant}
    \subsection{$\mu$-ordinary locus}\label{mu-ordinary locus}
    In this section,
    we review the theory of
    $\mu$-ordinary locus for
    the Shimura varieties
    $\mathrm{Sh}(G,X,K)$
    (\cite{Wedhorn1999,Wortmann2013}).
    The integral canonical model
    $Sh$
    in general does not have a moduli interpretation
    in terms of abelian schemes.
    Yet one can still define the Newton
    stratification
    over
    $Sh_1$
    and
    Ekedahl-Oort stratification
    over
    $Sh\otimes_{\mathcal{O}_\mathfrak{p}}
    \overline{\mathbb{F}}_p$.

    We suppose that $G$ is quasi-split over
    $\mathbb{Z}_p$
    and
    split over a finite \'{e}tale extension of
    $\mathbb{Z}_p$.
    Fix then a Borel subgroup
    $B$ of $G$ and a maximal torus
    $T$ of $B$
    (both defined over $\mathbb{Z}_p$).
    We can then identify the quotient
    $\mathcal{W}_T\backslash
    X^\ast(T)$
    with the set of
    conjugacy classes of co-characters
    of $G_\mathbb{C}$.
    Recall that
    for each $x\in X$,
    we have a Hodge decomposition
    $V_\mathbb{C}=V^{(-1,0)}
    \bigoplus
    V^{(0,-1)}$
    by the embedding
    $X\hookrightarrow
    S^\pm$.
    Write
    $\nu_x$
    for the co-character of
    $G_\mathbb{C}$
    such that
    $\nu_x(z)$
    acts on
    $V^{(-1,0)}$
    via multiplication by $z$ and
    on $V^{(0,-1)}$
    trivially.
    Then we write
    $[\nu]$
    for the $G(\mathbb{C})$-conjugacy class    
    of co-characters of $G_\mathbb{C}$
    containing all these 
    $\nu_x$
    and
    similarly
    $[\nu^{-1}]$
    containing all these
    $\nu_x^{-1}$.
    Then write
    \[
    \mu\in
    X^\ast(T)
    \]
    for the element such that
    $\mathrm{Fr}^{-1}(\mu)\in
    [\nu^{-1}]$
    where
    $\mathrm{Fr}$ is the Frobenius morphism.

    We fix a lattice
    $
    \Lambda\subset V
    $
    of
    $V$ and assume that
    $\mathfrak{t}
    \subset
    (\Lambda\otimes_\mathbb{Z}\mathbb{Z}_{(p)})^\otimes
    $
    and
    $\psi(\Lambda\times\Lambda)\subset
    \mathbb{Z}$.
    Write
    $\Lambda^\vee
    \subset
    V^\vee$
    for the dual lattice
    of $\Lambda$ by
    $\psi$.
    Recall the moduli interpretation of
    $Sh(\mathrm{GSp}(V,\psi),S^\pm,K_V)$
    for $(K_V)^p$
    sufficiently small:
    let
    $\mathcal{M}(\mathrm{GSp}(V,\psi),S^\pm,(K_V)^p,\xi)$
    be the moduli
    space over
    $\mathbb{Z}_{(p)}$
    parametrizing
    abelian schemes
    $A$ with a polarization of degree
    $d:=
    [\Lambda^\vee:\Lambda]$
    and a $(K_V)^p$-level structure up to
    isomorphism
    (which is representable by a quasi-projective
    scheme over $\mathbb{Z}_{(p)}$,
    denoted by the same notation).
    Thus we get an embedding
    of
    $\mathbb{Z}_{(p)}$-schemes
    \[
    Sh(\mathrm{GSp}(V,\psi),S^\pm,K_V)
    \hookrightarrow
    \mathcal{M}(\mathrm{GSp}(V,\psi),S^\pm,(K_V)^p,\xi).
    \]
    We make explicit
    this embedding over
    the $\mathbb{C}$-points:
    for any
    \[
    [h,g]\in
    Sh(\mathrm{GSp}(V,\psi),S^\pm,K_V,\xi)
    (\mathbb{C})
    =
    \mathrm{GSp}(\mathbb{Q})
    \backslash
    S^\pm\times\mathrm{GSp}(V,\psi)(\mathbb{A}_f)
    /K_V,
    \]
    we have a Hodge decomposition
    $V_\mathbb{C}
    =V^{(-1,0)}
    \bigoplus
    V^{(0,-1)}$
    given by $h$.
    Moreover, there is a unique
    lattice $\Lambda_g$
    of $V$ such that
    $(\Lambda_g)_{\widehat{\mathbb{Z}}}
    =g(\Lambda_{\widehat{\mathbb{Z}}})$
    with
    $\Lambda_{\widehat{\mathbb{Z}}}
    =\Lambda\otimes_\mathbb{Z}
    \widehat{\mathbb{Z}}$
    and a unique $\mathbb{Q}^\times$-multiple
    $\psi_{h,g}$
    of
    $\psi$
    such that
    $g(\Lambda_{\widehat{\mathbb{Z}}}^\vee)$
    is the dual lattice of
    $g(\Lambda_{\widehat{\mathbb{Z}}})$
    by
    $\psi_{h,g}$
    and such that
    the bilinear form
    $(v,w)
    \mapsto
    \psi_{h,g}(v,h(i)w)$
    is positive definite on
    $V_\mathbb{R}$.
    By Riemann's theorem,
    we can associate to
    $[h,g]$
    a triple
    $(A,\lambda,\eta)$
    with
    $A:=
    V^{(-1,0)}/\Lambda$
    a complex abelian variety
    with polarization $\lambda$
    induced by $\psi_{h,g}$
    and
    $\eta$
    the right
    $(K_V)^p$-coset of
    $\Lambda_{\widehat{\mathbb{Z}}^p}
    \xrightarrow{g^p}
    g^p(\Lambda_{\widehat{\mathbb{Z}}^p})
    =
    (\Lambda_g)_{\widehat{\mathbb{Z}}^p}
    \simeq
    H_1(A,\mathbb{Z})_{\widehat{\mathbb{Z}}^p}
    \simeq
    \prod_{\ell\neq p}
    T_\ell(A)$.

    Recall that we have a universal
    abelian scheme
    $\pi
    \colon
    \mathcal{A}
    \to
    Sh$.
    We then write
    \[
    \mathcal{V}^\circ
    :=
    H^1_\mathrm{dR}
    (\mathcal{A}/Sh),
    \quad
    \mathcal{V}
    :=
    H^1_\mathrm{dR}
    (\mathcal{A}\otimes E/\mathrm{Sh}(G,X,K)).
    \]
    Moreover, we have the Hodge filtration
    on $\mathcal{V}^\circ$:
    \[
    0=
    \mathrm{Fil}^{-1}
    \mathcal{V}^\circ
    \subset
    \mathrm{Fil}^0\mathcal{V}^\circ
    :=
    \pi_\ast
    \Omega_{\mathcal{A}/Sh}^1
    \subset
    \mathrm{Fil}^1\mathcal{V}^\circ
    =
    \mathcal{V}^\circ.
    \]
    For any
    field extension
    $E'/E(G,X)$
    embedded in $\mathbb{C}$
    and a fixed point
    $\zeta\in
    Sh(E')$,
    consider the algebraic closure
    $\overline{E'}
    \subset
    \mathbb{C}$
    and denote by
    $\overline{\zeta}$,
    resp.,
    $\zeta_\mathbb{C}$
    the $\overline{E'}$-point,
    resp.,
    $\mathbb{C}$-point corresponding to $\zeta$.
    From
    $Sh
    \hookrightarrow
    \mathcal{M}(\mathrm{GSp}(V,\psi),S^\pm,K_V,\xi)$,
    we have an isomorphism
    \[
    V\simeq
    H_1(\mathcal{A}_{\zeta_\mathbb{C}},\mathbb{Q}).
    \]
    Moreover we have the comparison isomorphisms
    \[
    H^1
    (\mathcal{A}_{\zeta_\mathbb{C}},\mathbb{Q})_\mathbb{C}
    \simeq
    H^1_\mathrm{dR}
    (\mathcal{A}_{\zeta_\mathbb{C}}/\mathbb{C}),
    \quad
    H^1
    (\mathcal{A}_{\zeta_\mathbb{C}},\mathbb{Q})_{\mathbb{Q}_\ell}
    \simeq
    H^1_\text{\'{e}t}
    (\mathcal{A}_{\zeta_\mathbb{C}},\mathbb{Q}_\ell)
    \simeq
    H^1_\text{\'{e}t}
    (\mathcal{A}_{\overline{\zeta}},\mathbb{Q}_\ell),
    \]
    The tensors
    $\mathfrak{t}
    \subset
    (V^\vee)^\otimes
    \simeq
    H^1(\mathcal{A}_{\zeta_\mathbb{C}},\mathbb{Q})^\otimes$
    correspond to
    $\mathfrak{t}_{\text{dR},\zeta}$,
    $\mathfrak{t}_{\text{\'{e}t},\ell,\zeta}$
    which form the Hodge tensors.
    We can show that there exist sections
    \[
    \mathfrak{t}_\text{dR}
    \subset
    \mathcal{V}^\otimes
    \]
    defined over $E(G,X)$
    horizontal with respect to the
    Gauss-Manin connection
    $\nabla$
    and for each
    $\zeta\in
    Sh(E')$,
    the pullback of
    $\mathfrak{t}_\text{dR}$
    to
    $\zeta$
    is
    $\mathfrak{t}_{\text{dR},\zeta}
    \subset
    H^1_\text{dR}(\mathcal{A}_{\zeta}/E')^\otimes$.
    One can then extend
    $\mathfrak{t}_\text{dR}$
    to integral tensors
    $\mathfrak{t}_\text{dR}
    ^\circ
    \subset
    (\mathcal{V}^\circ)^\otimes$.
    For any perfect field $k$
    of finite transcendental degree over
    $\mathbb{F}_p$
    and
    $L(k)=\mathrm{Frac}(W(k))$,
    consider the induced points
    $x\in
    Sh(k)$
    and
    $\widetilde{x}\in
    Sh(L(k))$.
    Write
    $\mathbb{D}_x$
    for the contravariant Dieudonn\'{e} module
    of the $p$-divisible group of
    $\mathcal{A}_x$
    (equipped with a $\mathrm{Frob}$-linear map
    $F$ and $\mathrm{Frob}^{-1}$-linear map
    $V$ such that $FV=p=VF$).
    Moreover, we have isomorphisms
    \[
    H^1_\text{dR}
    (\mathcal{A}_{\widetilde{x}}/W(k))
    \simeq
    H^1_\text{cris}
    (\mathcal{A}_x/W(k))
    \simeq
    \mathbb{D}_x,
    \]
    \[
    \Lambda_{\mathbb{Z}_p}
    \simeq
    H_1(\mathcal{A}_{\zeta_\mathbb{C}},\mathbb{Z})_{\mathbb{Z}_p}
    \simeq
    T_p(\mathcal{A}_{\zeta_\mathbb{C}})
    \simeq
    T_p(\mathcal{A}_{\overline{\zeta}}).
    \]
    Dualizing the last isomorphism gives
    $\Lambda_{\mathbb{Z}_p}^\vee
    \simeq
    T_p(\mathcal{A}_{\overline{\zeta}})^\vee(-1)
    \simeq
    H^1_{\text{\'{e}t}}
    (\mathcal{A}_{\overline{\zeta}},\mathbb{Z}_p)$,
    which sends
    $\mathfrak{t}_{\mathbb{Z}_p}$
    to
    $
    \mathfrak{t}_{\text{\'{e}t},\zeta}^\circ
    \subset
    H^1_{\text{\'{e}t}}
    (\mathcal{A}_{\overline{\zeta}},\mathbb{Z}_p)
    $
    which are invariant under
    $\mathrm{Gal}(\overline{L(k)}/L(k))$
    and
    whose base change to
    $H^1_{\text{\'{e}t}}
    (\mathcal{A}_{\overline{\zeta}},\mathbb{Q}_p)$
    are exactly
    $\mathfrak{t}_{\text{\'{e}t},p,\zeta}$.

    From the $p$-adic comparison theorem
    \[
    H^1_{\text{\'{e}t}}
    (\mathcal{A}_{\overline{\zeta}},\mathbb{Z}_p)
    \otimes_{\mathbb{Z}_p}
    B_\text{cris}
    \simeq
    H^1_\text{cris}
    (\mathcal{A}_x/W(k))
    \otimes_{W(k)}
    B_\text{cris}
    \simeq
    \mathbb{D}_x\otimes_{W(k)}
    B_\text{cris},
    \]
    one can get the images
    $
    \mathfrak{t}_{\text{dR},\widetilde{x}}^\circ
    \subset
    (\mathcal{V}_{\widetilde{x}}^\circ)^\otimes
    $
    of
    these
    $\mathfrak{t}_{\text{\'{e}t},\zeta}^\circ$.
    Moreover, we have an isomorphism
    \[
    f
    \colon
    (\Lambda_{W(k)}^\vee,\mathfrak{t}_{W(k)})
    \simeq
    (\mathcal{V}_{\widetilde{x}}^\circ,
    \mathfrak{t}_{\text{dR},\widetilde{x}}^\circ)
    \]
    and
    there is a cocharacter $\lambda$
    of $G(W(k))$
    such that
    the filtration
    $\Lambda_{W(k)}^\vee
    \supset
    f^{-1}
    (\mathrm{Fil}^1\mathcal{V}_{\overline{x}}^\circ)$
    is induced by
    $(\cdot)^\vee\circ\lambda$.
    One can also show that
    the images
    $
    \mathfrak{t}_{\text{cris},x}
    \subset
    (\mathbb{D}_x)^\otimes
    $
    of
    $\mathfrak{t}_{\text{dR},\widetilde{x}}$
    are independent of the choice of
    $\widetilde{x}$ over $x$
    and we also have an isomorphism
    \[
    \beta
    \colon
    (\Lambda_{W(k)}^\vee,\mathfrak{t}_{W(k)})
    \simeq
    (\mathbb{D}_x,\mathfrak{t}_{\text{cris},x}).
    \]
    
    To each such $\beta$,
    one can associate an element
    $g_\beta\in
    G(L(k))$
    as follows:
    $g_\beta$
    is the unique element such that the following
    diagram commutes:
    \begin{equation}\label{g_beta}
    \begin{tikzcd}
    (\Lambda_{W(k)}^\vee)^{(\sigma)}
    \arrow[d,"\beta^{(\sigma)}"]
    \arrow[r,"\sim"]
    &
    \Lambda_{W(k)}^\vee
    \arrow[r,"(g_\beta^{-1})^\mathrm{t}"]
    &
    \Lambda_{W(k)}^\vee
    \arrow[d,"\beta"]
    \\
    \mathbb{D}_x^{(\sigma)}
    \arrow[rr,"F\circ(1\otimes\sigma)^{-1}"]
    &
    &
    \mathbb{D}_x
    \end{tikzcd}
    \end{equation}
    Here the transpose inverse
    $(g_\beta^{-1})^\mathrm{t}$
    is considered in the embedding
    $g_\beta\in
    G(L(k))
    \subset
    \mathrm{GSp}(\Lambda(L(k)))$.

    We summarize the above discussion in the following lemma
    \begin{lemma}
    	For any
    	$x\in
    	Sh(k)$,
    	let
    	$\beta
    	\colon
    	(\Lambda_{W(k)}^\vee,
    	\mathfrak{t}_{W(k)})
    	\simeq
    	(\mathbb{D}_x,\mathfrak{t}_{\mathrm{cris},x})$
    	be an isomorphism.
    	Then any (other) isomorphism
    	$\beta'
    	\colon
    	(\Lambda_{W(k)}^\vee,
    	\mathfrak{t}_{W(k)})
    	\simeq
    	(\mathbb{D}_x,\mathfrak{t}_{\mathrm{cris},x})$
    	is of the form
    	$\beta'=\beta\circ h^\vee$
    	for some unique
    	$h\in
    	G(W(k))$.
    	Moreover, one has
    	\[
    	g_{\beta'}
    	=
    	h^{-1}g_\beta\sigma(h)
    	\in
    	G(W(k))\mu(p)G(W(k)).
    	\]
    \end{lemma}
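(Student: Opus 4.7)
The plan is to derive both parts from the definitions and the fact that $G$ is, by construction, exactly the stabilizer of the tensors $\mathfrak{t}$.

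For the first assertion, I would argue as follows. Since $\mathfrak{t}\subset(\Lambda\otimes\mathbb{Z}_{(p)})^{\otimes}$ and $G$ is defined over $\mathbb{Z}_p$ by Hypothesis \ref{G is unramified at p}, the $\mathbb{Z}_p$-model of $G$ is the scheme-theoretic stabilizer of $\mathfrak{t}_{\mathbb{Z}_p}$ in $\mathrm{GSp}(\Lambda_{\mathbb{Z}_p})\times\mathbb{G}_m$. Base-changing to $W(k)$ and transporting to the dual lattice via $h\mapsto h^{\vee}:=(h^{-1})^{t}$, we see that $G(W(k))$ is the full automorphism group of the tensored pair $(\Lambda_{W(k)}^{\vee},\mathfrak{t}_{W(k)})$. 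Hence $\beta^{-1}\circ\beta'$ is of the form $h^{\vee}$ for a unique $h\in G(W(k))$.

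For the formula $g_{\beta'}=h^{-1}g_{\beta}\sigma(h)$, I would substitute $\beta'=\beta\circ h^{\vee}$ into the commutative diagram (\ref{g_beta}) characterizing $g_{\beta'}$. Since $\Lambda$ is $\mathbb{Z}$-defined, the canonical identification $(\Lambda_{W(k)}^{\vee})^{(\sigma)}\cong \Lambda_{W(k)}^{\vee}$ turns the Frobenius twist $(h^{\vee})^{(\sigma)}$ into $\sigma(h)^{\vee}$ (Frobenius applied to matrix entries). Using the analogous diagram for $g_\beta$ to cancel $F\circ(1\otimes\sigma)^{-1}\circ\beta^{(\sigma)}$ against $\beta\circ(g_\beta^{-1})^{t}$, the identity reduces to
\[
h^{\vee}\circ (g_{\beta'}^{-1})^{t} = (g_{\beta}^{-1})^{t}\circ \sigma(h)^{\vee}
\]
on $\Lambda_{W(k)}^{\vee}$. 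Expanding $h^{\vee}=(h^{-1})^{t}$ and using $A^{t}\circ B^{t}=(BA)^{t}$, this rearranges to $g_{\beta'}^{-1}h^{-1}=\sigma(h)^{-1}g_{\beta}^{-1}$, i.e.\ $g_{\beta'}=h^{-1}g_{\beta}\sigma(h)$.

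For the containment $g_\beta\in G(W(k))\mu(p)G(W(k))$, the crucial input is that the Hodge filtration $\mathrm{Fil}^{1}\mathcal{V}_{\widetilde{x}}^{\circ}\subset \mathcal{V}_{\widetilde{x}}^{\circ}$, pulled back to $\Lambda_{W(k)}^{\vee}\otimes L(k)$ via the crystalline comparison, is induced by a cocharacter $G(W(k))$-conjugate to $\mu$ --- this is the defining property of the integral canonical model of \cite{Vasiu1999,Kisin2010} in the Hodge-type setting, already recalled in the discussion preceding the lemma. Via the crystalline/de Rham comparison the Frobenius $F$ encoded by $g_\beta$ then has cokernel of type $\mu$, and the Cartan decomposition for the hyperspecial maximal compact $G(W(k))\subset G(L(k))$ (available by Hypothesis \ref{G is unramified at p}) translates this into $g_\beta\in G(W(k))\mu(p)G(W(k))$. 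Independence of $\beta$ is automatic from part (2), since this double coset is preserved under $g\mapsto h^{-1}g\sigma(h)$ (as $\sigma$ stabilizes $G(W(k))$ because $G$ is $\mathbb{Z}_p$-defined).

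The hardest step will be the bookkeeping for part (2): one has to track the transpose-inverse encoding the dual action, the distinction between the Frobenius twist of a $W(k)$-linear map and the entrywise action of $\sigma$, and the semilinear-versus-linear viewpoint on the crystalline Frobenius, all consistently with the conventions of (\ref{g_beta}). Once this is settled, part (2) follows by direct substitution, and the substantive content of the double-coset containment is packaged entirely inside the construction of the integral canonical model.
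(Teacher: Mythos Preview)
Your proposal is correct. The paper does not give a separate proof of this lemma: it is introduced with ``We summarize the above discussion in the following lemma'' and left unproved, the content being regarded as implicit in the preceding paragraphs on the integral tensors $\mathfrak{t}^\circ_{\mathrm{dR}}$, the comparison isomorphisms, and the diagram (\ref{g_beta}). Your argument unpacks exactly that discussion---the first part from $G$ being the stabilizer of the tensors, the $\sigma$-conjugacy formula from direct substitution into (\ref{g_beta}), and the double-coset containment from the fact (recorded just before the lemma) that the Hodge filtration is induced by a cocharacter of $G(W(k))$ conjugate to $\mu$, together with the Cartan decomposition for the hyperspecial $G(W(k))$---so there is no genuine difference in approach.
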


    Next we define Newton stratifications on
    $Sh
    \otimes
    \kappa_\mathfrak{p}$.
    Now suppose that
    $k$ is algebraically closed.
    For two elements
    $g,g'\in G(L(k))$,
    we say $g$ is $\sigma$-conjugate to $g'$
    if $g=h^{-1}g'\sigma(h)$
    for some $h\in G(L(k))$.
    For any $g\in G(L(k))$,
    we write
    $[g]$ 
    for the $\sigma$-conjugacy class of $g$
    and
    $B(G)$
    for the set of
    all such $\sigma$-conjugacy classes
    (this set is independent of the choice of $k$).
    We have the following
    Newton map and Kottwitz map of $G$
    (\cite{Kottwitz1985,RapoportRichartz1996}):
    \[
    \nu_G
    \colon
    B(G)
    \to
    (\mathcal{W}_T
    \backslash X^\ast(T)_\mathbb{Q})^{\langle\sigma\rangle},
    \quad
    \kappa_G
    \colon
    B(G)
    \to
    \pi_1(G)_{\langle\sigma\rangle}.
    \]
    Here
    $\mathcal{W}_T$ is the Weyl group of the pair
    $(T,B)$ and
    $X^\ast(T)_\mathbb{Q}:=
    X^\ast(T)\otimes_\mathbb{Z}\mathbb{Q}$,
    the superscript
    $\langle\sigma\rangle$
    means the $\sigma$-invariant and
    the subscript $\langle\sigma\rangle$
    means the $\sigma$-coinvariant.
    For later use,
    we recall some details of the construction of
    $\nu_G$:
    for any $b\in G(L(k))$,
    there is a unique element
    $\nu_b
    \in
    X^\ast(T)_\mathbb{Q}$
    such that there exist an integer $s>0$,
    an element $c\in G(L(k))$
    and a uniformizer
    $\omega$ of
    $\mathbb{Q}_p$
    (recall that $G$ is defined over $\mathbb{Q}$),
    such that
    (among others)
    $s\nu_b
    \in
    X^\ast(T)$
    and
    $cb\sigma(b)\cdots\sigma^s(b)
    \sigma^s(c)^{-1}=c(s\nu_b)(\omega)c^{-1}$
    (\cite[§4]{Kottwitz1985}).
    Then we set
    \[
    \nu_G(b):=\nu_b.
    \]
    This is the slope homomorphism
    (cocharacter) associated to $b$.
    Moreover,
    the set
    $(\mathcal{W}_T\backslash
    X^\ast(T)_\mathbb{Q})^{\langle\sigma\rangle}$
    has a partial order
    $\preccurlyeq$
    which is a generalization of the
    notion of
    ``lying over''
    for Newton polygons
    (\cite[§2]{RapoportRichartz1996}).
    More precisely,
    recall that
    $(X^\ast(T),\Phi^\ast,X_\ast(T),\Phi_\ast)$
    is the root datum associated to $(G,T)$.
    We fix also a basis $\Delta$
    of this root datum with respect to
    $(B,T)$
    and denote by $\Delta^\vee$
    the set of coroots corresponding to $\Delta$.
    We then write
    \begin{align*}
    \overline{C}
    &
    :=
    \{
    x\in X^\ast(T)_\mathbb{R}|\langle x,\alpha\rangle\geq0,
    \forall \alpha\in\Delta
    \},
    \\
    C^\vee
    &
    :=
    \{
    x\in X^\ast(T)_\mathbb{R}|
    x=\sum_{\alpha^\vee\in\Delta^\vee}
    n_{\alpha^\vee}\alpha^\vee,
    0\leq n_{\alpha^\vee}\in\mathbb{R}
    \}
    \end{align*}
    for the Weyl chamber,
    resp.,
    obtuse Weyl chamber of
    this root datum.
    For two elements
    $x,x'\in X^\ast(T)_\mathbb{R}$,
    we write
    $x\preccurlyeq x'$ if
    $x$ lies in the convex hull of the orbit
    $\mathcal{W}_Tx'$.
    One can show that this is equivalent to the fact that for
    any $\mathbb{Q}$-rational representation
    $\rho
    \colon
    G\rightarrow
    \mathrm{GL}(W)$
    and any maximal torus
    $T'$ of $\mathrm{GL}(W)$
    containing $\rho(T)$,
    we have
    $\rho(x)\leq\rho(x')$
    in the usual sense.
    This relation induces a partial order
    $\preccurlyeq$ on
    $(\mathcal{W}_T\backslash X^\ast(T))^{\langle\sigma\rangle}$.
    Using $\nu_G$,
    we get a partial order on
    $B(G)$,
    still denoted by
    $\preccurlyeq$.   
    Recall that we have a unique dominant element
    $\mu\in
    X_\ast(T)$
    such that
    $\sigma^{-1}(\mu)
    \in
    \kappa_G(\nu^{-1})$.
    Then we set
    \[
    \overline{\mu}
    :=
    \frac{1}{r}
    \sum_{i=0}^{r-1}
    \sigma^i(\mu)
    \in
    (X_\ast(T)_\mathbb{Q})^{\langle\sigma\rangle}
    \]
    where
    $r$ is a non-zero integer
    such that $\sigma^r(\mu)=\mu$.
    We denote the image of
    $\overline{\mu}$ in
    $(\mathcal{W}_T\backslash
    X_\ast(T)_\mathbb{Q})^{\langle\sigma\rangle}$
    again by $\overline{\mu}$.
    Then we write
    $\mu^\sharp$
    for the image
    of $\mu$ under the projection
    \[
    X^\ast(T)
    \to
    \pi_1(G)_\mathbb{Q}
    :=
    (X^\ast(T)/
    \langle\alpha^\vee
    |\alpha^\vee\in\Phi^\vee\rangle)_{\langle\sigma\rangle},
    \quad
    \mu
    \mapsto
    \mu^\sharp.
    \]
    
    Then we write
    \[
    B(G,\mu)
    :=
    \{
    b\in
    B(G)|
    \kappa_G(b)=\mu^\sharp,\,
    \nu_G(b)\preceq
    \overline{\mu}
    \}.
    \]
    We know that
    $B(G,\mu)$ is exactly the image of the double coset
    $G(W(k))\mu(p)G(W(k))$
    in $B(G)$
    (\textit{cf.}\cite[Definition 5.6]{Wortmann2013}).

    For any point
    $x\in
    Sh\otimes\kappa_\mathfrak{p}$,
    write
    $k(x)$ for the residue field of $x$
    and $k$ an algebraic closure of $k(x)$
    and
    $\widehat{x}$
    the associated geometric point over $x$.
    To each isomorphism
    $\beta\colon
    (\Lambda_{W(k)}^\vee,
    \mathfrak{t}_{W(k)})
    \simeq
    (\mathbb{D}_{\widehat{x}},
    \mathfrak{t}_{\mathrm{cris},\widehat{x}})$,
    one associates an element
    $g_\beta
    \in
    G(L(k))$.
    One can show that
    $[g_\beta]$
    is independent of the choices of $\beta$,
    $k$,
    and lies in
    $B(G,\mu)$.
    Thus we get a well-defined map
    \begin{equation}\label{NT map}
    \mathcal{NT}
    \colon
    Sh\otimes\kappa_\mathfrak{p}
    \to
    B(G,\mu),
    \quad
    x\mapsto
    [g_\beta].
    \end{equation}

    \begin{definition}
    	For any element $b\in B(G,\mu)$,
    	we write
    	\[
    	\mathcal{N}^b
    	:=
    	\mathcal{NT}^{-1}(b)
    	\subset
    	Sh\otimes\kappa_\mathfrak{p}.
    	\]
    	It is called the Newton stratum of $b$.
    	The \textbf{$\mu$-ordinary locus} in
    	$Sh\otimes\kappa_\mathfrak{p}$
    	is the stratum
    	$\mathcal{N}^{b_\mathrm{max}}$
    	with
    	$b_\mathrm{max}$
    	the maximal element in $B(G,\mu)$
    	(which can be shown to exist).
    	We write
    	\[
    	Sh^\mu_1
    	:=
    	\mathcal{N}^{b_\mathrm{max}}.
    	\]
    \end{definition}

    \subsection{Hasse invariant}\label{Hasse invariant}
    In this subsection we recall the construction of
    the Hasse invariant which cuts out the $\mu$-ordinary
    locus defined above.
    Moreover,
    we shall use Hasse invariant to extend
    the $\mu$-ordinary locus from
    $Sh$ to a toroidal compactification
    $Sh^\Sigma$.

    Recall we have a cocharacter
    $\mu
    \colon
    \mathbb{G}_m
    \rightarrow
    G$
    which is defined over
    $\kappa_\mathfrak{p}$.
    We write
    $P_\pm:=
    P_\pm(\mu)$
    to be the  pair of opposite parabolic subgroups of
    $G_{\mathbb{F}_p}$
    defined by $\mu$,
    with common Levi factors
    $L$,
    the centralizer of $\mu$.
    Then
    $(G,P_+,(P_-)^{(\sigma)},\sigma\colon
    G\rightarrow
    G^{(\sigma)})$
    is an algebraic zip datum
    (\cite[§2.3]{KoskivirtaWedhorn2015}).
    For any $x\in P_+$,
    write
    $\overline{x}$
    for its image in the Levi quotient
    $P_+\rightarrow
    P_+/U_{P_+}$ and similarly
    for $y\in (P_-)^{(\sigma)}$,
    we write
    $\overline{y}$
    for its image in the Levi quotient.
    We then set the zip group to be
    $\mathbb{E}:=
    \{
    (x,y)\in P_+\times(P_-)^{(\sigma)}|
    \sigma(\overline{x})=\overline{y}
    \}$
    which acts on $G$ by restricting the action of
    $P_+\times(P_-)^{(\sigma)}$
    to $\mathbb{E}$.
    The quotient stack $[\mathbb{E}\backslash G]$
    is the stack of $G$-zips.
    More concretely,
    one has
    $\mathbb{E}=
    \{
    (ux',v\sigma(x'))|
    u\in U_{P_+},
    v\in U_{(P_-)^{(\sigma)}},
    x'\in P_+/U_{P_+}
    \}$.
    Now we write
    $S:=
    \{
    (x,y)\in E|
    x=y
    \}
    \subset
    \mathbb{E}$,
    which is the scheme-theoretic stabilizer of $1\in G$
    under the action of $E$.
    In general $S$ may not be smooth and we write
    $S_\mathrm{red}$
    to be the underlying reduced group scheme associated to $S$.
    Then $S_\mathrm{red}$ is a finite constant group scheme over
    $\overline{\mathbb{F}}_p$.
    The finite group
    $S_\mathrm{red}(\overline{\mathbb{F}}_p)$
    can be identified with another group:
    write
    $P_\pm^\circ
    =\cap_{n\geq0}
    (P_\pm)^{(\sigma^n)}$,
    which are now opposite parabolic subgroups of
    $G$ defined over $\mathbb{F}_p$
    with common Levi subgroup
    $L_{P_+^\circ}
    =
    L_{P_-^\circ}$.
    Then we have
    (\cite[Lemma 2.14]{KoskivirtaWedhorn2015})
    \[
    S_\mathrm{red}(\overline{\mathbb{F}}_p)
    =
    L_{P_\pm^\circ}(\mathbb{F}_p).
    \]
    Consider the character group
    $X^\ast(S_\mathrm{red})$,
    and we write
    $N_G$
    to be the exponent of
    the finite abelian group
    $X_\ast(S_\mathrm{red})$,
    \begin{equation}\label{Hasse number N_G}
    N_G
    =
    \min\{0<n\in\mathbb{N}|nX_\ast(S_\mathrm{red})=0\},
    \end{equation}
    called the Hasse number of
    $(G,X)$
    (\cite[Definition 4.11]{KoskivirtaWedhorn2015}).
    Moreover,
    it is easy to see that
    the parabolic subgroup
    $P_{\overline{\mu}}$
    (defined over $\mathbb{Z}_p$)
    stabilizing the filtration induced by
    $\overline{\mu}$
    (which is $\sigma$-invariant)
    gives
    $P_{\overline{\mu}}
    \otimes_{\mathbb{Z}_p}\mathbb{F}_p
    =
    P_+^\circ$.
    We thus deduce
    \begin{lemma}
    	The exponent of the
    	$\mathbb{F}_p$-points of the quotient torus
    	$T_{P_{\overline{\mu}}}
    	=
    	P_{\overline{\mu}}/
    	P_{\overline{\mu}}^\mathrm{der}$
    	is equal to
    	$N_G$.
    \end{lemma}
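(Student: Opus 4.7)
My plan is to identify both sides with the exponent of the finite abelian group $T_{L_{P_+^\circ}}(\mathbb{F}_p)$, approached from reduction mod $p$ on one side and Pontryagin duality on the other.

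First I would reduce $T_{P_{\overline{\mu}}}$ modulo $p$. The identification $P_{\overline{\mu}} \otimes_{\mathbb{Z}_p} \mathbb{F}_p = P_+^\circ$ recalled just before the lemma passes to maximal abelian quotients (since abelianization of smooth affine group schemes commutes with flat base change, and the unipotent radical of a parabolic dies in the quotient), so
\[
T_{P_{\overline{\mu}}} \otimes_{\mathbb{Z}_p} \mathbb{F}_p \;=\; L_{P_+^\circ}/L_{P_+^\circ}^\mathrm{der} \;=:\; T_{L_{P_+^\circ}}.
\]
Because $T_{P_{\overline{\mu}}}$ is an unramified torus over $\mathbb{Z}_p$ (as $G$ is unramified at $p$), this also yields $T_{P_{\overline{\mu}}}(\mathbb{F}_p) = T_{L_{P_+^\circ}}(\mathbb{F}_p)$, a finite abelian group of order prime to $p$.

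Next I would unpack $X^\ast(S_\mathrm{red})$. From the identification $S_\mathrm{red}(\overline{\mathbb{F}}_p) = L_{P_+^\circ}(\mathbb{F}_p)$ together with $S_\mathrm{red}$ being constant, one has $X^\ast(S_\mathrm{red}) = \mathrm{Hom}(L_{P_+^\circ}(\mathbb{F}_p), \overline{\mathbb{F}}_p^\times)$, and every such character factors through the abelianization of $L_{P_+^\circ}(\mathbb{F}_p)$. Applying Lang's theorem to the connected semisimple group $L_{P_+^\circ}^\mathrm{der}$ produces the short exact sequence
\[
1 \to L_{P_+^\circ}^\mathrm{der}(\mathbb{F}_p) \to L_{P_+^\circ}(\mathbb{F}_p) \to T_{L_{P_+^\circ}}(\mathbb{F}_p) \to 1,
\]
so once one knows $L_{P_+^\circ}^\mathrm{der}(\mathbb{F}_p)$ is a perfect group, the abelianization of $L_{P_+^\circ}(\mathbb{F}_p)$ is exactly $T_{L_{P_+^\circ}}(\mathbb{F}_p)$, giving $X^\ast(S_\mathrm{red}) = \mathrm{Hom}(T_{L_{P_+^\circ}}(\mathbb{F}_p),\overline{\mathbb{F}}_p^\times)$.

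Finally, since $T_{L_{P_+^\circ}}(\mathbb{F}_p)$ has order prime to $p$, Pontryagin duality with values in $\overline{\mathbb{F}}_p^\times$ is exponent-preserving, and chaining gives $N_G = \exp X^\ast(S_\mathrm{red}) = \exp T_{L_{P_+^\circ}}(\mathbb{F}_p) = \exp T_{P_{\overline{\mu}}}(\mathbb{F}_p)$. The main obstacle will be the perfectness claim for $L_{P_+^\circ}^\mathrm{der}(\mathbb{F}_p)$: this rests on the classification of finite groups of Lie type and fails in a handful of small-rank, small-characteristic exceptions (most notably $\mathrm{SL}_2(\mathbb{F}_2)$ and $\mathrm{SL}_2(\mathbb{F}_3)$). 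The running hypotheses that $p$ is odd and that $G^\mathrm{ad}$ has no $\mathrm{PGL}_2$-factor are designed exactly to rule these out, but one must verify that no such pathological Levi factor can arise inside $L_{P_+^\circ}$, which is the delicate part of the argument.
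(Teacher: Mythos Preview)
The paper does not actually prove this lemma: after recording $S_\mathrm{red}(\overline{\mathbb{F}}_p)=L_{P_\pm^\circ}(\mathbb{F}_p)$ and $P_{\overline{\mu}}\otimes_{\mathbb{Z}_p}\mathbb{F}_p=P_+^\circ$, it simply writes ``We thus deduce'' and states the lemma. Your argument is precisely the natural way to fill in that deduction, and the chain reduction-mod-$p$, Lang's theorem, Pontryagin duality is the right skeleton.

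That said, your attribution of the perfectness of $L_{P_+^\circ}^\mathrm{der}(\mathbb{F}_p)$ to the running hypotheses is not correct. The assumption that $G^\mathrm{ad}$ has no $\mathrm{PGL}_{2/\mathbb{Q}}$-factor is explicitly introduced in the paper for Koecher's principle (Definition~\ref{Shimura varieties, definition}(5) and the discussion of $H^\mathrm{min}$), not for this lemma; it constrains the $\mathbb{Q}$-simple factors of $G^\mathrm{ad}$, not the simple factors of a Levi over $\mathbb{F}_p$. A Levi of a proper parabolic of $G$ can certainly acquire $\mathrm{SL}_2$-factors even when $G^\mathrm{ad}$ has none, so for $p=3$ your argument does not exclude an $\mathrm{SL}_2(\mathbb{F}_3)$ inside $L_{P_+^\circ}^\mathrm{der}$. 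The gap you flagged is therefore real, but the hypotheses you point to do not close it. If you want to repair this step you should either invoke the specific structure of $L_{P_+^\circ}$ in the $\mu$-ordinary situation (its derived factors are governed by the $\sigma$-orbits on the Dynkin diagram cut out by $\overline{\mu}$), or bypass perfectness altogether by arguing directly that every $p'$-character of $L_{P_+^\circ}(\mathbb{F}_p)$ kills the image of $L_{P_+^\circ}^\mathrm{der}(\mathbb{F}_p)$; the paper itself is silent on this point and appears to rely implicitly on \cite{KoskivirtaWedhorn2015}.
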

    On the other hand,
    suppose that the splitting field of
    $T_{P_{\overline{\mu}}}\times_{\mathbb{Z}_p}
    \mathbb{F}_p$
    is $\mathbb{F}_{p^w}$,
    then
    \begin{corollary}
    	We have the following identity:
    	\[
    	N_G
    	=p^w-1.
    	\]
    \end{corollary}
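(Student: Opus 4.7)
The plan is to combine the preceding lemma, which identifies $N_G$ with the exponent of the finite abelian group $T_{P_{\overline{\mu}}}(\mathbb{F}_p)$, with a direct computation of this exponent via the Galois module structure of $X^\ast(T_{P_{\overline{\mu}}})$. The easy divisibility $\exp\bigl(T_{P_{\overline{\mu}}}(\mathbb{F}_p)\bigr) \mid p^w - 1$ follows from the embedding
\[
T_{P_{\overline{\mu}}}(\mathbb{F}_p) \hookrightarrow T_{P_{\overline{\mu}}}(\mathbb{F}_{p^w}) \simeq (\mathbb{F}_{p^w}^\times)^{\mathrm{rk}\,T_{P_{\overline{\mu}}}},
\]
which is a direct consequence of the hypothesis that $\mathbb{F}_{p^w}$ is the splitting field.

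For the reverse divisibility I would exhibit an $\mathbb{F}_p$-rational quotient of $T_{P_{\overline{\mu}}}$ isomorphic to the Weil restriction $\mathrm{Res}_{\mathbb{F}_{p^w}/\mathbb{F}_p}\mathbb{G}_m$, or equivalently a $\sigma$-equivariant direct summand of $X^\ast(T_{P_{\overline{\mu}}})$ isomorphic to the permutation module $\mathbb{Z}[\mathbb{Z}/w]$. The $\mathbb{F}_p$-points of such a quotient are $\mathbb{F}_{p^w}^\times$, which contains an element of order $p^w - 1$, and combining with the easy bound yields the corollary. To produce this summand, I would exploit the construction of $\overline{\mu}$ as the average of the $\sigma$-orbit $\mu,\sigma(\mu),\ldots,\sigma^{r-1}(\mu)$: the images of these cocharacters in $X_\ast(T_{P_{\overline{\mu}}})$ form an orbit that is cyclically permuted by $\sigma$, and one needs to check that the resulting sublattice is free of rank $r=w$ and splits off as a $\sigma$-stable direct summand. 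Reducing to $G$ simple of adjoint type as in §\ref{mu-ordinary locus and Hasse invariant} and invoking the explicit description of $\mu$ from Proposition~\ref{maximal element in B(G,mu)}, this is verified case by case through the Dynkin classification.

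The main obstacle is really this last step: an arbitrary $\mathbb{F}_p$-torus split by $\mathbb{F}_{p^w}$ need not have $\mathbb{F}_p$-points of exponent $p^w - 1$, as illustrated by the anisotropic norm-one torus inside $\mathrm{Res}_{\mathbb{F}_{p^w}/\mathbb{F}_p}\mathbb{G}_m$, whose $\mathbb{F}_p$-points have exponent only $(p^w - 1)/(p-1)$ when the extension is cyclic. The strength of the corollary therefore hinges on the specific structural fact that the abelianized Levi $T_{P_{\overline{\mu}}}$ arising in the $\mu$-ordinary theory is of permutation type rather than cyclotomic type, which is precisely what the Dynkin-by-Dynkin analysis in §\ref{mu-ordinary locus and Hasse invariant} confirms.
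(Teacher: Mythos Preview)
The paper does not actually supply a proof of this corollary: it is stated immediately after the lemma identifying $N_G$ with the exponent of $T_{P_{\overline{\mu}}}(\mathbb{F}_p)$, with only the lead-in sentence ``On the other hand, suppose that the splitting field of $T_{P_{\overline{\mu}}}\times_{\mathbb{Z}_p}\mathbb{F}_p$ is $\mathbb{F}_{p^w}$, then'' and no further argument. So there is no proof in the paper to compare against; your proposal is strictly more detailed than what the author provides.

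Your caution is well placed. The claim is \emph{not} a formal consequence of the lemma alone: as you note, an $\mathbb{F}_p$-torus with splitting field $\mathbb{F}_{p^w}$ need not have $\mathbb{F}_p$-points of exponent $p^w-1$. Indeed even an induced torus can fail this: for $T=\mathrm{Res}_{\mathbb{F}_{p^2}/\mathbb{F}_p}\mathbb{G}_m\times\mathrm{Res}_{\mathbb{F}_{p^3}/\mathbb{F}_p}\mathbb{G}_m$ one has splitting field $\mathbb{F}_{p^6}$ but $\exp T(\mathbb{F}_p)=\mathrm{lcm}(p^2-1,p^3-1)$, which for $p=2$ is $21\neq 63=p^6-1$. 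So some structural input about $T_{P_{\overline{\mu}}}$ really is required, and the paper leaves this implicit.

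Your plan to exhibit a $\sigma$-stable summand of $X^\ast(T_{P_{\overline{\mu}}})$ isomorphic to $\mathbb{Z}[\mathbb{Z}/w]$ via the orbit of $\mu$ is a sensible route. Two points to watch: first, the equality $r=w$ between the length of the $\sigma$-orbit of $\mu$ and the degree of the splitting field of the cocenter is itself something to verify, not assume; second, showing that the orbit generates a \emph{direct summand} (rather than merely a sublattice) of $X_\ast(T_{P_{\overline{\mu}}})$ is the delicate step, and the case-by-case check through Proposition~\ref{maximal element in B(G,mu)} is a reasonable way to discharge it. You are, in effect, filling a genuine gap in the exposition rather than reproducing an existing argument.
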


    By \cite[4.7]{KoskivirtaWedhorn2015},
    we know that
    $Sh^\mu_1$
    is open and dense in
    $Sh_1$.   
    Moreover, one can show that
    these $\mathcal{N}^b$
    are indeed locally closed
    (thus form a strata).
    Moreover,
    two points
    $x_1,x_2\in
    Sh_1$
    lie in the same Newton stratum if and only if
    we have an isomorphism
    $(\mathbb{D}_{\widehat{x_1}},
    \mathfrak{t}_{\mathrm{cris},\widehat{x_1}})
    \simeq
    (\mathbb{D}_{\widehat{x_2}},
    \mathfrak{t}_{\mathrm{cris},\widehat{x_2}})$.
    By
    \cite[Definition 4.11]{KoskivirtaWedhorn2015},
    we know that for the integer
    $N:=
    N_G$
    (the Hasse number of $(G,X)$)
    such that
    there is a section
    \[
    H
    \in
    H^0(Sh_1,\omega_{Sh}^{\otimes N})
    \]
    whose non-vanishing locus is exactly
    $Sh^\mu_1$
    (\cite[Theorem 4.12]{KoskivirtaWedhorn2015}).
    Moreover,
    by the Koecher principal
    (or apply \cite[Theorem 5.2.11(5)]{Madapusi2012}),
    $H$ extends to a unique section
    \[
    H^\Sigma
    \in
    H^0(Sh_1^\Sigma,
    \omega_{Sh^\Sigma}^{\otimes N}),
    \]
    whose non-vanishing locus is denoted by
    $Sh^{\Sigma,\mu}_1$.
    If moreover $G^\mathrm{ad}$
    has no factor isomorphic to
    $\mathrm{PGL}_{2/\mathbb{Q}}$,
    then
    $H$ extends to a unique section
    \[
    H^\mathrm{min}
    \in
    H^0(Sh^\mathrm{min}_1,
    \omega_{Sh^\mathrm{min}}^{\otimes N})
    \]
    whose non-vanishing locus is denoted by
    $Sh^{\mathrm{min},\mu}_1$
    (\cite{KoskivirtaWedhorn2015}).
    If
    $Sh^?_1$
    is projective,
    then
    $Sh^{?,\mu}_1$
    is affine
    ($?=\emptyset,\Sigma,\mathrm{min}$).
    \begin{remark}
    	We know that a certain power
    	$H^r$ of the Hasse invariant
    	lifts from
    	the special fibre
    	$Sh_1$ to
    	$Sh$,
    	denoted again by $H^r$.
    	We will write without further comment
    	the non-vanishing locus of
    	$H^r$ inside
    	$Sh$ as
    	$Sh^\mu$.
    	Similarly for
    	$Sh^{\Sigma,\mu}$
    	and
    	$Sh^{\mathrm{min},\mu}$.
    \end{remark}
    
    \begin{definition}\label{various Hasse invariant}
    	We call
    	$H,H^\Sigma,H^\mathrm{min}$
    	the Hasse invariant
    	of the Shimura variety
    	$Sh,Sh^\Sigma,Sh^\mathrm{min}$.
    \end{definition}    
    \begin{remark}
    	We work with the Hasse invariant
    	\'{a} la
    	\cite{KoskivirtaWedhorn2015}.
    	There is another notion of
    	Hasse invariant
    	studied by
    	Hernandez
    	(\cite{Hernandez2018})
    	and some others.
    	The latter invariant is a purely local notion
    	and in some sense our $H$
    	is a products of local invariants studied in
    	\cite{Hernandez2018}.
    	The existence of
    	$H$
    	is proved in
    	\cite{KoskivirtaWedhorn2015},
    	while the local Hasse invariant
    	in
    	\cite{Hernandez2018}
    	does not always exist
    	(cf.
    	Remarque 9.23 of loc.cit).
    	The relation between these notions of Hasse invariants
    	remains to be further explored.
    \end{remark}

    The rest of this subsection is devoted to the proof
    of the following proposition:
    \begin{proposition}\label{Hasse invariant is reduced}
    	The Cartier divisor
    	$V(H)$
    	associated to
    	the Hasse invariant
    	$H$ is reduced on the non $\mu$-ordinary locus
    	$Sh_1^{n-\mu}
    	:=
    	Sh_1-
    	Sh_1^\mu$.
    \end{proposition}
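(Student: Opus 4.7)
The plan is to reduce to the case that $G$ is simple of adjoint type, identify the codimension one components of the non $\mu$-ordinary locus using Proposition \ref{maximal element in B(G,mu)}, and then check that $H$ has order of vanishing exactly one at each generic point via the universal Hasse invariant on the stack of $G$-zips.

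\textbf{Reduction to simple adjoint type.} The decomposition $G^{\mathrm{ad}} = \prod_j G_j^{\mathrm{ad}}$ into simple factors induces compatible decompositions of $\mu$, of the parabolic $P_{\overline{\mu}}$, and of the Levi subgroup $L_{P_+^\circ}$. Because $N_G$ is the exponent of $T_{P_{\overline{\mu}}}(\mathbb{F}_p)$, it equals $\mathrm{lcm}_j N_{G_j}$, and the Hodge bundle $\omega_{Sh}$ together with the Hasse invariant $H$ decomposes up to an appropriate power as a product of pullbacks of the Hasse invariants $H_j$ of the simple factors. Since the vanishing loci of the $H_j$ on $Sh_1$ are pairwise set-theoretically disjoint away from common intersections of codimension at least two, reducedness of $V(H)$ along the codimension one part of $Sh_1^{n-\mu}$ follows from reducedness for each $V(H_j)$. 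We may therefore assume $G$ is simple of adjoint type.

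\textbf{Local reformulation.} Since $Sh_1$ is smooth over $\kappa_\mathfrak{p}$ and $V(H)$ is a Cartier divisor, $V(H)$ is reduced if and only if at every generic point $\eta$ of $V(H)$ the image of $H$ in the DVR $\mathcal{O}_{Sh_1,\eta}$ is a uniformizer. Set-theoretically $V(H) = Sh_1^{n-\mu}$, and by the Newton stratification its generic points are precisely the generic points of $\mathcal{N}^b$ for $b$ ranging over the maximal elements of $B(G,\mu) \setminus \{b_{\max}\}$. Proposition \ref{maximal element in B(G,mu)} gives an explicit description of these ``second maximal'' elements for each simple adjoint $G$.

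\textbf{Computation via the zip morphism.} Following \cite{KoskivirtaWedhorn2015}, the $G$-zip datum $(G,P_+,(P_-)^{(\sigma)},\sigma)$ yields a smooth morphism $\zeta \colon Sh_1 \to [\mathbb{E}\backslash G]$, and $H = \zeta^{\ast}\mathfrak{H}$ for the universal Hasse invariant $\mathfrak{H}$ on the stack of $G$-zips. As $\zeta$ is smooth, it suffices to check that $\mathfrak{H}$ vanishes to order one at the generic point of each codimension one $\mathbb{E}$-orbit. By Proposition \ref{maximal element in B(G,mu)} such an orbit differs from the open orbit by a single ``simple'' step in the $\mathbb{E}$-order on $G$, so passing to a transversal slice one identifies $\mathfrak{H}$, up to a unit, with the single transversal coordinate on the corresponding slice of the flag variety $G/P_+$. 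This gives order of vanishing exactly one and proves the proposition.

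The main obstacle is the last step, which requires the explicit case-by-case analysis of the ``second maximal'' element in $B(G,\mu)$ provided by Proposition \ref{maximal element in B(G,mu)} and a verification that the transversal coordinate to the corresponding $\mathbb{E}$-orbit appears linearly (rather than with multiplicity) in the local expression of $\mathfrak{H}$; the reduction to $G$ simple adjoint is essential in order to have a uniform description of this transversal slice.
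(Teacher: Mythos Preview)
Your outline has a genuine gap at exactly the point where the real work lies. In the last paragraph you say that on a transversal slice $\mathfrak{H}$ is ``up to a unit, the single transversal coordinate,'' but you then immediately concede that this ``requires \ldots\ a verification that the transversal coordinate \ldots\ appears linearly.'' That verification \emph{is} the proposition; nothing you wrote before it reduces the problem. Smoothness of $\zeta$ lets you transport orders of vanishing, and Proposition~\ref{maximal element in B(G,mu)} tells you which strata have codimension one, but neither tells you the multiplicity of $\mathfrak{H}$ along those strata. You have restated the claim on the stack rather than proved it.

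Your reduction step is also not sound as written. The Shimura variety $Sh_1$ does not decompose as a product over the simple factors of $G^{\mathrm{ad}}$, so there is no way to write $H$ as a product of pullbacks of ``Hasse invariants $H_j$ of the simple factors'' defined on separate Shimura varieties. What the paper actually uses is much weaker: the decomposition of $G^{\mathrm{ad}}$ is invoked only at the level of root data, to see (via Proposition~\ref{maximal element in B(G,mu)}) that any maximal $b\in B(G,\mu)\setminus\{\overline{\mu}\}$ differs from $\overline{\mu}$ by $\tfrac12\alpha^\vee$ for a single simple root $\alpha$ of one simple factor.

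The paper's argument is more direct and avoids the stack entirely. Fix a geometric point $x$ in such a codimension-one stratum $\mathcal{N}^b$ and look at the Dieudonn\'e crystal of $\mathcal{A}_x$ evaluated over the dual numbers $\overline{\mathbb{F}}_p[\varepsilon]$. The description $b=\overline{\mu}-\tfrac12\alpha^\vee$ singles out a copy $\mathfrak{s}_\alpha\simeq\mathfrak{sl}_2$ inside $\mathfrak{g}$; using the Siegel embedding one can choose a basis of $\Lambda$ so that on the two-dimensional piece $\overline{\Lambda}_\alpha$ the projections of $\mu$ and $b$ are $\begin{psmallmatrix}0&0\\0&1\end{psmallmatrix}$ and $\begin{psmallmatrix}0&1\\0&0\end{psmallmatrix}$, with Hodge line $\overline{\mathbb{F}}_p e_n$. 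This reduces the tangent-space computation to the rank-one situation of \cite[Th\'eor\`eme~A.4]{Pilloni2012}, where one checks explicitly that the Hasse invariant induces a \emph{nonzero linear form} on the space $X_x(\overline{\mathbb{F}}_p[\varepsilon])$ of first-order liftings. That is precisely the statement that $H$ vanishes to order one at $x$. Density of $\bigcup_b \mathcal{N}^b$ in $Sh_1^{n-\mu}$ (over maximal $b$) finishes the argument.
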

    Note that
    the non-vanishing locus of
    $H$
    is the $\mu$-ordinary locus
    $Sh_1^\mu$.
    The Cartier divisor
    $V(H)$
    lies only inside
    the complement
    $Sh_1^{n-\mu}$
    of
    $Sh_1^\mu$.
    
    In
    the definition of the Shimura datum
    $(G,X)$,
    we require the cocharacters
    $h_x\in X$
    to be minuscule
    (\cite[§1.2]{Deligne1977}).
    In this case we can describe the set
    $B(G,\mu)$ explicitly which will enable us
    to study the properties of the Hasse invariant.
    Let $T_0\subset T$ be the maximal split over $\mathbb{Q}_p$.
    As before,
    we write
    $(X_\ast(T),\Phi_\ast,X^\ast(T),\Phi^\ast)$
    for the absolute root datum with simple roots
    $\widetilde{\Delta}$,
    simple coroots
    $\widetilde{\Delta}_\ast$
    and
    $(X_\ast(T_0),\Phi_{\ast,0},X^\ast(T_0),\Phi_0^\ast)$
    for the relative root datum with simple roots
    $\widetilde{\Delta}_0$,
    simple coroots
    $\widetilde{\Delta}_{\ast,0}$.
    For a root $\alpha\in\Phi^\ast$,
    we denote by
    $w_\alpha\in
    X^\ast(T)_\mathbb{Q}$
    the fundamental weight associated to $\alpha$.
    Similarly,
    for a coroot
    $\beta^\vee\in\Phi_\ast$,
    we write
    $w_{\beta^\vee}$
    for the fundamental co-weight associated to $\beta^\vee$.
    For $\alpha\in\widetilde{\Delta}_0^\ast$,
    put
    \[
    \widetilde{w}_\alpha
    :=
    \sum_\beta w_\beta\in
    X^\ast(T_0)_\mathbb{Q},
    \]
    where $\beta$ runs through $\Phi^\ast$ such that
    $\beta|_{T_0}=\alpha$
    (which is equal to
    $w_\alpha$,
    the fundamental weight associated to
    $\alpha\in\Phi_0^\ast$).
    Then by
    \cite[Corollary 4.3]{ChenFarguesShen2017},
    \begin{proposition}\label{B(G,mu)}
    	The set
    	$B(G,\mu)$
    	consists
    	of dominant cocharacters
    	$\nu\in
    	X_{\ast,\mathrm{dm}}(T_0)_\mathbb{Q}$
    	such that
    	$0\leq
    	\overline{\mu}-\nu
    	\in
    	\mathbb{Q}_{\geq0}\langle\Delta_{\ast,0}\rangle$
    	and
    	for any
    	$\alpha\in
    	\widetilde{\Delta}_0^\ast$
    	with
    	$\langle\nu,\alpha\rangle\neq0$,
    	one has
    	$\langle
    	\overline{\mu}-\nu,
    	\widetilde{w}_\alpha
    	\rangle
    	\in
    	\mathbb{N}$.
    \end{proposition}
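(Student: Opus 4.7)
The strategy is to invoke Kottwitz's classification theorem: the map $b\mapsto (\nu_G(b),\kappa_G(b))$ is injective on $B(G)$, so characterizing $B(G,\mu)$ reduces to identifying the image of this map. Since the conditions defining $B(G,\mu)$ are exactly conditions on $\nu_G(b)$ and $\kappa_G(b)$, what remains is to repackage them in the form given in the proposition, and to verify surjectivity onto the stated set. I would split the task according to the two components of Kottwitz's map.

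First I would handle the Newton component. Because $\nu_G(b)$ lies in $(\mathcal{W}_T\backslash X_\ast(T)_\mathbb{Q})^{\langle\sigma\rangle}$, I choose the dominant representative $\nu$, which automatically lies in $X_{\ast,\mathrm{dm}}(T_0)_\mathbb{Q}$ by Galois invariance (using that $T_0\subset T$ is the maximal $\mathbb{Q}_p$-split subtorus). The inequality $\nu_G(b)\preceq\overline{\mu}$ in the sense of the Bruhat order unravels, as recalled in the paragraph defining $\preccurlyeq$, to $\overline{\mu}-\nu$ being a non-negative $\mathbb{Q}$-combination of simple positive relative coroots, i.e.\ an element of $\mathbb{Q}_{\geq 0}\langle\Delta_{\ast,0}\rangle$. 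The minuscule hypothesis on $\mu$ enters to ensure that $\overline{\mu}$ itself is the Galois-average of a minuscule dominant cocharacter, so the condition $\nu\leq\overline{\mu}$ becomes the given componentwise one.

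Second I would translate the Kottwitz invariant condition $\kappa_G(b)=\mu^\sharp$. By the definition $\pi_1(G)_\mathbb{Q}=(X_\ast(T)/\langle\Phi_\ast\rangle)_{\langle\sigma\rangle}\otimes\mathbb{Q}$ (and the integral version thereof), the equality of images of $\mu$ and $\nu$ in the Galois coinvariants of the fundamental group amounts to saying that $\overline{\mu}-\nu$ is a $\sigma$-coinvariant $\mathbb{Z}$-combination of coroots. Writing $\overline{\mu}-\nu=\sum c_\alpha\alpha^\vee$ with $\alpha$ running over $\widetilde{\Delta}_0^\ast$ (the relative simple roots, using that the sum is Galois-invariant), the coefficient $c_\alpha$ is precisely $\langle\overline{\mu}-\nu,\widetilde{w}_\alpha\rangle$ by the duality between fundamental weights and simple coroots. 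Dominance of $\nu$ forces $c_\alpha\geq0$ whenever $\langle\nu,\alpha\rangle\neq 0$, and the integrality of the Kottwitz invariant forces $c_\alpha\in\mathbb{N}$ in exactly those cases, yielding the third condition.

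The main obstacle will be the third step: correctly formulating the integrality condition coming from $\kappa_G$. In particular, one has to be careful that the integrality is demanded only at those simple relative roots $\alpha$ with $\langle\nu,\alpha\rangle\neq 0$, because at roots where $\nu$ is orthogonal, the contribution factors through a subquotient where further cancellations in $\pi_1(G)_{\langle\sigma\rangle}$ occur; verifying this precise threshold, and checking that every $\nu$ satisfying all three conditions is actually realized by some $\sigma$-conjugacy class in $G(L(k))\mu(p)G(L(k))$, is the heart of \cite[Corollary 4.3]{ChenFarguesShen2017} and would occupy the bulk of the argument. Finally I would conclude by noting that Kottwitz injectivity turns this combinatorial characterization into the claimed bijection.
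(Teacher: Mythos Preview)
The paper does not prove this proposition at all: it is stated verbatim as a citation of \cite[Corollary 4.3]{ChenFarguesShen2017}, with no further argument. Your proposal attempts to sketch the ideas behind that result, and you yourself acknowledge in the final paragraph that the heart of the matter---surjectivity onto the described set and the precise integrality threshold---is exactly the content of \cite[Corollary 4.3]{ChenFarguesShen2017}. So in the end both you and the paper defer to the same external reference; you simply unpack more of the motivation.

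One point of caution in your outline: your attribution of the integrality condition $\langle\overline{\mu}-\nu,\widetilde{w}_\alpha\rangle\in\mathbb{N}$ entirely to the constraint $\kappa_G(b)=\mu^\sharp$ is not quite accurate. The condition $\kappa_G(b)=\mu^\sharp$ only pins down the class of $\nu$ modulo the coroot lattice (after Galois coinvariants), whereas the integrality at the ``break points'' of $\nu$ (the simple roots $\alpha$ with $\langle\nu,\alpha\rangle\neq 0$) is really a characterization of which $\nu$ lie in the image of the Newton map $\nu_G$ at all---this goes back to Kottwitz's description of the image of $B(G)$ under $(\nu_G,\kappa_G)$. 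The two ingredients interact, but they are not the same constraint. This does not affect the overall shape of your sketch, since you correctly locate the nontrivial content in the cited reference.
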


    Note that we have a decomposition
    \[
    X_\ast(T_0)_\mathbb{Q}
    =
    (\Phi_{\ast,0})_\mathbb{Q}
    \bigoplus
    (\Phi_0^\ast)_\mathbb{Q}^\perp.
    \]
    According to this decomposition we can write
    $\overline{\mu}
    =\overline{\mu}_1+\overline{\mu}_2$
    and similarly
    $\nu=\nu_1+\nu_2$
    for any $\nu\in B(G,\mu)$.
    The condition
    $\overline{\mu}-\nu\in
    (\Delta_{\ast,0})_\mathbb{Q}$
    shows that
    $\overline{\mu}_2=\nu_2$.
    Moreover,
    by definition of
    $(\Phi_0^\ast)_\mathbb{Q}^\perp$,
    we have
    $\langle\nu_2,\alpha\rangle=0$
    for any $\alpha\in\widetilde{\Delta}_0^\ast$.
    Then we can rewrite
    $B(G,\mu)$ in the following manners:
    \begin{align*}
    &
    \{
    \overline{\mu}-\nu'\in
    X_{\ast,\mathrm{dm}}(T_0)_\mathbb{Q}^+|
    0\leq\nu'
    \in
    \mathbb{Q}_{\geq0}
    \langle
    \Delta_{\ast,0}
    \rangle,
    \text{ and }
    \forall
    \alpha\in\widetilde{\Delta}_0^\ast
    \text{ with }
    \langle\overline{\mu}-\nu',\alpha\rangle\neq0,
    \langle
    \nu',\widetilde{w}_\alpha
    \rangle
    \in\mathbb{N}
    \}
    \\
    =
    &
    \{
    \nu_1+\overline{\mu}_2
    \in
    X_{\ast,\mathrm{dm}}(T_0)_\mathbb{Q}|
    0\leq\overline{\mu}_1-\nu_1\in
    \mathbb{Q}_{\geq0}
    \langle
    \Delta_{\ast,0}
    \rangle,
    \text{ and }
    \forall
    \alpha\in\widetilde{\Delta}_0^\ast
    \text{ with }
    \langle \nu_1,\alpha\rangle\neq0,
    \langle
    \overline{\mu}_1-\nu_1,w_\alpha
    \rangle
    \in
    \mathbb{N}
    \}.
    \end{align*}
    From this last description
    we see that the set $B(G,\mu)$
    depends only on the root system
    $(\mathbb{Z}
    \langle
    \Phi_{\ast,0}
    \rangle,
    \Phi_{\ast,0},
    \mathbb{Z}
    \langle
    \Phi_0^\ast
    \rangle,
    \Phi_0^\ast)$
    generated by the roots and coroots
    $\Phi_{\ast,0},\Phi_0^\ast$.

    For reasons that we will explain later,
    we will be interested in the difference
    $\overline{\mu}-\nu
    =\overline{\mu}_1-\nu_1$
    for
    $\nu\in B(G,\mu)$.
    Using the decomposition of root system
    $((\Phi_{\ast,0})_\mathbb{Z},\Phi_{\ast,0},
    (\Phi_0^\ast)_\mathbb{Z},\Phi_0^\ast)$,
    we can assume that
    the root system is one of the nine classes of
    indecomposable root systems
    $A_n,B_n,C_n,D_n,E_6,E_7,E_8,F_4,G_2$.
    
    Recall the conditions posed on the cocharacter
    $\mu$ associated to the Shimura datum
    $(G,X)$:
    the cocharacter
    $\mu\colon
    \mathbb{G}_m
    \rightarrow
    G$ is minuscule(\cite[§1.2]{Deligne1977}).
    More precisely,
    let's
    suppose that the adjoint group has a decomposition into
    simple factors
    $G^\mathrm{ad}
    =\prod_{i=1}^nG^{(i)}$
    and the images of the torus, resp. Borel subgroup,
    $T_0,B_0$
    in each factor $G^{(i)}$
    are denoted by
    $T^{(i)}_0$, resp.,
    $B^{(i)}_0$.
    Let
    $\mu^{(i)}$
    be the unique
    $B^{(i)}_0$-dominant cocharacter
    of
    $T^{(i)}_0$
    conjugate to the image of
    $\mu$ in $G^{(i)}$.
    Then $\mu$ being minuscule means that
    there exists at most one simple root
    $\alpha\in\widetilde{\Delta}^{(i),\ast}_0$
    such that
    $\langle\mu^{(i)},\alpha\rangle>0$,
    in which case
    $\langle\mu^{(i)},\alpha\rangle=1$ and
    $\alpha$ is special
    (a simple root which appears with multiplicity one in
    the highest weight of $(T^{(i)}_0,B^{(i)}_0)$).
    Suppose such a special root $\alpha$ exists,
    then we have necessarily the fundamental co-weight
    $
    \mu^{(i)}=w_{\alpha^\vee}.
    $
    Otherwise,
    we have
    $\mu^{(i)}=0$.

    \begin{proposition}\label{maximal element in B(G,mu)}
    	Suppose that
    	$(\mathbb{Z}
    	\langle
    	\Phi_{\ast,0}
    	\rangle,
    	\Phi_{\ast,0},
    	\mathbb{Z}
    	\langle
    	\Phi_0^\ast
    	\rangle,
    	\Phi_0^\ast)$
    	is an indecomposable root system
    	such that
    	$\mu=w_{\alpha^\vee}$
    	for some special root
    	$\alpha\in\widetilde{\Delta}_0^\ast$.
    	Then the maximal element in
    	$B(G,\mu)\backslash\{\overline{\mu}\}$
    	is
    	$\overline{\mu}-\frac{1}{2}\alpha^\vee$.
    \end{proposition}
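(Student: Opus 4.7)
The plan is to verify directly that $\overline{\mu} - \frac{1}{2}\alpha^\vee$ lies in $B(G,\mu)$ and that every other element of $B(G,\mu)\setminus\{\overline{\mu}\}$ is $\preccurlyeq$-dominated by it. Setting $\nu' := \overline{\mu} - \nu = \sum_\beta c_\beta\beta^\vee$ with $c_\beta\geq0$ summed over simple relative roots, the conditions of Proposition \ref{B(G,mu)} become: $\overline{\mu}-\nu'$ is dominant and $c_\beta \in \mathbb{N}$ whenever $\langle\overline{\mu}-\nu',\beta\rangle\neq0$ (using the identity $\langle\nu',\widetilde{w}_\beta\rangle=c_\beta$). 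Under $\nu\mapsto\nu'$ the $\preccurlyeq$-order reverses on dominant cocharacters, so the goal becomes: the minimum positive such $\nu'$ is $\tfrac{1}{2}\alpha^\vee$. Since we are working in the relative root datum, Frobenius acts trivially on $X_\ast(T_0)$, so all pairings of $\overline{\mu}$ behave as in the split case; in particular $\langle\overline{\mu},\alpha\rangle=1$ and $\langle\overline{\mu},\beta\rangle=0$ for $\beta\neq\alpha$.

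For membership, set $\nu' = \tfrac{1}{2}\alpha^\vee$. Pairing $\overline{\mu}-\tfrac{1}{2}\alpha^\vee$ with each simple relative root: against $\alpha$ one gets $1-\tfrac{1}{2}\cdot 2 = 0$, and against $\beta\neq\alpha$ both $\langle\overline{\mu},\beta\rangle=0$ and $-\tfrac{1}{2}\langle\alpha^\vee,\beta\rangle\geq0$ (off-diagonal Cartan entries being $\leq0$). So dominance holds. The integrality requirement is vacuous at $\beta=\alpha$ (the pairing just computed vanishes) and trivial at $\beta\neq\alpha$ (the coefficient is $0$), giving $\overline{\mu}-\tfrac{1}{2}\alpha^\vee \in B(G,\mu)$.

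For maximality, take $\nu \in B(G,\mu)\setminus\{\overline{\mu}\}$ and write $\nu' = \sum c_\beta\beta^\vee \neq 0$; the aim is $c_\alpha\geq\tfrac{1}{2}$. I split on the value of $\langle\overline{\mu}-\nu',\alpha\rangle\geq0$. If this pairing vanishes, expanding via $\langle\overline{\mu},\alpha\rangle=1$ and $\langle\nu',\alpha\rangle = 2c_\alpha - \sum_{\beta\neq\alpha} c_\beta|\langle\beta^\vee,\alpha\rangle|$ gives $2c_\alpha = 1 + \sum_{\beta\neq\alpha} c_\beta|\langle\beta^\vee,\alpha\rangle|\geq 1$, hence $c_\alpha\geq\tfrac{1}{2}$. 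If instead the pairing is strictly positive, the integrality condition forces $c_\alpha\in\mathbb{N}$; either $c_\alpha\geq1$ (and we are done), or $c_\alpha=0$, a case that must still be ruled out given $\nu'\neq0$.

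The crux is thus the implication $c_\alpha=0 \Rightarrow \nu'=0$, which I expect to be the main obstacle. Under $c_\alpha=0$, dominance of $\overline{\mu}-\nu'$ against a simple root $\beta\neq\alpha$ yields $\langle\nu',\beta\rangle\leq\langle\overline{\mu},\beta\rangle=0$. Consequently, viewed in the sub-root-system obtained by deleting the node $\alpha$ from the relative Dynkin diagram, $\nu'$ is simultaneously a non-negative combination of the remaining simple coroots and anti-dominant. In any semisimple root system (applied componentwise to the connected components of the sub-diagram), the positive-coroot cone $C^\vee$ and the anti-dominant cone $-\overline{C}$ meet only at the origin, so $\nu'=0$, contradicting $\nu'\neq0$. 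Once this sub-system argument is formulated precisely — the only delicate bookkeeping concerns the indecomposability of $(\Phi_{\ast,0},\Phi_0^\ast)$ ensuring that the removal of $\alpha$ gives a genuine sub-root-system — everything else reduces to Cartan-matrix computations already performed above.
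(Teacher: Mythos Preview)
Your argument is correct and substantially cleaner than the paper's. The paper proves this proposition by an exhaustive case-by-case check over the Dynkin types $A_n, B_n, C_n, D_n, E_6, E_7$ (noting that $E_8, F_4, G_2$ have no special roots): for each type it writes out the simple roots, coroots, and fundamental (co)weights explicitly, then solves the system of inequalities and integrality constraints for $\delta = \overline{\mu}-\nu$ by hand. Your proof replaces all of this with a uniform root-theoretic argument that uses only two ingredients beyond the defining conditions of $B(G,\mu)$: that off-diagonal Cartan entries are non-positive, and that in any finite-type root system the cone of non-negative coroot combinations meets the anti-dominant cone only at the origin (equivalently, the inverse Cartan matrix has non-negative entries). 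This is both shorter and conceptually clearer; in particular it makes transparent \emph{why} the answer is always $\tfrac{1}{2}\alpha^\vee$, which the classification approach obscures.

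Two small remarks on exposition. First, your closing sentence about indecomposability is slightly off: removing the node $\alpha$ always yields a genuine (possibly reducible) finite-type sub-root-system, and your cone argument applies componentwise regardless; indecomposability of the ambient system plays no role at that step. Second, the identification $\langle\nu',\widetilde{w}_\beta\rangle = c_\beta$ relies on the paper's translation of Proposition~\ref{B(G,mu)} to the relative fundamental weights $w_\beta$ (the paragraph immediately following that proposition), so you might cite that explicitly.
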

    \begin{proof}
    	We proceed the proof case by case.
    	\begin{enumerate}
    		\item 
    		$G$ is of type $A_n$.
    		We may identify $G$ with $\mathrm{SL}_{n+1}$
    		and
    		$B$ is the standard subgroup of upper triangular
    		matrices.
    		Concerning the root datum,
    		for a diagonal element
    		$t=\mathrm{diag}(t_1,\cdots,t_{n+1})\in
    		\mathrm{SL}_{n+1}$,
    		let
    		$\varepsilon_i\in
    		X^\ast(T)$ denote the character
    		$\alpha_i(t)=t_i$
    		($i=1,\cdots,n+1$).
    		Write
    		$\alpha_i=\varepsilon_i-\varepsilon_{i+1}$
    		($i=1,\cdots,n$),
    		then the set of simple roots is
    		\[
    		\widetilde{\Delta}^\ast=
    		\{\alpha_1,\cdots,\alpha_n\}
    		\]
    		which is
    		a basis for
    		$X^\ast(T)$,
    		$\Phi^\ast
    		=\{
    		\varepsilon_i-\varepsilon_j|i\neq j
    		\}$.
    		Similarly,
    		let
    		$\varepsilon_i^\vee$
    		denote the cocharacter
    		$\varepsilon_i^\vee(x)=
    		\mathrm{diag}(1_{i-1},x,1_{n+1-i})\in
    		\mathrm{GL}_{n+1}$
    		and
    		then
    		$\alpha_i=\varepsilon_i^\vee-\varepsilon_{i+1}^\vee$,
    		\[
    		\widetilde{\Delta}^\ast
    		=
    		\{
    		\alpha_1^\vee,\cdots,\alpha_n^\vee
    		\},
    		\]
    		with
    		$\Phi^\ast
    		=
    		\{
    		\varepsilon_i^\vee-\varepsilon_j^\vee|i\neq j
    		\}$.
    		Moreover, it is easy to see that
    		the fundamental (co-)weights are
    		given by
    		\[
    		w_{\alpha_i}
    		=\sum_{j=1}^i
    		\varepsilon_j,
    		\quad
    		w_{\alpha_i^\vee}
    		=
    		\sum_{j=1}^i
    		\varepsilon_j^\vee
    		-\frac{i}{n+1}\sum_{j=1}^{n+1}\varepsilon_j^\vee
    		(i=1,\cdots,n).
    		\]
    		Moreover,
    		the highest weight is
    		$\varepsilon_1-\varepsilon_{n+1}
    		=\alpha_1+\cdots+\alpha_n$.
    		The special roots
    		are therefore
    		$\alpha_1,\cdots,\alpha_n$.
    		Suppose now that
    		$\overline{\mu}
    		=w_{\alpha_k^\vee}$
    		for some
    		$k=1,2,\cdots,n$.
    		For any
    		$\overline{\mu}-\delta
    		\in
    		B(G,\mu)$,
    		we write
    		\[
    		\delta=
    		a_1w_{\alpha_1^\vee}+\cdots+a_nw_{\alpha_n^\vee}
    		=
    		f_1\varepsilon_1^\vee+\cdots+
    		f_{n+1}\varepsilon_{n+1}^\vee
    		=
    		d_1\alpha_1^\vee+\cdots+d_n\alpha_n^\vee
    		\]
    		($a_i,f_j,d_l\in\mathbb{Q}$).
    		We necessarily have
    		$f_1+\cdots+f_{n+1}=0$.
    		Spelling out the condition in $B(G,\mu)$,
    		we see that the coefficients
    		$a_i,f_j$ should satisfy
    		\[
    		a_1=f_1-f_2\leq0,
    		\cdots,
    		a_{k-1}=f_{k-1}-f_k\leq0,
    		a_k=f_k-f_{k+1}\leq1,
    		\]
    		\[
    		a_{k+1}=f_{k+1}-f_{k+2}\leq0,
    		\cdots,
    		a_n=f_n-f_{n+1}\leq0,
    		\]
    		\[
    		f_1\geq0,
    		\cdots
    		f_1+\cdots+f_i\geq0,
    		\cdots,
    		f_1+\cdots+f_n\geq0
    		\]
    		and moreover
    		(1)
    		if
    		$a_i\neq0$
    		(with $i\neq k$),
    		we have
    		$f_1+\cdots+f_i\in\mathbb{N}$
    		and 
    		(2) if
    		$a_k\neq1$,
    		we have
    		$f_1+\cdots,f_k\in\mathbb{N}$.
    		
    		Suppose that the numbers
    		$a_i\neq0$ if and only if
    		$i=i_1,i_2,\cdots,i_N$
    		with
    		$i_1<i_2<\cdots<i_N$ elements in
    		$\{1,2,\cdots,n\}$.
    		\begin{enumerate}
    			\item 
    			If $k=i_j$ for some $j=1,\cdots,N$,
    			then we have
    			\[
    			0\leq
    			f_1=f_2=\cdots=f_{i_1}
    			<
    			f_{i_1+1}=\cdots=f_{i_2}
    			<\cdots
    			\]
    			\[
    			<
    			f_{i_{j-1}+1}=f_{i_{j-1}+2}=
    			\cdots
    			f_{i_j}
    			<
    			1+f_{i_j+1}=1+f_{i_j+2}
    			=\cdots
    			=
    			1+f_{i_{j+1}}
    			<
    			\cdots
    			\]
    			\[
    			<
    			1+f_{i_N+1}
    			=
    			1+f_{i_N+2}
    			=
    			\cdots
    			=1+f_{n+1}
    			<1
    			\]
    			such that
    			\[
    			d_{i_1}=\ell_1\in\mathbb{N},
    			\cdots,
    			d_{i_j}=\ell_j\in\mathbb{N},
    			d_{i_{j+1}}
    			=\ell_{j+1}\in\mathbb{N},
    			\cdots,
    			d_{i_N}
    			=\ell_N\in\mathbb{N}.
    			\]
    			One verifies that
    			$\delta=d_1\alpha_1^\vee+\cdots+d_n\alpha_n^\vee
    			>\frac{1}{2}\alpha_k^\vee$
    			unless $\delta=0$.

    			\item 
    			If $i_j<k<i_{j+1}$
    			for some $j=0,1,\cdots,N+1$
    			(with convention $i_0=0,i_{N+1}=n+1$).
    			In this case,
    			$\overline{\mu}-
    			\delta\in
    			B(G,\mu)$
    			implies that
    			\[
    			0\leq
    			f_1=\cdots=f_{i_1}
    			<
    			f_{i_1+1}=\cdots=f_{i_2}
    			<
    			\cdots
    			\]
    			\[
    			<
    			f_{i_{j-1}+1}=
    			\cdots
    			=f_{i_j}
    			<
    			f_{i_j+1}=\cdots=f_k=f_{k+1}+1
    			=\cdots=
    			f_{i_{j+1}}+1
    			<\cdots
    			\]
    			\[
    			<
    			f_{i_{j+1}+1}+1
    			=
    			f_{i_{j+1}+2}+1
    			=
    			\cdots
    			<
    			f_{i_N+1}+1=
    			f_{i_N+2}+1
    			=
    			\cdots
    			<1
    			\]
    			such that
    			\[
    			d_{i_1}=\ell_1\in\mathbb{N},
    			\cdots,
    			d_{i_j}=\ell_j\in\mathbb{N},
    			d_{i_{j+1}}=\ell_{j+1}\in\mathbb{N},
    			\cdots,
    			d_{i_N}=\ell_N\in\mathbb{N}.
    			\]
    			One verifies that
    			$\delta=d_1\alpha_1^\vee+\cdots+d_n\alpha_n^\vee
    			\geq\frac{1}{2}\alpha_k^\vee$
    			and we have
    			$\overline{\mu}
    			-\frac{1}{2}\alpha_k^\vee
    			\in
    			B(G,\mu)$.
    		\end{enumerate}

    	    Thus we conclude in the case
    	    $G$ of type $A_n$
    	    that the maximal element in
    	    $B(G,\mu)\backslash\{\overline{\mu}\}$
    	    is indeed
    	    $\overline{\mu}-\frac{1}{2}\alpha_k^\vee$.

    		\item 
    		$G$ is of type $B_n$.
    		We identify $G$ with the special orthogonal group
    		$\mathrm{SO}_{2n+1}$
    		associated to
    		the symmetric matrix
    		$\mathrm{antidiag}(1,1,\cdots,1)
    		\in
    		\mathrm{GL}_{2n+1}$.
    		Write the diagonal matrix
    		$t=\mathrm{diag}(t_1,\cdots,t_n,1,t_n^{-1},
    		\cdots,t_1^{-1})$.
    		Denote by
    		$\varepsilon_i
    		\in
    		X^\ast(T)$
    		the character
    		$\varepsilon_i(t)=t_i$
    		($i=1,\cdots,n$)
    		and set
    		$\alpha_i=\varepsilon_i-\varepsilon_{i+1}$
    		($i=1,\cdots,n-1$).
    		Then
    		\[
    		\widetilde{\Delta}^\ast
    		=
    		\{
    		\alpha_1,\cdots,\alpha_{n-1},\varepsilon_n
    		\}
    		\]
    		and
    		$\Phi^\ast
    		=\{
    		\pm(\varepsilon_i\pm\varepsilon_j)|i\neq j
    		\}
    		\cup
    		\{
    		\pm\varepsilon_i
    		\}$.
    		Similarly,
    		denote by
    		$\varepsilon_i^\vee
    		\in
    		X_\ast(T)$
    		the cocharacter
    		$\varepsilon_i^\vee(x)=
    		\mathrm{diag}(1_{i-1},x,1_{2n+3-2i},x^{-1},1_{i-1})
    		\in
    		\mathrm{GL}_{2n+1}$
    		and set
    		$\alpha_i^\vee
    		=\varepsilon_i^\vee-\varepsilon_{i+1}^\vee$.
    		Then
    		\[
    		\widetilde{\Delta}_\ast
    		=\{
    		\alpha_1^\vee,\cdots,\alpha_{n-1}^\vee,
    		2\varepsilon_n^\vee
    		\}
    		\]
    		and
    		$\Phi_\ast
    		=\{
    		\pm(\varepsilon_i^\vee\pm\varepsilon_j^\vee)|i\neq j
    		\}
    		\cup
    		\{
    		\pm2\varepsilon_i^\vee
    		\}$.
    		The fundamental (co-)weights are given by
    		\[
    		w_{\alpha_i}
    		=\sum_{j=1}^i\varepsilon_j,
    		\quad
    		w_{\varepsilon_n}
    		=\frac{1}{2}\sum_{j=1}^n\varepsilon_j;
    		\quad
    		w_{\alpha_1^\vee}
    		=
    		\sum_{j=1}^i\varepsilon_j^\vee,\quad
    		w_{2\varepsilon_n^\vee}
    		=\sum_{j=1}^n\varepsilon_j^\vee.
    		\]
    		In this case,
    		the highest weight is
    		$\varepsilon_1+\varepsilon_2
    		=\alpha_1+2\alpha_2+2\alpha_3+
    		\cdots+2\alpha_{n-1}+2\varepsilon_n$,
    		thus the special root is
    		$\alpha_1$ and therefore
    		$\overline{\mu}=w_{\alpha_1^\vee}
    		=\varepsilon_1^\vee$.
    		The computation is exactly the same
    		as above
    		(even simpler):
    		we write
    		\[
    		\delta
    		=
    		a_1w_{\alpha_1^\vee}+\cdots+a_nw_{\alpha_n^\vee}
    		=
    		d_1\alpha_1^\vee+\cdots+d_n\alpha_n^\vee
    		\]
    		with
    		$a_i,d_j\in\mathbb{Q}$.
    		Thus we have
    		the following conditions on
    		these coefficients
    		\[
    		a_1\leq1,
    		\quad
    		a_2,\cdots,a_n\leq0,
    		\quad
    		d_1,\cdots,d_n\geq0
    		\]
    		and
    		(1)
    		if
    		$a_1\neq0$ ($i\neq1$),
    		then
    		$d_i\in\mathbb{N}$;
    		(2)
    		if
    		$a_1\neq1$,
    		then
    		$d_1\in\mathbb{N}$.
    		Then one verifies that
    		$\delta=\frac{1}{2}\alpha_1^\vee$
    		satisfies all these conditions and moreover,
    		$\overline{\mu}-\delta$
    		is the maximal element in
    		$B(G,\mu)\backslash\{\overline{\mu}\}$.

    		\item 
    		$G$ is of type $C_n$
    		(the dual case of the preceding one).
    		We identify $G$ with the special
    		symplectic group
    		$\mathrm{Sp}_{2n}$
    		of the symplectic form
    		$\mathrm{antidiag}(J,-J)$
    		with
    		$J=\mathrm{antidiag}(1,1,\cdots,1)
    		\in
    		\mathrm{GL}_n$.
    		Write the diagonal matrix
    		$t=\mathrm{diag}(t_1,\cdots,t_n,t_n^{-1},\cdots,t_1^{-1})$.
    		Denote by
    		$\varepsilon_i
    		\in
    		X^\ast(T)$
    		the character
    		$\varepsilon_i(t)=t_i$
    		and
    		$\alpha_i=\varepsilon_i-\varepsilon_{i+1}$.
    		Then
    		\[
    		\widetilde{\Delta}^\ast
    		=\{
    		\alpha_1,\cdots,\alpha_{n-1},2\varepsilon_n
    		\}
    		\]
    		and
    		$\Phi^\ast
    		=
    		\{
    		\pm(\varepsilon_i\pm\varepsilon_j)|i\neq j
    		\}
    		\cup
    		\{
    		\pm2\varepsilon_i
    		\}$.
    		Similarly,
    		denote by
    		$\varepsilon_i^\vee
    		\in
    		X_\ast(T)$
    		the cocharacter
    		$\varepsilon^\vee_i(x)
    		=
    		\mathrm{diag}(1_{i-1},x,1_{2n+2-2i},x^{-1},1_{i-1})$
    		and
    		$\alpha_i^\vee
    		=\varepsilon_i^\vee-\varepsilon_{i+1}^\vee$.
    		Then
    		\[
    		\widetilde{\Delta}_\ast
    		=\{
    		\alpha_1^\vee,\cdots,\alpha_{n-1}^\vee,
    		\varepsilon_n^\vee
    		\}
    		\]
    		and
    		$\Phi_\ast
    		=\{
    		\varepsilon_i^\vee-\varepsilon_j^\vee|i\neq j
    		\}
    		\cup
    		\{
    		\pm\varepsilon_i
    		\}$.
    		The fundamental (co-)weights are given by
    		\[
    		w_{\alpha_i}
    		=
    		\sum_{j=1}^i\varepsilon_j,
    		\quad
    		w_{2\varepsilon_n}
    		=
    		\sum_{j=1}^n\varepsilon_j;
    		\quad
    		w_{\alpha_i^\vee}
    		=
    		\sum_{j=1}^i\varepsilon_j^\vee,
    		\quad
    		w_{\varepsilon_n^\vee}
    		=
    		\frac{1}{2}\sum_{j=1}^n\varepsilon_j^\vee.
    		\]
    		
    		Note that the highest weight is
    		$2\varepsilon_1
    		=2\alpha_1+\cdots+2\alpha_{n-1}+(2\varepsilon_n)$
    		and thus the special root is
    		$2\varepsilon_n$,
    		therefore we have
    		$\overline{\mu}
    		=\frac{1}{2}\sum_{j=1}^n\varepsilon_j^\vee
    		=w_{\varepsilon_n^\vee}$.
    		To simplify notations,
    		let's write
    		$\alpha_n=2\varepsilon_n$ and
    		$\alpha_n^\vee=\varepsilon_n^\vee$.
    		We write
    		\[
    		\delta
    		=
    		a_1w_{\alpha_1^\vee}+\cdots+a_nw_{\alpha_n^\vee}
    		=
    		d_1\alpha_1^\vee+\cdots+d_n\alpha_n^\vee
    		\]
    		with
    		$a_i,d_j\in\mathbb{Q}$.
    		The conditions put
    		these coefficients become
    		\[
    		a_1,\cdots,a_{n-1},a_n-1\leq0,
    		\quad
    		d_1,\cdots,d_n\geq0
    		\]
    		and
    		(1)
    		if $a_i\neq0$
    		($i\neq n$),
    		we have
    		$d_i\in\mathbb{N}$;
    		(2)
    		if $a_n-1=0$,
    		we have
    		$d_n\in\mathbb{N}$.
    		One verifies easily that
    		$\delta=\frac{1}{2}\alpha_n^\vee$
    		satisfies all these conditions
    		and
    		$\overline{\mu}-\delta
    		\in
    		B(G,\mu)$.

    		\item 
    		$G$ is of type $D_n$.
    		We identify
    		$G$ with the orthogonal group
    		$\mathrm{SO}_{2n}$ of the quadratic form
    		$\mathrm{antidiag}(1,\cdots,1)$.
    		Write the diagonal matrix
    		$t=\mathrm{diag}(t_1,\cdots,t_n,t_n^{-1},
    		\cdots,t_1^{-1})$.
    		Denote by
    		$\varepsilon_i
    		\in
    		X^\ast(T)$
    		the character
    		$\varepsilon_i(t)=t_i$ and
    		$\alpha_i=\varepsilon_i-\varepsilon_{i+1}$
    		and
    		$\alpha_{n-1}^+=\varepsilon_{n-1}+\varepsilon_n$.
    		Then
    		\[
    		\widetilde{\Delta}^\ast=
    		\{
    		\alpha_1,\cdots,\alpha_{n-1},\alpha_{n-1}^+
    		\}
    		\]
    		and
    		$\Phi^\ast
    		=\{
    		\pm(\varepsilon_i\pm\varepsilon_j)|i\neq j
    		\}$.
    		Similarly,
    		denote by
    		$\varepsilon_i^\vee
    		\in
    		X_\ast(T)$
    		the cocharacter
    		sending $x$
    		to
    		the element
    		$
    		\mathrm{diag}(1_{i-1},x,1_{2n+2-2i},x^{-1},1_{i-1})$
    		and
    		$\alpha_i^\vee=\varepsilon_i-\varepsilon_{i+1}$,
    		$(\alpha_n^+)^\vee
    		=\varepsilon_{n-1}+\varepsilon_n$.
    		Then
    		\[
    		\widetilde{\Delta}_\ast
    		=\{
    		\alpha_1^\vee,\cdots,\alpha_{n-1}^\vee,
    		(\alpha_{n-1}^+)^\vee
    		\}
    		\]
    		and
    		$\Phi_\ast
    		=\{
    		\pm(\varepsilon_i^\vee\pm\varepsilon_j^\vee)|i\neq j
    		\}$.
    		The fundamental (co-)weights are given by
    		\[
    		w_{\alpha_i}
    		=
    		\sum_{j=1}^i\varepsilon_j,\quad
    		w_{\alpha_{n-1}^+}
    		=
    		\frac{1}{2}\sum_{j=1}^n\varepsilon_j;
    		\quad
    		w_{\alpha_i^\vee}
    		=
    		\sum_{j=1}^i\varepsilon_j^\vee,\quad
    		w_{(\alpha_{n-1}^+)^\vee}
    		=
    		\frac{1}{2}\sum_{j=1}^n\varepsilon_j^\vee;
    		\]
    		
    		Note that the highest weight is
    		$\varepsilon_1+\varepsilon_2
    		=\alpha_1+2\alpha_2+\cdots+2\alpha_{n-1}+
    		\alpha_{n-1}+\alpha_{n-1}^+$,
    		thus the special roots are
    		$\alpha_1,\alpha_{n-1},\alpha_{n-1}^+$,
    		and therefore
    		we have three possibilities
    		$\overline{\mu}
    		=\varepsilon_1^\vee
    		(=w_{\alpha_1^\vee})$,
    		$\sum_{j=1}^{n-1}\varepsilon_j^\vee
    		(=w_{\alpha_{n-1}^\vee})$
    		or
    		$\frac{1}{2}\sum_{j=1}^n\varepsilon_j^\vee
    		(=w_{(\alpha_{n-1}^+)^\vee})$.
    		To simplify notations,
    		let's put
    		$\alpha_n=\alpha_{n-1}^+$ and
    		$\alpha_n^\vee=(\alpha_{n-1}^+)^\vee$.
    		We write
    		\[
    		\delta
    		=
    		a_1w_{\alpha_1^\vee}
    		+
    		\cdots
    		+a_nw_{\alpha_n^\vee}
    		=
    		d_1\alpha_1^\vee+\cdots
    		+d_n\alpha_n^\vee
    		\]
    		with
    		$a_i,d_j\in\mathbb{Q}$.
    		We discuss the three cases one by one
    		\begin{enumerate}
    			\item 
    			For $\overline{\mu}=w_{\alpha_1^\vee}$.
    			The conditions on these coefficients are
    			\[
    			a_1-1,a_2,\cdots,a_n\leq0,
    			\quad
    			d_1,\cdots,d_n\geq0
    			\]
    			and
    			(1)
    			if $a_i\neq0$
    			($i\neq1$),
    			we have
    			$d_i\in\mathbb{N}$;
    			(2)
    			if $a_1-1\neq0$,
    			we have
    			$d_1\in\mathbb{N}$.
    			Again one verifies that
    			$\delta=
    			\frac{1}{2}\alpha_1^\vee$
    			satisfies all these conditions and moreover
    			$\overline{\mu}-\delta
    			\in
    			B(G,\mu)\backslash\overline{\mu}$
    			is the maximal element.
    			
    			\item 
    			For
    			$\overline{\mu}
    			=w_{\alpha_{n-1}^\vee}$,
    			the conditions on these coefficients are
    			\[
    			a_1,\cdots,a_{n-2},a_{n-1}-1,a_n\leq0,
    			\quad
    			d_1,\cdots,d_n\geq0
    			\]
    			and
    			(1)
    			if
    			$a_i\neq0$
    			($i\neq n-1$),
    			we have
    			$d_i\in\mathbb{N}$;
    			(2)
    			if
    			$a_{n-1}-1\neq0$,
    			we have
    			$d_{n-1}\in\mathbb{N}$.
    			One verifies as above that
    			$\delta=\frac{1}{2}$
    			satisfies all these conditions and
    			$\overline{\mu}-\delta
    			\in
    			B(G,\mu)$
    			is the maximal element.
    			
    			\item 
    			For
    			$\overline{\mu}
    			=w_{\alpha_n^\vee}$,
    			the conditions on these coefficients are
    			\[
    			a_1,\cdots,a_{n-1},a_n-1\leq0,
    			\quad
    			d_1,\cdots,d_n\geq0
    			\]
    			and
    			(1)
    			if
    			$a_i\neq0$
    			($i\neq n$),
    			we have
    			$d_i\in\mathbb{N}$;
    			(2)
    			if $a_n-1\neq0$,
    			we have
    			$d_n\in\mathbb{N}$.
    			One verifies that
    			$\delta=\frac{1}{2}\alpha_n^\vee=
    			\frac{1}{2}(\alpha_{n-1}^+)^\vee$
    			satisfies all these conditions and
    			$\overline{\mu}-\delta
    			\in
    			B(G,\mu)$
    			is the maximal element.
    		\end{enumerate}

    		\item 
    		$G$ is of type $E_6$.
    		Recall the Dynkin diagram for
    		$E_6$:
    		\[
    		\begin{tikzcd}[arrows=-]
    		\stackrel{\alpha_1}{\bullet}
    		\arrow[r]
    		&
    		\stackrel{\alpha_3}{\bullet}
    		\arrow[r]
    		&
    		\stackrel{\alpha_4}{\bullet}
    		\arrow[r]
    		\arrow[d]
    		&
    		\stackrel{\alpha_5}{\bullet}
    		\arrow[r]
    		&
    		\stackrel{\alpha_6}{\bullet}
    		\\
    		&
    		&
    		\stackrel{\alpha_2}{\bullet}
    		\end{tikzcd}
    		\]
    		Then one sees that
    		the highest weight is
    		$\alpha_1+2(\alpha_2+\cdots+\alpha_5)+\alpha_6$,
    		thus the special roots are
    		$\alpha_1,\alpha_6$.
    		If
    		$\overline{\mu}=w_{\alpha_1^\vee}$,
    		one can verify as above that
    		$\overline{\mu}-\frac{1}{2}\alpha_1^\vee
    		\in
    		B(G,\mu)\backslash\{\overline{\mu}\}$
    		is the maximal element.
    		Similarly,
    		if
    		$\overline{\mu}=w_{\alpha_6^\vee}$,
    		one verifies that
    		$\overline{\mu}-\frac{1}{2}\alpha_6^\vee
    		\in
    		B(G,\mu)\backslash\{\overline{\mu}\}$
    		is the maximal element.
    		
    		\item 
    		$G$ is of type $E_7$.
    		The Dynkin diagram for $E_7$ is
    		\[
    		\begin{tikzcd}[arrows=-]
    		\stackrel{\alpha_1}{\bullet}
    		\arrow[r]
    		&
    		\stackrel{\alpha_3}{\bullet}
    		\arrow[r]
    		&
    		\stackrel{\alpha_4}{\bullet}
    		\arrow[r]
    		\arrow[d]
    		&
    		\stackrel{\alpha_5}{\bullet}
    		\arrow[r]
    		&
    		\stackrel{\alpha_6}{\bullet}
    		\arrow[r]
    		&
    		\stackrel{\alpha_7}{\bullet}
    		\\
    		&
    		&
    		\stackrel{\alpha_2}{\bullet}
    		\end{tikzcd}
    		\]
    		One verifies that the special root is
    		$\alpha_1$ and we have
    		$\overline{\mu}=w_{\alpha_1}$.
    		As above,
    		we have
    		$\overline{\mu}-\frac{1}{2}\alpha_1^\vee
    		\in
    		B(G,\mu)\backslash\{\overline{\mu}\}$
    		is the maximal element.

    		\item 
    		For other types
    		$E_8,F_4,G_2$,
    		there are no special roots.   		
    		
    	\end{enumerate}
    \end{proof}

    \begin{proof}
    	(of Proposition \ref{Hasse invariant is reduced})
    	We follow the strategy in
    	\cite[A.3]{Pilloni2012}.
    	Consider a geometric point
    	$x\in
    	\mathcal{N}^b$
    	for a maximal element
    	$b\in B(G,\mu)\backslash\{\overline{\mu}\}$
    	and
    	$\mathcal{A}_x$ the
    	abelian scheme over $x$,
    	$(\mathbb{D},\mathrm{Fr})$
    	the Dieudonn\'{e} crystal with $G$-structure of
    	$\mathcal{A}_x$ evaluated at the dual numbers
    	$\overline{\mathbb{F}}_p[\varepsilon]$,
    	which is of $\overline{\mathbb{F}}_p[\varepsilon]$-rank
    	$\mathrm{dim}(V)$.
    	By the above proposition,
    	we see that in the decomposition
    	$G^\mathrm{ad}
    	=\prod_{i=1}^n
    	G^{(i)}$,
    	among
    	the components
    	$b^{(i)}$
    	of the dominant co-character
    	$b
    	\in
    	X_{\ast,\mathrm{dm}}(T_0)_\mathbb{Q}$,
    	exactly one of them
    	(say, the $i$-th component) is of the form
    	$\mu^{(i)}-\frac{1}{2}\alpha^\vee$
    	where
    	$\alpha^\vee$
    	is the notation in
    	the above proposition.
    	By the embedding
    	$(G,X)
    	\hookrightarrow
    	(\mathrm{GSp}(V,\psi),S^\pm)$,
    	we see that
    	in the copy
    	$\mathfrak{s}_\alpha$
    	of
    	$\mathfrak{sl}_2$
    	inside the Lie algebra
    	$\mathfrak{g}(\overline{\mathbb{F}}_p)$ of
    	$G(\overline{\mathbb{F}}_p)$,
    	one can choose basis
    	$\mathcal{B}=(e_1,e_2,\cdots,
    	e_n,e_1^\ast,\cdots,e_n^\ast)$
    	of
    	$\Lambda$
    	such that
    	the projection of $\mu$
    	to this copy is of the form
    	$\begin{pmatrix}
    	0 & 0 \\
    	0 & 1
    	\end{pmatrix}$
    	while the projection of
    	$b$ to this copy is of the form
    	$\begin{pmatrix}
    	0 & 1 \\
    	0 & 0
    	\end{pmatrix}$.
    	Suppose that
    	$\mathfrak{s}_\alpha$
    	acts on the subspace
    	$\overline{\Lambda}_\alpha
    	=\overline{\mathbb{F}}_p(e_n,e_n^\ast)$
    	of
    	$\Lambda_{\overline{\mathbb{F}}_p}$.
    	We identify
    	$\Lambda_{\overline{\mathbb{F}}_p}$
    	with
    	$\mathbb{D}$
    	thus
    	$\overline{\Lambda}_\alpha$
    	with a subspace of
    	$\mathbb{D}$.
    	Then the Hodge filtration
    	on
    	$\overline{\Lambda}_\alpha$
    	is given by
    	$\mathrm{Fil}^1(\overline{\Lambda}_\alpha)
    	=\overline{\mathbb{F}}_p(e_n)$.
    	Now the same argument as in
    	\cite[Th\'{e}or\`{e}me A.4]{Pilloni2012}
    	shows that
    	the Hasse invariant
    	defines a non-zero
    	linear form on
    	the $\overline{\mathbb{F}}_p$-vector space
    	$X_x(\overline{\mathbb{F}}_p[\varepsilon])$
    	of liftings from $x$ to
    	$\overline{\mathbb{F}}_p[\varepsilon]$.
    	At last note that
    	the union
    	$\cup_b\mathcal{N}^b$
    	with $b$ running through all the maximal elements
    	in
    	$B(G,\mu)\backslash\{\overline{\mu}\}$
    	is open dense in the
    	non $\mu$-ordinary locus
    	$Sh^{n-\mu}_1$
    	(\textit{cf.}
    	\cite[Theorem A]{Zhang2018}).
    \end{proof}

    \section{Automorphic forms}
    \subsection{Some representations}
    \label{Notations on the Shimura datum}
    We first fix some notations.
    Recall that we have fixed a toroidal
    compactification
    $Sh^\Sigma$
    of the integral model $Sh$ over
    $\mathbb{W}$
    and we have the Hodge line bundles
    $\omega_{Sh^\Sigma}$
    and
    $\omega_{Sh}$
    over these spaces.
    We fix a point
    $x\in X$
    and the associated cocharacter
    $\nu_x$
    of
    $G_\mathbb{C}$,
    which is actually defined over the reflex field
    $E$.
    We write
    $P_V$
    for the parabolic subgroup
    of $\mathrm{GSp}(V,\psi)_{E}$
    stabilizing
    the Hodge filtration of the Hodge structure
    $\mathrm{Ad}\circ(\xi\circ x)$,
    $L_V$ the Levi subgroup of $P_V$
    and
    $U_V$
    the unipotent radical of $P_V$.
    Note that
    $L_V$ is the centralizer of
    the cocharacter
    $\nu_x$
    inside
    $\mathrm{GSp}(V,\psi)_{E}$.

    We fix a basis for
    $V$
    such that the symplectic form
    $\psi$ on
    $V$
    is represented by
    $\begin{pmatrix}
    0 & A \\
    -A & 0
    \end{pmatrix}$
    with
    $A=\text{anti-diag}(1,1,\cdots,1)$.
    Then we write
    $T_V$
    for the maximal torus of
    $\mathrm{GSp}(V,\psi)$
    consisting of diagonal matrices,
    $B_V$
    the Borel subgroup of
    $\mathrm{GSp}(V,\psi)$
    consisting of upper triangular matrices.
    We write
    $P_V$
    for some standard parabolic subgroup
    of
    $\mathrm{GSp}(V,\psi)$
    containing
    $B_V$.
    Under this basis,
    $P_V$
    is the maximal parabolic subgroup
    of $\mathrm{GSp}(V,\psi)$
    consisting of matrices of the form
    $\begin{pmatrix}
    A & B \\
    0 & D
    \end{pmatrix}$.
    Now
    we consider a standard
    parabolic subgroup
    $\widetilde{P}_V$ of $\mathrm{GSp}(V,\varphi)$
    such that
    $B_V\subset\widetilde{P}_V\subset P_V$.
    Similarly we have a Levi decomposition
    $\widetilde{P}_V
    =
    \widetilde{L}_V\widetilde{U}_V$.
    We write
    $P_V^\circ$
    for the parabolic subgroup of
    $\mathrm{GSp}(V,\psi)$
    opposite to $P_V$
    and similarly
    $\widetilde{P}_V^\circ$,
    opposite to
    $\widetilde{P}_V$.
    We write
    $\widetilde{P}_V^\mathrm{der}$
    for the derived subgroup of
    $\widetilde{P}_V$
    and we put
    $\widetilde{T}_{\widetilde{P}_V}
    =
    \widetilde{P}_V/\widetilde{P}_V^\mathrm{der}
    $.
    Then we set
    \[
    ?
    =
    G\cap
    ?_V,
    \quad
    \text{with}
    \quad
    ?=T,B,P,L,U,
    \widetilde{P},
    \widetilde{L},
    \widetilde{U},
    \widetilde{P}^\circ,
    \widetilde{U}^\circ,
    \widetilde{P}^\mathrm{der},
    \widetilde{T}_{\widetilde{P}}
    \quad
    \text{ and}
    \quad
    \widetilde{T}_{\widetilde{P}}
    =
    G\cap
    \widetilde{T}_{\widetilde{P}_V}
    =
    \widetilde{P}/\widetilde{P}^\mathrm{der}
    \]
    for the corresponding subgroups relative to $G$.

    \begin{remark}\label{integral model for L}
    	Note in particular that
    	$L$ is the centralizer of the cocharacter
    	$\nu_x$ inside
    	$G_{\mathbb{Q}}$.
    	Recall
    	we have fixed a $\mathbb{Z}_p$-model for
    	$G_{\mathbb{Q}_p}$
    	(see the paragraph after Hypothesis \ref{G is unramified at p}).
    	We can choose a representative
    	of
    	$\nu_x$
    	defined over $\mathbb{Z}_p$
    	(which is denoted by the same letter $\nu_x$).
    	This gives rise to a
    	$\mathbb{Z}_p$-model of
    	the Levi-subgroup
    	$L$.
    	In the following
    	we will also write this $\mathbb{Z}_p$-model by
    	$L$,
    	when no confusion is possible
    	(see also \cite[2.1.5]{GoldringKoskivirta2016}).
    \end{remark}

    We also write
    $\widetilde{P}_L
    =\widetilde{P}\cap L$
    and thus
    $\widetilde{T}_{\widetilde{P}}=
    \widetilde{P}_L/
    \widetilde{P}_L^\mathrm{der}$.
    Moreover,
    the natural inclusion
    $B\subset \widetilde{P}$
    induces a quotient map
    $
    T\rightarrow \widetilde{T}_{\widetilde{P}}.
    $
    Thus we can view
    the character group
    $X^\ast(\widetilde{T}_{\widetilde{P}})$
    as a subgroup of
    $X^\ast(T)$:
    $X^\ast(\widetilde{T}_{\widetilde{P}})
    \subset
    X^\ast(T)$.
    Similarly,
    we view
    the cocharacter group
    $X_\ast(\widetilde{T}_{\widetilde{P}})$
    as a quotient group of
    $X_\ast(T)$:
    $X_\ast(\widetilde{T}_{\widetilde{P}})
    \twoheadrightarrow
    X_\ast(T)$.
    We then put
    \begin{align*}
    X^\ast_\mathrm{dm}(\widetilde{T}_{\widetilde{P}})
    &
    :=X^\ast(\widetilde{T}_{\widetilde{P}})\cap
    X^\ast_\mathrm{dm}(T),
    \\
    X_{\ast,\mathrm{dm}}(\widetilde{T}_{\widetilde{P}})
    &
    :=
    \mathrm{Im}
    (X_{\ast,\mathrm{dm}}(T)
    \rightarrow
    X_\ast(\widetilde{T}_{\widetilde{P}})).
    \end{align*}
    
    We write
    $\lambda_{\mathrm{GSp}(V,\psi)}$
    for the character of
    $T_V$
    sending
    $(t_1,t_2,\cdots,t_n,\nu/t_n,\nu/t_{n-1},\cdots,\nu/t_1)$
    to
    $t_1t_2\cdots t_N$
    and then we put
    \begin{equation}\label{character lambda_G}
    \lambda_G
    =
    \lambda_{\mathrm{GSp}(V,\psi)}|_{T}
    \in
    X^\ast(T).
    \end{equation}

    \begin{definition}
    	Write
    	$B_L=B\cap L$.
    	For any $\mathcal{O}_\mathfrak{p}$-algebra 
    	$A$
    	and any character
    	$\lambda\in
    	X^\ast(T)$,
    	we write
    	\[
    	R_A[\lambda^{-1}]
    	:=
    	\mathrm{Ind}^{L}_{B_L}(\lambda^{-1})_{/A}
    	\]
    	for the algebraic induction
    	defined as in
    	\cite[§I.3.3]{Jantzen2003},
    	which is an $L$-equivariant
    	line bundle on the flag variety
    	$L/B_L$.
    	More concretely,
    	for any $A$-algebra $A'$,
    	$R_A[\lambda^{-1}](A')$
    	is the set of rational morphisms
    	$f\colon
    	L_{/A'}
    	\rightarrow
    	\mathbb{G}_{a/A'}$
    	such that
    	$f(gh)=\lambda^{-1}(h)f(g)$
    	for any
    	$g\in L(A')$,
    	$h\in B_L(A')$.
    \end{definition}
    For a finite flat $\mathcal{O}_\mathfrak{p}$-algebra
    $A$,
    we have a canonical isomorphism
    \[
    R_A[\lambda^{-1}]\otimes_AA[1/p]
    \simeq R_{A[1/p]}[\lambda^{-1}],
    \]
    thus we have a natural inclusion
    $R_A[\lambda^{-1}]
    \hookrightarrow
    R_{A[1/p]}[\lambda^{-1}]$
    and we can view the first space
    as a lattice in the
    second space.
    
    \begin{definition}
    	For a finite flat
    	$\mathcal{O}_\mathfrak{p}$-algebra $A$,
    	we define an $A$-module
    	\[
    	R^\mathrm{top}_A[\lambda^{-1}]
    	:=
    	\mathrm{Ind}_{B_L
    		(\mathcal{O}_\mathfrak{p})}^{L
    		(\mathcal{O}_\mathfrak{p})}(\lambda^{-1},A)
    	\]
    	to be the set of continuous maps
    	$f\colon
    	L(\mathcal{O}_\mathfrak{p})
    	\rightarrow
    	A$
    	such that
    	$f(gtu)=\lambda^{-1}(t)f(g)$
    	for any
    	$g\in L(\mathcal{O}_\mathfrak{p}),
    	t\in T(\mathcal{O}_\mathfrak{p}),
    	u\in U_L(\mathcal{O}_\mathfrak{p})$.
    	Similarly
    	we define an
    	$A[1/p]$-module
    	\[
    	R^\mathrm{top}_{A[1/p]}[\lambda^{-1}]
    	:=
    	\mathrm{Ind}_{B_L(E_\mathfrak{p})}^{L(E_\mathfrak{p})}
    	(\lambda^{-1},A[1/p])
    	\]
    	to be the set of continuous maps
    	$f\colon
    	L(E_\mathfrak{p})
    	\rightarrow
    	A[1/p]$
    	such that
    	$f(gtu)=\lambda^{-1}(t)f(g)$
    	for any
    	$g\in L(E_\mathfrak{p})$,
    	$t\in T(E_\mathfrak{p})$
    	and
    	$u\in U_L(E_\mathfrak{p})$.

    	We let
    	$L$ act on
    	$R_A[\lambda^{-1}]$
    	by left translation
    	and let
    	$L(\mathcal{O}_\mathfrak{p})$
    	act on
    	$R^\mathrm{top}_A[\lambda^{-1}]$
    	by left translation.

    \end{definition}
    By the Iwasawa decomposition
    $L(E_\mathfrak{p})
    =L(\mathcal{O}_\mathfrak{p})
    B_L(E_\mathfrak{p})$,
    we see that
    an element
    $f\in R_{A[1/p]}^\mathrm{top}[\lambda^{-1}]$
    is determined by its restriction to
    $L(\mathcal{O}_\mathfrak{p})$
    and thus we have a natural inclusion
    \[
    R_A^\mathrm{top}[\lambda^{-1}]
    \hookrightarrow
    R_{A[1/p]}^\mathrm{top}[\lambda^{-1}]
    \]
    therefore
    we can view the first space as a lattice in the
    second space
    by the compactness of
    $L(\mathcal{O}_\mathfrak{p})$.

    There are several morphisms among these representations
    that will be useful later on.
    \begin{enumerate}
    	\item 
    	We have a natural map
    	\[
    	\mathrm{ev}_{\mathcal{O}_\mathfrak{p}}
    	\colon
    	R_A[\lambda^{-1}]
    	\rightarrow
    	R^\mathrm{top}_A[\lambda^{-1}],
    	\quad
    	f\mapsto
    	(
    	f|_{\mathcal{O}_\mathfrak{p}}
    	\colon
    	g\in L(\mathcal{O}_\mathfrak{p})
    	\mapsto
    	f(g)
    	),
    	\]
    	which is simply the evaluation of
    	the algebraic representation
    	$R_A[\lambda^{-1}]$
    	at $\mathcal{O}_\mathfrak{p}$
    	(we view $g\in L(\mathcal{O}_\mathfrak{p})$
    	as an element in $L(A)$ via the natural map
    	$\mathcal{O}_\mathfrak{p}\rightarrow A$).

    	\item 
    	We write $A[\lambda^{-1}]$
    	for the free $A$-module of rank $1$
    	on which $T$ acts by the character $\lambda^{-1}$.
    	For any character
    	$\lambda\in
    	X^\ast(\widetilde{T}_{\widetilde{P}})$
    	(we view it also as a character of $T$),
    	we have a restriction map,
    	which is clearly
    	$\widetilde{T}_{\widetilde{P}}$-equivariant:
    	\[
    	\mathrm{Res}_A
    	\colon
    	R_A[\lambda^{-1}]
    	\rightarrow
    	A[\lambda^{-1}],
    	\quad
    	f\mapsto
    	f|_{\widetilde{P}_L}.
    	\]
    	Denote by
    	$R_A^0[\lambda^{-1}]$
    	the kernel of this map.
    	
    	\item 
    	Similarly we have another restriction map,
    	which is
    	$\widetilde{T}_{\widetilde{P}}(\mathcal{O}_\mathfrak{p})$-equivariant:
    	\[
    	\mathrm{Res}_A^\mathrm{top}
    	\colon
    	R^\mathrm{top}_A[\lambda^{-1}]
    	\rightarrow
    	A[\lambda^{-1}],
    	\quad
    	f\mapsto
    	f|_{\widetilde{P}_L(\mathcal{O}_\mathfrak{p})},
    	\]
    	whose kernel we denote by
    	$R^{\mathrm{top},0}_A[\lambda^{-1}]$.
    \end{enumerate}

    Next we define some operators on these
    modules, which
    correspond to the Hecke operators that we will
    consider later.
    We write
    \[
    \widetilde{T}_{\widetilde{P}}^+(E_\mathfrak{p})
    \subset
    \widetilde{T}_{\widetilde{P}}(E_\mathfrak{p})
    \]
    for the sub-monoid
    generated by the elements
    $\mu(p)\in \widetilde{T}_{\widetilde{P}}(E_\mathfrak{p})$
    for all
    $\mu\in
    X_{\ast,\mathrm{dm}}(\widetilde{T}_{\widetilde{P}})$.
    We let
    $\widetilde{T}_{\widetilde{P}}^+(E_\mathfrak{p})$
    act on
    $B_L(E_\mathfrak{p})$
    by inverse conjugation.
    It is easy to see that
    $\widetilde{T}_{\widetilde{P}}^+(E_\mathfrak{p})$
    stabilizes
    the subgroup
    $B_L(\mathcal{O}_\mathfrak{p})$.

    We write the fundamental coroots
    \[
    \{
    \epsilon_1',\epsilon_2',\cdots,\epsilon_r'
    \}
    \]
    for the monoid
    $
    X_{\ast,\mathrm{dm}}
    (T)$
    of dominant cocharacters.
    Then the element
    \[
    \epsilon_i:=
    \epsilon_i'(p)
    =\mathrm{diag}(\Lambda_i,p^{s_i}\Lambda_i^{-1})
    \in
    T_V(\mathbb{Q}_p)
    \]
    is a diagonal matrix in
    $\mathrm{GSp}(V,\psi)(\mathbb{Q}_p)$
    with
    $\Lambda_i=\mathrm{diag}
    (p^{t_1},\cdots,p^{t_n})$
    such that
    $0\leq t_1\leq t_2\leq\cdots\leq t_n$
    ($2n=\mathrm{dim}_\mathbb{Q}V$)
    and
    $s_i>0$.
    We denote the images of
    $\epsilon'_i$ in the projection
    $X_\ast(T)
    \rightarrow
    X_\ast(\widetilde{T}_{\widetilde{P}})$
    again by
    $\epsilon'_i$.

    \begin{definition}
    	We fix a finite flat $\mathcal{O}_\mathfrak{p}$-algebra
    	$A$.
    	For an element
    	$\epsilon\in\widetilde{T}_{\widetilde{P}}^+(E_\mathfrak{p})$
    	and the algebraic representation
    	$R_{A[1/p]}[\lambda^{-1}]$
    	for some dominant character
    	$\lambda\in X^\ast_\mathrm{dm}(\widetilde{T}_{\widetilde{P}})$,
    	we define an operator
    	$\mathbb{T}_\epsilon$
    	on
    	$R_{E_\mathfrak{p}}[\lambda^{-1}]$
    	as follows:
    	for any element
    	$f\in R_{A[1/p]}[\lambda^{-1}]$,
    	set
    	$\mathbb{T}_\epsilon(f)
    	\in
    	R_{A[1/p]}[\lambda^{-1}]$
    	to be
    	\[
    	(\mathbb{T}_\epsilon f)(g)
    	:=
    	f(\epsilon g\epsilon^{-1}),
    	\quad
    	\forall
    	g\in
    	L(A[1/p]).
    	\]

    	We define an operator
    	$\mathbb{T}_\epsilon$ on the spaces
    	$R^\mathrm{top}_{A[1/p]}[\lambda^{-1}]$
    	and
    	$A[\frac{1}{p}][\lambda^{-1}]$
    	by the same formula.
    \end{definition}
    
    It is clear that the maps
    $\mathrm{Res}_A$
    and
    $\mathrm{Res}_A^\mathrm{top}$
    are equivariant for the operator
    $\mathbb{T}_\epsilon$.
    Moreover, for two
    $\epsilon,\epsilon'\in
    \widetilde{T}_{\widetilde{P}}^+(E_\mathfrak{p})$,
    we have
    $\mathbb{T}_\epsilon\mathbb{T}_{\epsilon'}
    =
    \mathbb{T}_{\epsilon'}\mathbb{T}_\epsilon
    =
    \mathbb{T}_{\epsilon\epsilon'}$.
    We have the following observations
    (\textit{cf.} \cite[Proposition 3.1]{Pilloni2012}):
    \begin{proposition}\label{Hecke operator preserves top rep}
    	For a finite flat $\mathcal{O}_\mathfrak{p}$-algebra
    	$A$ with field of fractions,
    	a dominant character
    	$\lambda\in
    	X^{\ast}_\mathrm{dm}(\widetilde{T}_{\widetilde{P}})$ and an element
    	$\epsilon\in\widetilde{T}_{\widetilde{P}}^+(E_\mathfrak{p})$,
    	$\mathbb{T}_\epsilon$
    	preserves the lattice
    	$R^\mathrm{top}_A[\lambda^{-1}]$
    	inside
    	$R^\mathrm{top}_{A[1/p]}[\lambda^{-1}]$.
    \end{proposition}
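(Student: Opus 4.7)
My plan is to reduce, via the Iwasawa decomposition, to a $p$-adic valuation bound on the diagonal part of the Iwasawa factorization, and to obtain that bound by studying the action on an integral model of the highest weight representation $V_\lambda$. Fix $f \in R^\mathrm{top}_A[\lambda^{-1}]$ and $g \in L(\mathcal{O}_\mathfrak{p})$; it suffices to show $(\mathbb{T}_\epsilon f)(g) = f(\epsilon g \epsilon^{-1}) \in A$. Lift $\epsilon$ to $\tilde\mu(p) \in T(E_\mathfrak{p})$ with $\tilde\mu \in X_{\ast,\mathrm{dm}}(T)$ a dominant cocharacter (possible by the very definition of $X_{\ast,\mathrm{dm}}(\widetilde{T}_{\widetilde{P}})$). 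The Iwasawa decomposition $L(E_\mathfrak{p}) = L(\mathcal{O}_\mathfrak{p}) \cdot B_L(E_\mathfrak{p})$, already invoked earlier in the text, yields
\[
\epsilon g \epsilon^{-1} = k \cdot t \cdot u, \qquad k \in L(\mathcal{O}_\mathfrak{p}),\ t \in T(E_\mathfrak{p}),\ u \in U_L(E_\mathfrak{p}),
\]
and the transformation law for $f$ gives $f(\epsilon g \epsilon^{-1}) = \lambda^{-1}(t)\, f(k)$. Since $f(k) \in A$, the claim reduces to $v(\lambda(t)) \leq 0$.

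For this bound I would pass to the algebraic representation $V_\lambda$ of $L$ of highest weight $\lambda$, together with an $L$-stable $\mathcal{O}_\mathfrak{p}$-lattice $\Lambda_\lambda \subset V_\lambda$ and a highest-weight vector $v_\lambda^+ \in \Lambda_\lambda \setminus p\Lambda_\lambda$ (after a finite unramified base change if necessary, so that $T$ splits and $\Lambda_\lambda$ decomposes as a direct sum of $T$-weight eigenspaces). Writing $g\cdot v_\lambda^+ = \sum_\nu c_\nu v_\lambda^\nu$, all $c_\nu$ lie in $\mathcal{O}_\mathfrak{p}$, and since $g$ acts as an automorphism of $\Lambda_\lambda/p\Lambda_\lambda$ the expansion has at least one unit coefficient, say $c_{\nu^\ast}$. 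Now compute $\epsilon g \epsilon^{-1} v_\lambda^+$ in two ways: using $\epsilon^{-1} v_\lambda^+ = p^{-\langle \lambda, \tilde\mu\rangle} v_\lambda^+$ and $\epsilon\, v_\lambda^\nu = p^{\langle \nu, \tilde\mu\rangle} v_\lambda^\nu$ on one side, and $u v_\lambda^+ = v_\lambda^+$, $t\, v_\lambda^+ = \lambda(t)\, v_\lambda^+$ on the other, one obtains
\[
\sum_\nu c_\nu\, p^{\langle \nu - \lambda,\, \tilde\mu\rangle}\, v_\lambda^\nu \;=\; \lambda(t)\cdot k v_\lambda^+.
\]
Since $k \in L(\mathcal{O}_\mathfrak{p})$ also acts invertibly on $\Lambda_\lambda / p\Lambda_\lambda$, the vector $k v_\lambda^+$ is primitive in $\Lambda_\lambda$, so the minimum coefficient valuation on the right equals exactly $v(\lambda(t))$. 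Comparing with the left,
\[
v(\lambda(t)) \;\leq\; v(c_{\nu^\ast}) + \langle \nu^\ast - \lambda,\, \tilde\mu\rangle \;=\; \langle \nu^\ast - \lambda,\, \tilde\mu\rangle.
\]
Because $\nu^\ast$ is a weight of $V_\lambda$, the difference $\lambda - \nu^\ast$ is a non-negative integer combination of simple positive roots, and $\tilde\mu$ dominant then forces $\langle \nu^\ast - \lambda, \tilde\mu\rangle \leq 0$. Therefore $v(\lambda(t)) \leq 0$, so $\lambda^{-1}(t) \in \mathcal{O}_\mathfrak{p}$ and $(\mathbb{T}_\epsilon f)(g) \in \mathcal{O}_\mathfrak{p}\cdot A \subseteq A$. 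Continuity of $\mathbb{T}_\epsilon f$ on $L(\mathcal{O}_\mathfrak{p})$ follows from continuity of $f$ together with continuity of $g \mapsto \epsilon g \epsilon^{-1}$ on the compact group $L(\mathcal{O}_\mathfrak{p})$.

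The one delicate point is the valuation inequality, which rests on two integral ingredients: (i) the existence of an $L$-stable integral structure on $V_\lambda$ compatible with the weight decomposition, which is easily produced by a Weyl-module construction — possibly after a finite unramified extension to split $T$ — and (ii) the fact that $g$ and $k$ both act as automorphisms modulo $p$, which guarantees the primitivity of $gv_\lambda^+$ and $kv_\lambda^+$ and hence pins down their integral valuations. If one wishes to avoid $V_\lambda$ entirely, the same argument run with each fundamental weight $\omega$ in place of $\lambda$ gives $v(\omega(t)) \leq 0$, and writing $\lambda$ as a non-negative integer combination of fundamental weights recovers the required bound $v(\lambda(t)) \leq 0$.
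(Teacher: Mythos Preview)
Your proof is correct but follows a genuinely different route from the paper's. The paper first reduces to the generators $\epsilon_i$ (so that $\mathbb{T}_{\epsilon\epsilon'}=\mathbb{T}_\epsilon\mathbb{T}_{\epsilon'}$ handles general $\epsilon$), then uses the Iwahori--Bruhat decomposition $L(\mathcal{O}_\mathfrak{p})=\bigsqcup_w I_{B_L^\circ}\,w\,B_L(\mathcal{O}_\mathfrak{p})$: writing $g=iwu$ with $i$ in the opposite Iwahori, one computes directly that $(\mathbb{T}_\epsilon f)(g)=\lambda^{-1}(w^{-1}\epsilon w\epsilon^{-1})\,f(\epsilon i\epsilon^{-1}w)$, and both factors are integral because $\lambda$ is dominant and because conjugation by the specific $\epsilon_i$ contracts $I_{B_L^\circ}$ back into $L(\mathcal{O}_\mathfrak{p})$. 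Your argument instead uses the Iwasawa decomposition $\epsilon g\epsilon^{-1}=ktu$ and controls $v(\lambda(t))$ via the action on a highest-weight lattice $\Lambda_\lambda\subset V_\lambda$, comparing the primitivity of $gv_\lambda^+$ and $kv_\lambda^+$. This is more representation-theoretic but has the advantage of working uniformly for every dominant lift $\tilde\mu$ without first reducing to generators; the paper's approach is more elementary in that it avoids introducing $V_\lambda$ altogether, at the cost of needing the explicit structure of the $\epsilon_i$ to ensure $\epsilon i\epsilon^{-1}\in L(\mathcal{O}_\mathfrak{p})$. Both extract the same underlying fact, namely that $\lambda-\nu$ (for $\nu$ a weight of $V_\lambda$) or $\lambda-w\lambda$ is a non-negative sum of positive roots, paired against a dominant cocharacter.
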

    \begin{proof}
    	We prove the proposition for
    	$\epsilon=\epsilon_i$.
    	The general situation follows since each
    	$\epsilon$ is non-negative
    	integral combination of these 
    	$\epsilon_i$ and
    	$\mathbb{T}_{\epsilon\epsilon'}
    	=\mathbb{T}_\epsilon\mathbb{T}_{\epsilon'}$.
    	Recall
    	$\kappa_\mathfrak{p}$
    	is the residual field of
    	$\mathcal{O}_\mathfrak{p}$.
    	Let
    	$I_{B_L^\circ}$
    	be the set of elements in
    	$L(\mathcal{O}_\mathfrak{p})$
    	whose reduction modulo $\mathfrak{p}$
    	lies in the Borel subgroup
    	$B_L(\kappa_\mathfrak{p})^\circ$
    	opposite to $B_L(\kappa_\mathfrak{p})$
    	(an Iwahori subgroup of
    	$L(\mathcal{O}_\mathfrak{p})$).
    	By the Bruhat decomposition
    	$L(\kappa_\mathfrak{p})
    	=\bigsqcup_{w}
    	B_L^\circ(\kappa_\mathfrak{p})
    	w
    	B_L(\kappa_\mathfrak{p})$
    	where
    	$w$ runs through the double quotient set
    	$\mathcal{W}_P\backslash\mathcal{W}/\mathcal{W}_p$
    	(\cite[§II.1.9]{Jantzen2003}),
    	we have a decomposition
    	\[
    	L(\mathcal{O}_\mathfrak{p})
    	=
    	\bigsqcup_wI_{B_L^\circ}
    	wB_L(\mathcal{O}_\mathfrak{p})
    	=
    	\bigsqcup_{w}I_{B_L^\circ}w
    	U_L(\mathcal{O}_\mathfrak{p})
    	\]
    	where $w$ runs through the same set
    	$\mathcal{W}_P\backslash\mathcal{W}/\mathcal{W}_P$.
    	Now take a function
    	$f\in R^\mathrm{top}_{A}[\lambda^{-1}]$
    	and an element
    	$g=iwu\in I_{B_L^\circ}w
    	B_L(\mathcal{O}_\mathfrak{p})$,
    	by definition, we have
    	\[
    	(\mathbb{T}_\epsilon f)(g)
    	=
    	f(\epsilon iwu\epsilon^{-1})
    	=
    	f(\epsilon iw\epsilon^{-1})
    	=
    	f(\epsilon i\epsilon^{-1}w
    	\cdot
    	w^{-1}\epsilon w\epsilon^{-1})
    	=
    	\lambda^{-1}
    	(w^{-1}\epsilon w\epsilon^{-1})
    	f(\epsilon i\epsilon^{-1}w).
    	\]
    	Note that for
    	$\lambda\in X^{\ast}_\mathrm{dm}(\widetilde{T}_{\widetilde{P}})$,
    	we have
    	$
    	\lambda^{-1}(w^{-1}\epsilon w\epsilon^{-1})
    	\in
    	\mathcal{O}_\mathfrak{p}.
    	$
    	On the other hand,
    	by our restriction to
    	$\epsilon=\epsilon_i$ and
    	previous assumptions on these
    	$\epsilon_i$,
    	we see that
    	$\epsilon i\epsilon^{-1}\in
    	L(\mathcal{O}_\mathfrak{p})$
    	and thus
    	$\epsilon i\epsilon^{-1}
    	w\in L(\mathcal{O}_\mathfrak{p})$.
    	We deduce that
    	$(\mathbb{T}_\epsilon f)(g)\in
    	A$.
    \end{proof}

    On the other hand,
    $R_A[\lambda^{-1}]$
    may not be preserved by $\mathbb{T}_\epsilon$
    inside
    $R_{A[1/p]}[\lambda^{-1}]$
    (unless $A$ is an unramified extension of
    $\mathcal{O}_\mathfrak{p}$).
    This is because we do not have a Bruhat decomposition
    of $L(A)$ for general $A$.
    Due to this fact,   
    we need to modify the integral structure on
    $R_A[\lambda^{-1}]$
    which will be stable under the action of
    $\mathbb{T}_\epsilon$.
    The idea is simple: recall that we have a natural map
    $\mathrm{ev}_{\mathcal{O}_{\mathfrak{p}}}
    \colon
    R_A[\lambda^{-1}]
    \rightarrow
    R_A^\mathrm{top}[\lambda^{-1}]
    $
    as well as its base change to $A[1/p]$,
    that is,
    $\mathrm{ev}_{E_{\mathfrak{p}}}
    \colon
    R_{A[1/p]}[\lambda^{-1}]
    \rightarrow
    R_{A[1/p]}^\mathrm{top}[\lambda^{-1}]
    $.
    
    \begin{definition}
    	Then we set
    	\[
    	\widetilde{R}_A[\lambda^{-1}]
    	:=
    	\mathrm{ev}_{E_\mathfrak{p}}^{-1}
    	(R_A^\mathrm{top}[\lambda^{-1}])
    	\subset
    	R_{E_\mathfrak{p}}^\mathrm{top}[\lambda^{-1}]
    	\]
    	which is a lattice in
    	$R_{A[1/p]}[\lambda^{-1}]$
    	containing
    	$R_A[\lambda^{-1}]$.
    \end{definition}
      
    Clearly the map
    $\mathrm{ev}_{E_\mathfrak{p}}$
    is equivariant for the operator
    $\mathbb{T}_\epsilon$.
    Thus we see that
    \begin{corollary}
              The operator
              $\mathbb{T}_\epsilon$
              preserves
              $\widetilde{R}_A[\lambda^{-1}]$.
    \end{corollary}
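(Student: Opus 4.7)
The statement is a short, formal consequence of the two facts established immediately beforehand, and my plan is to reduce it to those facts without any new ingredient.

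First I would unwind the definition: by construction $\widetilde{R}_A[\lambda^{-1}]$ is the subset of $R_{A[1/p]}[\lambda^{-1}]$ consisting of those $f$ whose image under the evaluation map $\mathrm{ev}_{E_\mathfrak{p}} \colon R_{A[1/p]}[\lambda^{-1}] \to R_{A[1/p]}^{\mathrm{top}}[\lambda^{-1}]$ lies in the lattice $R_A^{\mathrm{top}}[\lambda^{-1}]$. So showing $\mathbb{T}_\epsilon$-stability of $\widetilde{R}_A[\lambda^{-1}]$ is the same as showing that for every $f \in \widetilde{R}_A[\lambda^{-1}]$ the element $\mathrm{ev}_{E_\mathfrak{p}}(\mathbb{T}_\epsilon f)$ remains inside $R_A^{\mathrm{top}}[\lambda^{-1}]$.

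Next I would invoke the $\mathbb{T}_\epsilon$-equivariance of $\mathrm{ev}_{E_\mathfrak{p}}$, which was noted just before the corollary: it follows at once from the fact that both Hecke operators are defined by the same formula $f \mapsto f(\epsilon(\,\cdot\,)\epsilon^{-1})$, so evaluating the transformed function on $L(\mathcal{O}_\mathfrak{p}) \subset L(A[1/p])$ commutes with the transformation. This gives the identity $\mathrm{ev}_{E_\mathfrak{p}}(\mathbb{T}_\epsilon f) = \mathbb{T}_\epsilon(\mathrm{ev}_{E_\mathfrak{p}}(f))$.

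Finally I would apply Proposition~\ref{Hecke operator preserves top rep}: since $\mathrm{ev}_{E_\mathfrak{p}}(f) \in R_A^{\mathrm{top}}[\lambda^{-1}]$ by hypothesis on $f$, the proposition (whose proof uses the Bruhat decomposition of $L(\mathcal{O}_\mathfrak{p})$, the dominance of $\lambda$, and the explicit form of $\epsilon_i$) guarantees that $\mathbb{T}_\epsilon(\mathrm{ev}_{E_\mathfrak{p}}(f))$ still lies in $R_A^{\mathrm{top}}[\lambda^{-1}]$. Combined with the equivariance identity, this puts $\mathrm{ev}_{E_\mathfrak{p}}(\mathbb{T}_\epsilon f)$ inside $R_A^{\mathrm{top}}[\lambda^{-1}]$, and hence $\mathbb{T}_\epsilon f \in \widetilde{R}_A[\lambda^{-1}]$ by definition. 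There is no real obstacle here; the only subtlety, already handled in the preceding proposition, is the failure of the Bruhat decomposition over a general $A$, which is precisely why $\widetilde{R}_A$ was introduced as the pullback of the well-behaved integral structure from the topological side.
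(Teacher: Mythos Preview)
Your proof is correct and follows exactly the same route as the paper: the corollary is stated immediately after the observation that $\mathrm{ev}_{E_\mathfrak{p}}$ is $\mathbb{T}_\epsilon$-equivariant, and the argument is precisely that $\widetilde{R}_A[\lambda^{-1}]$ is the preimage under this equivariant map of the $\mathbb{T}_\epsilon$-stable lattice $R_A^{\mathrm{top}}[\lambda^{-1}]$ from Proposition~\ref{Hecke operator preserves top rep}.
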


    Next we consider ordinary projectors.
    \begin{definition}
    	Consider as above
    	a character
    	$\lambda\in
    	X^{\ast}_\mathrm{dm}
    	(\widetilde{T}_{\widetilde{P}})$ and
    	we put
    	$\mathbb{T}_{\widetilde{P}}
    	=
    	\prod_{i=1}^r\mathbb{T}_{\epsilon_i}$
    	and
    	\[
    	e_{\widetilde{P}}
    	=\lim\limits_{n\rightarrow\infty}
    	(\mathbb{T}_{\widetilde{L}})^{n!}.
    	\]
    \end{definition}
    
    It is easy to see that this limit is independent of the choice of the
    set of generators.
    Then one has
    \begin{proposition}\label{ordinary algebraic form}
    	      For
    	      $\lambda\in X^{\ast}_\mathrm{dm}
    	      (\widetilde{T}_{\widetilde{P}})$
    	      and
    	      a finite flat $\mathcal{O}_\mathfrak{p}$-algebra
    	      $A$,
              we have an isomorphism of
              $A[1/p]$-modules:
              \[
              e_{\widetilde{P}}
              R_{A[1/p]}[\lambda^{-1}]
              \simeq
              A[\frac{1}{p}][\lambda^{-1}].
              \]
              Similarly,
              we have an isomorphism of
              $A$-modules,
              which is compatible with the above isomorphism:
              \[
              e_{\widetilde{P}}R_A[\lambda^{-1}]
              \simeq
              A[\lambda^{-1}].
              \]
    \end{proposition}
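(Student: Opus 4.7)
The plan is to analyze $e_{\widetilde{P}}$ via the $T$-weight decomposition of $R_{A[1/p]}[\lambda^{-1}]$: first I would establish the rational isomorphism by a direct eigenvalue calculation, and then upgrade to the integral statement using the surjectivity of $\mathrm{Res}_A$ together with the fact that $e_{\widetilde{P}}$ acts as the identity on the rank-one module $A[\lambda^{-1}]$.

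For the rational statement, decompose $R_{A[1/p]}[\lambda^{-1}] = \bigoplus_\mu R^\mu$ into weight spaces for the left-translation action of $T$ (after a harmless base change to a splitting field if needed). Since $\lambda \in X^{\ast}_{\mathrm{dm}}(\widetilde{T}_{\widetilde{P}}) \subset X^{\ast}_{\mathrm{dm}}(T)$ is $B$-dominant and $B_L = B \cap L$ inherits the positive system from $B$, the Borel--Weil theorem identifies $R_{A[1/p]}[\lambda^{-1}] = \mathrm{Ind}^L_{B_L}(\lambda^{-1})$ with the dual Weyl module of highest weight $\lambda$, whose $\lambda$-weight space is one-dimensional, while every other weight $\mu$ satisfies $\lambda - \mu \in \mathbb{N}\langle\widetilde{\Delta}^{\ast}\rangle \setminus \{0\}$. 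Writing $\mathbb{T}_\epsilon f(g) = L_{\epsilon^{-1}}(R_{\epsilon^{-1}} f)(g)$ and using the right transformation law $R_\epsilon f = \lambda^{-1}(\epsilon) f$ for $\epsilon \in T$, a direct calculation shows that on $R^\mu$ the operator $\mathbb{T}_\epsilon$ acts by the scalar $(\lambda - \mu)(\epsilon)$. Specializing to $\epsilon = \epsilon_i = \epsilon_i'(p)$ this scalar equals $p^{\langle \lambda - \mu, \epsilon_i'\rangle}$; the exponents are non-negative (each $\epsilon_i'$ is dominant) and their sum $\sum_i \langle \lambda - \mu, \epsilon_i'\rangle$ is strictly positive whenever $\mu \neq \lambda$, since $\sum_i \epsilon_i'$ is strictly dominant. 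Hence $\mathbb{T}_{\widetilde{P}} = \prod_i \mathbb{T}_{\epsilon_i}$ acts on each $R^\mu$ with $\mu \neq \lambda$ by a positive power of $p$ and as the identity on $R^\lambda$, so $e_{\widetilde{P}} = \lim_n \mathbb{T}_{\widetilde{P}}^{n!}$ annihilates $\bigoplus_{\mu \neq \lambda} R^\mu$ and is the identity on $R^\lambda$. Since $\mathrm{Res}_{A[1/p]}$ identifies $R^\lambda$ with $A[1/p][\lambda^{-1}]$, the first isomorphism follows.

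For the integral statement, I would combine three observations: (i) $\mathrm{Res}_A$ is $\mathbb{T}_\epsilon$-equivariant by construction and hence commutes with $e_{\widetilde{P}}$; (ii) $\mathbb{T}_\epsilon$ acts trivially on $A[\lambda^{-1}]$ because conjugation by $\epsilon$ is trivial on the abelian quotient $\widetilde{T}_{\widetilde{P}}$, so $e_{\widetilde{P}}$ restricts to the identity there; and (iii) $\mathrm{Res}_A$ is surjective, which follows from the existence of an integral highest weight vector $v_\lambda \in R_{\mathcal{O}_\mathfrak{p}}[\lambda^{-1}]$ of the dual Weyl module for the reductive group $L/\mathcal{O}_\mathfrak{p}$, normalized so that $v_\lambda(1) = 1$, which base-changes along $\mathcal{O}_\mathfrak{p} \to A$ to a section of $R_A[\lambda^{-1}]$ restricting to a generator of $A[\lambda^{-1}]$. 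Combining these, $\mathrm{Res}_A$ induces a surjection $e_{\widetilde{P}} R_A[\lambda^{-1}] \twoheadrightarrow A[\lambda^{-1}]$; the rational isomorphism realizes $e_{\widetilde{P}} R_A[\lambda^{-1}]$ as an $A$-submodule of $A[1/p][\lambda^{-1}]$ on which $\mathrm{Res}$ is simply the inclusion, which forces this surjection to be an isomorphism.

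The main technical subtlety will be the integral surjectivity of $\mathrm{Res}_A$: one must exhibit a highest weight vector of the dual Weyl module that survives the base change $\mathcal{O}_\mathfrak{p} \to A$ while remaining non-vanishing at the identity. This ultimately hinges on $\lambda$ being dominant for $B_L$ and on the formation of the algebraic induction being compatible with flat base change between $\mathcal{O}_\mathfrak{p}$-algebras.
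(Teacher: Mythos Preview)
Your approach is correct and, for the rational isomorphism, takes a genuinely different route from the paper. The paper argues injectivity geometrically: an element of the kernel $R^0_{A[1/p]}[\lambda^{-1}]$ vanishes on $\widetilde{P}_L$, and since conjugation by $\prod_i \epsilon_i$ contracts the opposite parabolic $\widetilde{P}_L^\circ$ into $\widetilde{P}_L^\circ\cap\widetilde{P}_L$ while the big cell $\widetilde{P}_L^\circ\widetilde{P}_L$ is dense in $L$, iterating $\mathbb{T}_{\widetilde{P}}$ forces such a function to zero. Your $T$-weight eigenvalue calculation replaces this contraction argument with an explicit diagonalization of $\mathbb{T}_\epsilon$ (acting by $p^{\langle\lambda-\mu,\epsilon_i'\rangle}$ on the $\mu$-weight space), which yields the same conclusion more quantitatively at the cost of invoking Borel--Weil and the weight structure of the dual Weyl module; the paper's route needs only the density of the big cell. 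For surjectivity both arguments amount to exhibiting a vector fixed by $\mathbb{T}_\epsilon$ with nonzero restriction to $\widetilde{P}_L$; the paper's description of this vector as ``$1$ on $\widetilde{P}_L$ and $0$ otherwise'' is informal, and your formulation via an integral highest-weight vector normalized by $v_\lambda(1)=1$ is the precise version. For the integral statement the two arguments coincide: surjectivity via that same vector, and injectivity by embedding $R_A^0[\lambda^{-1}]$ into $R_{A[1/p]}^0[\lambda^{-1}]$ where $e_{\widetilde{P}}$ has already been shown to vanish.
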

    \begin{proof}
    	     Consider the first isomorphism.
             For the surjectivity,
             consider the function
             $f\in
             R_{A[1/p]}[\lambda^{-1}]$
             which is $1$ on $\widetilde{P}_L$
             and $0$ otherwise.
             Since $\widetilde{P}_L\supset B_L$,
             $f$ is well-defined
             and in fact
             $f\in
             R_{A[1/p]}[\lambda^{-1}]\backslash
             R_{A[1/p]}^0[\lambda^{-1}]$.
             Moreover, the action of
             $\mathbb{T}_\epsilon$ on $f$ is trivial,
             and
             $\mathrm{Res}_{A[1/p]}(f)\neq0$,
             thus the surjectivity follows.

             For the injectivity,
             it is enough to show that
             $e_{\widetilde{P}}
             R_{A[1/p]}^0[\lambda^{-1}]=0$.
             Note that
             the big cell
             $B_L^\circ B_L$
             is dense in
             $L$
             (\cite[§II.1.9]{Jantzen2003},
             over the field
             $A[1/p]$),
             so is
             $\widetilde{P}_L^\circ\widetilde{P}_L$.
             Thus an element $f\in
             R_{A[1/p]}[\lambda^{-1}]$
             is determined by its restriction to
             $\widetilde{P}_L^\circ \widetilde{P}_L$.
             Moreover
             the conjugate action of
             $\prod_{i=1}^r\epsilon_i$
             contracts $\widetilde{P}_L^\circ$
             into
             $\widetilde{P}_L^\circ\cap\widetilde{P}_L$.
             Yet by definition,
             any
             $f\in R_{A[1/p]}^0[\lambda^{-1}]$
             satisfies
             $f|_{\widetilde{P}_L}=0$
             and thus we see that
             $e_{\widetilde{P}}f=0$.

             Now consider the second isomorphism.
             The surjectivity is the same as above.
             For the injectivity,
             any $f\in R_A^0[\lambda^{-1}]$
             is also an element in
             $R_{A[1/p]}^0[\lambda^{-1}]$
             and thus
             $e_{\widetilde{P}}f=0$.
    \end{proof}
    
    Similarly we have
    \begin{proposition}
    	For $\lambda\in X^{\ast}_\mathrm{dm}(\widetilde{T}_{\widetilde{P}})$
    	and a finite flat
    	$\mathcal{O}_\mathfrak{p}$-algebra $A$,
    	we have a commutative diagram with
    	isomorphic horizontal arrows
    	\[
    	\begin{tikzcd}
    	e_{\widetilde{P}}R_A^\mathrm{top}[\lambda]
    	\arrow[r,"\simeq"]
    	\arrow[d,hook]
    	&
    	A[\lambda^{-1}]
    	\arrow[d,hook]
    	\\
    	e_{\widetilde{P}}R_{A[1/p]}^\mathrm{top}[\lambda^{-1}]
    	\arrow[r,"\simeq"]
    	&
    	A[\frac{1}{p}][\lambda^{-1}]
    	\end{tikzcd}
    	\]
    \end{proposition}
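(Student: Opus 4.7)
The plan is to mirror the proof of Proposition \ref{ordinary algebraic form}, substituting the schematic Bruhat decomposition of $L$ by the Iwahori-Bruhat decomposition
\[
L(\mathcal{O}_\mathfrak{p}) = \bigsqcup_{w} I_{B_L^\circ}\, w\, B_L(\mathcal{O}_\mathfrak{p})
\]
already invoked in the proof of Proposition \ref{Hecke operator preserves top rep}. The crucial topological input is that each Iwahori cell is \emph{both open and closed} in $L(\mathcal{O}_\mathfrak{p})$, since it is the preimage of a single Bruhat cell in the finite set $L(\kappa_\mathfrak{p})$. A preliminary observation, valid also in the algebraic case, is that $\mathbb{T}_\epsilon$ (and hence $e_{\widetilde{P}}$) acts as the identity on $A[\lambda^{-1}]$ and $A[1/p][\lambda^{-1}]$: since $\lambda$ factors through the abelian quotient $\widetilde{T}_{\widetilde{P}}$ and $\epsilon \in \widetilde{T}_{\widetilde{P}}(E_\mathfrak{p})$, the conjugation $\epsilon(\cdot)\epsilon^{-1}$ induces the identity on $\widetilde{T}_{\widetilde{P}}$.

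For surjectivity of both horizontal arrows, I would construct a canonical preimage of $1 \in A[\lambda^{-1}]$. Define $f_0 \colon L(\mathcal{O}_\mathfrak{p}) \to A$ to vanish on every Iwahori cell other than the identity cell $I_{B_L^\circ}\, B_L(\mathcal{O}_\mathfrak{p})$, and to satisfy $f_0(itu) = \lambda^{-1}(t)$ for $i \in I_{B_L^\circ}$, $t \in T(\mathcal{O}_\mathfrak{p})$, $u \in U_L(\mathcal{O}_\mathfrak{p})$. Continuity follows from the open-closed nature of the cells; the relation $f_0(g\cdot t_0u_0) = \lambda^{-1}(t_0)f_0(g)$ reduces, after noting that $T$ normalises $U_L$, to the multiplicativity of $\lambda^{-1}$. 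Since $\mathrm{Res}_A^\mathrm{top}(f_0)$ is the $\lambda^{-1}$-multiple of the identity on $\widetilde{P}_L(\mathcal{O}_\mathfrak{p})$, corresponding to $1 \in A[\lambda^{-1}]$, the triviality of $e_{\widetilde{P}}$ on $A[\lambda^{-1}]$ yields $\mathrm{Res}_A^\mathrm{top}(e_{\widetilde{P}} f_0) = 1$. Surjectivity of the lower arrow is obtained by inverting $p$.

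For injectivity of the lower arrow, it suffices to show $e_{\widetilde{P}} R_{A[1/p]}^{\mathrm{top},0}[\lambda^{-1}] = 0$. Given $f$ with $f|_{\widetilde{P}_L(\mathcal{O}_\mathfrak{p})} = 0$, one argues as in Proposition \ref{ordinary algebraic form}: because $\epsilon = \prod_i \epsilon_i$ lies in the positive cone of $\widetilde{T}_{\widetilde{P}}$, the inverse conjugation $g \mapsto \epsilon g \epsilon^{-1}$ contracts the unipotent radical $U_{\widetilde{P}_L^\circ}(\mathcal{O}_\mathfrak{p})$ into smaller and smaller subgroups, so that iterated conjugation drives any element of the identity Iwahori cell arbitrarily close to $\widetilde{P}_L(\mathcal{O}_\mathfrak{p})$ (up to right multiplication by $B_L(\mathcal{O}_\mathfrak{p})$). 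Continuity of $f$ and the vanishing hypothesis then force $(\mathbb{T}_{\widetilde{P}})^{n!}f(g) \to 0$ uniformly. The non-identity cells are handled similarly by tracking how conjugation permutes them together with the contraction on the unipotent part. Injectivity of the upper arrow is immediate from the inclusion $R_A^\mathrm{top}[\lambda^{-1}] \hookrightarrow R_{A[1/p]}^\mathrm{top}[\lambda^{-1}]$.

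The main obstacle is making the contraction argument rigorous in the topological setting. In the algebraic case the density of the big cell $\widetilde{P}_L^\circ\widetilde{P}_L$ in $L$ provided a clean substitute, but here one must carefully combine the Iwahori decomposition, the shrinking behaviour of $\epsilon^n I_{B_L^\circ}\epsilon^{-n}$ within the opposite Iwahori radical, the $\mathcal{O}_\mathfrak{p}$-integrality established in Proposition \ref{Hecke operator preserves top rep}, and the uniform continuity of functions in the topological induction to conclude the desired vanishing.
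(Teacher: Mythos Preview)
Your approach is essentially the one the paper has in mind; the paper's own proof is the single sentence ``the proof is exactly the same as in the above proposition,'' so you are really filling in what the author left implicit. The adaptation via the Iwahori--Bruhat decomposition in place of big-cell density is the correct topological substitute, and you have identified the right contraction mechanism for injectivity.

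One simplification worth noting for surjectivity: rather than supporting $f_0$ on the identity Iwahori cell (where the decomposition $g=itu$ is not unique, since $I_{B_L^\circ}\cap B_L(\mathcal{O}_\mathfrak{p})\supset T(\mathcal{O}_\mathfrak{p})$, so well-definedness needs a further word), you can mirror the algebraic construction more literally. Since $\lambda\in X^\ast(\widetilde{T}_{\widetilde{P}})$ extends to a character of $\widetilde{P}_L$ trivially on $\widetilde{P}_L^{\mathrm{der}}$, and since $\widetilde{P}_L(\mathcal{O}_\mathfrak{p})$ is open and closed in $L(\mathcal{O}_\mathfrak{p})$ (it is the preimage of $\widetilde{P}_L(\kappa_\mathfrak{p})$ under reduction), the function $f_0(g)=\lambda^{-1}(g)$ for $g\in\widetilde{P}_L(\mathcal{O}_\mathfrak{p})$ and $0$ otherwise is continuous, lies in $R_A^{\mathrm{top}}[\lambda^{-1}]$, is literally fixed by each $\mathbb{T}_\epsilon$ (because $\epsilon$ normalizes $\widetilde{P}_L$ and conjugation is invisible to the abelian quotient), and has $\mathrm{Res}_A^{\mathrm{top}}(f_0)=1$. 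This avoids the well-definedness check entirely and makes the parallel with Proposition~\ref{ordinary algebraic form} exact.
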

    \begin{proof}
    	The proof is exactly the same as in
    	the above proposition.
    \end{proof}

    Combining the above propositions,
    we see
    \begin{corollary}\label{ordinary modified=ordinary}
    	For any
    	$\lambda\in X^{\ast}_\mathrm{dm}(\widetilde{T}_{\widetilde{P}})$
    	and a finite flat
    	$\mathcal{O}_\mathfrak{p}$-algebra
    	$A$,
    	we have an isomorphism of
    	$A$-modules
    	\[
    	e_{\widetilde{P}}R_A[\lambda^{-1}]
    	\simeq
    	e_{\widetilde{P}}\widetilde{R}_A[\lambda^{-1}]
    	\simeq
    	A[\lambda^{-1}].
    	\]
    \end{corollary}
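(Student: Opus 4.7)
The plan is to derive the corollary directly from the two preceding propositions by a sandwich argument, exploiting that $\widetilde R_A[\lambda^{-1}]$ sits between $R_A[\lambda^{-1}]$ and $R_{A[1/p]}[\lambda^{-1}]$ and is stable under $e_{\widetilde P}$ (by the corollary immediately preceding the definition of the $\widetilde P$-ordinary projector). Applying $e_{\widetilde P}$ to this chain of inclusions yields
\[
e_{\widetilde P}R_A[\lambda^{-1}] \hookrightarrow e_{\widetilde P}\widetilde R_A[\lambda^{-1}] \hookrightarrow e_{\widetilde P}R_{A[1/p]}[\lambda^{-1}] \simeq A\bigl[\tfrac{1}{p}\bigr][\lambda^{-1}],
\]
where the leftmost term is already identified with $A[\lambda^{-1}]$ and the rightmost identification is the first isomorphism of Proposition \ref{ordinary algebraic form}.

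The crucial step is to show that the image of $e_{\widetilde P}\widetilde R_A[\lambda^{-1}]$ under the rightmost isomorphism lands inside the lattice $A[\lambda^{-1}]\subset A[1/p][\lambda^{-1}]$. To see this, I would recall that the isomorphism of Proposition \ref{ordinary algebraic form} is induced by $\mathrm{Res}_{A[1/p]}$, i.e., evaluation of an algebraic function on $L_{A[1/p]}$ at (essentially) the identity of $\widetilde P_L$. For $f\in\widetilde R_A[\lambda^{-1}]$, the defining condition $\mathrm{ev}_{E_\mathfrak{p}}(f)\in R_A^{\mathrm{top}}[\lambda^{-1}]$ forces $f(g)\in A$ for every $g\in L(\mathcal O_\mathfrak{p})$; specializing to $g=e$ shows that the image of $f$ in $A[1/p][\lambda^{-1}]$ already lies in $A[\lambda^{-1}]$.

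Combining these observations, the composition
\[
e_{\widetilde P}R_A[\lambda^{-1}] \hookrightarrow e_{\widetilde P}\widetilde R_A[\lambda^{-1}] \longrightarrow A[\lambda^{-1}]
\]
coincides with the isomorphism $\mathrm{Res}_A$ of Proposition \ref{ordinary algebraic form}, which forces each arrow to be an isomorphism and so yields both desired identifications simultaneously. I anticipate no serious obstacle: the argument is essentially formal bookkeeping once one checks that the identifications in Proposition \ref{ordinary algebraic form} and its topological analogue implement restriction to $\widetilde P_L$ and $\widetilde P_L(\mathcal O_\mathfrak{p})$ respectively, and that these are intertwined by $\mathrm{ev}_{E_\mathfrak{p}}$ — both points are immediate from the definitions.
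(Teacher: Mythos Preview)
Your proposal is correct and is exactly the ``combining the above propositions'' that the paper indicates: the sandwich $e_{\widetilde P}R_A[\lambda^{-1}]\hookrightarrow e_{\widetilde P}\widetilde R_A[\lambda^{-1}]\hookrightarrow e_{\widetilde P}R_{A[1/p]}[\lambda^{-1}]$, together with the observation that $\mathrm{Res}_{A[1/p]}$ restricted to $\widetilde R_A[\lambda^{-1}]$ factors through $\mathrm{Res}_A^{\mathrm{top}}\circ\mathrm{ev}_{E_\mathfrak p}$ and hence lands in $A[\lambda^{-1}]$, is precisely how the two propositions combine.
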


    \subsection{Classical automorphic forms}
    Now we will globalize
    the above discussion to the whole
    Shimura variety
    $Sh$
    (or its the $\mu$-ordinary locus).
    We define the $G$-torsor
    \[
    \mathfrak{G}^\Sigma
    :=
    \underline{\mathrm{Isom}}
    (
    (\mathcal{O}_{Sh^\Sigma}\otimes_\mathbb{Z}
    \Lambda,
    \mathfrak{t}),
    (e^\ast H^1_\mathrm{dR}
    (\mathcal{A}^\Sigma/Sh^\Sigma),
    \mathfrak{t}_\mathrm{dR}^\circ
    )
    ),
    \]
    where
    $e\colon
    Sh^\Sigma
    \rightarrow
    \mathcal{A}^\Sigma$
    is the unit section.

    Similarly,
    the cocharacter
    $\nu_x^{-1}$
    induces a filtration
    $\mathrm{Fil}^iV_{\mathcal{O}_\mathfrak{p}}$:
    \[
    0=
    \mathrm{Fil}^{-1}V_{\mathcal{O}_\mathfrak{p}}
    \subset
    \mathrm{Fil}^0V_{\mathcal{O}_\mathfrak{p}}
    \subset
    \mathrm{Fil}^1V_{\mathcal{O}_\mathfrak{p}}
    =
    V_{\mathcal{O}_\mathfrak{p}}.
    \]
    Then $(P_V)_{\mathcal{O}_\mathfrak{p}}$
    is exactly the stabilizer in
    $\mathrm{GSp}(V,\psi)_{\mathcal{O}_\mathfrak{p}}$
    of this filtration.
    We define the
    $P$-torsor
    as the sub-torsor
    of
    $\mathfrak{G}^\Sigma$
    \[
    \mathfrak{P}^\Sigma
    \subset
    \mathfrak{G}^\Sigma
    \]
    which preserves the filtrations on
    $\mathcal{O}_{Sh^\Sigma}
    \otimes_{\mathbb{Z}}
    V(\mathbb{Z})$
    (induced from
    $V$)
    and
    on
    $H^1_\mathrm{dR}
    (\mathcal{A}^\Sigma/
    Sh^\Sigma)$
    (the Hodge filtration).
    Then we define the $L$-torsor to be the quotient
    \begin{equation}\label{L-torsor}
    \mathfrak{L}^\Sigma
    :=
    \mathfrak{P}^\Sigma/U.
    \end{equation}

    For any character
    $\lambda\in X^\ast(\widetilde{T}_{\widetilde{P}})$,
    we write
    \[
    \mathcal{V}_\lambda^\Sigma
    :=
    \mathfrak{L}^\Sigma\times^L
    R_{\mathcal{O}_\mathfrak{p}}[\lambda^{-1}]
    \]
    for the contracted product,
    which is a quasi-coherent sheaf on
    $Sh^\Sigma$.
    We then write
    $\mathcal{V}_\lambda$
    for the restriction of
    $\mathcal{V}_\lambda^\Sigma$
    from
    $Sh^\Sigma$
    to
    $Sh$
    and
    $\mathcal{V}_\lambda^{\Sigma}(-C^\Sigma)$
    for the extension of
    $\mathcal{V}_\lambda$
    by zero
    back to
    the compactification
    $Sh^\Sigma$.

    \begin{definition}
    	For any $\mathcal{O}_\mathfrak{p}$-algebra
    	$A$,
    	we call
    	$H^0(Sh^\Sigma_A,
    	\mathcal{V}_\lambda^\Sigma)$
    	the space of modular forms
    	on
    	$Sh^\Sigma$
    	of weight $\lambda$,
    	of level $K$,
    	of coefficients in $A$,
    	and
    	call
    	$H^0(Sh^\Sigma_A,
    	\mathcal{V}_\lambda^\Sigma(-C^\Sigma))$
    	the space of cuspidal
    	modular forms on
    	$Sh^\Sigma$
    	of weight $\lambda$,
    	of level $K$,
    	of coefficients in $A$.
    \end{definition}

    \begin{remark}
    	By the definition of the Hasse invariant
    	$H$,
    	we know that
    	$H\in
    	H^0(Sh_1,\omega_{Sh}^{\otimes N_G})$
    	is a modular form of weight
    	$N_G\lambda_G$.
    \end{remark}

    \subsection{$p$-adic automorphic forms}\label{p-adic modular forms}

	In this subsection, we
	define the space of $p$-adic
	automorphic forms.
	For any integer $k\geq0$
	and any
	$\mathbb{W}$-scheme
	$Y$,
	we write
	$Y_k:=
	Y\times_{\mathbb{W}}\mathbb{W}_k$
	for the base change.
	We fix a point
	$x_0
	\in
	Sh_1^\mu$
	of characteristic $p$ in the (non-compactified)
	$\mu$-ordinary locus
	and denote by
	$\mathcal{A}_{x_0}[p^\infty]$
	the $p$-divisible group
	associated to
	$\mathcal{A}_{x_0}$.

	Recall the notion of
	$p$-divisible groups with $G$-structure
	as in
	\cite[Definition 3.1]{ShankarZhou2016}:
	for a $p$-divisible group $\mathcal{D}$
	over a ring $R$,
	we write
	$\mathbb{D}(\mathcal{D})$
	for the contravariant Dieudonn\'{e}
	crystal
	associated to $\mathcal{D}$.
	Then a $p$-divisible group with $G$-structure is the data
	of a $p$-divisible group
	$\mathcal{D}$ over
	$\overline{\mathbb{F}}_p$
	and
	tensors
	$\mathfrak{s}_{0}
	=(s_{\alpha,0})
	\subset
	\mathbb{D}(\mathcal{D})(\mathbb{W})^\otimes$
	such that
	there exists a finite free
	$\mathbb{Z}_p$-module
	$V$ with a $\mathbb{W}$-linear isomorphism
	$V\otimes_{\mathbb{Z}_p}\mathbb{W}
	\simeq
	\mathbb{D}(\mathcal{D})(\mathbb{W})$
	and the stabilizer in $\mathrm{GL}(V)$
	of the images of the tensors
	$\mathfrak{s}_0$
	in $V^\otimes$
	via the above isomorphism
	is exactly the group $G$.
	By \cite[Theorem 5.5]{ShankarZhou2016},
	for any such point
	$x_0$,
	there is a unique lifting
	$\widetilde{x}_0
	\in
	Sh(\mathbb{W})$
	such that
	the action of the subgroup
	$I_{x_0}\subset
	\mathrm{Aut}_\mathbb{Q}(\mathcal{A}_{x_0})$
	on $\mathcal{A}_{x_0}$
	lifts to
	$\mathcal{A}_{\widetilde{x}_0}$.
	Here
	$I_{x_0}$ consists of those automorphisms fixing the tensors
	$\mathfrak{t}_{\text{\'{e}t},\ell,x_0}$
	for all $\ell\neq p$
	(\textit{cf.}
	\cite[§5.4]{ShankarZhou2016}).
	We call
	$\mathcal{A}_{\widetilde{x}_0}$
	the canonical lift of
	$\mathcal{A}_{x_0}$.
	For any other point
	$y_0
	\in
	Sh_0^\mu$,
	we have an isomorphism of the canonical lifts
	$\mathcal{A}_{\widetilde{x}_0}[p^\infty]
	\simeq
	\mathcal{A}_{\widetilde{y}_0}[p^\infty]$.
	Indeed,
	we have an isomorphism of $p$-divisible groups
	$\mathcal{A}_{x_0}[p^\infty]
	\simeq
	\mathcal{A}_{y_0}[p^\infty]$
	(\cite[Proposition 5.4]{ShankarZhou2016}).
	Moreover,
	by Grothendieck-Messing theory,
	lifts of
	$\mathcal{A}_{x_0}[p^\infty]$
	to
	$\mathbb{W}$
	correspond to lifts of the natural filtration
	$\overline{\mathrm{Fil}}$
	to $\mathbb{W}$
	on
	$\mathbb{D}(\mathcal{A}_{x_0}[p^\infty])
	\otimes_{\mathbb{Z}_p}\mathbb{F}_p$
	induced by the mod $p$
	cocharacter $\mu(\mathrm{mod\ }p)$
	to
	$\mathbb{D}(\mathcal{A}_{x_0}[p^\infty])$.
	Here $\mu\in X_\ast(T)$
	is the dominant representative of the
	conjugacy class of
	$\nu_x^{-1}$ in $X_\ast(T)$
	(independent of $x$).
	In particular,
	the canonical lift
	$\mathcal{A}_{\widetilde{x}_0}[p^\infty]$
	corresponds to
	the filtration on
	$\mathbb{D}(\mathcal{A}_{x_0}[p^\infty])$
	given by the cocharacter
	$\mu$.
	By comparing the filtrations on
	both sides
	$\mathbb{D}(\mathcal{A}_{x_0}[p^\infty])
	\simeq
	\mathbb{D}(\mathcal{A}_{y_0}[p^\infty])$,
	we see that
	$\mathcal{A}_{\widetilde{x}_0}[p^\infty]$
	is isomorphic to
	$\mathcal{A}_{\widetilde{y}_0}[p^\infty]$.
	We define $p$-divisible groups
	\[
	\widetilde{\mathcal{BT}}
	:=\mathcal{A}_{\widetilde{x}_0}[p^\infty],
	\quad
	\mathcal{BT}
	:=
	\mathcal{A}_{x_0}[p^\infty].
	\]

	We have a slope decomposition
	$\mathcal{BT}
	=
	\prod_{i=1}^r
	\mathcal{BT}_i$
	with each
	$\mathcal{BT}_i$
	isoclinic of slope
	$\lambda_i$
	and
	$1\geq\lambda_1>\lambda_2>\cdots>\lambda_r\geq0$.
	This slope filtration lifts to
	$\widetilde{\mathcal{BT}}$:
	\[
	0
	=
	\widetilde{\mathcal{BT}}_0
	\subset
	\widetilde{\mathcal{BT}}_1
	\subset
	\widetilde{\mathcal{BT}}_2
	\subset
	\cdots
	\subset
	\widetilde{\mathcal{BT}}_r
	=
	\widetilde{\mathcal{BT}}
	\]
	such that each graded piece
	$\mathrm{gr}_i(\widetilde{\mathcal{BT}})
	=
	\widetilde{\mathcal{BT}}_i/\widetilde{\mathcal{BT}}_{i-1}$
	lifts
	$\mathcal{BT}_i$
	from
	$\overline{\mathbb{F}}_p$
	to
	$\mathbb{W}$.

	We consider then the construction of Igusa towers.
	Set $r_0=\lfloor\frac{1+r}{2}\rfloor$.
	For any
	$x\in
	Sh_1^\mu$
	we consider
	an isomorphism
	\[
	\varphi\colon
	\prod_{i=1}^{r_0}\mathcal{A}_{x}[p^\infty]_i
	\simeq
	\prod_{i=1}^{r_0}\mathcal{A}_{x_0}[p^\infty]_i,
	\]
	which,
	if $r$ is an odd integer,
	respects the polarization at the components
	$\mathcal{A}_{x}[p^\infty]_{r_0}$
	and
	$\mathcal{A}_{x_0}[p^\infty]_{r_0}$.
	Then one can extend $\varphi$ in a unique way
	to an isomorphism
	\[
	\varphi\colon
	\prod_{i=1}^r\mathcal{A}_{x}[p^\infty]_i
	\simeq
	\prod_{i=1}^r\mathcal{A}_{x_0}[p^\infty]_i
	\]
	respecting the polarizations on both sides.
	Now for a cuspidal label representative $W$
	of rank $t$
	of a component of the
	boundary of
	$Sh^\Sigma$
	of rank $t$
	which intersects non-trivially with the closure of
	a lifting of $x$ from
	$Sh_1^\mu$
	to
	$Sh^\mu$,
    by \cite[§3.2.1]{Madapusi2012},
	we have an extension of the
	universal abelian scheme
	$\mathcal{A}_W$
	to a semi-abelian scheme
	$\mathcal{G}$
	\[
	0
	\rightarrow
	\mathbb{G}_m\otimes W
	\rightarrow
	\mathcal{G}
	\rightarrow
	\mathcal{A}_W
	\rightarrow
	0
	\]
	as well as their $p$-divisible groups
	\[
	0
	\rightarrow
	\mu_{p^\infty}^t
	\rightarrow
	\mathcal{G}[p^\infty]
	\rightarrow
	\mathcal{A}_W[p^\infty]
	\rightarrow
	0.
	\]
	By construction of the toroidal compactification for
	the integral Siegel Shimura variety
	$Sh(\mathrm{GSp}(V,\psi),S^\pm,K_V)$, we see that
	the difference between $\mathcal{A}_W[p^\infty]$
	and
	$\mathcal{A}_{x_0}[p^\infty]$
	is $\mu_{p^\infty}^t$ and
	$(\mathbb{Q}_p/\mathbb{Z}_p)^t$.
	Thus we see that
	for any $y\in W$,
	any isomorphism $\varphi$ as above
	extends in a unique way to an isomorphism
	\begin{equation}\label{extension to the boundary}
	\varphi_W
	\colon
	(\mathcal{G}_y[p^\infty])^\circ
	\simeq
	\prod_{i=1}^{r-1}\mathcal{A}_{x_0}[p^\infty]_i,
	\end{equation}
	where
	$(\mathcal{G}_y[p^\infty])^\circ$
	denotes the connected component of
	$\mathcal{G}_y[p^\infty]$.
	Translating this into the language of Dieudonn`{e} crystals,
	we have
	\begin{proposition}\label{extension to the boundary in terms of crystals}
		Fix a point
		$x\in
		Sh_1^\mu$
		and consider an isomorphism
		$\phi
		\colon
		\prod_{i=1}^{r_0}
		\mathcal{A}_x[p^\infty]_i
		\simeq
		\prod_{i=1}^{r_0}
		\mathcal{A}_{x_0}[p^\infty]_i$.
		We assume moreover that for
		$r$ odd,
		$\phi$
		respects the polarization at the components
		$\mathcal{A}_x[p^\infty]_{r_0}$
		and
		$\mathcal{A}_{x_0}[p^\infty]_{r_0}$.
		Then for any $W$ as above and any
		point
		$y\in
		W$,
		$\phi$
		extends in a unique way to an isomorphism of
		Dieudonn\'{e} crystals
		\[
		\mathbb{D}(\phi_W)(\mathbb{W})
		\colon
		\mathbb{D}
		((\mathcal{G}_y[p^\infty])^\circ)
		\simeq
		\mathbb{D}
		(\prod_{i=1}^{r-1}\mathcal{A}_{x_0}[p^\infty]_i).
		\]
		
	\end{proposition}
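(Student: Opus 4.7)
The plan is to deduce Proposition \ref{extension to the boundary in terms of crystals} directly from the isomorphism (\ref{extension to the boundary}) by functoriality of the contravariant Dieudonné crystal functor $\mathbb{D}(-)$. The discussion preceding (\ref{extension to the boundary}) already furnishes, for the geometric point $y \in W$, a unique extension
\[
\varphi_W \colon (\mathcal{G}_y[p^\infty])^\circ \simeq \prod_{i=1}^{r-1}\mathcal{A}_{x_0}[p^\infty]_i
\]
of $\phi$ to an isomorphism of connected $p$-divisible groups over $\overline{\mathbb{F}}_p$ (respecting polarizations in the $r$ odd case). Applying $\mathbb{D}(-)$ to both sides and evaluating at $\mathbb{W}$ produces the required isomorphism $\mathbb{D}(\phi_W)(\mathbb{W})$ of Dieudonné crystals.

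For uniqueness, I would invoke the classical theorem of Dieudonné: the contravariant functor $\mathbb{D}(-)(\mathbb{W})$ is an anti-equivalence between the category of (connected) $p$-divisible groups over $\overline{\mathbb{F}}_p$ and the category of (effective) Dieudonné modules over $\mathbb{W}$. In particular it is fully faithful, so the uniqueness of $\varphi_W$ established in (\ref{extension to the boundary}) transfers verbatim to uniqueness of $\mathbb{D}(\phi_W)(\mathbb{W})$ extending $\mathbb{D}(\phi)(\mathbb{W})$. Compatibility with polarizations comes for free because $\mathbb{D}(-)$ interchanges duality of $p$-divisible groups with duality of crystals.

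The one point that requires a moment of attention—already absorbed into the previous statement but worth making explicit—is the matching of the connected component on the left with the truncated product on the right. In the slope decomposition $\mathcal{BT} = \prod_{i=1}^r\mathcal{BT}_i$, the index $r$ corresponds to the étale slope $\lambda_r = 0$, so $\prod_{i=1}^{r-1}\mathcal{A}_{x_0}[p^\infty]_i$ is precisely the connected (non-étale) part of $\mathcal{A}_{x_0}[p^\infty]$. Meanwhile, the extra multiplicative factor $\mu_{p^\infty}^t$ appearing in the semi-abelian degeneration at the boundary is of slope $\lambda_1 = 1$ and is therefore absorbed into the slope-$1$ piece on the right; the étale quotient $(\mathbb{Q}_p/\mathbb{Z}_p)^t$ is discarded by taking the connected component on the left. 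Thus the ranks match and the isomorphism of $p$-divisible groups (and hence, after applying $\mathbb{D}$, of crystals) is forced.

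The genuine technical work has in fact been carried out in establishing (\ref{extension to the boundary}); the present proposition is essentially its crystalline shadow, and no obstacle beyond Dieudonné's anti-equivalence needs to be overcome.
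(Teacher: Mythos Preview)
Your proposal is correct and matches the paper's approach exactly: the paper presents this proposition with the preamble ``Translating this into the language of Dieudonn\'{e} crystals, we have'' and gives no further argument, so the content is precisely the functoriality of $\mathbb{D}(-)$ applied to (\ref{extension to the boundary}) together with the full faithfulness of the Dieudonn\'{e} functor for uniqueness. Your added paragraph on matching the connected component with the truncated product is a helpful elaboration that the paper leaves implicit.
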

	
	Next we discuss the automorphism group of the $p$-divisible group
	$\mathcal{A}_{x_0}[p^\infty]$.
	We define an algebraic subgroup
	$L_\mu
	\subset
	G_{\mathbb{Q}_p}$
	over $\mathbb{Q}_p$
	as follows:
	for any
	$\mathbb{Q}_p$-algebra $R$,
	\[
	L_\mu(R):=
	\{
	g\in
	G(R\otimes_{\mathbb{Q}_p}\mathbb{W})|
	g^{-1}\sigma(\mu(p))\sigma(g)=\sigma(\mu(p))
	\}.
	\]	
	It is known that
	$L_\mu$
	has the same rank as
	$G_{\mathbb{Q}_p}$
	(\textit{cf.}
	\cite[§§3.3 and 5.4]{ShankarZhou2016}
	and
	\cite[Corollary 2.1.7]{Kisin2016}).
	By \cite[§3.3]{ShankarZhou2016},
	we know also that
	$L_\mu(\mathbb{Q}_p)$
	is the set of automorphisms of
	the $p$-divisible group
	$\mathcal{A}_{x_0}[p^\infty]$
	in the isogeny category,
	fixing the tensors
	$\mathfrak{t}$.
	Next we discuss the integral structure of
	$L_\mu$ over $\mathbb{Z}_p$.
	We know that $L_\mu$
	is an inner form
	of the centralizer
	$L_{\mathbb{Q}_p}
	=
	\mathrm{Cent}_G(\overline{\mu})$ in
	$G_{\mathbb{Q}_p}$ of the cocharacter
	$\overline{\mu}$
	(by definition of $L_\mu$ or see
	\cite[§4.3]{Kottwitz1997}).
	Since we have fixed a representative of
	$\mu$ which is defined over $\mathbb{Z}_p$,
	the $\mathbb{Z}_p$-model
	$L$ of $L_{\mathbb{Q}_p}$
	determines a $\mathbb{Z}_p$-model of
	$L_\mu$
	via the inner twist between
	$L_{\mathbb{Q}_p}$
	and
	$L_\mu$.
	We will fix such a $\mathbb{Z}_p$-model of
	$L_\mu$
	and denote this model by the same letter
	$L_\mu$,
	when no confusion is possible.
	Then one sees immediately
	(using the (resp. truncated) Dieudonn\'{e} crystals associated to
	$\mathcal{A}_{x_0}[p^\infty]$)
	that
	$L_\mu(\mathbb{Z}_p)$,
	resp.,
	$L_\mu(\mathbb{Z}_p/p^m)$,
	is the set of automorphisms of
	$\mathcal{A}_{x_0}[p^\infty]$,
	resp.,
	$\mathcal{A}_{x_0}[p^m]$
	(induced from an automorphism of
	$\mathcal{A}_{x_0}[p^\infty]$)
	fixing the tensors
	$\mathfrak{t}$.
	Lastly it is known that
	$L_\mu(\mathbb{Q}_p)$
	respects the slope filtration
	on
	$\mathbb{D}(\mathcal{A}_{x_0}[p^\infty])$
	induced by
	$\mu$
	(\textit{cf.}
	the proof of
	\cite[Theorem 3.5]{ShankarZhou2016}).

	From the above discussion we get the following:
	\begin{definition-proposition}\label{Igusa tower}
		We write
		\[
		\mathrm{Ig}
		:=
		\underline{\mathrm{Isom}}_{Sh^{\Sigma,\mu},
			\mathrm{pol}}
		((\mathcal{A}[p^\infty],\mathfrak{t}),
		(\widetilde{\mathcal{BT}},\mathfrak{t}_{\mathbb{Z}_p}))
		\]
		for the
		$L_\mu(\mathbb{Z}_p)$-torsor over
		the $\mu$-ordinary locus
		$Sh^{\Sigma,\mu}$,
		consisting of
		isomorphisms 
		\[
		\varphi\colon
		\mathcal{A}_y[p^\infty]
		\simeq
		\widetilde{\mathcal{BT}}
		\]
		as above		
		such that the induced isomorphisms on the
		$\mathbb{W}$-points of the Dieudonn\'{e} crystals
		\[
		\mathbb{D}(\varphi)(\mathbb{W})
		\colon
		\mathbb{D}(\mathcal{A}_y[p^\infty])(\mathbb{W})
		\rightarrow
		\mathbb{D}(\widetilde{\mathcal{BT}})(\mathbb{W})
		\]
		respect the Hodge tensors $\mathfrak{t}$
		and
		$\mathfrak{t}_{x_0}$
		on both sides
		(the associated map on the corresponding
		Dieudonn\'{e} crystals,
		by
		Proposition
		\ref{extension to the boundary in terms of crystals},
		extends uniquely to an isomorphism on the boundary).
		Similarly for any
		$m\geq0$,
		we have an $L_\mu(\mathbb{Z}_p)$-torsor
		\[
		\mathrm{Ig}_m
		\subset
		\underline{\mathrm{Isom}}_{Sh^{\Sigma,\mu}_m,
			\mathrm{pol}}
		((\mathcal{A}[p^\infty],\mathfrak{t}),
		(\widetilde{\mathcal{BT}},\mathfrak{t}_{\mathbb{Z}_p}))
		\]
		over
		$Sh^{\Sigma,\mu}_m$
		which is the pull-back of
		$\mathrm{Ig}$ along
		the closed embedding
		$Sh^{\Sigma,\mu}_m\hookrightarrow
		Sh^{\Sigma,\mu}$.
		Moreover
		for any integer
		$n\geq1$,
		we consider the
		$L_\mu(\mathbb{Z}_p/p^n)$-torsor
		\[
		\mathrm{Ig}_{m,n}
		\subset
		\mathrm{Isom}_{Sh^{\Sigma,\mu}_m,\mathrm{pol}}
		(\mathcal{A}[p^n],\widetilde{\mathcal{BT}}[p^n])
		\]
		consisting of
		isomorphism
		$\varphi_n
		\colon
		\prod_{i=1}^{r_0}
		\mathrm{gr}_i(\mathcal{A}_y[p^n])
		\simeq
		\prod_{i=1}^{r_0}
		\mathrm{gr}_i(\widetilde{\mathcal{BT}})$
		such that
		there is an isomorphism
		$\varphi
		\colon
		\prod_{i=1}^{r_0}
		\mathrm{gr}_i(\mathcal{A}_y[p^\infty])
		\simeq
		\prod_{i=1}^{r_0}
		\mathrm{gr}_i(\widetilde{\mathcal{BT}})$
		with reduction
		$\varphi\equiv\varphi_n(\mathrm{mod} p^n)$
		and
		$\mathbb{D}(\varphi)(\mathbb{W})$
		respects the Hodge tensors
		$\mathfrak{t}$
		and
		$\mathfrak{t}_{x_0}$
		on both sides.
		
		We write
		$\mathrm{Ig}_\infty$
		to be the formal completion of
		$\mathrm{Ig}$
		along the special fibre
		$\mathrm{Ig}_0$.
		Similarly we write
		$Sh^{\Sigma,\mu}_\infty$ to be
		the formal completion of
		$Sh^{\Sigma,\mu}$
		along
		$Sh^{\Sigma,\mu}_0$.
	\end{definition-proposition}

    Let $\widetilde{P}$
    be as above.
    
    \begin{definition}
    	\begin{enumerate}
    		\item 
    		We write
    		\begin{align*}
    		\mathbb{V}
    		&
    		:=
    		H^0(\mathrm{Ig},\mathcal{O}_{\mathrm{Ig}}),
    		&
    		\mathbb{V}_{m,n}
    		&
    		:=
    		H^0(\mathrm{Ig}_{m,n},
    		\mathcal{O}_{\mathrm{Ig}_{m,n}}),
    		\\
    		\mathbb{V}_{m}
    		&
    		:=
    		H^0(\mathrm{Ig}_m,
    		\mathcal{O}_{\mathrm{Ig}_m}),
    		&
    		\mathbb{V}_\infty
    		&
    		:=
    		\lim\limits_{\overleftarrow{m}}
    		\mathbb{V}_m.
    		\end{align*}
    		
    		\item 
    		For any $\widetilde{P}$ as above,
    		we write
    		$\mathbb{V}_\infty^{\widetilde{P}_L^\mathrm{der}}$
    		for the subspace of
    		$\mathbb{V}_\infty$
    		consisting of regular functions 
    		which are invariant under the action of
    		$\widetilde{P}_L^\mathrm{der}$.
    		This is the space of $p$-adic automorphic forms
    		for the parabolic subgroup $\widetilde{P}\subset G$
    		and of level $K$.
    		
    		\item 
    		For any character
    		$\kappa\in X^\ast(\widetilde{T}_{\widetilde{P}})$,
    		we write
    		$\mathbb{V}_\infty^{\widetilde{P}_L^\mathrm{der}}[\kappa]$
    		to be the subspace of
    		$\mathbb{V}_\infty^{\widetilde{P}_L^\mathrm{der}}$
    		on which $\widetilde{T}_{\widetilde{P}}$
    		acts by the character
    		$\kappa$.
    		This is the space of $p$-adic automorphic forms
    		for $\widetilde{P}$ of level $K$ and
    		of character $\kappa$.
    		
    		\item 
    		We write
    		$\mathbb{V}_{\mathrm{cusp},\infty}^{\widetilde{P}_L^\mathrm{der}}$
    		for the subspace of
    		$\mathbb{V}_\infty^{\widetilde{P}_L^\mathrm{der}}$
    		consisting of regular functions
    		which vanish at
    		the cusps
    		$C^\Sigma$.
    		This is the space of cuspidal $p$-adic automorphic
    		forms for $\widetilde{P}$ of level $K$.
    		Similarly
    		for any
    		$\kappa\in
    		X^\ast(\widetilde{T}_{\widetilde{P}})$
    		we have
    		the space of cuspidal
    		$p$-adic automorphic forms
    		$\mathbb{V}^{\widetilde{P}_L^\mathrm{der}}_{\mathrm{cusp},\infty}
    		[\kappa]$
    		for $\widetilde{P}$ of level $K$
    		and of character $\kappa$.
    	\end{enumerate}
    \end{definition}

	\subsection{Hodge-Tate map}
	In this subsection,
	we define the Hodge-Tate map,
	which relates the Igusa tower
	$(\mathrm{Ig}_m)_{m\geq1}$
	to the torsors
	$\mathfrak{L}^\Sigma$.
	For the torsors
	$\mathfrak{P}^\Sigma_?,
	\mathfrak{L}^\Sigma_?$,
	we write
	$\mathfrak{P}^{\Sigma,\mu}_?,
	\mathfrak{L}^{\Sigma,\mu}_?$
	for the restriction of these torsors
	from
	$Sh^\Sigma$
	to the $\mu$-ordinary locus
	$Sh^{\Sigma,\mu}_?$
	($?=0,1,\cdots,\emptyset$).
	Now for any isomorphism
	$\varphi
	\colon
	\mathcal{A}[p^\infty]^\circ
	\rightarrow
	\mathcal{A}_{x_0}[p^\infty]^\circ$
	over
	$Sh^{\Sigma,\mu}_m$ in
	$\mathrm{Ig}_m$,
	we have an induced isomorphism
	\[
	\mathrm{HT}_m(\varphi)
	\colon
	e^\ast
	H^1_\mathrm{dR}(\mathcal{A}/Sh^{\Sigma,\mu}_m)
	\rightarrow
	e^\ast
	H^1_\mathrm{dR}(\mathcal{A}_{x_0}/Sh^{\Sigma,\mu}_m)
	\]
	respecting the Hodge tensors
	$\mathfrak{t},\mathfrak{t}_\mathrm{dR}$
	and the Hodge filtrations
	on both sides.
	Then the above process
	gives us the Hodge-Tate map
	
	\begin{definition}
		We define a map
		\[
		\mathrm{HT}_m
		\colon
		\mathrm{Ig}_m
		\rightarrow
		\mathfrak{L}^{\Sigma,\mu}_m,
		\,
		\varphi
		\mapsto
		\mathrm{HT}_m(\varphi).
		\]
		For a parabolic subgroup
		$\widetilde{P}\subset P$,
		we write
		$\mathrm{HT}_m^{\widetilde{P}}$
		to be the composition
		\[
		\mathrm{HT}_m^{\widetilde{P}}
		\colon
		\mathrm{Ig}_m
		\xrightarrow{\mathrm{HT}_m}
		\mathcal{L}_m^{\Sigma,\mu}
		\rightarrow
		\mathcal{L}_m^{\Sigma,\mu}/\widetilde{P}_L^\mathrm{der}.
		\]
		Moreover,
		we write
		$\mathrm{HT}_\infty
		\colon
		\mathrm{Ig}_\infty
		\rightarrow
		\mathcal{L}_\infty^{\Sigma,\mu}$
		for the inverse limit of the maps
		$(\mathrm{HT}_m)_m$
		and similarly for
		$\mathrm{HT}_\infty^{\widetilde{P}}$.
	\end{definition}
    Note that
    \textit{a priori},
    $\mathrm{Ig}_m$
    is an
    $L_\mu(\mathbb{Z}_p/p^m)$-torsor and
    $\mathfrak{L}_m^{\Sigma,\mu}$
    is an
    $L(\mathbb{Z}_p/p^m)$-torsor.
    However by the relation between
    $L_\mu$ and $L$    
    (\textit{cf.}
    preceding
    Definition-Proposition
    \ref{Igusa tower}
    ),
    we see that
    the Hodge-Tate map
    $\mathrm{HT}_m$
    is equivariant for the actions of
    $L_\mu(\mathbb{Z}_p/p^m)$
    and
    $L(\mathbb{Z}_p/p^m)$
    on both sides.

    Using this map, we can associate to a classical
    modular form a $p$-adic modular form as follows:
    for any
    $f\in
    H^0(Sh^{\Sigma,\mu}_A,
    \mathcal{V}_\lambda^{\Sigma,\mu})$
    with $A$ an
    $\mathcal{O}_\mathfrak{p}/\mathfrak{p^m}$-algebra,
    by definition $f$ is a global section of
    $\mathcal{V}_\lambda^{\Sigma,\mu}$
    and thus $B_L$ acts on $f$ by the character
    $\lambda$.
    As a result $\mathrm{HT}_m^\ast
    (f)$
    is a global section of
    the structural sheaf
    $\mathcal{O}_{\mathrm{Ig}_m}$,
    on which $B_L$
    acts by the character same $\lambda$.
    In summary we have the following morphism
    \[
    \mathrm{HT}_m^\ast
    \colon
    H^0(Sh^\Sigma_A,\mathcal{V}_\lambda^{\Sigma,\mu})
    \rightarrow
    \mathbb{V}_m^{\widetilde{P}_L^\mathrm{der}}[\lambda^{-1}].
    \]
    Moreover, if $f$ is cuspidal,
    so is $\mathrm{HT}^\ast_m(f)$.

    Consider the quotients
    $\mathfrak{L}^{\Sigma,\mu}_m
    /\widetilde{P}_L^\mathrm{der}$
    and
    $\mathfrak{L}^{\Sigma,\mu}_m
    /\widetilde{P}_L$
    of the
    $\widetilde{P}_L$-torsor
    $\mathcal{L}^{\Sigma,\mu}$
    over
    $Sh^{\Sigma,\mu}_m$
    and the quotient
    $\mathrm{Ig}_m/\widetilde{P}_L(\mathcal{O}_\mathfrak{p})$
    of
    $\mathrm{Ig}_m$ by
    $\widetilde{P}_L(\mathcal{O}_\mathfrak{p})$.
    Write the fibre product
    \[
    \widetilde{\mathrm{Ig}}_m^{\widetilde{P}_L}
    :=
    (\mathrm{Ig}_m/\widetilde{P}_L(\mathcal{O}_\mathfrak{p}))
    \times_{\mathfrak{L}_m^{\Sigma,\mu}/\widetilde{P}_L}
    \mathfrak{L}_m^{\Sigma,\mu}/\widetilde{P}_L^\mathrm{der}
    \]
    and consider the projection to the first factor
    $\mathrm{pr}
    \colon
    \widetilde{\mathrm{Ig}}_m^{\widetilde{P}_L}
    \rightarrow
    \mathrm{Ig}_m/\widetilde{P}_L
    (\mathcal{O}_\mathfrak{p})$.
    Note that
    $\widetilde{T}_{\widetilde{P}}$
    acts on this torsor
    by its natural action on
    $\mathfrak{L}_m^{\Sigma,\mu}/\widetilde{P}_L^\mathrm{der}$.
    We have the following result which
    relates the Igusa tower and the
    $L$-torsor
    (\textit{cf.} \cite[Proposition 4.1]{Pilloni2012}):
    \begin{proposition}\label{fundamental isom}
    	For any
    	$\lambda
    	\in
    	X^\ast(\widetilde{T}_{\widetilde{P}})$,
    	the natural morphism
    	\[
    	\widetilde{\mathrm{HT}}_m^{\widetilde{P}_L}
    	\colon
    	\mathrm{Ig}_m
    	/\widetilde{P}_L^\mathrm{der}(\mathcal{O}_\mathfrak{p})
    	\rightarrow
    	\widetilde{\mathrm{Ig}}_m^{\widetilde{P}_L}
    	\]
    	induces the following isomorphism:
    	\[
    	H^0(\mathrm{Ig}_m/
    	\widetilde{P}_L(\mathcal{O}_\mathfrak{p}),
    	\mathrm{pr}_\ast
    	(\mathcal{O}_{\widetilde{\mathrm{Ig}}_m^{\widetilde{P}_L}}
    	[\lambda^{-1}]))
    	\simeq
    	\mathbb{V}_m^{\widetilde{P}_L^\mathrm{der}}[\lambda^{-1}].
    	\]
    	Here
    	$\mathcal{O}_{\widetilde{\mathrm{Ig}}_m^{\widetilde{P}_L}}
    	[\lambda^{-1}]$
    	denotes the subsheaf
    	of
    	$\mathcal{O}_{\widetilde{\mathrm{Ig}}_m^{\widetilde{P}_L}}$
    	on which $\widetilde{T}_{\widetilde{P}}$
    	acts by the character $\lambda$.
    \end{proposition}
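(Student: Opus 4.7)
The plan is to realize both sides of the claimed isomorphism as the weight-$\lambda$ eigencomponent of the pushforward of the structure sheaf along a single $\widetilde{T}_{\widetilde{P}}$-torsor over $\mathrm{Ig}_m/\widetilde{P}_L(\mathcal{O}_\mathfrak{p})$, with $\widetilde{\mathrm{HT}}_m^{\widetilde{P}_L}$ serving as the comparison isomorphism between two different torsor presentations of that torsor.

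First I would verify that both
\[
\mathrm{Ig}_m/\widetilde{P}_L^\mathrm{der}(\mathcal{O}_\mathfrak{p})\longrightarrow \mathrm{Ig}_m/\widetilde{P}_L(\mathcal{O}_\mathfrak{p})\qquad\text{and}\qquad \mathrm{pr}\colon \widetilde{\mathrm{Ig}}_m^{\widetilde{P}_L}\longrightarrow \mathrm{Ig}_m/\widetilde{P}_L(\mathcal{O}_\mathfrak{p})
\]
are $\widetilde{T}_{\widetilde{P}}$-torsors over a common base. The first follows from the identification $\widetilde{P}_L/\widetilde{P}_L^\mathrm{der}=\widetilde{T}_{\widetilde{P}}$ once the parabolic $\widetilde{P}_L$ is transported along the fixed inner twist $L\leftrightarrow L_\mu$ to a parabolic of $L_\mu$ with the same derived subgroup and abelianization (legitimate since $L$ and $L_\mu$ are inner forms sharing the quotient torus $\widetilde{T}_{\widetilde{P}}$, as recalled around Definition-Proposition \ref{Igusa tower}). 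The second is immediate from the fibre-product definition, since $\mathfrak{L}_m^{\Sigma,\mu}/\widetilde{P}_L^\mathrm{der}\to \mathfrak{L}_m^{\Sigma,\mu}/\widetilde{P}_L$ is itself a $\widetilde{T}_{\widetilde{P}}$-torsor and the torsor property is preserved by base change.

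Next I would check that $\widetilde{\mathrm{HT}}_m^{\widetilde{P}_L}$ is $\widetilde{T}_{\widetilde{P}}$-equivariant; this reduces to the equivariance of $\mathrm{HT}_m$ with respect to the compatible actions of $L_\mu(\mathbb{Z}_p/p^m)$ and $L(\mathbb{Z}_p/p^m)$ noted directly after Definition-Proposition \ref{Igusa tower}, together with the matching of abelian quotients for the corresponding parabolics. Since any morphism of torsors for the same group over a fixed base is automatically an isomorphism, one obtains
\[
\widetilde{\mathrm{HT}}_m^{\widetilde{P}_L}\colon \mathrm{Ig}_m/\widetilde{P}_L^\mathrm{der}(\mathcal{O}_\mathfrak{p})\xrightarrow{\sim} \widetilde{\mathrm{Ig}}_m^{\widetilde{P}_L}.
\]
Pushing down to the common base and decomposing into $\widetilde{T}_{\widetilde{P}}$-isotypic components, the weight-$\lambda$ part on the right is by definition $\mathrm{pr}_\ast(\mathcal{O}_{\widetilde{\mathrm{Ig}}_m^{\widetilde{P}_L}}[\lambda^{-1}])$, while on the left its global sections recover the $\widetilde{P}_L^\mathrm{der}(\mathcal{O}_\mathfrak{p})$-invariants in $\mathbb{V}_m$ on which $\widetilde{T}_{\widetilde{P}}$ acts with the matching weight, namely $\mathbb{V}_m^{\widetilde{P}_L^\mathrm{der}}[\lambda^{-1}]$.

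The main obstacle is the careful setup of the first step: the quotient $\mathrm{Ig}_m/\widetilde{P}_L^\mathrm{der}(\mathcal{O}_\mathfrak{p})$ is a priori not well-defined because $\widetilde{P}_L^\mathrm{der}$ naturally sits inside $L$ rather than inside $L_\mu$. One must use the fixed $\mathbb{Z}_p$-models of $L$ and $L_\mu$ together with the common cocharacter $\overline{\mu}$ to transport the parabolic structure across the inner twist at the level of $\mathcal{O}_\mathfrak{p}/\mathfrak{p}^m$-points, and verify that this transport is compatible with the Hodge-Tate comparison, so that $\widetilde{\mathrm{HT}}_m^{\widetilde{P}_L}$ is actually well-defined as a morphism of torsors. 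Once this matching is pinned down, the rest of the argument is formal torsor-theoretic bookkeeping, parallel to \cite[Proposition 4.1]{Pilloni2012}.
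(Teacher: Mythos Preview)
There is a genuine gap. Your argument asserts that both $\mathrm{Ig}_m/\widetilde{P}_L^\mathrm{der}(\mathcal{O}_\mathfrak{p})\to\mathrm{Ig}_m/\widetilde{P}_L(\mathcal{O}_\mathfrak{p})$ and $\mathrm{pr}$ are torsors for the \emph{same} group $\widetilde{T}_{\widetilde{P}}$, and then invokes the principle that any morphism of torsors for the same group over a fixed base is automatically an isomorphism. But the two objects are torsors for different groups. The Igusa quotient on the left is formed by dividing out by the abstract group of $\mathcal{O}_\mathfrak{p}$-points $\widetilde{P}_L^\mathrm{der}(\mathcal{O}_\mathfrak{p})$, so what results is a pro-finite \'{e}tale cover, i.e.\ a torsor for the \emph{constant} group scheme $\underline{\widetilde{T}_{\widetilde{P}}(\mathcal{O}_\mathfrak{p})}$. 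By contrast $\widetilde{\mathrm{Ig}}_m^{\widetilde{P}_L}$, being pulled back from $\mathfrak{L}_m^{\Sigma,\mu}/\widetilde{P}_L^\mathrm{der}\to\mathfrak{L}_m^{\Sigma,\mu}/\widetilde{P}_L$, is a torsor for the \emph{algebraic torus} $\widetilde{T}_{\widetilde{P}}$, which has positive relative dimension. The fibres on the two sides do not even have the same dimension, so $\widetilde{\mathrm{HT}}_m^{\widetilde{P}_L}$ cannot be an isomorphism of schemes, and your key step does not apply.

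The paper's proof confronts precisely this mismatch. It records that $\widetilde{\mathrm{HT}}_m^{\widetilde{P}_L}$ is equivariant for the natural homomorphism of group schemes $\underline{\widetilde{T}_{\widetilde{P}}(\mathcal{O}_\mathfrak{p})}\to\widetilde{T}_{\widetilde{P}}$, then works \'{e}tale-locally over $\mathrm{Ig}_m/\widetilde{P}_L(\mathcal{O}_\mathfrak{p})$ where both torsors trivialize. The remaining content is that for a fixed algebraic character $\lambda$, the $\lambda$-eigenspace in $H^0(\widetilde{T}_{\widetilde{P}},\mathcal{O}_{\widetilde{T}_{\widetilde{P}}})$ and the $\lambda$-eigenspace in the continuous functions on $\widetilde{T}_{\widetilde{P}}(\mathcal{O}_\mathfrak{p})$ are both free of rank one, and evaluation at $\mathcal{O}_\mathfrak{p}$-points identifies them (using that $\widetilde{T}_{\widetilde{P}}$ is quasi-split over $\mathcal{O}_\mathfrak{p}$). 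Your torsor shortcut bypasses exactly this comparison, which is where the actual work lies.
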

    \begin{proof}
    	By definition,
    	we have
    	\[
    	\mathbb{V}_m^{\widetilde{P}_L^\mathrm{der}}[\lambda^{-1}]
    	=H^0
    	(\mathrm{Ig}_m
    	/\widetilde{P}_L^\mathrm{der}(\mathcal{O}_\mathfrak{p}),
    	\mathcal{O}_{\mathrm{Ig}_m/\widetilde{P}_L^\mathrm{der}
    		(\mathcal{O}_\mathfrak{p})}[\lambda^{-1}]).
    	\]
    	Note that
    	$\mathrm{Ig}_m/
    	\widetilde{P}_L^\mathrm{der}(\mathcal{O}_\mathfrak{p})$
    	is a pro-finite \'{e}tale covering of
    	$\mathrm{Ig}_m/
    	\widetilde{P}_L(\mathcal{O}_\mathfrak{p})$
    	of group
    	$\widetilde{T}_{\widetilde{P}}(\mathcal{O}_\mathfrak{p})$
    	while
    	$\widetilde{\mathrm{Ig}}_m^{\widetilde{P}_L}$
    	is a
    	$\widetilde{T}_{\widetilde{P}}$-torsor
    	over
    	$\mathrm{Ig}_m/
    	\widetilde{P}_L(\mathcal{O}_\mathfrak{p})$.
    	Thus over the base torsor
    	$\mathrm{Ig}_m/
    	\widetilde{P}_L(\mathcal{O}_\mathfrak{p})$,
    	$\widetilde{\mathrm{HT}}_m^{\widetilde{P}_L}$
    	is induced by the natural morphism
    	$\underline{\widetilde{T}_{\widetilde{P}}(\mathcal{O}_\mathfrak{p})}
    	\rightarrow
    	\widetilde{T}_{\widetilde{P}}$
    	of group schemes over
    	$\mathcal{O}_\mathfrak{p}$
    	(the first being a constant group scheme).
    	Note that $\widetilde{T}_{\widetilde{P}}$
    	is quasi-split over $\mathcal{O}_\mathfrak{p}$,
    	we see that
    	\'{e}tale locally over
    	$\mathrm{Ig}_m/
    	\widetilde{P}_L(\mathcal{O}_\mathfrak{p})$,
    	$\widetilde{\mathrm{HT}}_m^{\widetilde{P}_L}$
    	sends
    	$H^0(\widetilde{T}_{\widetilde{P}},\mathcal{O}_{\widetilde{T}_{\widetilde{P}}}
    	[\lambda^{-1}])$
    	bijectively to
    	$H^0(\underline{\widetilde{T}_{\widetilde{P}}(\mathcal{O}_\mathfrak{p})},
    	\mathcal{O}_{
    		\underline{\widetilde{T}_{\widetilde{P}}(\mathcal{O}_\mathfrak{p})}}
    	[\lambda^{-1}])$
    	(both are in fact
    	of rank $1$ over $\mathcal{O}_\mathfrak{p}$).
    \end{proof}

    \section{Hecke operators}\label{Hecke operators}
    \subsection{Parahoric Hecke algebra}
    We introduce first the abstract Hecke algebras
    associated to the parabolic subgroups
    $\widetilde{P}$ that we are going to use.
    We write
    $\mathbb{Z}[\widetilde{T}_{\widetilde{P}}^+(E_\mathfrak{p})]$
    for the commutative algebra over
    $\mathbb{Z}$ generated by the elements in
    $\widetilde{T}_{\widetilde{P}}^+(E_\mathfrak{p})$.
    For any integer $n\geq1$,
    denote by
    $I_{\widetilde{P}}(n)$
    for the subset of
    $P(\mathcal{O}_\mathfrak{p})$
    consisting of elements
    $g$ such that
    $g(\mathrm{mod}\mathfrak{p}^n)\in
    \widetilde{P}(\mathcal{O}_\mathfrak{p}/\mathfrak{p}^n)$.
    Similarly,
    denote by
    $I_{S\widetilde{P}}(n)$
    the subset of
    $P(\mathcal{O}_\mathfrak{p})$
    consisting of $g$ such that
    $g(\mathrm{mod}\,\mathfrak{p}^n)
    \in
    S\widetilde{P}(\mathcal{O}_\mathfrak{p}/\mathfrak{p}^n)$.
    Write
    \[
    \mathcal{C}(G(E_\mathfrak{p})//I_{\widetilde{P}}(n),\mathbb{Z}),
    \quad
    \text{resp.,}
    \quad
    \mathcal{C}(G(E_\mathfrak{p})//I_{S\widetilde{P}}(n),\mathbb{Z})
    \]
    for the set of compact support functions
    from
    the double coset
    $G(E_\mathfrak{p})//I_{\widetilde{P}}(n)$,
    resp.,
    $G(E_\mathfrak{p})//I_{S\widetilde{P}}(n)$,
    to $\mathbb{Z}$.
    These two sets have natural $\mathbb{Z}$-algebra structures
    whose product is given by the involution.
    We then have the canonical projections
    \[
    \pi_{\widetilde{P}}
    \colon
    G(E_\mathfrak{p})
    \rightarrow
    G(E_\mathfrak{p})//I_{\widetilde{P}}(n),
    \]
    \[
    \pi_{S\widetilde{P}}
    \colon
    G(E_\mathfrak{p})
    \rightarrow
    G(E_\mathfrak{p})//I_{S\widetilde{P}}(n).
    \]
    
    \begin{proposition}
    	The maps of $\mathbb{Z}$-algebras
    	given by
    	\[
    	\mathbb{Z}[\widetilde{T}_{\widetilde{P}}^+(E_\mathfrak{p})]
    	\rightarrow
    	\mathcal{C}
    	(G(E_\mathfrak{p})//I_{\widetilde{P}}(n),\mathbb{Z}),
    	\quad
    	\epsilon\in\widetilde{T}_{\widetilde{P}}^+(E_\mathfrak{p})
    	\mapsto
    	1_{\pi_{\widetilde{P}}(\epsilon)};
    	\]
    	\[
    	\mathbb{Z}[\widetilde{T}_{\widetilde{P}}^+(E_\mathfrak{p})]
    	\rightarrow
    	\mathcal{C}
    	(G(E_\mathfrak{p})//I_{S\widetilde{P}}(n),\mathbb{Z}),
    	\quad
    	\epsilon\in\widetilde{T}_{\widetilde{P}}^+(E_\mathfrak{p})
    	\mapsto
    	1_{\pi_{S\widetilde{P}}(\epsilon)}
    	\]
    	are well-defined and are both injective.
    	Here
    	$1_X$ denotes the characteristic function of a subset
    	$X$.
    \end{proposition}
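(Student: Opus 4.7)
The plan is to prove both assertions in parallel, since $I_{\widetilde{P}}(n)$ and $I_{S\widetilde{P}}(n)$ play interchangeable structural roles in the argument. The content decomposes into two parts: multiplicativity on generators (which is the nontrivial content of being a $\mathbb{Z}$-algebra map, beyond additive $\mathbb{Z}$-linearity) and $\mathbb{Z}$-linear independence of the images, which gives injectivity.

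For multiplicativity, I would fix $\epsilon,\epsilon' \in \widetilde{T}_{\widetilde{P}}^+(E_\mathfrak{p})$, lift them to $T(E_\mathfrak{p}) \subset G(E_\mathfrak{p})$ via the fundamental cocharacters $\epsilon_i'(p)$ introduced before the definition of $\mathbb{T}_\epsilon$, and exploit an Iwahori-type decomposition of $I_{\widetilde{P}}(n)$ relative to $\widetilde{P}$ and a parabolic opposite to it inside $P$. Since $\epsilon$ is the value at $p$ of a dominant cocharacter, conjugation by $\epsilon$ contracts the ``positive unipotent'' part of $I_{\widetilde{P}}(n)$ into itself and dilates the ``negative'' part, while normalizing the Levi part. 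A standard computation then yields the set-theoretic identity
\[
I_{\widetilde{P}}(n) \, \epsilon \, I_{\widetilde{P}}(n) \cdot I_{\widetilde{P}}(n) \, \epsilon' \, I_{\widetilde{P}}(n) = I_{\widetilde{P}}(n) \, \epsilon\epsilon' \, I_{\widetilde{P}}(n),
\]
together with the crucial additional fact that each element of the right-hand double coset is hit exactly once in the convolution of the characteristic functions. This gives $1_{\pi_{\widetilde{P}}(\epsilon)} \ast 1_{\pi_{\widetilde{P}}(\epsilon')} = 1_{\pi_{\widetilde{P}}(\epsilon\epsilon')}$, hence multiplicativity. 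The same argument applies verbatim to $I_{S\widetilde{P}}(n)$, as $S\widetilde{P}$ shares the relevant Levi and unipotent structure with $\widetilde{P}$.

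For injectivity, it suffices to verify that distinct elements of $\widetilde{T}_{\widetilde{P}}^+(E_\mathfrak{p})$ produce distinct double cosets, for then the characteristic functions of those cosets are $\mathbb{Z}$-linearly independent inside $\mathcal{C}(G(E_\mathfrak{p})//I,\mathbb{Z})$. The monoid $\widetilde{T}_{\widetilde{P}}^+(E_\mathfrak{p})$ is freely generated (as a monoid) by the elements $\epsilon_i'(p)$, and the valuation map $\widetilde{T}_{\widetilde{P}}(E_\mathfrak{p}) \twoheadrightarrow X_\ast(\widetilde{T}_{\widetilde{P}})$ (with kernel $\widetilde{T}_{\widetilde{P}}(\mathcal{O}_\mathfrak{p})$) identifies it with the monoid $X_{\ast,\mathrm{dm}}(\widetilde{T}_{\widetilde{P}})$. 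Lifting to $G(E_\mathfrak{p})$ and invoking the Cartan decomposition $G(E_\mathfrak{p}) = \bigsqcup_{\mu} G(\mathcal{O}_\mathfrak{p}) \mu(p) G(\mathcal{O}_\mathfrak{p})$ indexed by $\mu \in X_{\ast,\mathrm{dm}}(T)$, one sees that distinct lifts of distinct monoid elements land in distinct $G(\mathcal{O}_\mathfrak{p})$-double cosets; since $I_{\widetilde{P}}(n)$ and $I_{S\widetilde{P}}(n)$ are both contained in $G(\mathcal{O}_\mathfrak{p})$, they land a fortiori in distinct $I$-double cosets, giving the desired injectivity.

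The principal technical obstacle is the bookkeeping for the Iwahori-type decomposition of $I_{\widetilde{P}}(n)$: this is not a standard Iwahori subgroup but a parahoric-type subgroup equipped with a depth-$n$ level refinement coming from the congruence condition modulo $\mathfrak{p}^n$. The contraction property and the ``freeness'' of the convolution must therefore be verified directly in this setting rather than quoted from the Iwahori case. Once that verification is in hand, the remainder of the argument follows the classical Satake-style recipe for algebra maps from group rings of dominant-cocharacter monoids into parahoric Hecke algebras.
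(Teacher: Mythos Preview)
Your multiplicativity argument is essentially the paper's: both hinge on the contraction property $\epsilon^{-1}\widetilde{P}(\mathcal{O}_\mathfrak{p})\epsilon\subset\widetilde{P}(\mathcal{O}_\mathfrak{p})$ for dominant $\epsilon$, and then a counting/index computation to show $H\epsilon H\cdot H\epsilon' H=H\epsilon\epsilon' H$ with no multiplicity. The paper phrases this via the bijection $H/(\epsilon^{-1}H\epsilon\cap H)\simeq \widetilde{P}(\mathcal{O}_\mathfrak{p})/\epsilon^{-1}\widetilde{P}(\mathcal{O}_\mathfrak{p})\epsilon$ and the resulting index identity $\sharp X_{\epsilon\epsilon'}=\sharp X_\epsilon\cdot\sharp X_{\epsilon'}$, which is exactly your ``hit exactly once'' statement made numerical.

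Your injectivity argument, however, takes a genuinely different route. The paper does \emph{not} use Cartan decomposition inside $G$; instead it exploits the Hodge-type embedding $G\hookrightarrow\mathrm{GSp}(V,\psi)$ already fixed in the setup, builds a commutative square comparing $\mathbb{Z}[\widetilde{T}_{\widetilde{P}}^+(E_\mathfrak{p})]\to\mathrm{Im}(\pi_{\widetilde{P}})$ with the corresponding map for $\mathrm{GSp}(V,\psi)$, and then quotes the known isomorphism in the Siegel case (Hida, \cite{Hida1995}). Your direct Cartan argument is more self-contained and does not rely on the ambient symplectic group; the paper's argument is shorter given that the Siegel result is available off the shelf and the embedding is already part of the infrastructure. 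One caution on your version: the claim that $\widetilde{T}_{\widetilde{P}}^+(E_\mathfrak{p})$ is \emph{freely} generated by the images of the $\epsilon_i'(p)$ is not obviously true (the projection $X_\ast(T)\to X_\ast(\widetilde{T}_{\widetilde{P}})$ can introduce relations among the images of the fundamental cocharacters) and is not needed --- what you actually need is only that distinct monoid elements admit distinct dominant lifts in $X_{\ast,\mathrm{dm}}(T)$, which suffices for Cartan to separate the $G(\mathcal{O}_\mathfrak{p})$-double cosets.
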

    \begin{proof}
    	We treat the first case,
    	the second being similar.
    	We already know that the proposition is true
    	for the case
    	$G=\mathrm{GSp}(V,\psi)$
    	(\cite[§3.6]{Hida2002}).
    	The proof for the general case is quite similar
    	to this case
    	as in
    	\cite[§2]{Hida1995}.
    	More precisely,
    	write temporarily
    	$H=I_{\widetilde{P}}(n)$,
    	then for each
    	$\epsilon\in\widetilde{T}_{\widetilde{P}}^+(E_\mathfrak{p})$,
    	we can choose a set of representatives
    	$X_\epsilon\subset H$
    	such that we have a decomposition
    	\[
    	H\epsilon H=
    	\bigsqcup_{\alpha\in X_\epsilon}H\epsilon\alpha.
    	\]
    	We can choose $X_\epsilon$ as follows:
    	let $\widetilde{T}_{\widetilde{P}}^+(E_\mathfrak{p})$
    	act on
    	the parabolic subgroup
    	$\widetilde{P}(\mathcal{O}_\mathfrak{p})$
    	of
    	$G(\mathcal{O}_\mathfrak{p})$
    	by inverse conjugation,
    	thus we see that
    	$\epsilon^{-1}
    	\widetilde{P}(\mathcal{O}_\mathfrak{p})
    	\epsilon
    	\subset
    	\widetilde{P}(\mathcal{O}_\mathfrak{p})$.
    	Moreover,
    	the following natural inclusion is
    	in fact a bijection:
    	\[
    	H/(\epsilon^{-1}H\epsilon\cap H)
    	\hookrightarrow
    	\widetilde{P}(\mathcal{O}_\mathfrak{p})/
    	\epsilon^{-1}\widetilde{P}(\mathcal{O}_\mathfrak{p})\epsilon.
    	\]
    	Then we just take a set
    	$X_\epsilon$ of representatives in $\widetilde{P}(\mathcal{O}_\mathfrak{p})$
    	for the quotient
    	$H/(\epsilon^{-1}H\epsilon\cap H)$.
    	Note that
    	for any two
    	$\epsilon,\epsilon'
    	\in
    	\widetilde{T}_{\widetilde{P}}^+(E_\mathfrak{p})$,
    	\begin{align*}
    	\sharp X_{\epsilon\epsilon'}
    	&
    	=
    	[H:(\epsilon\epsilon')^{-1}H(\epsilon\epsilon')\cap H]
    	=
    	[\widetilde{P}(\mathcal{O}_\mathfrak{p})
    	:(\epsilon\epsilon')^{-1}
    	\widetilde{P}(\mathcal{O}_\mathfrak{p})(\epsilon\epsilon')]
    	\\
    	&
    	=
    	[\widetilde{P}(\mathcal{O}_\mathfrak{p})
    	:\epsilon^{-1}\widetilde{P}(\mathcal{O}_\mathfrak{p})\epsilon]
    	\cdot
    	[\epsilon^{-1}\widetilde{P}(\mathcal{O}_\mathfrak{p})\epsilon
    	:
    	(\epsilon\epsilon')^{-1}
    	\widetilde{P}(\mathcal{O}_\mathfrak{p})(\epsilon\epsilon')]
    	\\
    	&
    	=
    	\sharp X_\epsilon\cdot
    	[\widetilde{P}(\mathcal{O}_\mathfrak{p}):
    	(\epsilon')^{-1}
    	\widetilde{P}(\mathcal{O}_\mathfrak{p})\epsilon']
    	\\
    	&
    	=
    	\sharp X_\epsilon\cdot
    	\sharp X_{\epsilon'}.
    	\end{align*}
    	Combined with the inclusion
    	$H\epsilon\epsilon'H
    	\subset
    	H\epsilon H\epsilon'H$,
    	this shows that
    	\[
    	H\epsilon\epsilon' H
    	=
    	H\epsilon H\cdot H\epsilon'H
    	=
    	H\epsilon'H\cdot H\epsilon H.
    	\]
    	This shows that the map
    	sending $\epsilon$ to
    	$1_{\pi_{\widetilde{P}}(\epsilon)}$
    	is well-defined
    	and is indeed a morphism of
    	$\mathbb{Z}$-algebras.    	
    	Next we show that it is injective.
    	Note that we have inclusions
    	$G(E_\mathfrak{p})
    	\hookrightarrow
    	\mathrm{GSp}(V,\psi)(E_\mathfrak{p})$,
    	$\widetilde{T}_{\widetilde{P}}^+(E_\mathfrak{p})
    	\rightarrow
    	\widetilde{T}_{L_V}^+(E_\mathfrak{p})$,
    	the latter of which induces an inclusion
    	$\mathbb{Z}[\widetilde{T}_{\widetilde{P}}^+(E_\mathfrak{p})]
    	\hookrightarrow
    	\mathbb{Z}[\widetilde{T}_{L_V}^+(E_\mathfrak{p})]$.
    	The image
    	$\mathrm{Im}(\pi_{\widetilde{P}})$
    	is a $\mathbb{Z}$-algebra
    	generated by the characteristic functions
    	of the double cosets
    	$\pi_{\widetilde{P}}(\epsilon)$
    	for all
    	$\epsilon\in
    	\widetilde{T}_{\widetilde{P}}^+(E_\mathfrak{p})$.
    	Now consider the following map
    	\[
    	\varphi
    	\colon
    	\mathrm{Im}(\pi_{\widetilde{P}})
    	\rightarrow
    	\mathrm{Im}(\pi_{\widetilde{P}_V}),
    	\quad
    	\pi_{\widetilde{P}}(\epsilon)
    	\mapsto
    	\pi_{\widetilde{P}_V}(\epsilon)
    	,\,
    	\forall
    	\epsilon\in
    	\widetilde{T}_{\widetilde{P}}^+(E_\mathfrak{p});
    	\]
    	It is clear that $\varphi$ is injective.
    	Moreover, we have a commutative diagram
    	(the isomorphism in the bottom line is
    	\cite[Proposition 2.1]{Hida1995}):
    	\[
    	\begin{tikzcd}
    	\mathbb{Z}[\widetilde{T}_{\widetilde{P}}^+(E_\mathfrak{p})]
    	\arrow[r]
    	\arrow[d,hook]
    	&
    	\mathrm{Im}(\pi_{\widetilde{P}})
    	\arrow[d,hook]
    	\\
    	\mathbb{Z}[\widetilde{T}_{L_V}^+(E_\mathfrak{p})]
    	\arrow[r,"\simeq"]
    	&
    	\mathrm{Im}(\pi_{\widetilde{P}_V})
    	\end{tikzcd}
    	\]
    	From this we see that
    	the top line in the above diagram is
    	injective.
    \end{proof}

    We will identify
    $\mathbb{Z}[\widetilde{T}_{\widetilde{P}}^+(E_\mathfrak{p})]$
    with its images inside
    $\mathcal{C}
    (G(E_\mathfrak{p})//I_{\widetilde{P}}(n),\mathbb{Z})$
    and
    $\mathcal{C}
    (G(E_\mathfrak{p})//I_{S\widetilde{P}}(n),\mathbb{Z})$.

    \subsection{Hecke operators on the Igusa tower}
    By the discussion after
    Proposition \ref{Hecke operator preserves top rep},
    we will also need to modify the integral
    structure on the
    space of $p$-adic modular forms.
    We write
    $Sh^{\Sigma,\mu}_\mathrm{rig}$
    for the rigid space associated to the
    formal scheme
    $Sh^{\Sigma,\mu}_\infty$.
    Similarly,
    write
    $\mathrm{Ig}^?_\mathrm{rig}$
    for the rigid space associated to
    $\mathrm{Ig}_\infty^?$
    ($?=G,P,L$).
    We fix a character
    $\lambda\in
    X^{\ast}_\mathrm{dm}
    (\widetilde{T}_{\widetilde{P}})$.
    Recall we have a sheaf
    $\mathcal{V}^\Sigma_\lambda(-C^\Sigma)$
    over
    $Sh^\Sigma$.
    Then we write
    $
    \mathcal{R}_m[\lambda^{-1}]
    $
    for the restriction of
    $\mathcal{V}^\Sigma_\lambda(-C^\Sigma)$
    to
    $Sh^{\Sigma,\mu}_m$,
    and
    $
    \mathcal{R}_\mathrm{rig}[\lambda^{-1}]
    $
    for the rigid sheaf associated to the formal limit
    $
    \mathcal{R}_\infty[\lambda^{-1}]
    $
    of the projective system
    $(\mathcal{R}_m[\lambda^{-1}])_{m\geq1}$.
    Here
    $[\lambda^{-1}]$
    means the subsheaf on which
    $\widetilde{T}_{\widetilde{P}}$ acts by the character
    $\lambda$.
    Then we have the relation
    \[
    H^0(Sh^{\Sigma,\mu}_\mathrm{rig},
    \mathcal{R}_\mathrm{rig}[\lambda^{-1}])
    =
    H^0(Sh^{\Sigma,\mu}_\infty,
    \mathcal{R}_\infty[\lambda^{-1}])
    \otimes_{\mathcal{O}_\mathfrak{p}}E_\mathfrak{p}.
    \]
    On the other hand, we have a morphism
    $\varphi'
    \colon
    \mathrm{Ig}_m/\widetilde{P}_L^\mathrm{der}(\mathcal{O}_\mathfrak{p})
    \rightarrow
    Sh^{\Sigma,\mu}_m$
    and thus a sheaf
    \[
    \mathcal{R}^{\mathrm{top}}_m[\lambda^{-1}]
    :=\varphi'_\ast
    (\mathcal{O}_{\mathrm{Ig}_m/
    	\widetilde{P}_L^\mathrm{der}(\mathcal{O}_\mathfrak{p})}[\lambda^{-1}])
    \]
    over
    $Sh^{\Sigma,\mu}_m$
    and the rigid sheaf
    $
    \mathcal{R}^{\mathrm{top}}_\mathrm{rig}[\lambda^{-1}]
    $
    associated to the formal sheaf
    $
    \mathcal{R}^{\mathrm{top}}_\infty[\lambda^{-1}].
    $ 
    Again we have a similar identity as above:
    \[
    H^0(Sh^{\Sigma,\mu}_\mathrm{rig},
    \mathcal{R}^\mathrm{top}_\mathrm{rig}[\lambda^{-1}])
    =
    H^0(Sh^{\Sigma,\mu}_\infty,
    \mathcal{R}_\infty^\mathrm{top}[\lambda^{-1}])
    \otimes_{\mathcal{O}_\mathfrak{p}}E_\mathfrak{p}.
    \]
    The natural morphism
    $\mathrm{Ig}_m/\widetilde{P}_L^\mathrm{der}(\mathcal{O}_\mathfrak{p})
    \rightarrow
    \mathrm{Ig}_m/\widetilde{P}_L(\mathcal{O}_\mathfrak{p})$
    gives rise to a morphism
    $\mathcal{R}_?[\lambda^{-1}]
    \rightarrow
    \mathcal{R}^{\mathrm{top}}_?[\lambda^{-1}]$
    of sheaves over
    $Sh^{\Sigma,\mu}_?$
    ($?=m,\infty,\mathrm{rig}$).
    For an affine formal open subset
    $\mathrm{Spf}(A)$
    of
    $Sh^{\Sigma,\mu}_\infty$
    and the corresponding affinoid open subset
    $\mathrm{Sp}(A[\frac{1}{p}])$
    of
    the rigid space
    $Sh^{\Sigma,\mu}_\mathrm{rig}$,
    we have the following commutative diagram of
    functions over these open subsets
    \[
    \begin{tikzcd}
    \mathcal{R}_\infty[\lambda^{-1}](A)
    \arrow[r,"\iota_\infty"]
    \arrow[d,"\mathrm{ev}_{\mathcal{O}_\mathfrak{p}}"]
    &
    \mathcal{R}_\mathrm{rig}[\lambda^{-1}](A[\frac{1}{p}])
    \arrow[d,"\mathrm{ev}_{E_\mathfrak{p}}"]
    \\
    \mathcal{R}_\infty^\mathrm{top}[\lambda^{-1}](A)
    \arrow[r,"\iota_\infty^\mathrm{top}"]
    &
    \mathcal{R}_\mathrm{rig}^\mathrm{top}
    [\lambda^{-1}](A[\frac{1}{p}])
    \end{tikzcd}
    \]
    As in the previous section,
    we put
    \[
    \widetilde{\mathcal{R}}_\infty[\lambda^{-1}](A)
    :=
    \mathrm{ev}_{E_\mathfrak{p}}^{-1}
    (\mathrm{Im}(\iota_\infty^\mathrm{top})),
    \]
    which is a lattice in
    $\mathcal{R}_\mathrm{rig}[\lambda^{-1}](A[\frac{1}{p}])$
    containing the lattice
    $\mathcal{R}_\infty[\lambda^{-1}](A)$.
    This is analogous to the construction of
    $\widetilde{R}_B[\lambda^{-1}]$
    where $B$ is a finite flat $\mathcal{O}_\mathfrak{p}$-algebra.
    Clearly, this construction is functorial in
    $A$ and glues well,
    thus defines a new formal sheaf
    $
    \widetilde{\mathcal{R}}_\infty[\lambda^{-1}]
    $
    on the formal scheme
    $Sh^{\Sigma,\mu}_\infty$.

    Next we review the construction of the correspondence
    on the integral Siegel Shimura variety
    \[
    Sh(V)
    :=
    Sh(\mathrm{GSp}(V,\psi),S^\pm,K_V)
    \]
    (here $K_V$ is a compact open subgroup of
    $\mathrm{GSp}(V,\psi)(\mathbb{A}_f)$
    such that
    $K_V\cap G(\mathbb{A}_f)
    =K$)
    (\textit{cf.} \cite[§5.1.3]{Pilloni2012}).
    We fix an element
    $\epsilon\in
    \widetilde{T}_{\widetilde{P}}(E_\mathfrak{p})$.
    Viewed as an element in
    $\mathrm{GSp}(V,\psi)$,
    we can write
    $\epsilon
    =\mathrm{diag}(\varepsilon,\nu(\epsilon)\varepsilon^{-t})$
    with
    $\varepsilon\in
    L(E_\mathfrak{p})$.
    Then the correspondence
    $\mathrm{Ig}^{\mathrm{GSp}(V,\pi)}_{\infty,n}
    /\widetilde{P}_{L_V}(\epsilon)$
    over the $\mu$-ordinary
    locus of the (non-compactified) Shimura variety
    $Sh(\mathrm{GSp}(V,\psi),S^\pm,K_V)^{\mu}_\infty$
    parameterizes the following data:
    for any $\mathbb{Z}_p$-algebra
    $A$,
    $\mathrm{Ig}_{\infty,n}/\widetilde{P}_V(\epsilon)(A)$
    consists of quintuples
    $(\mathcal{A},\widetilde{\mathcal{A}},
    \pi,\psi_n,\widetilde{\psi}_n)$
    where
    \begin{enumerate}
    	\item 
    	$\mathcal{A}$ is a
    	$\mu$-ordinary principally polarized abelian scheme
    	over $\mathrm{Spec}(A)$
    	and a $\widetilde{P}_V
    	(\mathcal{O}_\mathfrak{p}/\mathfrak{p}^n)$-coset of
    	isomorphisms
    	$\psi_n
    	\colon
    	\mathcal{A}_{x_0}[p^n]
    	\simeq
    	\mathcal{A}[p^n]$
    	such that there exists an isomorphism
    	$\psi_\infty
    	\colon
    	\mathcal{A}_{x_0}[p^\infty]
    	\simeq
    	\mathcal{A}[p^\infty]$
    	with
    	$\psi_\infty\equiv
    	\psi_n(\mathrm{mod}\,p^n)$;
    	
    	\item 
    	$\widetilde{\mathcal{A}}$ is a $\mu$-ordinary
    	principally polarized
    	abelian scheme over
    	$\mathrm{Spec}(A)$
    	and a $\widetilde{P}_V
    	(\mathcal{O}_\mathfrak{p}/\mathfrak{p}^n)$-coset
    	of isomorphisms
    	$\widetilde{\psi}_n
    	\colon
    	\mathcal{A}_{x_0}[p^n]
    	\simeq
    	\widetilde{\mathcal{A}}[p^n]$
    	such that
    	there exists an isomorphism
    	$\widetilde{\psi}_\infty
    	\colon
    	\mathcal{A}_{x_0}[p^\infty]
    	\simeq
    	\widetilde{\mathcal{A}}[p^\infty]$
    	with
    	$\widetilde{\psi}_\infty\equiv
    	\widetilde{\psi}_n(\mathrm{mod}\,p^n)$;
    	
    	\item 
    	$\pi\colon
    	\mathcal{A}
    	\rightarrow
    	\widetilde{\mathcal{A}}$
    	is a $p$-isogeny of similitude factor
    	$\nu(\epsilon)$
    	of abelian schemes
    	such that
    	on the level of Dieudonn\'{e} crystals,
    	we have
    	$\mathbb{D}
    	((\widetilde{\psi}_\infty)^{-1}
    	\circ
    	\pi
    	\circ
    	\psi_\infty)(\mathbb{W})
    	=\epsilon
    	\colon
    	\mathbb{D}(\mathcal{A}_{x_0}[p^\infty])(\mathbb{W})
    	\rightarrow
    	\mathbb{D}(\mathcal{A}_{x_0}[p^\infty])(\mathbb{W})$.
    \end{enumerate}
    We emphasize that we parameterize only the
    $\mu$-ordinary abelian schemes.
    Now for
    $\epsilon\in
    \widetilde{T}_{\widetilde{P}}^+(E_\mathfrak{p})$,
    we define the correspondence
    \[
    \mathrm{Ig}_{\infty,n}^\mu
    /\widetilde{P}_L(\epsilon)
    \]
    to be the pull-back of
    $\mathrm{Ig}^{\mathrm{GSp}(V,\psi)}_{\infty,n}
    /\widetilde{P}_{L_V}(\epsilon)$
    along the embedding
    $Sh^\mu_\infty
    \hookrightarrow
    Sh(V)^\mu_\infty$.
    Clearly, as in the Siegel case,
    we have two natural projections
    \[
    \mathrm{pr}_1,\mathrm{pr}_2
    \colon
    \mathrm{Ig}_{\infty,n}^\mu/\widetilde{P}_L(\epsilon)
    \rightarrow
    \mathrm{Ig}_{\infty,n}^\mu/\widetilde{P}_L,
    \]
    where
    $\mathrm{pr}_1$
    takes the quintuple
    $(\mathcal{A},\widetilde{\mathcal{A}},
    \pi,\psi_n,\widetilde{\psi}_n)$
    to
    $(\mathcal{A},\psi_n)$
    and
    $\mathrm{pr}_2$ takes it to
    $(\widetilde{\mathcal{A}},\widetilde{\psi}_n)$.
    Here
    \[
    \mathrm{Ig}_{\infty,n}/\widetilde{P}_L
    :=
    \mathrm{Ig}_{\infty,n}/
    \widetilde{P}_L(\mathcal{O}_\mathfrak{p}/\mathfrak{p}^n)
    \]
    is defined in the same manner as
    the quotient
    $\mathrm{Ig}_\infty/\widetilde{P}_L(\mathcal{O}_\mathfrak{p})$.
    Similarly we have induced morphisms on
    the corresponding rigid spaces
    \[
    \mathrm{pr}_{1,\mathrm{rig}},
    \mathrm{pr}_{2,\mathrm{rig}}
    \colon
    \mathrm{Ig}_{\mathrm{rig},n}/\widetilde{P}_L(\epsilon)
    \rightarrow
    \mathrm{Ig}_{\mathrm{rig},n}/\widetilde{P}_L.
    \]
    Now over
    the correspondence
    $\mathrm{Ig}^{\mathrm{GSp}(V,\psi)}_{\infty,n}/
    \widetilde{P}(\epsilon)$,
    we have two universal abelian schemes
    $\mathcal{A}^{\mathrm{GSp}(V,\psi)}$,
    $\widetilde{\mathcal{A}}^{\mathrm{GSp}(V,\psi)}$
    and a $p$-isogeny
    \[
    \pi^{\mathrm{GSp}(V,\psi)}
    =
    \pi^{\mathrm{GSp}(V,\psi),\epsilon}
    \colon
    \mathcal{A}^{\mathrm{GSp}(V,\psi)}
    \rightarrow
    \widetilde{\mathcal{A}}^{\mathrm{GSp}(V,\psi)}
    \]
    whose trivialization via
    (lifts
    $\psi^{\mathrm{GSp}(V,\psi)}_\infty$,
    resp.,
    $\widetilde{\psi}^{\mathrm{GSp}(V,\psi)}_\infty$
    of)
    $\psi^{\mathrm{GSp}(V,\psi)}_n$,
    resp.
    $\widetilde{\psi}^{\mathrm{GSp}(V,\psi)}_n$
    is $\epsilon$.
    We denote 
    by
    $\mathcal{A}^G$,
    $\widetilde{\mathcal{A}}^G$,
    $\psi_\infty^G$,
    $\widetilde{\psi}_\infty^G$
    the corresponding pull-backs along
    $Sh^\mu_n
    \rightarrow
    Sh(V)^\mu_n$    
    and
    $\pi^G=\pi^{G,\epsilon}
    \colon
    \mathcal{A}^G
    \rightarrow
    \widetilde{\mathcal{A}}^G$.
    From this, we get a morphism
    \[
    (\pi^{G,\epsilon})^\ast
    \colon
    H^1_\mathrm{dR}
    (\widetilde{\mathcal{A}}^G/
    (\mathrm{Ig}_{\infty,n}^\mu/\widetilde{P}(\epsilon)))
    \rightarrow
    H^1_\mathrm{dR}
    (\mathcal{A}^G/
    (\mathrm{Ig}_{\infty,n}^\mu/\widetilde{P}(\epsilon))).
    \]
    Recall that we have just constructed
    $\mathcal{R}_\infty[\lambda^{-1}]$
    over
    $Sh^{\Sigma,\mu}_\infty$.
    Consider the morphism
    $\mathrm{Ig}_{m,n}^\mu/\widetilde{P}_L
    \rightarrow
    Sh^{\mu}_m$,
    and
    we denote the pull-backs along this morphism of
    the sheaf
    $\mathcal{R}_m[\lambda^{-1}]$
    by
    $
    \mathcal{R}_m^\mathrm{Ig}[\lambda^{-1}]
    $
    and similarly we have
    $
    \mathcal{R}_\infty^\mathrm{Ig}[\lambda^{-1}]$,
    $
    \mathcal{R}_\mathrm{rig}^\mathrm{Ig}[\lambda^{-1}]$
    etc..
    Using the above morphism
    $(\pi^{G,\epsilon})^\ast$,
    we can construct a morphism
    \[
    (\pi^{G,\epsilon})^\ast
    \colon
    \mathrm{pr}_{2,\mathrm{rig}}^\ast
    (\mathcal{R}_\mathrm{rig}^\mathrm{Ig}[\lambda^{-1}])
    \rightarrow
    \mathrm{pr}_{1,\mathrm{rig}}^\ast
    (\mathcal{R}_\mathrm{rig}^\mathrm{Ig})[\lambda^{-1}]
    \]as follows:
    consider an affinoid open subset
    $\mathrm{Sp}(A[\frac{1}{p}])$
    of
    $\mathrm{Ig}_{\mathrm{rig},n}^\mu
    /\widetilde{P}_L(\epsilon)$
    associated to an affine formal open subset
    $\mathrm{Spf}(A)$
    of
    $\mathrm{Ig}_{\infty,n}^\mu
    /\widetilde{P}_L(\epsilon)$.
    Then
    $\mathrm{pr}_{?,\mathrm{rig}}^\ast
    (\mathcal{R}_\mathrm{rig}^\mathrm{Ig}[\lambda^{-1}])$
    consists of rational functions on
    $G(A[\frac{1}{p}])$
    with values in $A[\frac{1}{p}]$
    invariant under the right action of the unipotent
    subgroup
    $\widetilde{U}(A[\frac{1}{p}])$
    and on which the torus
    $\widetilde{T}(A[\frac{1}{p}])$
    acts by the character
    $\lambda$
    ($?=1,2$).
    Caution that the difference between
    $\mathrm{pr}_{1,\mathrm{rig}}
    (\mathcal{R}_\mathrm{rig}^\mathrm{Ig}[\lambda^{-1}])$
    and
    $\mathrm{pr}_{2,\mathrm{rig}}
    (\mathcal{R}_\mathrm{rig}^\mathrm{Ig}[\lambda^{-1}])$
    lies in how we identify the unipotent group
    $\widetilde{U}(A[\frac{1}{p}])$
    as explained below.
    One can identify
    $G(A[\frac{1}{p}])$
    with the set of isomorphisms
    $f
    \colon
    \mathcal{A}_{x_0}[p^\infty]
    \rightarrow
    \mathcal{A}^G[p^\infty]
    $
    over (or rather, restricted to)
    $\mathrm{Sp}(A[\frac{1}{p}])$,
    preserving the Hodge tensors on both sides.
    Note that by the definition of the correspondences
    $\mathrm{Ig}_{m,n}^\mu
    /\widetilde{P}_L(\epsilon)$,
    for each such $f$,
    there is another
    $\widetilde{f}
    \colon
    \mathcal{A}_{x_0}[p^\infty]
    \rightarrow
    \widetilde{\mathcal{A}}^G[p^\infty]$
    over
    $\mathrm{Sp}(A[\frac{1}{p}])$,
    preserving the Hodge tensors and moreover
    $\mathbb{D}(\widetilde{f}^{-1}
    \circ
    \pi
    \circ
    f)(\mathbb{W})
    =\epsilon^{-1}$
    (which is necessarily unique).
    Then we define
    the morphism
    $(\pi^{G,\epsilon})^\ast$
    on the affinoid open subset
    $\mathrm{Sp}(A[\frac{1}{p}])$
    by the formula
    \[
    (\pi^{G,\epsilon})^\ast
    \colon
    \mathrm{pr}_{2,\mathrm{rig}}^\ast
    (\mathcal{R}_\mathrm{rig}^\mathrm{Ig}[\lambda^{-1}])
    (A[\frac{1}{p}])
    \rightarrow
    \mathrm{pr}_{1,\mathrm{rig}}^\ast
    (\mathcal{R}_\mathrm{rig}^\mathrm{Ig}[\lambda^{-1}])
    (A[\frac{1}{p}]),
    \]
    \[
    F
    \mapsto
    \big(
    \mathbb{D}(f)\mapsto
    F(\mathbb{D}(\widetilde{f}))
    \big).
    \]
    This process is clearly functorial in
    the affinoid open subset
    $\mathrm{Sp}(A[\frac{1}{p}])$
    and thus we can globalize the
    above process and obtain the morphism
    $(\pi^{G,\epsilon})^\ast$.
    In particular, if we fix a geometric point
    $\overline{x}
    \in
    (\mathrm{Ig}_{\infty,n}/\widetilde{P}_L(\epsilon))
    (\overline{\mathbb{F}}_p)$
    and set
    $A$
    to be the strict Henselization of
    $\mathrm{Ig}_{\infty,n}/\widetilde{P}_L(\epsilon)$
    at the point $\overline{x}$,
    then
    (\textit{cf.} \cite[Lemme 5.1]{Pilloni2012})
    \begin{proposition}\label{local-global commutativity}
    	For $\lambda\in
    	X^{\ast}_\mathrm{dm}
    	(\widetilde{T}_{\widetilde{P}})$
    	and $\overline{x}$ as above,
    	we have a commutative diagram
    	with isomorphic vertical arrows:
    	\[
    	\begin{tikzcd}
    	R_A[\lambda^{-1}]
    	\arrow[r,hook]
    	\arrow[d]
    	&
    	R_{A[\frac{1}{p}]}[\lambda^{-1}]
    	\arrow[r,"\mathbb{T}_\epsilon"]
    	\arrow[d]
    	&
    	R_{A[\frac{1}{p}]}[\lambda^{-1}]
    	\arrow[d]
    	\\
    	\mathrm{pr}_{2,\infty}^\ast
    	(\mathcal{R}_\infty^\mathrm{Ig}[\lambda^{-1}])(A)
    	\arrow[r]
    	&
    	\mathrm{pr}_{2,\mathrm{rig}}^\ast
    	(\mathcal{R}_\mathrm{rig}^\mathrm{Ig}[\lambda^{-1}])
    	(A[\frac{1}{p}])
    	\arrow[r,"(\pi^{G,\epsilon})^\ast"]
    	&
    	\mathrm{pr}_{1,\mathrm{rig}}^\ast
    	(\mathcal{R}_\mathrm{rig}^\mathrm{Ig}[\lambda^{-1}])
    	(A[\frac{1}{p}])
    	\end{tikzcd}
    	\]
    \end{proposition}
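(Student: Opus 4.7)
The plan is to prove the proposition by first producing compatible trivializations of the various $L$-torsors over the strict Henselization $A$, under which every arrow in the diagram becomes a tautological operation on $R_A[\lambda^{-1}]$, and then checking that the geometric pullback along $\pi^{G,\epsilon}$ is precisely the conjugation-by-$\epsilon$ operator $\mathbb{T}_\epsilon$.

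First, I would fix trivializations. Over the strict Henselization $A$ at $\overline{x}$, choose any isomorphism $\psi_\infty\colon\mathcal{A}_{x_0}[p^\infty]\simeq\mathcal{A}^G[p^\infty]$ lifting $\psi_n$ and preserving Hodge tensors, and let $\widetilde{\psi}_\infty$ be the unique lift on the target side such that $\mathbb{D}(\widetilde{\psi}_\infty^{-1}\circ\pi^{G,\epsilon}\circ\psi_\infty)(\mathbb{W})=\epsilon$, as guaranteed by the definition of the correspondence. Via the Hodge--Tate map and the inner twist between $L_\mu$ and $L$ described after Definition-Proposition \ref{Igusa tower}, the isomorphism $\psi_\infty$ (resp.\ $\widetilde{\psi}_\infty$) trivializes $\mathrm{pr}_2^\ast\mathfrak{L}^{\Sigma,\mu}$ (resp.\ $\mathrm{pr}_1^\ast\mathfrak{L}^{\Sigma,\mu}$) over $A$. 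Because $\mathcal{V}_\lambda^\Sigma=\mathfrak{L}^\Sigma\times^L R_{\mathcal{O}_\mathfrak{p}}[\lambda^{-1}]$, these trivializations yield canonical $L(A)$-equivariant isomorphisms
\[
\mathrm{pr}_{i,\infty}^\ast(\mathcal{R}_\infty^{\mathrm{Ig}}[\lambda^{-1}])(A)\;\simeq\;R_A[\lambda^{-1}]\qquad(i=1,2),
\]
giving the vertical arrows of the diagram (and similarly after inverting $p$ on the right-hand column).

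Next I would identify the two horizontal arrows. By construction the $p$-isogeny $\pi^{G,\epsilon}$ induces a morphism of $L$-torsors $\mathrm{pr}_2^\ast\mathfrak{L}\to\mathrm{pr}_1^\ast\mathfrak{L}$ over $\mathrm{Ig}_{\infty,n}^\mu/\widetilde{P}_L(\epsilon)$, and the normalization $\mathbb{D}(\widetilde{\psi}_\infty^{-1}\circ\pi\circ\psi_\infty)(\mathbb{W})=\epsilon$ says exactly that, after the trivializations chosen above, this torsor map is right-multiplication by $\epsilon$. Passing to associated $R[\lambda^{-1}]$-bundles and dualizing, the pullback operator $(\pi^{G,\epsilon})^\ast$ becomes the self-map of $R_{A[1/p]}[\lambda^{-1}]$ sending $F\mapsto\bigl(g\mapsto F(\epsilon g\epsilon^{-1})\bigr)$: this is the conjugation action, because the two trivializations differ by $\epsilon$ on one side, while on the other side we still evaluate using the identification of $\mathrm{pr}_{1,\ast}^\ast$ with $R_A[\lambda^{-1}]$ via $\widetilde{\psi}_\infty$, and the factor $\epsilon^{-1}$ from the right of $\mathrm{pr}_2$'s trivialization combines with the $\epsilon$ from the left to produce conjugation. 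This is precisely the operator $\mathbb{T}_\epsilon$ of the previous subsection, so commutativity of the right square follows. The left square commutes because the inclusion $R_A[\lambda^{-1}]\hookrightarrow R_{A[1/p]}[\lambda^{-1}]$ matches the rigidification map $\mathcal{R}_\infty^{\mathrm{Ig}}\hookrightarrow\mathcal{R}_{\mathrm{rig}}^{\mathrm{Ig}}$ under our trivializations.

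Finally the vertical arrows are isomorphisms because the torsors in question become trivial after passage to the strict Henselization $A$: the evaluation map $\mathfrak{L}(A)\to L(A)$ is a bijection once a base point is picked, and the construction $\mathfrak{L}\times^L R_{\mathcal{O}_\mathfrak{p}}[\lambda^{-1}]$ is therefore, at the level of $A$-sections, just $R_A[\lambda^{-1}]$ itself. The analogous fact on the rigid side follows from the same argument applied to $A[1/p]$.

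The main obstacle is bookkeeping: one must pin down conventions for left versus right torsor actions, the direction of Dieudonné functors, and the inner twist between $L$ and $L_\mu$ so that the formula $\mathbb{D}(\widetilde{\psi}_\infty^{-1}\circ\pi\circ\psi_\infty)(\mathbb{W})=\epsilon$ translates into precisely the operator $f(g)\mapsto f(\epsilon g\epsilon^{-1})$ rather than, say, $f(\epsilon g)$ or $f(g\epsilon^{-1})$. Once the conventions are consistently fixed, everything is formal and the argument closely parallels \cite[Lemme 5.1]{Pilloni2012}; the only new input is that here we trivialize through the Hodge--Tate map (using that $L_\mu\to L$ is an inner twist) rather than working with a single ordinary $L$-torsor as in the PEL-ordinary case.
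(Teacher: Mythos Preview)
Your approach is essentially the same as the paper's: trivialize the pulled-back torsors using the universal level structures $\psi_\infty^G$ and $\widetilde{\psi}_\infty^G$, then verify by direct computation on Dieudonn\'e crystals that under these trivializations the geometric pullback $(\pi^{G,\epsilon})^\ast$ becomes $F\mapsto(g\mapsto F(\epsilon g\epsilon^{-1}))=\mathbb{T}_\epsilon F$. The paper carries this out slightly more concretely, writing the vertical isomorphisms as $F\mapsto F_i$ with $F_i(g\circ\mathbb{D}(\psi_\infty^G))=F(g)$ (for $i=1$) and $F_2(g\circ\mathbb{D}(\widetilde{\psi}_\infty^G))=F(g)$, and then computing $(\pi^{G,\epsilon})^\ast F_2=(\mathbb{T}_\epsilon F)_1$ directly.

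One bookkeeping slip to fix: in the paper's conventions $\mathrm{pr}_1$ sends the quintuple to $(\mathcal{A},\psi_n)$ and $\mathrm{pr}_2$ to $(\widetilde{\mathcal{A}},\widetilde{\psi}_n)$, so it is $\psi_\infty$ that trivializes $\mathrm{pr}_1^\ast\mathfrak{L}^{\Sigma,\mu}$ and $\widetilde{\psi}_\infty$ that trivializes $\mathrm{pr}_2^\ast\mathfrak{L}^{\Sigma,\mu}$, the reverse of what you wrote. This is precisely the kind of convention-tracking you flagged as the main obstacle, and swapping the labels back makes your argument line up with the paper's computation.
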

    \begin{proof}
    	By the above discussion,
    	we see easily that the vertical arrows are indeed
    	isomorphisms.
    	It remains to show that
    	the second square is commutative.
    	This follows from our definition of
    	$(\pi^{G,\epsilon})^\ast$.
    	Indeed,
    	for any
    	$f$ as in the definition of
    	$(\pi^{G,\epsilon})^\ast$
    	(we identify $f$ and its Dieudonn\'{e} crystal
    	counterpart),
    	passing to the contravariant
    	Dieudonn\'{e} crystals,
    	we have an identity
    	$\mathbb{D}(f)
    	\circ
    	\mathbb{D}(\pi)
    	\circ
    	\mathbb{D}(\widetilde{f})^{-1}
    	=\epsilon^{-1}$.
    	Moreover,
    	an element
    	$F\in
    	R_{A[\frac{1}{p}]}[\lambda^{-1}]$
    	is sent to an element
    	$F_1\in
    	\mathrm{pr}_{1,\mathrm{rig}}^\ast
    	(\mathcal{R}_\mathrm{rig}^\mathrm{Ig}[\lambda^{-1}])$,
    	resp.,
    	$F_2\in
    	\mathrm{pr}_{2,\mathrm{rig}}^\ast
    	(\mathcal{R}_\mathrm{rig}^\mathrm{Ig}[\lambda^{-1}])$
    	which is given by
    	\[
    	F_1(g\circ\mathbb{D}(\psi^G_\infty))
    	=
    	F(g),
    	\text{resp}.,
    	F_2(g\circ\mathbb{D}(\widetilde{\psi}^G_\infty))
    	=F(g),
    	\forall
    	g\in G(E_\mathfrak{p}).
    	\]
    	Moreover,
    	note that for
    	$\mathbb{D}(\phi)
    	=g\circ
    	\mathbb{D}(\psi^G_\infty)$,
    	one has
    	\[
    	\mathbb{D}(\widetilde{\phi})
    	=
    	\epsilon
    	\circ
    	g
    	\circ
    	\epsilon^{-1}
    	\circ
    	\mathbb{D}(\varphi^G_\infty).
    	\]
    	Thus
    	we have
    	\[
    	((\pi^{G,\epsilon})^\ast F_2)
    	(g\circ\mathbb{D}(\psi^G_\infty))
    	=
    	F_2
    	(
    	\epsilon
    	g
    	\epsilon^{-1}
    	\mathbb{D}(\varphi^G_\infty)
    	)
    	=
    	F(\epsilon
    	g
    	\epsilon^{-1})
    	=
    	(\mathbb{T}_\epsilon F)(g)
    	=
    	(\mathbb{T}_\epsilon F)_1
    	(g\circ
    	\mathbb{D}(\varphi^G_\infty)).
    	\]
    	From this we conclude that
    	$(\pi^{G,\epsilon})^\ast F_2
    	=(\mathbb{T}_\epsilon F)_1$,
    	thus the commutativity follows.
    \end{proof}

    Moreover, it is also easy to see 
    the above commutative diagram is
    functorial in $A$
    and thus again we can globalize the diagram and
    obtain
    the following morphisms
    \begin{align*}
    (\pi^{G,\epsilon})^\ast
    &
    \colon
    \mathrm{pr}_{2,\infty}
    (\mathcal{R}_\infty^\mathrm{top,Ig}[\lambda^{-1}])
    \rightarrow
    \mathrm{pr}_{1,\infty}
    (\mathcal{R}_\infty^\mathrm{top,Ig}[\lambda^{-1}]),
    \\
    (\pi^{G,\epsilon})^\ast
    &
    \colon
    \mathrm{pr}_{2,\mathrm{rig}}
    (\mathcal{R}_\mathrm{rig}^\mathrm{top,Ig}[\lambda^{-1}])
    \rightarrow
    \mathrm{pr}_{1,\mathrm{rig}}
    (\mathcal{R}_\mathrm{rig}^\mathrm{top,Ig}[\lambda^{-1}]),
    \\
    (\pi^{G,\epsilon})^\ast
    &
    \colon
    \mathrm{pr}_{2,\infty}
    (\widetilde{\mathcal{R}}_\infty^\mathrm{Ig}[\lambda^{-1}])
    \rightarrow
    \mathrm{pr}_{1,\infty}
    ((\widetilde{\mathcal{R}}_\infty^\mathrm{Ig}[\lambda^{-1}])).
    \end{align*}

    Then we consider the trace map
    over the formal scheme
    $\mathrm{Ig}_{\infty,n}^\mu/\widetilde{P}_L$:
    \[
    \mathrm{Tr}_{\mathrm{pr}_{1,\infty}}
    \colon
    (\mathrm{pr}_{1,\infty})_\ast
    (\mathcal{O}_{\mathrm{Ig}_{\infty,n}^\mu
    	/\widetilde{P}_L(\epsilon)})
    \rightarrow
    \mathcal{O}_{\mathrm{Ig}_{\infty,n}^\mu
    	/\widetilde{P}_L}.
    \]
    We want to study the $p$-integrality of this
    morphism.
    Recall that we have the unipotent radical
    $U_{\widetilde{P}}$
    of
    the parabolic subgroup
    $\widetilde{P}$
    of $G$.
    Note that
    $\widetilde{T}_{\widetilde{P}}^+(E_\mathfrak{p})$
    acts by inverse conjugation on
    $U_{\widetilde{P}}(\mathcal{O}_\mathfrak{p})$.

    \begin{definition}
    	For an element
    	$\epsilon
    	\in
    	\widetilde{T}_{\widetilde{P}}^+(E_\mathfrak{p})$,
    	we set
    	\[
    	m_\epsilon
    	:=
    	\sharp\frac{U_{\widetilde{P}}
    		(\mathcal{O}_\mathfrak{p})}{\epsilon
    		U_{\widetilde{P}}
    		(\mathcal{O}_\mathfrak{p})\epsilon^{-1}\cap
    		U_{\widetilde{P}}(\mathcal{O}_\mathfrak{p})}.
    	\]
    \end{definition}
    
    \begin{proposition}\label{Integrality of Hecke operator}
    	For any
    	$\epsilon\in
    	\widetilde{T}_{\widetilde{P}}^+(E_\mathfrak{p})$,
    	the morphism
    	\[
    	\mathrm{Tr}_\epsilon
    	:=
    	\frac{1}{m_\epsilon}
    	\mathrm{Tr}_{\mathrm{pr}_{1,\infty}}
    	\colon
    	(\mathrm{pr}_{1,\infty})_\ast
    	(\mathcal{O}_{\mathrm{Ig}_{\infty,n}^\mu
    		/\widetilde{P}_L(\epsilon)})
    	\rightarrow
    	\mathcal{O}_{\mathrm{Ig}_{\infty,n}^\mu
    		/\widetilde{P}_L}
    	\]
    	is well-defined
    	(in other words, the morphism
    	$\mathrm{Tr}_{\mathrm{pr}_{1,\infty}}$
    	is divisible by $m_\epsilon$).
    \end{proposition}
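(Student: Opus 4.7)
The plan follows the strategy of Pilloni \cite[§5.2]{Pilloni2012} in the ordinary PEL case: reduce to a local statement at a geometric point of the Igusa tower by means of Serre--Tate theory, then identify the correspondence with a conjugation-by-$\epsilon$ endomorphism of a formal unipotent group whose generic degree and trace can be computed directly.

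Concretely, I would fix a geometric point $\overline{x}$ of $\mathrm{Ig}^\mu_{\infty,n}/\widetilde{P}_L$ and let $R$ be the completion of its strict Henselization. The key input is the $\mu$-ordinary Serre--Tate theory developed in \cite{ShankarZhou2016}: the Igusa trivialization of $\mathcal{A}_{\overline{x}}[p^\infty]$ together with its Hodge tensors identifies $\mathrm{Spf}(R)$ canonically with the formal completion at the identity of a formal group built from (a subquotient of) the unipotent radical $U_{\widetilde{P}}$, so that deformations are parametrised by Serre--Tate coordinates valued in $\widehat{U}_{\widetilde{P}}$. Via Grothendieck--Messing, the condition $\mathbb{D}(\widetilde{\psi}_\infty^{-1}\circ \pi \circ \psi_\infty)(\mathbb{W}) = \epsilon$ in the definition of $\mathrm{Ig}^\mu_{\infty,n}/\widetilde{P}_L(\epsilon)$ translates into the statement that, locally at $\overline{x}$, the projection $\mathrm{pr}_1$ is the finite flat endomorphism of $\widehat{U}_{\widetilde{P}}$ induced by inverse conjugation $u \mapsto \epsilon^{-1} u \epsilon$ on $U_{\widetilde{P}}$.

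Since $\widetilde{T}_{\widetilde{P}}^+(E_\mathfrak{p})$ stabilises $U_{\widetilde{P}}(\mathcal{O}_\mathfrak{p})$ under inverse conjugation, the generic degree of this self-map is precisely $m_\epsilon$; a direct computation on formal unipotent groups, factoring through the finite \'{e}tale part and the infinitesimal part, then shows that its trace on the structure sheaf is divisible by $m_\epsilon$. Gluing this local identification using the $L_\mu(\mathbb{Z}_p/p^n)$-equivariance of the Igusa structure and of the correspondence (so that the local picture is the same at every $\mu$-ordinary geometric point) yields the global divisibility, which is exactly the well-definedness of $\mathrm{Tr}_\epsilon$.

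The main obstacle is the precise local identification of the correspondence: one has to verify that the constraint on the Dieudonn\'{e} map really translates into $\mathrm{Ad}(\epsilon)^{-1}$ on the Serre--Tate coordinates (and not some twist of it), and that the polarisation compatibility invoked in the definition when $r$ is odd is automatically preserved. This is a careful but essentially formal bookkeeping on the Dieudonn\'{e} side, using crucially the $\mu$-ordinary slope decomposition of Proposition \ref{extension to the boundary in terms of crystals} and the description of $L_\mu$ as the automorphism group of $\widetilde{\mathcal{BT}}$ respecting the Hodge tensors.
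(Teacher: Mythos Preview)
Your proposal is on the right track and follows essentially the same strategy as the paper: both reduce, via the $\mu$-ordinary Serre--Tate theory of \cite{ShankarZhou2016}, to a local computation at a geometric point of the $\mu$-ordinary locus, identify $\mathrm{pr}_1$ locally with the conjugation-by-$\epsilon$ map on the deformation space, and then read off the index $m_\epsilon$.

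There is, however, one genuine refinement you are glossing over, and it is exactly the new content beyond the ordinary PEL case. In the $\mu$-ordinary setting the deformation space $\mathcal{D}_G(\mathcal{BT})$ is \emph{not} the formal completion of a unipotent group such as $\widehat{U}_{\widetilde{P}}$: by \cite[Theorem~4.9]{ShankarZhou2016} it carries only a \emph{shifted subcascade} structure, built as an iterated biextension out of the pieces $E_{i,j}=\mathrm{Ext}(\widetilde{\mathcal{BT}}_i,\widetilde{\mathcal{BT}}_j)$ indexed by pairs of slopes $\lambda_i>\lambda_j$. The paper's proof makes this explicit: after writing $\epsilon=\mathrm{diag}(\epsilon_1,\dots,\epsilon_r)$ according to the slope decomposition, it identifies each $E_{i,j}$ (via the Honda-system equivalence) with a block $\widetilde{P}(\mathcal{O}_\mathfrak{p})\cap P_{i,j}(\mathcal{O}_\mathfrak{p})$ of the unipotent radical of the Siegel parabolic, checks that the induced action of the $p$-isogeny $\pi^{G,\epsilon}$ on each $E_{i,j}$ is conjugation by the relevant diagonal blocks, and then sums the contributions over all $(i,j)$ to obtain
\[
\sharp\,\frac{\mathcal{D}_G(\mathcal{BT})}{\epsilon\,\mathcal{D}_G(\mathcal{BT})\,\epsilon^{-1}\cap \mathcal{D}_G(\mathcal{BT})}=m_\epsilon.
\]
Your phrase ``formal group built from a subquotient of $U_{\widetilde{P}}$'' gestures at this but is not literally correct (for one thing, the dimension of $\mathcal{D}_G(\mathcal{BT})$ is $\dim U_P$, not $\dim U_{\widetilde{P}}$), and the cascade bookkeeping---matching the $E_{i,j}$ to unipotent blocks and tracking how the $\epsilon_i$ act on them---is precisely the step that has to be carried out carefully in the Hodge-type case. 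Once that is done, the remaining deduction of $p$-integrality of $\mathrm{Tr}_\epsilon$ is as in \cite[Lemma~6.6]{Hida2002} and \cite[Appendice~1]{Pilloni2012}.
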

    \begin{proof}
    	The proof is similar to
    	the ones in
    	\cite[Lemma 6.6]{Hida2002}
    	and
    	\cite[Appendice 1]{Pilloni2012},
    	using the Serre-Tate theory developed in
    	\cite{ShankarZhou2016}.
    	First we recall some relevant results in
    	\cite{ShankarZhou2016}.
    	Let
    	$x_0
    	\in
    	Sh^\mu_1$
    	be a geometric point in the $\mu$-ordinary
    	locus
    	(as we have done since the beginning)
    	and
    	$\mathcal{BT}$
    	be the $p$-divisible group associated to
    	the abelian scheme
    	$\mathcal{A}_{x_0}$.
    	Then we write
    	$\mathcal{D}(\mathcal{BT})$
    	for the deformation space of
    	$\mathcal{BT}$ over $\mathrm{Spec}(\mathbb{W})$
    	and
    	$\mathcal{D}_G(\mathcal{BT})$
    	the subspace of
    	$\mathcal{D}(\mathcal{BT})$
    	consisting of $G$-adapted deformations
    	(\textit{cf.} \cite[§4]{ShankarZhou2016}).
    	Then by
    	\cite[Theorem 4.9]{ShankarZhou2016},
    	$\mathcal{D}_G(\mathcal{BT})$
    	is a shifted subcascade
    	and has a dense subset consisting of
    	CM points(\textit{i.e.} torsion points).
    	More precisely,
    	recall that we have a slope decomposition
    	$\mathcal{BT}
    	=\prod_{i=1}^r\mathcal{BT}_i$
    	each piece of slope
    	$1\geq\lambda_1>\lambda_2>\cdots\lambda_r\geq0$
    	and its canonical lifting
    	$\widetilde{\mathcal{BT}}
    	=\prod_{i=1}^r
    	\widetilde{\mathcal{BT}}_i$.
    	We write
    	(in the notation of \cite[Definition 4.4]{ShankarZhou2016})
    	$E_{i,j}
    	=\mathrm{Ext}
    	(\widetilde{\mathcal{BT}}_i,\widetilde{\mathcal{BT}}_j)$
    	for
    	$i<j$.
    	Let
    	$\mathcal{BT}_{i,j}$
    	denote the product
    	$\prod_{k=i}^j\mathcal{BT}_k$
    	and
    	$\mathcal{D}_G(\mathcal{BT}_{i,j})$
    	its corresponding deformation space over
    	$\mathrm{Spec}(\mathbb{W})$
    	($i\leq j$).
    	Then \cite[Proposition 4.8, Theorem 4.9]{ShankarZhou2016}
    	show that
    	\begin{enumerate}
    		\item 
    		the natural morphisms
    		$\lambda_{i,j}
    		\colon
    		\mathcal{D}_G(\mathcal{BT}_{i,j})
    		\rightarrow
    		\mathcal{D}_G(\mathcal{BT}_{i,j-1})$
    		and
    		$\rho_{i,j}
    		\colon
    		\mathcal{D}_G(\mathcal{BT}_{i,j})
    		\rightarrow
    		\mathcal{D}_G(\mathcal{BT}_{i+1,j})$
    		satisfy the commutative relation
    		$\rho_{i,j-1}\circ\lambda_{i,j}
    		=\lambda_{i+1,j}\circ\rho_{i,j}$;
    		
    		\item 
    		$\mathcal{D}_G(\mathcal{BT}_{i,j})$
    		is a bi-extension of
    		$(\mathcal{D}_G(\mathcal{BT}_{i,j-1}),
    		\mathcal{D}_G(\mathcal{BT}_{i+1,j}))$
    		by
    		$E_{i,j}\times
    		\mathcal{D}_G(\mathcal{BT}_{i+1,j-1})$.
    	\end{enumerate}
    	Recall that over the correspondence
    	$\mathrm{Ig}_{\infty,1}^\mu/\widetilde{P}_L(\epsilon)$,
    	we have two abelian schemes
    	$\mathcal{A}^G$ and
    	$\widetilde{\mathcal{A}}^G$
    	and a $p$-isogeny
    	$\pi^{G,\epsilon}
    	\colon
    	\mathcal{A}^G
    	\rightarrow
    	\widetilde{\mathcal{A}}^G$.
    	This $p$-isogeny induces an action of
    	$\epsilon$
    	on the deformation space
    	$\mathcal{D}_G(\mathcal{BT})
    	=\mathcal{D}_G(\mathcal{BT}_{1,r})$.
    	More precisely,
    	we view $\epsilon$
    	as an element in
    	$\mathrm{GSp}(V,\psi)(E_\mathfrak{p})$
    	and suppose that
    	the parabolic subgroup
    	$P_V$
    	of $\mathrm{GSp}(V,\psi)$
    	corresponds to a partition
    	$\mathrm{dim}(V)
    	=n_1+n_2+\cdots+n_r$
    	(recall $P_V$ stabilizes the filtration induced by the
    	cocharacter $\mu$)
    	and
    	the parabolic subgroup $\widetilde{P}_V$
    	(contained in $P_V$)
    	corresponds to partitions
    	$n_i=n_{i,1}+n_{i,2}+\cdots+n_{i,s_i}$
    	for
    	$i=1,\cdots,r$.
    	Then
    	$\epsilon
    	=\mathrm{diag}
    	(\epsilon_1,\cdots,\epsilon_r)$
    	with each diagonal matrix
    	$\epsilon_i
    	=\mathrm{diag}
    	(p^{t_{i,1}}1_{n_{i,1}},\cdots,
    	p^{t_{i,s_i}}1_{n_{i,s_i}})$
    	such that
    	$t_{1,1}\leq t_{1,2}\cdots
    	\leq t_{1,s_1}\leq
    	t_{2,1}\leq
    	\cdots
    	\leq
    	\cdots
    	t_{r,1}
    	\leq
    	\cdots
    	\leq
    	t_{r,s_r}$.
    	By the equivalence between the category of
    	Honda systems
    	over $\mathbb{W}$
    	and the category of $p$-divisible groups
    	over $\mathbb{W}$
    	(\textit{cf.}\cite[Appendix A]{ShankarZhou2016}),
    	we can identify
    	$E_{i,j}$
    	with
    	$\widetilde{P}(\mathcal{O}_\mathfrak{p})\cap
    	P_{i,j}(\mathcal{O}_\mathfrak{p})$,
    	here $P_{i,j}$ is the subgroup of
    	the unipotent subgroup $U_{P_V}$ of $P_V$
    	where the entries at index
    	$(i',j')$ vanish for
    	$i'<j'$ and 
    	$(i',j')\notin
    	[\sum_{k=1}^{i-1}n_k+1,\sum_{k=1}^in_k]
    	\times
    	[\sum_{k=1}^{j-1}n_k+1,\sum_{k=1}^jn_k]
    	$.
    	We see that
    	$\epsilon_i$ acts by conjugation on
    	$E_{i,j}$    	
    	via its conjugate action on
    	the Honda system
    	$\mathbb{D}
    	((\widetilde{\mathcal{BT}}_j)_{\overline{\mathbb{F}}_p})$
    	and thus we have
    	(see also \cite[p.64]{Hida2002})
    	\[
    	\sharp\frac{
    		\mathcal{D}_G(\mathcal{BT})}{\epsilon
    		\mathcal{D}_G(\mathcal{BT})
    		\epsilon^{-1}\cap
    	\mathcal{D}_G(\mathcal{BT})}
        =
        m_\epsilon.
    	\]
    	From this we deduce that
    	$\mathrm{Tr}_\epsilon$ is $p$-integral.
    \end{proof}
    
    We have also the following simple observations
    \begin{lemma}
    	We fix an element
    	$\epsilon
    	\in
    	\widetilde{T}_{\widetilde{P}}^+(E_\mathfrak{p})$.
    	\begin{enumerate}
    		\item 
    		There is a subset
    		$X_\epsilon
    		\subset\widetilde{P}(E_\mathfrak{p})$
    		such that
    		for any $n\geq1$:
    		\[
    		I_{\widetilde{P}}(n)\epsilon I_{\widetilde{P}}(n)
    		=
    		\bigsqcup_{x\in X_\epsilon}
    		I_{\widetilde{P}}(n)\epsilon x.
    		\]
    		
    		\item 
    		We have the following identity of
    		double cosets
    		for any $n\geq2$:
    		\[
    		I_{\widetilde{P}}(n)
    		\epsilon
    		I_{\widetilde{P}}(n)
    		=I_{\widetilde{P}}(n)
    		\epsilon
    		I_{\widetilde{P}}(n-1).
    		\]
    	\end{enumerate}
    \end{lemma}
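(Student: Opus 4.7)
The plan is to exploit an Iwahori-type factorisation of $I_{\widetilde{P}}(n)$ together with the observation that $\epsilon\in\widetilde{T}_{\widetilde{P}}^+(E_\mathfrak{p})$ lies in the connected centre of $\widetilde{L}$, so that conjugation by $\epsilon$ is trivial on $\widetilde{L}(\mathcal{O}_\mathfrak{p})$ while contracting $U_{\widetilde{P}}(\mathcal{O}_\mathfrak{p})$. First I would establish the unique factorisation
$$I_{\widetilde{P}}(n) = U^-_{\widetilde{P}_L}(\mathfrak{p}^n)\cdot \widetilde{L}(\mathcal{O}_\mathfrak{p})\cdot U_{\widetilde{P}}(\mathcal{O}_\mathfrak{p}),$$
where $U^-_{\widetilde{P}_L}$ is the unipotent radical in $L$ opposite to $\widetilde{P}_L=\widetilde{P}\cap L$. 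This follows by combining: (a) the Levi splitting $I_{\widetilde{P}}(n)=I_{\widetilde{P}_L}(n)\cdot U(\mathcal{O}_\mathfrak{p})$, valid because $U\subset\widetilde{P}$ and so $U(\mathcal{O}_\mathfrak{p})\subset I_{\widetilde{P}}(n)$; (b) the standard lift-by-reduction inside $L(\mathcal{O}_\mathfrak{p})$ giving $I_{\widetilde{P}_L}(n)=U^-_{\widetilde{P}_L}(\mathfrak{p}^n)\cdot\widetilde{P}_L(\mathcal{O}_\mathfrak{p})$; and (c) the identity $U_{\widetilde{P}_L}\cdot U=U_{\widetilde{P}}$ at the level of root subgroups.

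For part (1), set $H=\epsilon^{-1}I_{\widetilde{P}}(n)\epsilon\cap I_{\widetilde{P}}(n)$ and use the natural bijection $I_{\widetilde{P}}(n)/H \xrightarrow{\sim} I_{\widetilde{P}}(n)\backslash I_{\widetilde{P}}(n)\epsilon I_{\widetilde{P}}(n)$ sending $x$ to $I_{\widetilde{P}}(n)\epsilon x$. Applying the factorisation to $\epsilon^{-1}I_{\widetilde{P}}(n)\epsilon$ and using that $\epsilon$-conjugation expands $U^-_{\widetilde{P}_L}$, fixes $\widetilde{L}$, and contracts $U_{\widetilde{P}}(\mathcal{O}_\mathfrak{p})$, term-by-term intersection yields
$$H = U^-_{\widetilde{P}_L}(\mathfrak{p}^n)\cdot\widetilde{L}(\mathcal{O}_\mathfrak{p})\cdot \bigl(\epsilon^{-1}U_{\widetilde{P}}(\mathcal{O}_\mathfrak{p})\epsilon\bigr).$$
Consequently $I_{\widetilde{P}}(n)/H\simeq U_{\widetilde{P}}(\mathcal{O}_\mathfrak{p})/\epsilon^{-1}U_{\widetilde{P}}(\mathcal{O}_\mathfrak{p})\epsilon$, which is manifestly independent of $n$. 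Choosing any lift $X_\epsilon\subset U_{\widetilde{P}}(\mathcal{O}_\mathfrak{p})\subset\widetilde{P}(E_\mathfrak{p})$ of this finite quotient gives the required disjoint decomposition uniformly in $n\geq 1$.

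For part (2), the inclusion $I_{\widetilde{P}}(n)\epsilon I_{\widetilde{P}}(n)\subset I_{\widetilde{P}}(n)\epsilon I_{\widetilde{P}}(n-1)$ is immediate. Conversely, for $y\in I_{\widetilde{P}}(n-1)$ write $y=u^-\cdot l\cdot u^+$ with $u^-\in U^-_{\widetilde{P}_L}(\mathfrak{p}^{n-1})$, $l\in\widetilde{L}(\mathcal{O}_\mathfrak{p})$, $u^+\in U_{\widetilde{P}}(\mathcal{O}_\mathfrak{p})$. Since $\epsilon$ centralises $l$,
$$\epsilon y = (\epsilon u^-\epsilon^{-1})\cdot \epsilon\cdot (l u^+).$$
The factor $lu^+$ already lies in $I_{\widetilde{P}}(n)$. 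By the strict positivity $s_i>0$ in the definition of the generators $\epsilon_i$, conjugation by $\epsilon$ raises the valuation on each root subgroup of $U^-_{\widetilde{P}_L}$ by at least one, so $\epsilon u^-\epsilon^{-1}\in U^-_{\widetilde{P}_L}(\mathfrak{p}^n)\subset I_{\widetilde{P}}(n)$. Hence $\epsilon y\in I_{\widetilde{P}}(n)\epsilon I_{\widetilde{P}}(n)$.

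The main obstacle is setting up the unique Iwahori factorisation of $I_{\widetilde{P}}(n)$ and verifying the strict contraction/expansion behaviour of conjugation by every $\epsilon\in\widetilde{T}_{\widetilde{P}}^+(E_\mathfrak{p})$; once those structural properties are in hand both parts reduce to routine coset bookkeeping.
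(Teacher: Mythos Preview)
Your argument for part (1) is essentially the paper's: both identify the right–coset space with $U_{\widetilde{P}}(\mathcal{O}_\mathfrak{p})/\epsilon^{-1}U_{\widetilde{P}}(\mathcal{O}_\mathfrak{p})\epsilon$, which is visibly independent of $n$; you simply spell out the Iwahori factorisation that the paper leaves implicit.

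For part (2), however, your route diverges from the paper's and contains a genuine gap. The paper does \emph{not} prove the inclusion $I_{\widetilde{P}}(n)\epsilon I_{\widetilde{P}}(n-1)\subset I_{\widetilde{P}}(n)\epsilon I_{\widetilde{P}}(n)$ directly. Instead it argues by a cardinality count: setting $I_{\widetilde{P}}(n)^\epsilon=\epsilon^{-1}I_{\widetilde{P}}(n)\epsilon\cap I_{\widetilde{P}}(n)$ and $I_{\widetilde{P}}(n-1)^\epsilon=\epsilon^{-1}I_{\widetilde{P}}(n)\epsilon\cap I_{\widetilde{P}}(n-1)$, one checks that both $I_{\widetilde{P}}(n)\backslash I_{\widetilde{P}}(n-1)$ and $I_{\widetilde{P}}(n)^\epsilon\backslash I_{\widetilde{P}}(n-1)^\epsilon$ are in bijection with $\ker\bigl(U_{\widetilde{P}}^\circ(\mathcal{O}_\mathfrak{p}/\mathfrak{p}^n)\to U_{\widetilde{P}}^\circ(\mathcal{O}_\mathfrak{p}/\mathfrak{p}^{n-1})\bigr)$, and then a snake-lemma style index comparison forces the natural injection $I_{\widetilde{P}}(n)^\epsilon\backslash I_{\widetilde{P}}(n)\hookrightarrow I_{\widetilde{P}}(n-1)^\epsilon\backslash I_{\widetilde{P}}(n-1)$ to be a bijection. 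Nowhere does this require conjugation by $\epsilon$ to move $U^-_{\widetilde{P}_L}(\mathfrak{p}^{n-1})$ all the way into $U^-_{\widetilde{P}_L}(\mathfrak{p}^n)$.

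Your direct argument does require exactly that strict contraction, and the justification you offer is incorrect. The inequality $s_i>0$ in the description $\epsilon_i=\mathrm{diag}(\Lambda_i,p^{s_i}\Lambda_i^{-1})$ records positivity of the similitude exponent; it governs the roots in $U_P$ (those mixing the two Siegel blocks), not the roots of $L$ appearing in $U^-_{\widetilde{P}_L}$. For the latter, the relevant quantities are the differences $t_j-t_k$ among the entries of $\Lambda_i$, and these are only required to be $\geq 0$. Concretely, a generator $\epsilon_i$ whose $\Lambda_i$ has repeated entries $t_k=t_{k+1}$ while the corresponding root $e_{k+1}-e_k$ lies in $U^-_{\widetilde{P}_L}$ (i.e.\ is not a root of $\widetilde{L}$) gives $\epsilon_i u^-\epsilon_i^{-1}=u^-$ on that root subgroup, so $\epsilon_i u^-\epsilon_i^{-1}$ remains in $U^-_{\widetilde{P}_L}(\mathfrak{p}^{n-1})\setminus U^-_{\widetilde{P}_L}(\mathfrak{p}^n)$ and your conclusion fails. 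Thus your proof of (2) works only for regular $\epsilon$, whereas the lemma is stated (and used in the subsequent corollary) for arbitrary $\epsilon\in\widetilde{T}_{\widetilde{P}}^+(E_\mathfrak{p})$. The paper's counting argument is what handles the general case.
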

    \begin{proof}
    	\begin{enumerate}
    		\item 
    		By definition,
    		$I_{\widetilde{P}}(n)
    		\epsilon
    		I_{\widetilde{P}}(n)
    		=\bigsqcup_x
    		I_{\widetilde{P}}(n)\epsilon x$
    		where
    		$x$ runs through
    		$(\epsilon^{-1}
    		I_{\widetilde{P}}(n)
    		\epsilon\cap I_{\widetilde{P}}(n))
    		\backslash
    		I_{\widetilde{P}}(n)$.
    		Since
    		$\epsilon
    		\in
    		\widetilde{T}_{\widetilde{P}}^+(E_\mathfrak{p})$,
    		we can identify the quotient with
    		$\epsilon^{-1}
    		U_{\widetilde{P}}(\mathcal{O}_\mathfrak{p})
    		\epsilon\backslash
    		U_{\widetilde{P}}(\mathcal{O}_\mathfrak{p})$,
    		which is independent of $n$.
    		
    		\item 
    		The proof is the same as
    		\cite[Proposition 5.3]{Pilloni2012}.
    		More precisely,
    		write
    		$I_{\widetilde{P}}(n)^\epsilon
    		:=
    		\epsilon^{-1}
    		I_{\widetilde{P}}(n)
    		\epsilon\cap
    		I_{\widetilde{P}}(n)$
    		and
    		$I_{\widetilde{P}}(n-1)^\epsilon
    		:=
    		\epsilon^{-1}
    		I_{\widetilde{P}}(n)
    		\epsilon
    		\cap
    		I_{\widetilde{P}}(n-1)$.
    		Then we see that both
    		$I_{\widetilde{P}}(n)\backslash
    		I_{\widetilde{P}}(n-1)$
    		and
    		$I_{\widetilde{P}}(n)^\epsilon\backslash
    		I_{\widetilde{P}}(n-1)^\epsilon$
    		are in bijection with
    		$\mathrm{Ker}
    		(U_{\widetilde{P}}^\circ
    		(\mathcal{O}_\mathfrak{p}/\mathfrak{p^n})
    		\rightarrow
    		U_{\widetilde{P}}^\circ
    		(\mathcal{O}_\mathfrak{p}/\mathfrak{p^{n-1}}))$.
    		From this we deduce that
    		the quotients
    		$I_{\widetilde{P}}(n)^\epsilon
    		\backslash
    		I_{\widetilde{P}}(n)$
    		and
    		$I_{\widetilde{P}}(n-1)^\epsilon
    		\backslash
    		I_{\widetilde{P}}(n-1)$
    		are identical using the
    		snake lemma
    		(the former one is \textit{a priori}
    		contained in the latter one).
    	\end{enumerate}
    \end{proof}

    \begin{definition}\label{Definition of Hecke operator at p}
    	For any
    	sheaf
    	$\mathcal{F}$ over
    	$\mathrm{Ig}_{\mathrm{rig},n}^\mu/\widetilde{P}_L$,
    	we define
    	$\mathbb{T}_\epsilon$
    	to be the composition of the following morphisms
    	\[
    	\begin{tikzcd}
    	H^0
    	(\mathrm{Ig}_{\mathrm{rig},n}^\mu/\widetilde{P}_L,\mathcal{F})
    	\arrow[r]
    	\arrow[d,dashed]
    	&
    	H^0
    	(\mathrm{Ig}_{\mathrm{rig},n}^\mu/\widetilde{P}_L(\epsilon),
    	\mathrm{pr}_{2,\mathrm{rig}}^\ast\mathcal{F})
    	\arrow[d,"(\pi^{G,\epsilon})^\ast"']
    	\\
    	H^0
    	(\mathrm{Ig}_{\mathrm{rig},n}^\mu/\widetilde{P}_L,\mathcal{F})
    	&
    	H^0
    	(\mathrm{Ig}_{\mathrm{rig},n}^\mu/\widetilde{P}_L(\epsilon),
    	\mathrm{pr}_{1,\mathrm{rig}}^\ast\mathcal{F})
    	\arrow[l,"\mathrm{Tr}_\epsilon"']
    	\end{tikzcd}
    	\]
    	
    	By considering the affine open formal
    	subsets of
    	$\mathrm{Ig}_{\infty,n}^\mu/\widetilde{P}_L$,
    	we define also
    	the operator
    	\[
    	\mathbb{T}_\epsilon
    	\colon
    	H^0(\mathrm{Ig}_{\infty,n}^\mu/\widetilde{P}_L,
    	\widetilde{\mathcal{R}}_\infty^\mathrm{Ig}[\lambda^{-1}])
    	\rightarrow
    	H^0(\mathrm{Ig}_{\infty,n}^\mu/\widetilde{P}_L,
    	\widetilde{\mathcal{R}}_\infty^\mathrm{Ig}[\lambda^{-1}]).
    	\]
    	By the above arguments,
    	we see that these operators are well-defined.
    \end{definition}

    \begin{proposition}
    	We have the following morphisms
    	of $\mathbb{Z}$-algebras
    	\[
    	\mathbb{Z}[\widetilde{T}_{\widetilde{P}}^+(E_\mathfrak{p})]
    	\rightarrow
    	\mathrm{End}(H^0(\mathrm{Ig}_{\infty,n}^\mu/\widetilde{P}_L,
    	\widetilde{\mathcal{R}}_\infty^\mathrm{Ig}[\lambda^{-1}])),
    	\,
    	\epsilon
    	\mapsto
    	\mathbb{T}_\epsilon;
    	\]
    	\[
    	\mathbb{Z}[\widetilde{T}_{\widetilde{P}}^+(E_\mathfrak{p})]
    	\rightarrow
    	\mathrm{End}
    	(H^0(\mathrm{Ig}_{\mathrm{rig},n}^\mu/\widetilde{P}_L,
    	\mathcal{R}_\mathrm{rig}^\mathrm{Ig}
    	[\lambda^{-1}])),
    	\,
    	\epsilon
    	\mapsto
    	\mathbb{T}_\epsilon.
    	\]
    \end{proposition}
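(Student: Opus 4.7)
The plan is to reduce both statements to a multiplicativity identity
\[
\mathbb{T}_{\epsilon}\circ\mathbb{T}_{\epsilon'}=\mathbb{T}_{\epsilon\epsilon'},
\quad
\forall\,\epsilon,\epsilon'\in\widetilde{T}_{\widetilde{P}}^+(E_\mathfrak{p}),
\]
which then automatically gives a ring map out of
$\mathbb{Z}[\widetilde{T}_{\widetilde{P}}^+(E_\mathfrak{p})]$
(commutativity of this monoid is built into its definition).
Well-definedness of each individual $\mathbb{T}_\epsilon$ on the rigid space side is formal
from the construction (base change along $\mathrm{pr}_2$, the morphism $(\pi^{G,\epsilon})^\ast$
from Proposition \ref{local-global commutativity}, and the usual finite flat trace $\mathrm{Tr}_{\mathrm{pr}_{1,\mathrm{rig}}}$);
the integrality of the operator in the formal picture, i.e. that
$\mathbb{T}_\epsilon$ preserves
$H^0(\mathrm{Ig}^\mu_{\infty,n}/\widetilde{P}_L,\widetilde{\mathcal{R}}_\infty^{\mathrm{Ig}}[\lambda^{-1}])$,
is exactly the content of Proposition \ref{Integrality of Hecke operator} together with
Proposition \ref{Hecke operator preserves top rep} applied through the local-global translation.

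To establish the multiplicativity I would work at the level of correspondences.
For $\epsilon,\epsilon'\in\widetilde{T}_{\widetilde{P}}^+(E_\mathfrak{p})$ I form the triple fibre product
\[
\mathrm{Ig}^\mu_{\infty,n}/\widetilde{P}_L(\epsilon)
\times_{\mathrm{pr}_2,\mathrm{Ig}^\mu_{\infty,n}/\widetilde{P}_L,\mathrm{pr}_1}
\mathrm{Ig}^\mu_{\infty,n}/\widetilde{P}_L(\epsilon'),
\]
parametrising chains of $p$-isogenies
$\mathcal{A}\xrightarrow{\pi^{G,\epsilon}}\widetilde{\mathcal{A}}\xrightarrow{\pi^{G,\epsilon'}}\widetilde{\widetilde{\mathcal{A}}}$
with compatible trivialisations. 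The composed isogeny $\pi^{G,\epsilon'}\circ\pi^{G,\epsilon}$ has, on
contravariant Dieudonn\'e crystals, the matrix $\epsilon\epsilon'=\epsilon'\epsilon$,
and therefore produces a canonical map from the triple product to
$\mathrm{Ig}^\mu_{\infty,n}/\widetilde{P}_L(\epsilon\epsilon')$. A diagram chase of the two
projections, using that $\widetilde{T}_{\widetilde{P}}^+(E_\mathfrak{p})$ stabilises
$I_{\widetilde{P}}(n)$ and that the class of representatives
$X_{\epsilon\epsilon'}=X_\epsilon X_{\epsilon'}$ (compare the proof of the corresponding fact for
$\mathrm{GSp}(V,\psi)$ via the injection
$\mathbb{Z}[\widetilde{T}_{\widetilde{P}}^+(E_\mathfrak{p})]\hookrightarrow
\mathbb{Z}[\widetilde{T}_{L_V}^+(E_\mathfrak{p})]$)
shows that the composition
$\mathrm{Tr}_\epsilon\circ(\pi^{G,\epsilon})^\ast\circ\mathrm{Tr}_{\epsilon'}\circ(\pi^{G,\epsilon'})^\ast$
agrees with $\mathrm{Tr}_{\epsilon\epsilon'}\circ(\pi^{G,\epsilon\epsilon'})^\ast$. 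Crucially the
normalising factors match because
$m_{\epsilon\epsilon'}=m_\epsilon\,m_{\epsilon'}$, which follows from the tower of quotients
\[
\frac{U_{\widetilde{P}}(\mathcal{O}_\mathfrak{p})}{\epsilon\epsilon'U_{\widetilde{P}}(\mathcal{O}_\mathfrak{p})(\epsilon\epsilon')^{-1}\cap U_{\widetilde{P}}(\mathcal{O}_\mathfrak{p})}
\;\simeq\;
\frac{U_{\widetilde{P}}(\mathcal{O}_\mathfrak{p})}{\epsilon U_{\widetilde{P}}(\mathcal{O}_\mathfrak{p})\epsilon^{-1}\cap U_{\widetilde{P}}(\mathcal{O}_\mathfrak{p})}
\times
\frac{\epsilon U_{\widetilde{P}}(\mathcal{O}_\mathfrak{p})\epsilon^{-1}\cap U_{\widetilde{P}}(\mathcal{O}_\mathfrak{p})}{\epsilon\epsilon'U_{\widetilde{P}}(\mathcal{O}_\mathfrak{p})(\epsilon\epsilon')^{-1}\cap U_{\widetilde{P}}(\mathcal{O}_\mathfrak{p})},
\]
the inverse-conjugation action of $\widetilde{T}_{\widetilde{P}}^+(E_\mathfrak{p})$ on
$U_{\widetilde{P}}(\mathcal{O}_\mathfrak{p})$ being multiplicative in $\epsilon$.

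An alternative and in some ways cleaner route is to go through the strict Henselisation at a
geometric point as in Proposition \ref{local-global commutativity}: there the two geometric
operators in question become identified, in a $\widetilde{T}_{\widetilde{P}}$-equivariant way, with
the abstract algebraic operators on $R_{A[1/p]}[\lambda^{-1}]$, for which we have already recorded
the identity $\mathbb{T}_\epsilon\mathbb{T}_{\epsilon'}=\mathbb{T}_{\epsilon\epsilon'}$ right after
their definition. Globalising over an affine formal cover of
$\mathrm{Ig}^\mu_{\infty,n}/\widetilde{P}_L(\epsilon)$ and using faithful flatness of the completion
maps lifts the identity to the global sections, yielding the first ring homomorphism; inverting $p$
gives the second one.

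The main obstacle is the careful bookkeeping of the normalisation constants $m_\epsilon$ when the
correspondences are composed, since the trace is defined only after dividing by $m_\epsilon$: one
has to verify that the division does not destroy multiplicativity, and this is where the product
formula for $m_{\epsilon\epsilon'}$ above is used. The remaining commutative-diagram checks
(compatibility of $(\pi^{G,\epsilon\epsilon'})^\ast$ with $(\pi^{G,\epsilon})^\ast\circ(\pi^{G,\epsilon'})^\ast$ via $\mathbb{D}((\pi^{G,\epsilon'})\circ(\pi^{G,\epsilon}))=\mathbb{D}(\pi^{G,\epsilon})\circ\mathbb{D}(\pi^{G,\epsilon'})$)
are essentially formal once one has set up the fibre-product correspondence.
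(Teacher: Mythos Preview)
The paper states this proposition without proof, treating it as a routine consequence of the preceding constructions (the injection of $\mathbb{Z}[\widetilde{T}_{\widetilde{P}}^+(E_\mathfrak{p})]$ into the parahoric Hecke algebra, where the identity $H\epsilon H\cdot H\epsilon'H=H\epsilon\epsilon'H$ and $\sharp X_{\epsilon\epsilon'}=\sharp X_\epsilon\cdot\sharp X_{\epsilon'}$ were already checked; the integrality of $\mathrm{Tr}_\epsilon$ in Proposition~\ref{Integrality of Hecke operator}; and Definition~\ref{Definition of Hecke operator at p}). Your proposal is a correct and reasonably detailed way to fill this in, and your first route via the fibre-product correspondence is exactly the geometric incarnation of the double-coset computation the paper already carried out abstractly.

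One small caveat on your ``alternative route'': Proposition~\ref{local-global commutativity} only identifies the sheaf map $(\pi^{G,\epsilon})^\ast$ with the abstract operator $\mathbb{T}_\epsilon$ on $R_{A[1/p]}[\lambda^{-1}]$; it does not by itself account for the normalised trace $\mathrm{Tr}_\epsilon$. So the Henselisation argument establishes $(\pi^{G,\epsilon\epsilon'})^\ast=(\pi^{G,\epsilon})^\ast\circ(\pi^{G,\epsilon'})^\ast$ on stalks, but to conclude $\mathbb{T}_{\epsilon\epsilon'}=\mathbb{T}_\epsilon\circ\mathbb{T}_{\epsilon'}$ you still need the correspondence-level comparison of traces and the product formula $m_{\epsilon\epsilon'}=m_\epsilon\,m_{\epsilon'}$ that you wrote down in your first approach. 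In other words, the two routes are not independent alternatives: the second handles the cohomological part, but the bookkeeping of the normalising constants has to go through the first.
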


    \begin{corollary}
    	For any
    	$\lambda\in
    	X^\ast(\widetilde{T}_{\widetilde{P}})^+$,
    	$\epsilon
    	\in
    	\widetilde{T}_{\widetilde{P}}^+(E_\mathfrak{p})$
    	and
    	$n\geq2$,
    	we have
    	\[
    	\mathbb{T}_\epsilon
    	\big(
    	H^0(\mathrm{Ig}_{\mathrm{rig},n}^\mu/\widetilde{P}_L,
    	\mathcal{R}_\mathrm{rig}^\mathrm{Ig}[\lambda^{-1}])
    	\big)
    	\subset
    	H^0(\mathrm{Ig}_{\mathrm{rig},n-1}^\mu/\widetilde{P}_L,
    	\mathcal{R}_\mathrm{rig}^\mathrm{Ig}[\lambda^{-1}]).
    	\]
    \end{corollary}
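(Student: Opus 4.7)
The plan is to reduce the corollary to the combinatorial identity in part (2) of the preceding lemma. My first step would be to give a local description of $\mathbb{T}_\epsilon$ as a classical double-coset operator. Using the local-global compatibility of Proposition \ref{local-global commutativity}, combined with the integrality of $\mathrm{Tr}_\epsilon$ from Proposition \ref{Integrality of Hecke operator}, one can trivialize the situation on an affinoid $\mathrm{Sp}(A[1/p])$ sitting over $\mathrm{Ig}^\mu_{\mathrm{rig},n}/\widetilde{P}_L$ and express $\mathbb{T}_\epsilon$ on a section $F$ as the sum
\[
(\mathbb{T}_\epsilon F)(g) \;=\; \frac{1}{m_\epsilon}\sum_{x\in X_\epsilon} F(\epsilon x g),
\]
where $X_\epsilon$ is a system of representatives for the double coset decomposition of $I_{\widetilde{P}}(n)\epsilon I_{\widetilde{P}}(n)$ modulo its left $I_{\widetilde{P}}(n)$-translates. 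This step essentially unwinds Definition \ref{Definition of Hecke operator at p} on local rings and is a matter of careful bookkeeping rather than genuine content.

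The second step is to invoke part (2) of the preceding lemma: $I_{\widetilde{P}}(n)\epsilon I_{\widetilde{P}}(n) = I_{\widetilde{P}}(n)\epsilon I_{\widetilde{P}}(n-1)$. This lets us choose the system of representatives $X_\epsilon$ entirely inside $I_{\widetilde{P}}(n-1)$. Then the formula for $\mathbb{T}_\epsilon(F)$ depends only on the values of $F$ on $I_{\widetilde{P}}(n-1)$-translates of the point $g$, so $\mathbb{T}_\epsilon(F)$ is invariant under the kernel of the surjection $I_{\widetilde{P}}(n)\twoheadrightarrow I_{\widetilde{P}}(n-1)/I_{\widetilde{P}}(n)$ (viewed as a quotient of $\widetilde{P}(\mathcal{O}_\mathfrak{p}/\mathfrak{p}^{n-1})$ level structures). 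Geometrically this is exactly the statement that the section descends along the natural projection $\mathrm{Ig}^\mu_{\mathrm{rig},n}/\widetilde{P}_L \to \mathrm{Ig}^\mu_{\mathrm{rig},n-1}/\widetilde{P}_L$, giving the desired inclusion.

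The main point requiring care — though not, I think, a real obstacle — is the first step: verifying rigorously on the rigid analytic side that the geometric operator built from the two projections $\mathrm{pr}_{1,\mathrm{rig}},\mathrm{pr}_{2,\mathrm{rig}}$ together with the isogeny pullback $(\pi^{G,\epsilon})^\ast$ and the normalized trace really does compute the classical double-coset sum above, and in particular that the fibres of $\mathrm{pr}_{1,\mathrm{rig}}$ over a geometric point are indexed precisely by $X_\epsilon$. This uses Proposition \ref{local-global commutativity} applied at strict Henselizations of geometric points of $\mathrm{Ig}^\mu_{\infty,n}/\widetilde{P}_L(\epsilon)$, together with the fact that, after conjugating, left multiplication by $\epsilon$ identifies the index set for $\mathrm{pr}_{1,\mathrm{rig}}$-fibres with $\epsilon^{-1}I_{\widetilde{P}}(n)\epsilon \cap I_{\widetilde{P}}(n) \backslash I_{\widetilde{P}}(n)$, which is precisely $X_\epsilon$ by part (1) of the preceding lemma. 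Once this identification is established, the corollary follows immediately from part (2) without further geometric input.
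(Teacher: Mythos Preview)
Your proposal is correct and follows the route the paper intends: the corollary is stated in the paper without proof, as an immediate consequence of the preceding lemma, and your two-step argument (expressing $\mathbb{T}_\epsilon$ via the coset representatives $X_\epsilon$ from part (1), then invoking the identity $I_{\widetilde{P}}(n)\epsilon I_{\widetilde{P}}(n) = I_{\widetilde{P}}(n)\epsilon I_{\widetilde{P}}(n-1)$ from part (2) to see that the output is already $I_{\widetilde{P}}(n-1)$-invariant) is exactly the intended unwinding. Your remark that the only real work is checking that the geometric definition of $\mathbb{T}_\epsilon$ via the correspondence $\mathrm{Ig}^\mu_{\infty,n}/\widetilde{P}_L(\epsilon)$ matches the group-theoretic double-coset sum is well placed, and Proposition \ref{local-global commutativity} together with the moduli description of the correspondence does supply this.
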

    
    Recall we have a projection
    $\mathrm{pr}
    \colon
    \widetilde{\mathrm{Ig}}_\infty^{\widetilde{P}_L}
    \rightarrow
    \mathrm{Ig}_\infty/
    \widetilde{P}_L(\mathcal{O}_\mathfrak{p})$.
    We write
    \[
    \Omega[\lambda^{-1}]
    :=
    \mathrm{pr}_\ast
    (\mathcal{O}_{\widetilde{\mathrm{Ig}}_\infty^{\widetilde{P}_L}}
    [\lambda^{-1}]).
    \]
    Now we give
    \begin{definition}
    	Fix a (finite) set of generators
    	$\epsilon_1,\cdots,\epsilon_r$
    	of the monoid
    	$\widetilde{T}_{\widetilde{P}}^+(E_\mathfrak{p})$,
    	we define
    	the $\widetilde{P}$-ordinary projector:
    	\[
    	e_{\widetilde{P}}
    	:=
    	\lim\limits_{n\rightarrow\infty}
    	\left(
    	\prod_{i=1}^r\mathbb{T}_{\epsilon_i}
    	\right)^{n!}
    	\in
    	\mathrm{End}_{\mathcal{O}_\mathfrak{p}}
    	(
    	H^0(\mathrm{Ig}_{\infty}^{L,\mu}/\widetilde{P}_L,
    	\Omega[\lambda^{-1}])
    	).
    	\]
    	
    	Using the isomorphism in
    	\ref{fundamental isom}
    	and the Koecher principal
    	(by Definition 
    	\ref{Shimura varieties, definition}, Condition 5
    	and
    	\cite[Remark 10.3]{Lan2016}),
    	we write
    	\[
    	e_{\widetilde{P}}
    	\mathbb{V}_\infty^{\widetilde{P}_L^\mathrm{der}}[\lambda^{-1}]
    	\]
    	for the space of
    	$\widetilde{P}$-ordinary $p$-adic modular forms
    	of weight $\lambda$, of level
    	$K$.
    	Using the Hodge-Tate map
    	$\mathrm{HT}_\infty^\ast$,
    	we write
    	\[
    	e_{\widetilde{P}}
    	H^0(Sh^\Sigma,\mathcal{V}_\lambda^\Sigma),
    	\text{resp., }
    	e_{\widetilde{P}}
    	H^0(Sh^\Sigma,\mathcal{V}_\lambda^\Sigma(-C^\Sigma))
    	\]
    	for the images of
    	$H^0(Sh^\Sigma,\mathcal{V}_\lambda^\Sigma)$,
    	resp.,
    	$H^0(Sh^\Sigma,
    	\mathcal{V}_\lambda^\Sigma(-C^\Sigma))$
    	in
    	$e_{\widetilde{P}}
    	\mathbb{V}^{\widetilde{P}_L^\mathrm{der}}_\infty[\lambda^{-1}]$.
    \end{definition}

    Now we can globalize the result in
    Proposition
    \ref{ordinary algebraic form}
    as follows
    \begin{proposition}\label{comparison of forms on Igusa and p-adic}
    	For any
    	$\lambda\in X^\ast_\mathrm{dm}(\widetilde{T}_{\widetilde{P}})$,
    	we have a commutative diagram
    	with horizontal isomorphisms
    	\[
    	\begin{tikzcd}
    	e_{\widetilde{P}}
    	H^0(\mathrm{Ig}_{\infty,1}^\mu/\widetilde{P}_L,
    	\widetilde{\mathcal{R}}_\infty^\mathrm{Ig}
    	[\lambda^{-1}])
    	\arrow[r,"\simeq"]
    	\arrow[d,hook]
    	&
    	e_{\widetilde{P}}
    	\mathbb{V}^{\widetilde{P}_L^\mathrm{der}}_\infty[\lambda^{-1}]
    	\arrow[d,hook]
    	\\
    	e_{\widetilde{P}}
    	H^0
    	(\mathrm{Ig}_{\mathrm{rig},1}^\mu/\widetilde{P}_L,
    	\mathcal{R}_\mathrm{rig}^\mathrm{Ig}
    	[\lambda^{-1}])
    	\arrow[r,"\simeq"]
    	&
    	e_{\widetilde{P}}
    	\mathbb{V}^{\widetilde{P}_L^\mathrm{der}}_\infty[\lambda^{-1}]
    	[\frac{1}{p}]
    	\end{tikzcd}
    	\]
    \end{proposition}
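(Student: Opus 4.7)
The plan is to deduce the global isomorphisms from their local (strict henselian) counterparts already established in Proposition \ref{local-global commutativity}, combined with the local structure of ordinary projectors in Proposition \ref{ordinary algebraic form} and Corollary \ref{ordinary modified=ordinary}. The content of the proposition is essentially the assertion that the ordinary projector $e_{\widetilde{P}}$ kills the ``kernel'' part of the induction $R_A[\lambda^{-1}]$ and leaves only the torus character piece $A[\lambda^{-1}]$, and that this identification is compatible with passing from the formal/integral lattice $\widetilde{\mathcal{R}}^{\mathrm{Ig}}_\infty[\lambda^{-1}]$ to the rigid sheaf $\mathcal{R}^{\mathrm{Ig}}_\mathrm{rig}[\lambda^{-1}]$.

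First, I would work affine-locally. Pick an affine formal open $\mathrm{Spf}(A)\subset \mathrm{Ig}_{\infty,1}^\mu/\widetilde{P}_L$ (or, to be safe, take $A$ to be the strict henselization at a geometric point $\overline{x}$ as in Proposition \ref{local-global commutativity}). The vertical arrows of the diagram in that proposition are isomorphisms, identifying sections of $\widetilde{\mathcal{R}}^{\mathrm{Ig}}_\infty[\lambda^{-1}]$, resp.\ $\mathcal{R}^{\mathrm{Ig}}_\mathrm{rig}[\lambda^{-1}]$, over such an $A$ with $\widetilde{R}_A[\lambda^{-1}]$, resp.\ $R_{A[1/p]}[\lambda^{-1}]$. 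Moreover the Hecke operators $\mathbb{T}_\epsilon$ on both sides match by construction (Definition \ref{Definition of Hecke operator at p} together with the commutativity in Proposition \ref{local-global commutativity}). Applying $e_{\widetilde{P}}$ and invoking Corollary \ref{ordinary modified=ordinary} and Proposition \ref{ordinary algebraic form} then yields canonical isomorphisms
\[
e_{\widetilde{P}}\widetilde{\mathcal{R}}^{\mathrm{Ig}}_\infty[\lambda^{-1}](A)\;\simeq\;A[\lambda^{-1}],\qquad e_{\widetilde{P}}\mathcal{R}^{\mathrm{Ig}}_\mathrm{rig}[\lambda^{-1}](A[\tfrac{1}{p}])\;\simeq\;A[\tfrac{1}{p}][\lambda^{-1}],
\]
compatible with the inclusion $A[\lambda^{-1}]\hookrightarrow A[1/p][\lambda^{-1}]$.

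Next, I would sheafify these identifications. Since the construction of $\widetilde{\mathcal{R}}^{\mathrm{Ig}}_\infty[\lambda^{-1}]$ is functorial in affine opens (as noted just before Proposition \ref{local-global commutativity}), and since the ordinary projector $e_{\widetilde{P}}$ is defined as a $p$-adically convergent limit of the Hecke operators $\mathbb{T}_{\epsilon_i}$ which act compatibly with restriction, the displayed local isomorphism globalizes to an isomorphism of sheaves
\[
e_{\widetilde{P}}\widetilde{\mathcal{R}}^{\mathrm{Ig}}_\infty[\lambda^{-1}]\;\simeq\;\Omega[\lambda^{-1}]
\]
on $\mathrm{Ig}_\infty/\widetilde{P}_L(\mathcal{O}_\mathfrak{p})$, where $\Omega[\lambda^{-1}]=\mathrm{pr}_\ast(\mathcal{O}_{\widetilde{\mathrm{Ig}}_\infty^{\widetilde{P}_L}}[\lambda^{-1}])$, and similarly after inverting $p$ on the rigid side. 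Taking global sections, and combining with Proposition \ref{fundamental isom} (which identifies $H^0(\mathrm{Ig}_\infty/\widetilde{P}_L(\mathcal{O}_\mathfrak{p}),\Omega[\lambda^{-1}])$ with $\mathbb{V}_\infty^{\widetilde{P}_L^\mathrm{der}}[\lambda^{-1}]$), produces the top (integral) isomorphism of the diagram. The Koecher principle (using Definition \ref{Shimura varieties, definition}(5)) ensures that passing to sections over the compactified Igusa tower does not introduce any extra sections at the boundary. Inverting $p$ then gives the bottom (rigid) isomorphism, and the commutativity of the square is tautological from the compatibility of $\mathrm{ev}_{\mathcal{O}_\mathfrak{p}}$ with $\mathrm{ev}_{E_\mathfrak{p}}$ encoded in the very definition of $\widetilde{\mathcal{R}}^{\mathrm{Ig}}_\infty[\lambda^{-1}]$.

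The main obstacle, as usual in this type of argument, is verifying that the purely local Bruhat-cell computation underlying Proposition \ref{ordinary algebraic form} (which contracts $\widetilde{P}_L^\circ$ into $\widetilde{P}_L^\circ\cap\widetilde{P}_L$ under iterated conjugation by the $\epsilon_i$) remains valid uniformly in $A$, so that the $p$-adic limit defining $e_{\widetilde{P}}$ genuinely converges on sections and commutes with sheafification and with inverting $p$. This is precisely why one has to work with the modified lattice $\widetilde{\mathcal{R}}^{\mathrm{Ig}}_\infty[\lambda^{-1}]$ rather than $\mathcal{R}^{\mathrm{Ig}}_\infty[\lambda^{-1}]$: only the former is stable under $\mathbb{T}_\epsilon$ over a general base $A$, so the convergence of $(\prod_i\mathbb{T}_{\epsilon_i})^{n!}$ to $e_{\widetilde{P}}$ is controlled by Proposition \ref{Hecke operator preserves top rep}. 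Once this integrality is in hand, the rest of the argument is formal sheaf theory.
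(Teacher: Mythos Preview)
Your proposal is correct and follows essentially the same route as the paper: both arguments reduce to the local computation of Proposition~\ref{ordinary algebraic form}/Corollary~\ref{ordinary modified=ordinary} on affine opens (equivalently, stalks), showing that $e_{\widetilde{P}}$ kills the kernel of $\widetilde{R}_A[\lambda^{-1}]\to A[\lambda^{-1}]$, and then globalize; the identification of the target with $\mathbb{V}_\infty^{\widetilde{P}_L^{\mathrm{der}}}[\lambda^{-1}]$ via Proposition~\ref{fundamental isom} is exactly what the paper has in mind. The only point you leave implicit is the passage between the level-$1$ tower $\mathrm{Ig}_{\infty,1}^\mu/\widetilde{P}_L$ appearing in the statement and the full tower $\mathrm{Ig}_\infty/\widetilde{P}_L(\mathcal{O}_\mathfrak{p})$ used in your sheafification step; the paper handles this with the preceding corollary (that $\mathbb{T}_\epsilon$ drops the level by one, so $e_{\widetilde{P}}$ forces descent to level $1$), and you should cite it as well.
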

    \begin{proof}
    	Using the preceding corollary,
    	the bottom isomorphism comes from the
    	isomorphism
    	\[
    	e_{\widetilde{P}}
    	H^0
    	(\mathrm{Ig}_{\mathrm{rig},\infty}^\mu/\widetilde{P}_L,
    	\mathcal{R}_\mathrm{rig}^\mathrm{Ig}
    	[\lambda^{-1}])
    	\simeq
    	e_{\widetilde{P}}
    	\mathbb{V}_\infty^{\widetilde{P}_L^\mathrm{der}}[\lambda^{-1}]
    	[\frac{1}{p}],
    	\]
    	which again comes from the $p$-integral version
    	by considering stalks of
    	the corresponding sheaves on
    	$\mathrm{Ig}_\infty^\mu/\widetilde{P}_L$:
    	\[
    	e_{\widetilde{P}}
    	H^0(\mathrm{Ig}_{\infty}^\mu/\widetilde{P}_L,
    	\widetilde{\mathcal{R}}_\infty^\mathrm{Ig}
    	[\lambda^{-1}])
    	\simeq
    	e_{\widetilde{P}}
    	\mathbb{V}_\infty^{\widetilde{P}_L^\mathrm{der}}[\lambda^{-1}].
    	\]
    	To show this last isomorphism,
    	again for a point
    	$x\in\mathrm{Ig}_\infty^\mu/\widetilde{P}_L$,
    	consider the stalk
    	$\widetilde{\mathcal{R}}_\infty^\mathrm{Ig}
    	[\lambda^{-1}]_x$
    	of the sheaf
    	$\widetilde{\mathcal{R}}_\infty[\lambda^{-1}]$
    	at the point $x$.
    	Now for any affine open subset
    	$\mathrm{Spf}(A)$
    	containing $x$,
    	$\Omega_\infty[\lambda^{-1}](A)$
    	is given exactly by the module
    	$A[\lambda^{-1}]$.
    	On the other hand,
    	$\widetilde{\mathcal{R}}_\infty^\mathrm{Ig}
    	[\lambda^{-1}](A)$
    	is just the modified representation
    	$\widetilde{R}_A[\lambda^{-1}]$.
    	Thus by Proposition \ref{ordinary algebraic form},
    	we have an isomorphism
    	$e_{\widetilde{P}}\widetilde{R}_A[\lambda^{-1}]
    	\simeq
    	A[\lambda^{-1}]
    	$.
    	Write
    	$\widetilde{\mathcal{R}}_\infty^\mathrm{Ig}
    	[\lambda^{-1}]^0$
    	for the sheaf on
    	$\mathrm{Ig}_\infty^\mu/\widetilde{P}_L$
    	which is given on affine open subset
    	$\mathrm{Spf}(A)$
    	by the kernel
    	$\mathrm{Ker}
    	(\widetilde{R}_A[\lambda^{-1}]
    	\rightarrow
    	A[\lambda^{-1}])$.
    	Then it suffices to show that
    	$e_{\widetilde{P}}
    	((\widetilde{\mathcal{R}}_\infty^\mathrm{Ig}
    	[\lambda^{-1}]^0)_x)=0$
    	for any point $x$.
    	Clearly the operator
    	$e_{\widetilde{P}}$
    	commutes with the inductive limit
    	and this last one follows from the fact that
    	$e_{\widetilde{P}}
    	\mathrm{Ker}
    	(\widetilde{R}_A[\lambda^{-1}]
    	\rightarrow
    	A[\lambda^{-1}])=0$.
    	The top isomorphism in the proposition
    	follows also from this isomorphism.
    \end{proof}

    We want to refine the top isomorphism in
    the above proposition:
    more precisely,
    it is desirable to replace
    $\mathrm{Ig}_{\infty,1}^\mu/\widetilde{P}_L$
    by the $\mu$-ordinary locus
    $Sh^{\Sigma,\mu}_\infty$.
    We set
    \[
    X^\ast_\mathrm{dd}(\widetilde{T}_{\widetilde{P}})
    :=
    X^\ast_\mathrm{dd}(T)\bigcap
    X^\ast_\mathrm{dm}(\widetilde{T}_{\widetilde{P}}).
    \]
    
    \begin{proposition}
    	For any
    	$\lambda\in
    	X^\ast_\mathrm{dd}(\widetilde{T}_{\widetilde{P}})$,
    	the following natural map is an isomorphism:
    	\[
    	e_{\widetilde{P}}
    	H^0
    	(Sh^{\Sigma,\mu}_\infty,
    	\mathcal{R}_\infty[\lambda^{-1}])
    	\rightarrow
    	e_{\widetilde{P}}
    	H^0
    	(\mathrm{Ig}_{\infty,1}^\mu/\widetilde{P}_L,
    	\mathcal{R}_\infty^\mathrm{Ig}[\lambda^{-1}]).
    	\]
    \end{proposition}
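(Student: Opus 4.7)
The plan is to match both sides of the claimed isomorphism to a common intermediate object via the local structure of the $L$-torsor $\mathfrak{L}^{\Sigma,\mu}_\infty$ and Proposition~\ref{ordinary algebraic form} applied pointwise. The key observation is that the natural map is the pullback along $\pi\colon \mathrm{Ig}_{\infty,1}^\mu/\widetilde{P}_L \to Sh^{\Sigma,\mu}_\infty$, and this pullback is compatible with the algebraic induction structure of $\mathcal{R}_\infty[\lambda^{-1}]$ through the Hodge-Tate map.

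First, I would work locally. Let $\mathrm{Spf}(A) \subset Sh^{\Sigma,\mu}_\infty$ be an affine formal open over which the $L$-torsor $\mathfrak{L}^{\Sigma,\mu}_\infty$ is trivial; by the definition of $\mathcal{V}_\lambda$ as a contracted product, such a trivialization identifies the sections $\mathcal{R}_\infty[\lambda^{-1}](A)$ with the algebraic induction module $R_A[\lambda^{-1}]$. Over the corresponding preimage in $\mathrm{Ig}_{\infty,1}^\mu/\widetilde{P}_L$, the pullback sheaf $\mathcal{R}_\infty^\mathrm{Ig}[\lambda^{-1}]$ has sections computed by the same module $R_A[\lambda^{-1}]$ via Proposition~\ref{local-global commutativity} (the commutative diagram identifies the stalk of $\mathcal{R}_\infty^\mathrm{Ig}[\lambda^{-1}]$ with $R_A[\lambda^{-1}]$). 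Applying $e_{\widetilde{P}}$ on both sides and using Proposition~\ref{ordinary algebraic form} collapses $e_{\widetilde{P}} R_A[\lambda^{-1}] \simeq A[\lambda^{-1}]$, so the pullback map restricts to the identity on $A[\lambda^{-1}]$, giving both injectivity and surjectivity locally.

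Next, I would globalize: the local identification $e_{\widetilde{P}} \mathcal{R}_\infty[\lambda^{-1}] \simeq A[\lambda^{-1}]$ is functorial in $A$ and glues to an isomorphism of sheaves on $Sh^{\Sigma,\mu}_\infty$ (since the ordinary projector commutes with restriction to affine open subschemes), and the same is true on $\mathrm{Ig}_{\infty,1}^\mu/\widetilde{P}_L$. Passing to global sections, both $e_{\widetilde{P}} H^0(Sh^{\Sigma,\mu}_\infty, \mathcal{R}_\infty[\lambda^{-1}])$ and $e_{\widetilde{P}} H^0(\mathrm{Ig}_{\infty,1}^\mu/\widetilde{P}_L, \mathcal{R}_\infty^\mathrm{Ig}[\lambda^{-1}])$ identify with global sections of the line bundle associated to the character $\lambda$ via the $L$-torsor, and the natural map is the identity on this common target. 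The role of the dominance assumption $\lambda \in X^\ast_\mathrm{dd}(\widetilde{T}_{\widetilde{P}})$ is to guarantee that the strict positivity condition makes the induction $R_A[\lambda^{-1}]$ non-trivial and, via Koecher's principle (Definition~\ref{Shimura varieties, definition}(5)), that no spurious boundary sections appear to spoil the match.

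The main obstacle is verifying that the map $e_{\widetilde{P}} \mathcal{R}_\infty[\lambda^{-1}] \to e_{\widetilde{P}} \pi_\ast \mathcal{R}_\infty^\mathrm{Ig}[\lambda^{-1}]$ is genuinely the identity (not a non-trivial twist) on the common summand $A[\lambda^{-1}]$; this amounts to tracking the Hodge-Tate identification through the definition of $e_{\widetilde{P}}$ and checking that the kernel $R_A^0[\lambda^{-1}]$ of the restriction $R_A[\lambda^{-1}] \to A[\lambda^{-1}]$ is annihilated by $e_{\widetilde{P}}$ both on the Shimura variety and on the Igusa tower. For the latter, the argument in Proposition~\ref{ordinary algebraic form}—that the big cell $\widetilde{P}_L^\circ \widetilde{P}_L$ is dense and the conjugation by $\prod_i \epsilon_i$ contracts the opposite unipotent—goes through verbatim in the sheaf-theoretic setting, since the densities and the contraction property are geometric statements.
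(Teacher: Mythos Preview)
There is a genuine gap. You treat the pullback along $\pi\colon \mathrm{Ig}_{\infty,1}^\mu/\widetilde{P}_L \to Sh^{\Sigma,\mu}_\infty$ as if it were locally the identity on $R_A[\lambda^{-1}]$, but this ignores that $\pi$ is a \emph{non-trivial} finite \'etale cover, with fibres indexed essentially by $L_\mu(\mathbb{F}_p)/\widetilde{P}_L(\mathbb{F}_p)$. Over an affine $\mathrm{Spf}(A)\subset Sh^{\Sigma,\mu}_\infty$, the preimage is $\mathrm{Spf}(B)$ for a genuine \'etale $A$-algebra $B$, and the sections of $\mathcal{R}_\infty^\mathrm{Ig}[\lambda^{-1}]$ over it are $R_B[\lambda^{-1}]$, not $R_A[\lambda^{-1}]$. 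After applying $e_{\widetilde{P}}$ you get the inclusion $A[\lambda^{-1}]\hookrightarrow B[\lambda^{-1}]$, which is not surjective. Your argument therefore only recovers injectivity of the map in the proposition; surjectivity---descending ordinary forms from the Igusa level to hyperspecial level---is the whole point and is not addressed.

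The paper's proof handles this by a different mechanism: it introduces a modified correspondence $\mathrm{Ig}_{\infty,n}^{\mu,\triangle}/\widetilde{P}_L(\epsilon)$ and a modified Hecke operator $\mathbb{T}_\epsilon^\triangle$ that averages over the extra Weyl group cosets, so that $\mathbb{T}_\epsilon^\triangle$ produces sections invariant under all of $G(\mathcal{O}_\mathfrak{p}/\mathfrak{p}^n)$, i.e.\ sections already living on $Sh^{\Sigma,\mu}_\infty$. One then shows that for $F$ in the ordinary part, $\mathbb{T}_{\epsilon_{\widetilde{P}}}F-\mathbb{T}_{\epsilon_{\widetilde{P}}}^\triangle F$ is divisible by $p$: the contribution of each non-trivial Weyl element $w_\infty\notin\mathcal{W}_{\widetilde{L}}$ picks up a factor $\lambda^{-1}(w_\infty^{-1}\epsilon_{\widetilde{P}}w_\infty\epsilon_{\widetilde{P}}^{-1})\in p\mathcal{O}_\mathfrak{p}$, and \emph{this} is precisely where the strict positivity hypothesis $\lambda\in X^\ast_\mathrm{dd}(\widetilde{T}_{\widetilde{P}})$ is used (not for Koecher or non-triviality of the induction, as you suggest). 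Invertibility of $\mathbb{T}_{\epsilon_{\widetilde{P}}}$ on the ordinary part together with Nakayama's lemma then gives surjectivity.
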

    \begin{proof}
    	We follow the strategy in
    	\cite[§5.2.3]{Pilloni2012}.
    	The idea is to construct a new Hecke operator
    	on a modified correspondence such that
    	this new Hecke operator takes
    	functions on
    	$\mathrm{Ig}_{\infty,1}^\mu/\widetilde{P}_L$
    	to functions on
    	$Sh^\mu_\infty$,
    	\textit{i.e.},the image
    	of this Hecke operator
    	are functions invariant under the action
    	$G(\mathcal{O}_\mathfrak{p}/\mathfrak{p})$.
    	More precisely, we consider the following
    	correspondence
    	$\mathrm{Ig}_{\infty,n}^{\mathrm{GSp}(V,\psi),
    		\triangle}/\widetilde{P}_L(\epsilon)$
    	over
    	$Sh(V)^\infty_m$,
    	whose $A$-points
    	($A$ is a $\mathbb{Z}_p$-algebra) consists
    	of quintuples
    	$(\mathcal{A},\widetilde{\mathcal{A}},
    	\pi,\psi,\widetilde{\psi}_n)$
    	where
    	\begin{enumerate}
    		\item 
    		$\mathcal{A}$ is a $\mu$-ordinary
    		principally polarized abelian scheme
    		over $\mathrm{Spec}(A)$
    		and a
    		$\widetilde{P}_V(\mathcal{O}_\mathfrak{p}
    		/\mathfrak{p}^n)$-coset of isomorphisms
    		$\psi_n
    		\colon
    		\mathcal{A}_{x_0}[p^n]
    		\simeq
    		\mathcal{A}[p^n]$;
    		
    		\item
    		$\widetilde{\mathcal{A}}$
    		is a $\mu$-ordinary principally polarized
    		abelian scheme over
    		$\mathrm{Spec}(A)$
    		and a
    		$\widetilde{P}_V(\mathcal{O}_\mathfrak{p}
    		/\mathfrak{p}^n)$-coset of
    		isomorphisms
    		$\widetilde{\psi}_n
    		\colon
    		\mathcal{A}_{x_0}[p^n]
    		\simeq
    		\widetilde{\mathcal{A}}[p^n]$;
    		
    		\item 
    		$\pi\colon
    		\mathcal{A}
    		\rightarrow
    		\widetilde{\mathcal{A}}$
    		is a $p$-isogeny of similitude factor
    		$\nu(\epsilon)$
    		of abelian schemes such that
    		there are isomorphisms
    		$\psi_\infty
    		\colon
    		\mathcal{A}_{x_0}[p^\infty]
    		\simeq
    		\mathcal{A}[p^\infty]$
    		and
    		$\widetilde{\psi}_\infty\colon
    		\mathcal{A}_{x_0}[p^\infty]
    		\simeq
    		\widetilde{\mathcal{A}}[p^\infty]$
    		with
    		$\widetilde{\psi}_\infty
    		\equiv
    		\widetilde{\psi}_n
    		(\mathrm{mod}\,p^n)$ and
    		on the level of
    		Dieudonn\'{e} crystals,
    		we have
    		$\mathbb{D}((\widetilde{\psi}_\infty)^{-1}
    		\circ
    		\pi\circ
    		\psi_\infty)(\mathbb{W})
    		=\epsilon^{-1}
    		\colon
    		\mathbb{D}(\mathcal{A}_{x_0}[p^\infty])
    		(\mathbb{W})
    		\rightarrow
    		\mathbb{D}(\mathcal{A}_{x_0}[p^\infty])
    		(\mathbb{W})$.
    	\end{enumerate}
        Then as before,
        we write
        $\mathrm{Ig}_{\infty,n}^{\mu,\triangle}/
        \widetilde{P}_L(\epsilon)$
        to be the pull-back of
        $\mathrm{Ig}_{\infty,n}^{\mathrm{GSp}(V,\psi)}
        /\widetilde{P}_L(\epsilon)$
        along the embedding
        $Sh^\mu_\infty
        \rightarrow
        Sh(V)^\mu_\infty$.
        We then write
        \[
        \mathrm{pr}_1^\triangle,\mathrm{pr}_2^\triangle
        \colon
        \mathrm{Ig}^{\mu,\triangle}_{\infty,n}
        /\widetilde{P}_L(\epsilon)
        \rightarrow
        \mathrm{Ig}_{\infty,n}^\mu/\widetilde{P}_L
        \]
        for the two natural projections
        which takes
        $(\mathcal{A},\widetilde{\mathcal{A}},\pi,
        \psi_n,\widetilde{\psi}_n)$
        to
        $(\mathcal{A},\psi_n)$
        resp.,
        $(\widetilde{\mathcal{A}},
        \widetilde{\psi}_n)$.

        Note that in the definition above,
        the isomorphism
        $\psi_\infty$
        may not be congruent to
        $\psi_n$ modulo $p^n$.
        Write
        $\mathcal{W}_{\widetilde{P}_V}$
        to be the Weyl group scheme
        of
        $\widetilde{P}_V$
        (over
        $\mathrm{Spec}(\mathcal{O}_\mathfrak{p})$),
        which is finite \'{e}tale over
        $\mathrm{Spec}(\mathcal{O}_\mathfrak{p})$
        (\cite[Proposition 3.2.8]{Conrad2011}).
        We know that the set of
        $\widetilde{P}_V(\mathcal{O}_\mathfrak{p}/
        \mathfrak{p}^n)$-cosets of isomorphisms
        $\psi_n$
        is parameterized by the Weyl group
        $\mathcal{W}_{\widetilde{P}_V}
        	(\mathcal{O}_\mathfrak{p}/\mathfrak{p}^n)$.
        We claim that
        there is an isomorphism
        $\psi_\infty^0
        \colon
        \mathcal{A}_{x_0}[p^\infty]
        \rightarrow
        \mathcal{A}[p^\infty]$
        such that
        $\psi_\infty^0
        \equiv
        \psi_n
        (\mathrm{mod}\,p^n)$
        and
        $\mathbb{D}((\widetilde{\psi}_\infty)^{-1}
        \circ
        \pi
        \circ
        \psi_\infty^0)
        =\epsilon^{-1}\circ w$
        for some
        $w_n\in\mathcal{W}_{\widetilde{P}_V}
        	(\mathcal{O}_\mathfrak{p}/\mathfrak{p}^n)$.
        Indeed,
        we know that there exists some
        $w_\infty
        \in\mathcal{W}_{\widetilde{P}}(\mathcal{O}_\mathfrak{p})$
        such that
        $\psi_\infty\circ w_\infty
        \equiv
        \psi_n(\mathrm{mod}\,p)$.
        Thus we put
        $\psi_\infty^0
        =\psi_\infty\circ w_\infty$,
        then one gets the desired claim
        by setting $w_n$ to be the image of
        $w_\infty$ in
        $\mathcal{W}_{\widetilde{P}}
        (\mathcal{O}_\mathfrak{p}/\mathfrak{p}^n)$.
        Moreover,
        it is clear that
        $w_\infty\in\mathcal{W}_{\widetilde{L}}$
        if and only if the point
        $x\in
        \mathrm{Ig}_{\infty,n}^{\mu,\triangle}/
        \widetilde{P}_L(\epsilon)$
        corresponding to the above quintuple lies in
        $\mathrm{Ig}_{\infty,n}^\mu
        /\widetilde{P}_L(\epsilon)$.

        Write
        $(\mathcal{A}^{G,\triangle},
        \widetilde{\mathcal{A}}^{G,\triangle},
        \pi^{G,\triangle,\epsilon},
        \psi_n^{G,\triangle},
        \widetilde{\psi}_n^{G,\triangle})$
        for the universal quintuple over
        $\mathrm{Ig}^{\mu,\triangle}_{\infty,n}
        /\widetilde{P}_L$.
        As in the case of
        $\mathrm{Ig}_{\infty,n}^\mu
        /\widetilde{P}_L(\epsilon)$,
        one can define a morphism
        \[
        (\pi^{G,\triangle,\epsilon})^\ast
        \colon
        (\mathrm{pr}_{2,\mathrm{rig}}^\triangle)^\ast
        (\mathcal{R}_\mathrm{rig}^\mathrm{Ig}[\lambda^{-1}])
        \rightarrow
        (\mathrm{pr}_{1,\mathrm{rig}}^\triangle)^\ast
        (\mathcal{R}_\mathrm{rig}^\mathrm{Ig}[\lambda^{-1}]),
        \quad
        F
        \mapsto
        (\mathbb{D}(f)
        \mapsto
        F(\mathbb{D}(\widetilde{f}))).
        \]
        
        For a finite flat $\mathcal{O}_\mathfrak{p}$-algebra
        $A$,
        we define a modified Hecke operator
        $\mathbb{T}_\epsilon^\triangle$
        on
        $R_{A[\frac{1}{p}]}[\lambda^{-1}]$
        by sending
        an element $F$ to
        $(\mathbb{T}_\epsilon^\triangle F)(g)
        =
        F(\epsilon w_\infty g\epsilon^{-1})
        $
        for any $g\in G$.
        We claim that
        for
        $F\in
        e_{\widetilde{P}}
        R_A[\lambda^{-1}]$,
        $w_\infty\notin\mathcal{W}_{\widetilde{L}}$,
        $\epsilon_{\widetilde{P}}
        =\prod_{i=1}^r\epsilon_i$
        and
        $\lambda\in
        X^\ast_\mathrm{dd}(\widetilde{T}_{\widetilde{P}})$,
        we have
        $\mathbb{T}_{\epsilon_{\widetilde{P}}}^\triangle F\in
        p\widetilde{R}_A[\lambda^{-1}]$.
        Since $F$ is supported on $\widetilde{P}$,
        it suffices to show that
        $(\mathbb{T}_{\epsilon_{\widetilde{P}}}^\triangle F)(g)$
        is divisible by $p$ for
        $g\in\widetilde{P}$.
        In this case,
        we have
        \[
        (\mathbb{T}_{\epsilon_{\widetilde{P}}}^\triangle F)(g)
        =
        F(\epsilon_{\widetilde{P}}w_\infty
        g\epsilon_{\widetilde{P}}^{-1})
        =
        F(\epsilon_{\widetilde{P}}w_\infty\epsilon_{\widetilde{P}}^{-1}
        \cdot
        \epsilon_{\widetilde{P}}g\epsilon_{\widetilde{P}}^{-1})
        =
        F(\epsilon_{\widetilde{P}}w_\infty e_{\widetilde{P}}^{-1})
        =
        \lambda^{-1}(w_\infty^{-1}\epsilon_{\widetilde{P}}
        w_\infty\epsilon_{\widetilde{P}}^{-1})
        F(1).
        \]
        Note that
        $\lambda^{-1}(w_\infty^{-1}\epsilon_{\widetilde{P}}
        w_\infty\epsilon_{\widetilde{P}}^{-1})
        \in
        pA$,
        we deduce that
        $(\mathbb{T}_{\epsilon_{\widetilde{P}}}^\triangle F)(g)$
        is indeed
        divisible by $p$ for any $g$.

        Next we can globalize the above argument.
        The same reasoning as in
        Proposition \ref{local-global commutativity}
        shows that
        for any affine open subset
        $\mathrm{Spf}(A)$ of
        $\mathrm{Ig}_{\mathrm{rig},n}^{\mu,\triangle}/
        \widetilde{P}_L$,
        we have the following commutative diagram
        \[
        \begin{tikzcd}
        R_A[\lambda^{-1}]
        \arrow[r]
        \arrow[d,hook]
        &
        (\mathrm{pr}_{2,\mathrm{rig}}^{\triangle})^\ast
        (\mathcal{R}_\mathrm{rig}^{\mathrm{Ig}}
        [\lambda^{-1}])
        (A[\frac{1}{p}])
        \arrow[d]
        \\
        R_{A[\frac{1}{p}]}[\lambda^{-1}]
        \arrow[r]
        \arrow[d,"\mathbb{T}_\epsilon^\triangle"]
        &
        (\mathrm{pr}_{2,\mathrm{rig}}^{\triangle})^\ast
        (\mathcal{R}_\mathrm{rig}^{\mathrm{Ig}}
        [\lambda^{-1}])
        (A[\frac{1}{p}])
        \arrow[d,"(\pi^{G,\triangle,\epsilon})^\ast"]
        \\
        R_{A[\frac{1}{p}]}[\lambda^{-1}]
        \arrow[r]
        &
        (\mathrm{pr}_{1,\mathrm{rig}}^{\triangle})^\ast
        (\mathcal{R}_\mathrm{rig}^{\mathrm{Ig}}
        [\lambda^{-1}])
        (A[\frac{1}{p}])
        \end{tikzcd}
        \]

        We write
        \[
        \mathrm{Tr}_\epsilon^\triangle
        =
        \frac{1}{m_\epsilon}
        \mathrm{Tr}_{\mathrm{pr}_{1,\mathrm{rig}}^\triangle}.
        \]
        Then for any
        $\mathcal{O}_{\mathrm{Ig}_{\mathrm{rig},n}^{\mu,\triangle}
        /\widetilde{P}_L}$-sheaf $\mathcal{F}$,
        we 
        write
        $\mathbb{T}_\epsilon^\triangle$
        for the composition of morphisms
        (see the remark below):

        \[
        \begin{tikzcd}
        H^0(\mathrm{Ig}_{\mathrm{rig},n}^\mu
        /\widetilde{P}_L,\mathcal{F})
        \arrow[r]
        \arrow[d,dashed]
        &
        H^0(\mathrm{Ig}_{\mathrm{rig},n}^{\mu,\triangle}
        /\widetilde{P}_L(\epsilon),
        (\mathrm{pr}_{2,\mathrm{rig}}^\triangle)^\ast\mathcal{F})
        \arrow[d,"(\pi^{G,\triangle,\epsilon})^\ast"']
        \\
        H^0(\mathrm{Ig}_{\mathrm{rig},n}^\mu
        /\widetilde{P}_L,\mathcal{F})
        &
        H^0(\mathrm{Ig}_{\mathrm{rig},n}^{\mu,\triangle}
        /\widetilde{P}_L(\epsilon),
        (\mathrm{pr}_{1,\mathrm{rig}}^\triangle)^\ast\mathcal{F})
        \arrow[l,"\mathrm{Tr}_\epsilon^\triangle"']
        \end{tikzcd}
        \]

        On one hand,
        for
        $\lambda\in
        X^\ast_\mathrm{dd}(\widetilde{T}_{\widetilde{P}})$
        and
        $F\in
        e_{\widetilde{P}}
        H^0(\mathrm{Ig}_{\infty,n}^\mu/
        \widetilde{P}_L,
        \mathcal{R}^\mathrm{Ig}_\infty[\lambda^{-1}])$,
        it is easy to see that
        $\mathbb{T}_\epsilon F
        -\mathbb{T}_\epsilon^\triangle F$
        is divisible by $p$:
        indeed,
        for any point
        $x\in
        \mathrm{Ig}_{\infty,n}^{\mu,\triangle}/
        \widetilde{P}_L(\epsilon)$,
        if this point is in
        $\mathrm{Ig}_{\infty,n}^\mu/
        \widetilde{P}_L(\epsilon)$,
        then
        $\mathbb{T}_\epsilon$ and
        $\mathbb{T}_\epsilon^\triangle$ coincide
        and the difference is $0$;
        otherwise,
        by definition
        $(\mathbb{T}_\epsilon F)_x=0$
        while by the above argument
        $(\mathbb{T}_\epsilon^\triangle F)_x$ is divisible by
        $p$.
        Thus one deduces that
        \[
        \mathbb{T}_\epsilon F
        -
        \mathbb{T}_\epsilon^\triangle F
        \in
        pH^0(\mathrm{Ig}_{\infty,n}^\mu/
        \widetilde{P}_L,
        \widetilde{\mathcal{R}}_\infty^\mathrm{Ig}[\lambda^{-1}])
        \]

        On the other hand,
        by the definition of the correspondence
        $\mathrm{Ig}_{\infty,n}^{\mu,\triangle}/
        \widetilde{P}_L(\epsilon)$,
        one sees that
        $\mathbb{T}_\epsilon^\triangle$
        sends a section
        $F\in
        H^0(\mathrm{Ig}_{\infty,n}^\mu/
        \widetilde{P}_L,
        \mathcal{R}_\infty^\mathrm{Ig}[\lambda^{-1}])$
        to a section
        that is in fact invariant under
        the whole
        $G(\mathcal{O}_\mathfrak{p}/\mathfrak{p}^n)$,
        and thus
        $\mathbb{T}_\epsilon^\triangle F$
        is a section in
        $H^0
        (Sh_\infty^\mu,
        \mathcal{R}_\infty[\lambda^{-1}])$.

        Now using the fact that
        $\mathbb{T}_\epsilon$ is invertible on
        $e_{\widetilde{P}}
        H^0
        (\mathrm{Ig}_{\infty,n}^\mu
        /\widetilde{P}_L,
        \mathcal{R}_\infty^\mathrm{Ig}[\lambda^{-1}])$
        and Nakayama's Lemma,
        we see that the natural inclusion
        \[
        e_{\widetilde{P}}
        H^0
        (Sh_\infty^{\Sigma,\mu},\mathcal{R}_\infty[\lambda^{-1}])
        \hookrightarrow
        e_{\widetilde{P}}
        H^0
        (\mathrm{Ig}_{\infty,n}^\mu/
        \widetilde{P}_L,
        \mathcal{R}_\infty^\mathrm{Ig}[\lambda^{-1}])
        \]
        is indeed an isomorphism.       
    \end{proof}
    \begin{remark}
    	Here we have implicitly used
    	Koecher's principle
    	(which is valid by our assumption
    	Definition \ref{Shimura varieties, definition}
    	and
    	\cite{Lan2016})
    	in the definition of
    	$\mathbb{T}_\epsilon^\triangle$:
    	indeed,
    	write
    	$\widetilde{\mathrm{Ig}}_\infty^{\mu}/\widetilde{P}_L$
    	for
    	the pull-back of
    	$Sh_{\infty}^\mu
    	\rightarrow
    	Sh_\infty^{\Sigma,\mu}$
    	along the projections
    	$\mathrm{Ig}_\infty^\mu/\widetilde{P}_L$.
    	Then    	
    	\textit{a priori},
    	$\mathbb{T}_\epsilon^\triangle$
    	is defined only on
    	$H^0
    	(\widetilde{\mathrm{Ig}}_\infty^\mu
    	/\widetilde{P}_L,
    	\mathcal{F})$.
    	Then we apply
    	Koecher's principle
    	to extend to
    	$H^0(\mathrm{Ig}_\infty^\mu/\widetilde{P}_L)$.
    \end{remark}
    Applying Corollary \ref{ordinary modified=ordinary},
    we obtain
    \begin{corollary}\label{descent from Igusa to Sh}
    	For any
    	$\lambda
    	\in
    	X_\mathrm{dd}^\ast(\widetilde{T}_{\widetilde{P}})$,
    	we have an isomorphism
    	\[
    	e_{\widetilde{P}}
    	H^0
    	(Sh^{\Sigma,\mu}_\infty,
    	\mathcal{R}_\infty[\lambda^{-1}])
    	\rightarrow
    	e_{\widetilde{P}}
    	H^0
    	(\mathrm{Ig}_{\infty,1}^\mu/\widetilde{P}_L,
    	\widetilde{\mathcal{R}}_\infty^\mathrm{Ig}
    	[\lambda^{-1}]).
    	\]
    \end{corollary}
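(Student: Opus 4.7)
The plan is to combine the immediately preceding proposition with Corollary \ref{ordinary modified=ordinary}, applied sheaf-locally on $\mathrm{Ig}_{\infty,1}^\mu/\widetilde{P}_L$, in order to replace the sheaf $\mathcal{R}_\infty^\mathrm{Ig}[\lambda^{-1}]$ by its modified version $\widetilde{\mathcal{R}}_\infty^\mathrm{Ig}[\lambda^{-1}]$ after applying the idempotent $e_{\widetilde{P}}$.

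First I would observe that there is a canonical inclusion of sheaves $\iota \colon \mathcal{R}_\infty^\mathrm{Ig}[\lambda^{-1}] \hookrightarrow \widetilde{\mathcal{R}}_\infty^\mathrm{Ig}[\lambda^{-1}]$ on $\mathrm{Ig}_{\infty,1}^\mu/\widetilde{P}_L$: indeed, on any affine formal open subset $\mathrm{Spf}(A) \subset \mathrm{Ig}_{\infty,1}^\mu/\widetilde{P}_L$, the two sheaves take the respective values $R_A[\lambda^{-1}]$ and $\widetilde{R}_A[\lambda^{-1}]$, and by the very construction of $\widetilde{R}_A[\lambda^{-1}] = \mathrm{ev}_{E_\mathfrak{p}}^{-1}(R_A^\mathrm{top}[\lambda^{-1}])$ we have a natural inclusion $R_A[\lambda^{-1}] \hookrightarrow \widetilde{R}_A[\lambda^{-1}]$ whose cokernel is killed by a suitable power of $p$.

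The key step is to apply Corollary \ref{ordinary modified=ordinary} locally: for each such $\mathrm{Spf}(A)$, one has the chain of isomorphisms
\[
e_{\widetilde{P}} R_A[\lambda^{-1}] \;\simeq\; e_{\widetilde{P}} \widetilde{R}_A[\lambda^{-1}] \;\simeq\; A[\lambda^{-1}],
\]
and the first isomorphism is induced by $e_{\widetilde{P}} \iota$. Since the operator $e_{\widetilde{P}}$ is defined via the Hecke operators $\mathbb{T}_{\epsilon_i}$, which arise from finite correspondences and therefore commute with the formation of sections on affine formal opens, the map $e_{\widetilde{P}} \iota$ glues to an isomorphism of sheaves $e_{\widetilde{P}} \mathcal{R}_\infty^\mathrm{Ig}[\lambda^{-1}] \xrightarrow{\simeq} e_{\widetilde{P}} \widetilde{\mathcal{R}}_\infty^\mathrm{Ig}[\lambda^{-1}]$ on $\mathrm{Ig}_{\infty,1}^\mu/\widetilde{P}_L$. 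Taking global sections and composing with the isomorphism furnished by the preceding proposition yields the desired isomorphism.

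The only point that requires mild care — and the closest thing to an obstacle — is the compatibility between the local construction of $\widetilde{R}_A[\lambda^{-1}]$ and the action of $e_{\widetilde{P}}$ on sheaves (rather than on individual representations). This is essentially the same sheafification argument already used in the proof of the preceding proposition (via stalks at points of $\mathrm{Ig}_\infty^\mu/\widetilde{P}_L$), so no genuinely new ingredient is needed beyond Corollary \ref{ordinary modified=ordinary}.
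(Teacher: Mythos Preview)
Your proposal is correct and follows exactly the approach the paper intends: the paper's own proof is the single line ``Applying Corollary \ref{ordinary modified=ordinary}, we obtain'', and your argument spells this out by combining the immediately preceding proposition (which gives the isomorphism with $\mathcal{R}_\infty^{\mathrm{Ig}}$ in place of $\widetilde{\mathcal{R}}_\infty^{\mathrm{Ig}}$) with the local identification $e_{\widetilde{P}}R_A[\lambda^{-1}]\simeq e_{\widetilde{P}}\widetilde{R}_A[\lambda^{-1}]$ from Corollary \ref{ordinary modified=ordinary}, glued via the same stalkwise/sheafification reasoning already used in the proof of Proposition \ref{comparison of forms on Igusa and p-adic}.
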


    \section{Lifting modular forms}
    In this section,
    we will deal with the problem of
    lifting modular forms with values in
    $A/pA$ to modular forms with values in $A$
    for a finite flat extension $A$ of
    $\mathcal{O}_\mathfrak{p}$.

    \subsection{Review of toroidal compactification}
    We first review some results in
    \cite{Madapusi2012}
    and also introduce some notations.

    Let
    $(G,X)$ be our Shimura data of Hodge type
    and decompose the adjoint group
    $G^\mathrm{ad}=
    \prod_{i=1}^mG_i$
    into simple factors.
    Then a parabolic subgroup
    $P$ of $G$ is said to be admissible
    if $P_i:=
    \mathrm{Im}(P\hookrightarrow G\rightarrow G^\mathrm{ad}
    \rightarrow G_i)$
    is a proper maximal parabolic subgroup of $G_i$ or
    the whole $G_i$
    for all $i=1,\cdots,m$.
    Such an admissible parabolic subgroup corresponds to a
    rational boundary component for
    $(G,X)$
    (\cite[§2.1.3]{Madapusi2012}).
    Recall $U_P$ denotes the unipotent radical of $P$,
    we then write
    $W_P=Z(U_P)$
    for the centre of $U_P$.
    Denote by $Q_P$ the minimal
    normal subgroup pf $P$ such that
    the morphism
    $h_x'
    \colon
    \mathbb{S}
    \rightarrow
    G_\mathbb{R}$
    factors through
    $(Q_P)_\mathbb{R}$
    (here $x\in X$).
    Then
    $Q_P(\mathbb{R})W_P(\mathbb{C})$
    acts on
    the set $\pi_0(X)$
    of connected components of $X$
    via the natural map
    $\pi_0
    (Q_P(\mathbb{R})W_P(\mathbb{C}))
    \rightarrow
    \pi_0(Q_P(\mathbb{R}))
    \rightarrow
    \pi_0(G(\mathbb{R}))$.
    For a connected component $X^+$ of $X$,
    we then write
    $F_{P,X^+}$
    for the
    $Q_P(\mathbb{R})W_P(\mathbb{C})$-orbit
    of
    $(X^+,h_x')$
    inside
    $\pi_0(X)\times
    \mathrm{Hom}(\mathbb{S}_\mathbb{C},
    (Q_P)_\mathbb{C})$
    (\cite[2.1.5]{Madapusi2012}).
    A cusp label representative
    (clr) for
    $(G,X)$ is a triple
    $\Phi=(P_\Phi,X_\Phi^+,g_\Phi)$
    where
    $P_\Phi$ is an admissible parabolic subgroup of $G$,
    $X_\Phi^+$ is a connected component of $X$
    and
    $g_\Phi$ is an element in
    $G(\mathbb{A}_\mathrm{f})$
    (\cite[§2.1.7]{Madapusi2012}).
    We put
    $Q_\Phi=P_\Phi$,
    $U_\Phi=U_{P_\Phi}$,
    $W_\Phi=W_{P_\Phi}$,
    $\overline{Q}_\Phi
    =Q_\Phi/W_\Phi$,
    $V_\Phi:=
    U_\Phi/W_\Phi$
    the unipotent radical of $\overline{Q}_\Phi$,
    $G_{\Phi,h}=Q_\Phi/U_\Phi$
    the Levi component of
    $Q_\Phi$,
    $D_\Phi:=
    F_{P_\Phi,X_\Phi^+}$,
    $\overline{D}_\Phi
    =W_\Phi(\mathbb{C})\backslash D_\Phi$,
    $D_{\Phi,h}=
    V_\Phi(\mathbb{R})\backslash\overline{D}_\Phi$.
    Then
    $(G_{\Phi,h},D_{\Phi,h})$
    is a pure Shimura datum with reflex field
    $E$
    and
    $(Q_\Phi,D_\Phi)$,
    $(\overline{Q}_\Phi,\overline{D}_\Phi)$
    mixed Shimura data with reflex field
    $E$
    (\cite[2.1.7]{Madapusi2012}).
    Given the Shimura variety
    $Sh_K(G,X)$,
    we write
    $Sh_{K_\Phi}(Q_\Phi,D_\Phi)$
    for the (complex) Shimura variety associated to
    $(Q_\Phi,D_\Phi,K_\Phi)$
    where
    $K_\Phi=Q_\Phi(\mathbb{A}_\mathrm{f})
    \cap K_{\Phi,P}$
    and
    $K_{\Phi,P}
    =P_{\Phi}(\mathbb{A}_\mathrm{f})
    \cap
    g_\Phi Kg_\Phi^{-1}$.
    Write
    $K_{\Phi,U}$
    to be the subset of
    $U_\Phi(\mathbb{A}_\mathrm{f})$
    which is the image of the set
    $\{(z,u)\in
    Z_\Phi(\mathbb{Q})\times
    U_\Phi(\mathbb{A}_\mathrm{f})|
    zu\in K_\Phi\}$
    under the canonical projection to the second factor
    $U_\Phi(\mathbb{A}_\mathrm{f})$.
    Then let
    $K_{\Phi,V}\subset
    V_{P_\Phi}(\mathbb{A}_\mathrm{f})$
    denote the image of
    $K_{\Phi,U}$ under the natural quotient map.
    Thus we obtain a canonical smooth family of
    abelian varieties
    (\cite[§2.1.10]{Madapusi2012})
    \[
    \mathcal{A}_K(\Phi)(\mathbb{C})
    \rightarrow
    Sh_{K_{\Phi,h}}
    (G_{\Phi,h},D_{\Phi,h})(\mathbb{C}).
    \]

    Concerning the interaction between our Shimura variety
    of Hodge type and the Siegel Shimura variety,
    We change a little bit of notations and
    write
    \[
    \iota
    \colon
    (G,X)
    \rightarrow
    (\mathrm{GSp}(V,\psi),S^\pm)
    =:
    (G^\ddagger,X^\ddagger)
    \]
    for the embedding of our Shimura data into the
    Siegel Shimura data fixed from the beginning.
    We fix also an element
    $g_V$
    in
    $G^\ddagger(\mathbb{A}_\mathrm{f})$.
    Then the clr
    $\Phi$ for $(G,X)$
    induces a clr 
    $\Phi^\ddagger
    =(P_{\Phi^\ddagger},X_{\Phi^\ddagger}^+),g_{\Phi^\ddagger}$
    for
    $(G^\ddagger,X^\ddagger)$:
    $P_{\Phi^\ddagger}
    =\iota_\ast(P_\Phi)$,
    $X_{\Phi^\ddagger}^+$
    is the unique connected component of
    $X^\ddagger$ containing
    $\iota(X_\Phi)$
    and
    $g_{\Phi^\ddagger}=g_\Phi g^\ddagger$
    (\cite[2.1.28]{Madapusi2012}).

	Now we continue our discussion.
	We write
	$\mathbf{B}_K(\Phi):=
	(W_\Phi(\mathbb{Q})\cap K_{\Phi,W})(-1)$
	and let
	$\mathbf{E}_K(\Phi)$
	denote the torus split over
	$\mathbb{Z}$ with character group
	$\mathbf{S}_K(\Phi)
	:=\mathbf{B}_K(\Phi)^\vee
	=\mathrm{Hom}_\mathbb{Z}
	(\mathbf{B}_K(\Phi),\mathbb{Z})
	$(\cite[§2.1.11]{Madapusi2012}).
	From this we get a canonical isomorphism of families of
	complex groups
	\[
	P_\Phi(0)(\mathbb{Q})\backslash
	D_\Phi(0)
	\times
	P_\Phi(0)(\mathbb{A}_\mathrm{f})/
	K_\Phi(0)
	\xrightarrow{\sim}
	\mathbf{E}_K(\Phi)(\mathbb{C})
	\times
	Sh_{\nu_\Phi(K_{\Phi,h})}
	(\mathbb{G}_m,S^\pm(0))(\mathbb{C}).
	\]
	Here
	we fix a surjective map of Shimura data
	$\nu_\Phi
	\colon
	(G_{\Phi,h},D_{\Phi,h})
	\rightarrow
	(\mathbb{G}_m,S^\pm(0))$,
	$P_\Phi(0)
	:=
	W_\Phi\rtimes\mathbb{G}_m$,
	$D_\Phi(0)
	:=
	\{
	(\omega,\lambda)\in
	\mathrm{Hom}(\mathbb{S}_\mathbb{C},
	P_\Phi(0)_\mathbb{C}\times S^\pm(0))|
	(\pi\circ h)(x,y)=xy
	\}$,
	and
	$K_\Phi(0)
	:=
	K_{\Phi,W}\rtimes
	\nu_\Phi(K_{\Phi,h})
	\subset
	P_\Phi(0)(\mathbb{A}_\mathrm{f})$.

	The relation among different choices of
	clr $\Phi$ for
	$(G,X)$
	is as given
	(\textit{cf.}\cite[§4.1.3]{Madapusi2012}).
	To state the relevant results,
	we need first introduce some more notations.
	Recall that for a finite dimensional vector space
	$\mathbf{V}$ over $\mathbb{Q}$,
	a rational polyhedral cone
	(rpc)
	$\sigma\subset
	\mathbf{V}\otimes_\mathbb{Q}\mathbb{R}$
	is a subset given by
	$\sigma
	=
	\{
	v\in\mathbf{V}\otimes\mathbb{R}|
	f_i(v)\geq0,\forall i=1,2,\cdots,r
	\}$
	for a finite set
	$f_1,f_2,\cdots,f_r$
	of dual vectors in
	$\mathbf{V}^\vee$.
	Fix a $\mathbb{Z}$-lattice $\mathbf{X}$ of
	$\mathbf{V}$,
	then $\sigma$ is admissible
	(with respect to $\mathbf{X}$)
	if the above $f_1,\cdots,f_r$
	can be chosen to be part of a basis for
	$\mathbf{X}$.
	On the other hand,for two admissible parabolic subgroups
	$P_1,P_2$ of $G$
	and an element
	$\gamma\in G(\mathbb{Q})$,
	we write
	$P_1\xrightarrow{\gamma}P_2$
	if
	$\gamma Q_{P_1}\gamma^{-1}\subset Q_{P_2}$.
	For two clr
	$\Phi_1,\Phi_2$ for
	$(G,X)$ and elements
	$\gamma\in G(\mathbb{Q})$,
	$q_2\in Q_{\Phi_2}(\mathbb{A}_\mathrm{f})$,
	we write
	$\Phi_1\xrightarrow{(\gamma,q_2)_K}\Phi_2$
	if
	$P_{\Phi_1}
	\xrightarrow{\gamma}
	P_{\Phi_2}$,
	$\gamma\cdot X_{\Phi_1}^+
	\in
	\pi_0(X)$
	lies inside the
	$Q_{\Phi_2}(\mathbb{Q})$-orbit of
	$X_{\Phi_2}^+$
	and moreover
	$\gamma g_{\Phi_1}\in
	q_2g_{\Phi_2}K$.
	Now for a clr
	$\Phi$,
	we write
	$\mathbf{H}^\ast(\Phi)
	\subset
	W_\Phi(\mathbb{R})(-1)$
	to be the union of the images of the cones
	$\mathrm{Int}(\gamma^{-1})(\mathbf{H}(\Phi'))$
	for all
	$
	\Phi'
    \xrightarrow{(\gamma,q)_K}
	\Phi$(
	\cite[§2.1.14]{Madapusi2012}).
	Here
	$\mathbf{H}(\Phi')
	:=\mathbf{H}_{P_{\Phi'},X_{\Phi'}^+}$
	is given in
	\cite[§2.1.6]{Madapusi2012},
	which is an open non-degenerate self-adjoint convex cone
	inside
	$W_{P_{\Phi'}}(\mathbb{R})(-1)$
	corresponding to
	$X_{\Phi'}^+$.
	A rational polyhedral cone decomposition
	(rpcd)
	for $\mathbf{H}^\ast(\Phi)$
	is a set
	$\Sigma(\Phi)$
	of rpc $\sigma\subset W_\Phi(\mathbb{R})(-1)$
	such that
	(1)
	$\sigma\subset \mathbf{H}^\ast(\Phi)$ for all
	$\sigma\in\Sigma(\Phi)$;
	(2)for any $\sigma\in\Sigma(\Phi)$,
	any face of $\sigma$ is also in $\Sigma(\Phi)$;
	(3)for any $\sigma_1,\sigma_2\in\Sigma(\Phi)$,
	$\sigma_1\cap\sigma_2$ is a common face of
	$\sigma_1$ and $\sigma_2$
	(\cite[§2.1.22]{Madapusi2012}).
	An admissible rpcd
	for $(G,X,K)$
	is an assignment to each clr $\Phi$ for
	$(G,X)$
	a rpcd
	$\Sigma(\Phi)$
	for
	$\mathbf{H}^\ast(\Phi)$
	and satisfying compatibility conditions:
	for any
	$\Phi_1\xrightarrow{(\gamma,q_2)_K}\Phi_2$,
	we have
	$\Sigma(\Phi_2)
	=
	\{
	\sigma\subset
	W_{\Phi_1}(\mathbb{Q})(-1)|
	\mathrm{Int}(\gamma^{-1})(\sigma)\in
	\Sigma(\Phi_1)
	\}$.

	We fix an admissible
	rpcd
	(rpcd)
	$\Sigma^\ddagger$
	for
	$(G^\ddagger,X^\ddagger,K^\ddagger)$
	and
	$\Sigma$
	the induced
	rpcd for
	$(G,X,K)$,
	then we have the corresponding morphism of
	toroidal compactifications
	of the integral models of the
	Shimura varieties:
	\[
	Sh^\Sigma
	\rightarrow
	Sh(G^\ddagger,X^\ddagger,K^\ddagger)^{\Sigma^\ddagger},
	\]
	which, by construction,
	is the normalization of
	$Sh$ inside
	$Sh(G^\ddagger,X^\ddagger,K^\ddagger)^{\Sigma^\ddagger}$
	(\cite[§4.1.4]{Madapusi2012}).
	We consider pairs
	$(\Phi,\sigma)$
	where $\Phi$ is a clr and
	$\sigma\in\Sigma^\circ(\Phi)$,
	\textit{i.e.}
	$\sigma^\circ
	\subset
	\mathbf{H}(\Phi)$.
	We say two such pairs
	$(\Phi_1,\sigma_1)$ and
	$(\Phi_2,\sigma_2)$ are equivalent
	if there is
	$\Phi_1\xrightarrow{(\gamma,q_2)_K}
	\Phi_2$ such that
	$\gamma P_{\Phi_1}=P_{\Phi_2}$ and
	$\mathrm{Int}(\gamma)(\sigma_1)=\sigma_2$.
	Write
	$\mathrm{Cusp}^\Sigma_K(G,X)$
	for the set of such equivalence classes
	and there is a natural partial order
	$\preccurlyeq$ on this set:
	two equivalence classes
	$[(\Phi_1,\sigma_1)]
	\preccurlyeq
	[(\Phi_2,\sigma_2)]$
	if
	$(\Phi_1,\sigma_1)
	\xrightarrow{(\gamma,q_2)_K}
	(\Phi_2,\sigma_2)$
	for some $\gamma\in G(\mathbb{Q})$
	and
	$q_2\in Q_{\Phi_2}(\mathbb{A}_\mathrm{f})$
	(\cite[2.1.26]{Madapusi2012}).

	Now for any rpc
	$\sigma^\ddagger
	\subset
	\mathbf{B}_{K^\ddagger}(\Phi^\ddagger)\otimes\mathbb{R}$,
	we have the twisted torus embedding
	(\cite[4.1.2]{Madapusi2012},
	note that our notations are a bit different from
	\textit{loc.cit}):
	\[
	Sh(Q_{\Phi^\ddagger},D_{\Phi^\ddagger},
	K_{\Phi^\ddagger}^\ddagger)
	\hookrightarrow
	Sh(Q_{\Phi^\ddagger},D_{\Phi^\ddagger},
	K_{\Phi^\ddagger}^\ddagger,
	\sigma^\ddagger).
	\]
	Let
	$\sigma=\sigma^\ddagger\cap W_\Phi(\mathbb{R})(-1)$
	and
	$Sh(Q_\Phi,D_\Phi,K_\Phi,\sigma)$
	be the normalization of
	$Sh(Q_{\Phi^\ddagger},D_{\Phi^\ddagger},
	K_{\Phi^\ddagger}^\ddagger,
	\sigma^\ddagger)$
	in
	$Sh(Q_\Phi,D_\Phi,K_\Phi)$.
	Then
	we write
	$\widetilde{Z}
	(Q_\Phi,D_\Phi,K_\Phi,\sigma)$
	for the closed stratum in
	$Sh(Q_\Phi,D_\Phi,K_\Phi,\sigma)
	\otimes_{\mathcal{O}_{E}}E$
	and
	$Z(Q_\Phi,D_\Phi,K_\Phi,\sigma)$
	for the normalization of
	$Z(Q_\Phi,D_\Phi,K_\Phi,\sigma)$ in
	$\widetilde{Z}(Q_\Phi,D_\Phi,K_\Phi,\sigma)$.
	Now we choose
	$\Upsilon
	=
	[(\Phi,\sigma)]
	\in
	\mathrm{Cusp}_K^\Sigma(G,X)$
	and set
	$\Upsilon^\ddagger
	=\iota_\ast\Upsilon
	=
	[(\Phi^\ddagger,\sigma^\ddagger)]$.
	Then we have the following commutative diagram
	(\cite[§4.1.4]{Madapusi2012})
	\[
	\begin{tikzcd}
	\widetilde{Z}
	(Q_\Phi,D_\Phi,K_\Phi,\sigma)
	\arrow[r]
	\arrow[d,"\simeq"]
	&
	\widetilde{Z}
	(Q_{\Phi^\ddagger},D_{\Phi^\ddagger},
	K_{\Phi^\ddagger}^\ddagger,\sigma^\ddagger)
	\arrow[r]
	\arrow[d,"\simeq"]
	&
	Z
	(Q_{\Phi^\ddagger},D_{\Phi^\ddagger},
	K_{\Phi^\ddagger}^\ddagger,\sigma^\ddagger)
	\arrow[d,"\simeq"]
	\\
	\widetilde{Z}_K(\Upsilon)
	\arrow[r]
	&
	\widetilde{Z}_{K^\ddagger}(\Upsilon^\ddagger)
	\arrow[r]
	&
	Z_{K^\ddagger}(\Upsilon)
	\end{tikzcd}
	\]
	We then write
	$Z_K(\Upsilon)$ for the normalization of
	$Z_{K^\ddagger}(\Upsilon^\ddagger)$
	inside
	$\widetilde{Z}_K(\Upsilon)$.
	As such one obtains the structure of the
	toroidal compactification
	$Sh^\Sigma$.
	We refer the reader to
	\cite[Theorem 4.1.5]{Madapusi2012}
	for the precise statements.
	By construction,
	the stratification on
	$Sh^\Sigma$
	is parameterized by
	$\Upsilon\in
	\mathrm{Cusp}_K^\Sigma(G,X)$.

	Next we review the construction of minimal
	compactification
	$Sh^\mathrm{min}$ of
	$Sh$.
	We write
	$\mathrm{Cusp}_K(G,X)$
	for the set of equivalence classes
	of clr for
	$(G,X)$
	for the relation
	$\Phi_1\sim\Phi_2$
	given by some
	$\Phi_1\xrightarrow{(\gamma,q_2)_K}
	\Phi_2$
	such that
	$\gamma\cdot P_{\Phi_1}=P_{\Phi_2}$.
	It also has a natural partial order
	$\preccurlyeq$ given by:
	$[\Phi_1]\preccurlyeq[\Phi_2]$ if
	there exists some
	$\Phi_1
	\xrightarrow{(\gamma,q_2)_K}
	\Phi_2$.
	By \cite[§5.2.11]{Madapusi2012},
	one has
	\[
	Sh^\mathrm{min}
	=
	\bigsqcup_{[\Phi]\in\mathrm{Cusp}_K(G,X)}
	Z_K([\Phi]).
	\]
	
	Recall that the Hodge line bundle
	$\omega_{K^\ddagger}(\Sigma^\ddagger)$
	over
	$Sh(G^\ddagger,X^\ddagger,K^\ddagger)^{\Sigma^\ddagger}$
	is given by
	(\cite[Definition 5.1.1]{Madapusi2012}):
	\[
	\omega_{K^\ddagger}(\Sigma^\ddagger)
	=
	\mathrm{det}
	(
	H_\mathrm{dR}(\Sigma^\ddagger)/F^0H_\mathrm{dR}(\Sigma^\ddagger)
	)^{-1}.
	\]
	We write
	$\omega_K(\Sigma)$
	for the restriction of
	$\omega_{K^\ddagger}(\Sigma^\ddagger)$
	to
	$Sh^\Sigma$
	(independent of the choice of
	$K^\ddagger,\Sigma^\ddagger$).
	We know that
	$\omega_K(\Sigma)$
	is an ample line bundle over
	$Sh^\Sigma$
	and the graded 
	$\mathcal{O}_\mathfrak{p}$-ring
	$\oplus_{n\geq0}
	H^0(Sh^\Sigma,\omega_K(\Sigma)^{\otimes n})$
	is finitely generated
	(\cite[§5.1.3]{Madapusi2012}).
	Let
	$H_\mathrm{dR}(\Phi,\sigma)$
	be the restriction of
	$H_\mathrm{dR}(\Phi^\ddagger,\sigma^\ddagger)$
	to
	$Sh(Q_\Phi,D_\Phi,K_\Phi,\sigma)$,
	then we have
	\[
	\mathrm{det}
	(H_\mathrm{dR}(\Phi,\sigma)/F^0H_\mathrm{dR}(\Phi,\sigma))^{-1}
	\simeq
	\omega_K^{\text{\'{e}t}}(\Phi)
	\otimes_\mathbb{Z}
	\omega_K^{\text{ab}}(\Phi)=
	\omega_K(\Phi).
	\]
	Here
	$\omega_K^\text{ab}(\Phi)$
	is the Hodge line bundle associated with the universal
	abelian scheme
	$\mathcal{A}
	\rightarrow
	Sh(G_{\Phi,h},D_{\Phi,h},K_{\Phi,h})$
	(and also its pull-back to
	$Sh(Q_\Phi,D_\Phi,K_\Phi,\sigma)$)
	and
	$\omega_K^\text{\'{e}t}(\Phi)
	:=
	\mathrm{det}(\mathrm{gr}^WH^g(\mathbb{Z}))^{-1}$
	(\cite[§5.1.4]{Madapusi2012}).
	Moreover,
	since
	$Sh(Q_\Phi,D_\Phi,K_\Phi)$ is an
	$\mathbf{E}_K(\Phi)$-torsor over
	$Sh(\overline{Q}_\Phi,\overline{D}_\Phi,
	\overline{K}_\Phi)$,
	we have
	(\cite[p.32]{Pilloni2012})
	\[
	Sh(Q_\Phi,D_\Phi,K_\Phi)
	=
	\mathrm{Spec}
	(\oplus_{l\in\mathbf{S}_K(\Phi)}
	\Psi_K^{(l)}(\Phi))
	\]
	where
	$\Psi_K^{(l)}(\Phi)$
	is a line bundle over
	$Sh(\overline{Q}_\Phi,\overline{D}_\Phi,
	\overline{K}_\Phi)$.
	We then write the projective morphism
	\[
	\pi'
	\colon
	Sh(\overline{Q}_\Phi,\overline{D}_\Phi,
	\overline{K}_\Phi)
	\rightarrow
	Sh(G_{\Phi,h},D_{\Phi,h},K_{\Phi,h}).
	\]
	Now for each $l\in\mathbf{S}_K(\Phi)$,
	we define a coherent sheaf
	over
	$Sh(G_{\Phi,h},D_{\Phi,h},K_{\Phi,h})$:
	\[
	\mathrm{FJ}_K^{(l)}
	(\Phi)
	:=
	\pi'_\ast
	(\Psi_K^{(l)}(\Phi))
	\]
	For a
	$\Upsilon=[(\Phi,\sigma)]$ and an integer $n\geq0$,
	we have an evaluation map
	$\mathrm{FJ}_{(\Phi,\sigma)}$
	which is the composition of the following morphisms
	(\cite[§5.1.5]{Madapusi2012}):	
	\[
	\begin{tikzcd}
	H^0
	(Sh^\Sigma,\omega_K(\Sigma)^{\otimes n})
	\arrow[r]
	\arrow[rdd,bend right=10,dashed]
	&
	H^0
	(Sh_{Z_K(\Upsilon)}^{\Sigma,\wedge},
	\omega_K(\Phi)^{\otimes n})
	\arrow[d,"\sim"]
	\\
	&
	H^0
	(Sh(Q_\Phi,D_\Phi,K_\Phi,\sigma)^\wedge,
	\omega_K(\Phi)^{\otimes n})
	\arrow[d,hookrightarrow]
	\\
	&
	\prod_{l\in\mathbf{S}_K(\Phi)}
	H^0
	(Sh(G_{\Phi,h},D_{\Phi,h},K_{\Phi,h}),
	\mathrm{FJ}_K^{(l)}(\Phi)\otimes
	\omega_K(\Phi)^{\otimes n}).
	\end{tikzcd}
	\]
	One can show that
	these morphisms
	$\mathrm{FJ}_{(\Phi,\sigma)}$
	are compatible with each other
	(\cite[(5.1.6.2)]{Madapusi2012}).
	
	Now let
	$P_\Phi(\mathbb{Q})_\triangledown
	\subset
	P_\Phi(\mathbb{Q})$
	be the stabilizer in
	$P_\Phi(Q)$ of
	the
	$Q_\Phi(\mathbb{R})$-orbit of
	$X_\Phi^+$.
	We know that there is an element
	$q\in Q_\Phi(\mathbb{A})$
	such that
	$\Phi\xrightarrow{(\gamma,q)_K}\Phi$.
	This induces an isomorphism
	on
	$Sh(Q_\Phi,D_\Phi,K_\Phi)$
	which does not depend on the choice of
	the finite component
	$q_\mathrm{f}$ of $q$.
	Now we define
	\[
	\triangle_K(\Phi):=
	P_\Phi(Q)_\triangledown\bigcap
	(Q_\Phi(\mathbb{A}_\mathrm{f})g_\Phi Kg_\Phi^{-1})/
	Q_\Phi(\mathbb{Q}).
	\]
	Then
	$\triangle_K(\Phi)$ acts on the tower of Shimura varieties
	\[
	Sh(Q_\Phi,D_\Phi,K_\Phi)
	\rightarrow
	Sh(\overline{Q}_\Phi,\overline{D}_\Phi,\overline{K}_\Phi)
	\rightarrow
	Sh(G_{\Phi,h},D_{\Phi,h},K_{\Phi,h}).
	\]
	Moreover,
	the action of
	$\triangle_K(\Phi)$ on this last Shimura variety
	factors through a finite quotient
	$\triangle_K^\mathrm{fin}(\Phi)$.
	If
	$L_\Phi:=
	P_\Phi/U_\Phi
	\rightarrow
	G_{\Phi,l}:=
	P_\Phi/Q_\Phi$
	has a section
	(such that $L_\Phi=G_{\Phi,h}\times G_{\Phi,l}$),
	then
	$\triangle_K(\Phi)$
	acts trivially on
	$Sh(G_{\Phi,h},D_{\Phi,h},K_{\Phi,h})$
	(\cite[(2.1.16.2)]{Madapusi2012}).
	Now write
	$\mathbf{P}_K(\Phi)
	\subset
	\mathbf{S}_K(\Phi)$
	for the sub-monoid consisting of elements that are
	non-negative on
	$\mathbf{H}(\Phi)$.
	Then
	$\mathrm{FJ}_\Phi
	:=
	\mathrm{FJ}_{(\Phi,\sigma)}$
	does not depend on the choice of
	$\sigma\in\Sigma(\Phi)$
	and its image is in the invariant subspace
	$\prod_{l\in\mathbf{S}_K(\Phi)}
	H^0
	(Sh(G_{\Phi,h},D_{\Phi,h},K_{\Phi,h}),
	\mathrm{FJ}_K^{(l)}
	\otimes\omega_K(\Phi)^{\otimes n})^{\triangle_K(\Phi)}$
	(\cite[(5.18)]{Madapusi2012}).
	Now the minimal compactification of
	$Sh$ is given by
	(\cite[5.2.1]{Madapusi2012})
	\[
	Sh^\mathrm{min}
	:=
	\mathrm{Proj}
	\left(
	\oplus_{n\geq0}
	H^0(Sh^\Sigma,
	\omega_K(\Sigma)^{\otimes n})
	\right)
	\]
	together with a canonical map
	\[
	\oint^\Sigma
	\colon
	Sh^\Sigma
	\rightarrow
	Sh^\mathrm{min}.
	\]
	
	Now let $x\in Sh^\mathrm{min}(\overline{\mathbb{F}}_p)$
	be a geometric point lifting to a point
	$y\in Z_K([\Phi])$ and write
	$\mathrm{FJ}_K^{(l)}(\Phi)_y^\wedge$
	for the completion of
	$\mathrm{FJ}_K^{(l)}(\Phi)$
	at the point $y$.
	Then we have canonical isomorphisms of
	complete local rings
	(\cite[5.2.8]{Madapusi2012}):
	\[
	\mathcal{O}_{Sh^\mathrm{min},x}^\wedge
	\simeq
	\bigg(
	\prod_{l\in\mathbf{S}_K(\Phi)}
	\mathrm{FJ}_K^{(l)}(\Phi)_y^\wedge
	\bigg)^{\triangle_K(\Phi)},
	\]
	\begin{align*}
	&
	H^0
	\left(
	\mathrm{Spec}
	(\mathcal{O}_{Sh^\mathrm{min},x}^\wedge),
	(\oint^\Sigma)_\ast(\omega_K(\Phi)^{\otimes n})
	\right)
	\\
	\simeq
	&
	\bigg(
	\prod_{l\in\mathbf{S}_K(\Phi)}
	H^0(Sh(G_{\Phi,h},D_{\Phi,h},K_{\Phi,h})_y,
	\mathrm{FJ}_K^{(l)}(\Phi)\otimes
	\omega_K(\Phi)^{\otimes n})
	\bigg)^{\triangle_K(\Phi)}.
	\end{align*}

	\subsection{Base change of modular forms}
	We continue to use the notations in the
	previous subsection.
	Now we can show
	
	\begin{proposition}
		For $y\in
		Z_K(\Phi)$
		as in the last subsection,
		$\lambda\in
		X^\ast_\mathrm{dm}(\widetilde{T}_{\widetilde{P}})$,
		$l\in
		\mathbf{P}_K(\Phi)$
		positive definite
		and
		$n\geq0$,
		we have the following isomorphism
		\begin{align*}
		&
		H^0
		(Sh(G_{\Phi,h},D_{\Phi,h},K_{\Phi,h})_y
		\otimes_{\mathcal{O}_\mathfrak{p}}
		\mathcal{O}_\mathfrak{p}/\mathfrak{p}^n,
		\mathrm{FJ}_K^{(l)}\otimes
		\omega_K(\Phi)[\lambda^{-1}])
		\\
		\simeq
		&
		H^0
		(Sh(G_{\Phi,h},D_{\Phi,h},K_{\Phi,h})_y,
		\mathrm{FJ}_K^{(l)}\otimes
		\omega_K(\Phi)[\lambda^{-1}])
		\otimes
		\mathcal{O}_\mathfrak{p}/\mathfrak{p}^n.
		\end{align*}
		Moreover,
		if $x\in
		Sh^{\mathrm{min},\mu}
		(\overline{\mathbb{F}}_p)$
		is in the $\mu$-ordinary locus,
		then we have
		(write the morphism
		$\widetilde{\pi}\colon
		\mathrm{Ig}_{\infty}^\mu/\widetilde{P}_L
		\rightarrow
		Sh_\infty^\mu$):
		\begin{align*}
		&
		H^0
		(
		Sh_y
		\otimes
		\mathcal{O}_\mathfrak{p}/\mathfrak{p}^n,
		\mathrm{FJ}_K^{(l)}(\Phi)
		\otimes
		\widetilde{\pi}_\ast(\mathcal{O}_{\mathrm{Ig}_\infty^\mu/
			\widetilde{P}_L}[\lambda^{-1}])
		)
		\\
		\simeq
		&
		H^0
		(
		Sh_y,
		\mathrm{FJ}_K^{(l)}(\Phi)
		\otimes
		\widetilde{\pi}_\ast(\mathcal{O}_{\mathrm{Ig}_\infty^\mu/
			\widetilde{P}_L}[\lambda^{-1}])
		)
		\otimes
		\mathcal{O}_\mathfrak{p}/\mathfrak{p}^n.
		\end{align*}
	\end{proposition}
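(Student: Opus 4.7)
The plan is to deduce both isomorphisms from a single cohomology-vanishing input together with the standard ``$H^1$-vanishing implies base change'' mechanism over the discrete valuation ring $\mathcal{O}_\mathfrak{p}$. Let $\varpi$ be a uniformizer of $\mathcal{O}_\mathfrak{p}$. For any sheaf $\mathcal{F}$ on one of the formal schemes considered, the short exact sequence
\[
0\to \mathcal{F}\xrightarrow{\varpi^n}\mathcal{F}\to \mathcal{F}/\varpi^n\mathcal{F}\to 0
\]
yields, upon taking cohomology, the surjection
$H^0(\mathcal{F})\otimes_{\mathcal{O}_\mathfrak{p}}\mathcal{O}_\mathfrak{p}/\mathfrak{p}^n
\twoheadrightarrow H^0(\mathcal{F}/\varpi^n\mathcal{F})$
together with the fact that its kernel is controlled by the $\varpi^n$-torsion of $H^1(\mathcal{F})$. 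Hence it suffices in each case to produce $\mathcal{O}_\mathfrak{p}$-flatness of $\mathcal{F}$ together with $H^1(\mathcal{F})=0$ on the relevant base.

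For the first isomorphism, recall that by construction
$\mathrm{FJ}_K^{(l)}(\Phi)=\pi'_\ast\Psi_K^{(l)}(\Phi)$ with $\Psi_K^{(l)}(\Phi)$ a line bundle on the $\mathbf{E}_K(\Phi)$-torsor $Sh(\overline{Q}_\Phi,\overline{D}_\Phi,\overline{K}_\Phi)$ over $Sh(G_{\Phi,h},D_{\Phi,h},K_{\Phi,h})$. The positive-definiteness of $l\in\mathbf{P}_K(\Phi)$ is precisely the condition that $\Psi_K^{(l)}(\Phi)$, viewed via theta-coefficient theory on the universal abelian scheme $\mathcal{A}_K(\Phi)$, is relatively ample along the projection $\pi'$. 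Combining Serre vanishing on the abelian-scheme fibres with the projection formula shows that $R^i\pi'_\ast\Psi_K^{(l)}(\Phi)=0$ for $i>0$, so $\mathrm{FJ}_K^{(l)}(\Phi)$ is $\mathcal{O}_\mathfrak{p}$-flat and its Leray spectral sequence degenerates. After tensoring with the automorphic bundle $\omega_K(\Phi)[\lambda^{-1}]$ and restricting to the formal completion $Sh(G_{\Phi,h},D_{\Phi,h},K_{\Phi,h})_y$ along the stratum through $y$, a suitable twist argument (using positive-definiteness of $l$ to absorb the weight $\lambda$) yields $H^i=0$ for $i\ge 1$. One can reduce this vanishing from the Hodge-type setting to the classical Siegel case via the embedding $\iota\colon(G,X)\hookrightarrow(G^\ddagger,X^\ddagger)$, since the compactification and Fourier-Jacobi data in our case are, by the normalization diagram of §6.1, restrictions of Siegel data where positivity of $l$ has the classical interpretation.

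For the second isomorphism, one first observes that $\widetilde{\pi}_\ast(\mathcal{O}_{\mathrm{Ig}_\infty^\mu/\widetilde{P}_L}[\lambda^{-1}])$ is an inductive limit, over the finite-level quotients $\mathrm{Ig}_m/\widetilde{P}_L\to Sh_m^\mu$, of coherent $\mathcal{O}_{Sh_m^\mu}$-modules. Each such $\mathrm{Ig}_m/\widetilde{P}_L$ is a finite \'etale $L_\mu(\mathbb{Z}_p/p^m)$-torsor by Definition-Proposition \ref{Igusa tower}, hence the pushforward at finite level is locally free and \emph{a fortiori} $\mathcal{O}_\mathfrak{p}$-flat. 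For each fixed $m$, tensoring with $\mathrm{FJ}_K^{(l)}(\Phi)$ and applying the first isomorphism on the stratum through $y$ (now intersected with the $\mu$-ordinary locus) gives the desired base-change statement at level $m$. Passing to the inductive limit in $m$, together with the exactness of filtered colimits and the formal affineness of the neighbourhoods involved (so that $H^0$ commutes with colimits and $H^1$ vanishing is preserved), yields the full statement.

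The main obstacle will be the positivity-driven vanishing step used in paragraph two: one must check that the twist by $\omega_K(\Phi)[\lambda^{-1}]$ does not destroy the ampleness provided by $l\in\mathbf{P}_K(\Phi)$, and that the reduction to the Siegel case is compatible with the formal completion at~$y$ and with the automorphic-bundle structure indexed by $\lambda\in X^\ast_\mathrm{dm}(\widetilde{T}_{\widetilde{P}})$. A secondary subtlety is justifying the interchange of the mod~$\mathfrak{p}^n$ reduction with the filtered colimit defining $\widetilde{\pi}_\ast(\mathcal{O}_{\mathrm{Ig}_\infty^\mu/\widetilde{P}_L}[\lambda^{-1}])$; this is where finite \'etaleness at each level $m$ is essential, as it guarantees both flatness and the vanishing of higher direct images along $\widetilde{\pi}_m$, so the base-change isomorphism established at finite level survives the limit intact.
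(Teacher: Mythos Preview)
Your approach is essentially correct and follows the same strategy as the paper: reduce base change to an $H^1$-vanishing statement, obtain that vanishing from the positivity of $l$, and handle the second isomorphism via the finite nature of the Igusa cover.

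Two points where the paper is sharper. First, the correct tool for the vanishing of $R^i\pi'_\ast\Psi_K^{(l)}(\Phi)$ is \emph{Mumford's vanishing theorem} for line bundles on abelian varieties (a positive-definite polarization form kills all higher cohomology), not Serre vanishing: Serre only gives vanishing after sufficient twisting, which is not what is needed here. The paper identifies $\Psi_K^{(l)}(\Phi)$ explicitly as the pullback of the Poincar\'e bundle along the map $c_l\colon\mathrm{Hom}_\mathbb{Z}(\Lambda/\Lambda^{'\perp},\mathcal{A}^t)\to\mathcal{A}\times\mathcal{A}^t$, and Mumford's theorem applies directly. Second, your concern that the twist by $\omega_K(\Phi)[\lambda^{-1}]$ might destroy ampleness is unnecessary: this bundle is pulled back from the base Shimura variety and is therefore trivial along the fibres of $\pi'$. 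For the second isomorphism the paper disposes of the $\lambda$-dependence even more cleanly, by noting that on the (boundary) Igusa tower $\mathrm{Ig}^\Phi_{m,n}/\widetilde{P}_L^\mathrm{der}$ the isotypic sheaves $\mathcal{O}[\lambda^{-1}]$ and $\mathcal{O}[0]$ are isomorphic as $\mathcal{O}$-modules, reducing immediately to $\lambda=0$; there the paper observes that the cover is finite, so the pullback of $\mathrm{FJ}_K^{(l)}$ remains ample and the total space is affine over the base, whence the base-change isomorphism. Your inductive-limit argument is a valid alternative packaging of the same content.
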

    \begin{proof}
    	First introduce some notations.
    	Recall that we have fixed a
    	$\mathbb{Z}$-lattice
    	$\Lambda$ of $V$
    	which is unimodular with respect to the
    	symplectic form
    	$\langle\cdot,\cdot\rangle$.
    	Let
    	$\mathfrak{C}$
    	be the set of totally isotropic submodules
    	of $\Lambda$.
    	For any $\Lambda'\in\mathfrak{C}$,
    	denote by
    	$\Lambda^{'\perp}$
    	for the submodule of $\Lambda$ orthogonal to
    	$\Lambda'$
    	and
    	by $C(\Lambda/\Lambda^{'\perp})$
    	the cone of positive definite
    	symmetric bilinear forms on the vector space
    	$\Lambda/\Lambda^{'\perp}\otimes_\mathbb{Z}\mathbb{R}$
    	whose radical is defined over $\mathbb{Q}$.
    	It is clear that for
    	$\Lambda''\subset\Lambda'$ in
    	$\mathfrak{C}$,
    	one has
    	$C(V/V^{''\perp})
    	\subset
    	C(V/V^{'\perp})$.
    	Consider the equivalence relation
    	on the set of
    	$C(V/V^{'\perp})$
    	for all $\Lambda\in\mathfrak{C}$
    	generated by this
    	inclusion relation.
    	We write the quotient set of the above set by this
    	equivalence relation by
    	$\mathcal{C}$.
    	Note that for any $\Lambda'\in\mathfrak{C}$,
    	there is a natural filtration
    	$W_\ast\Lambda$ on $\Lambda$
    	\[
    	W_{-3}\Lambda=0
    	\subset
    	W_{-2}\Lambda=\Lambda'
    	\subset
    	W_{-1}\Lambda=\Lambda^{'\perp}
    	\subset
    	W_0\Lambda=\Lambda.
    	\]
    	Then we write $P_{\Lambda'}\subset G$
    	for the parabolic subgroup stabilizing this filtration
    	$W_\ast\Lambda$
    	(\cite[§2.2.1]{Madapusi2012}).
    	Conversely,
    	for any admissible parabolic subgroup $P\subset G$,
    	there is a canonical filtration
    	$W_\ast\mathfrak{g}$ on $\mathfrak{g}$:
    	\[
    	W_{-3}\mathfrak{g}=0
    	\subset
    	W_{-2}\mathfrak{g}=\mathrm{Lie}(W_P)
    	\subset
    	W_{-1}\mathfrak{g}=\mathrm{Lie}(U_P)
    	\subset
    	W_0\mathfrak{g}=\mathrm{Lie}(P)
    	\subset
    	W_1\mathfrak{g}=\mathfrak{g}
    	\]
    	which then induces a filtration on
    	$\Lambda$
    	Now we choose a $\Lambda'\in\mathfrak{C}$
    	such that
    	the corresponding parabolic  subgroup
    	$P_{\Lambda'}$
    	is the same as $P_\Phi$.  	
    	Note that
    	$\Psi_K^{(l)}(\Phi)$
    	is the pull-back of the Poincar\'{e} line bundle
    	on
    	$\mathcal{A}\times\mathcal{A}^t$
    	(recall
    	$\mathcal{A}$ is the universal abelian scheme over
    	$Sh$)
    	along the natural map
    	$c_l
    	\colon
    	\mathrm{Hom}_\mathbb{Z}
    	(\Lambda/\Lambda^{'\perp},\mathcal{A}^t)
    	\rightarrow
    	\mathcal{A}\times\mathcal{A}^t$.
    	Moreover,
    	$Sh(\overline{Q}_\Phi,
    	\overline{D}_\Phi,\overline{K}_\Phi)$
    	is in fact a
    	$\mathrm{Hom}_\mathbb{Z}(\Lambda/\Lambda^{'\perp},
    	\mathcal{A}^t)$-torsor over the Shimura variety
    	$Sh(G_{\Phi,h},D_{\Phi,h},K_{\Phi,h})$.
    	Thus by Mumford's vanishing theorem
    	(\cite[§III.16]{Mumford1970}),
    	we get the first isomorphism.
    	
    	As for the second isomorphism,
    	we define the following torsor
    	over
    	$Sh(\overline{Q}_\Phi,\overline{D}_\Phi,
    	\overline{K}_\Phi)_m$:
    	\[
    	\pi_{m,n}^\Phi
    	\colon
    	\mathrm{Ig}^\Phi_{m,n}
    	:=\underline{\mathrm{Isom}}_{
    		Sh(\overline{Q}_\Phi,\overline{D}_\Phi,
    		\overline{K}_\Phi)_m}
    	(\mathcal{A}_{x_0}[p^n]^\circ,
    	\mathcal{A}[p^ny]^\circ)
    	\rightarrow
    	Sh(\overline{Q}_\Phi,\overline{D}_\Phi,
    	\overline{K}_\Phi)_m.
    	\]
    	Since
    	the morphism
    	$
    	\pi_{m,n}^\Phi/\widetilde{P}_L^\mathrm{der}
    	\colon
    	\mathrm{Ig}_{m,n}^\Phi/\widetilde{P}_L^\mathrm{der}
    	\rightarrow
    	Sh(\overline{Q}_\Phi,\overline{D}_\Phi,
    	\overline{K}_\Phi)$
    	is finite,
    	thus
    	$(\pi_{m,n}^\Phi/\widetilde{P}_L^\mathrm{der})
    	^\ast
    	(\mathrm{FJ}_K^{(l)})$
    	is an ample line bundle
    	(\cite[II.5.1.12]{EGA1961}),
    	and thus is affine.
    	From this we get the following isomorphism
    	\[
    	H^0
    	((\mathrm{Ig}_{m,n}^\Phi/\widetilde{P}_L^\mathrm{der})_x
    	\otimes\mathcal{O}_\mathfrak{p}/\mathfrak{p}^n,
    	(\pi_{m,n}^\Phi/\widetilde{P}_L^\mathrm{der})^\ast
    	\mathrm{FJ}_K^{(l)})
    	\simeq
    	H^0
    	((\mathrm{Ig}_{m,n}^\Phi/\widetilde{P}_L^\mathrm{der})_x,
    	(\pi_{m,n}^\Phi/\widetilde{P}_L^\mathrm{der})^\ast
    	\mathrm{FJ}_K^{(l)})
    	\otimes\mathcal{O}_\mathfrak{p}/\mathfrak{p}^n.
    	\]
    	This isomorphism corresponds to the second isomorphism
    	in the proposition in the case
    	$\lambda=0$.
    	In general,
    	since
    	$\mathcal{O}_{\mathrm{Ig}_{m,n}^\Phi/\widetilde{P}_L^\mathrm{der}}
    	[\lambda^{-1}]$
    	and
    	$\mathcal{O}_{\mathrm{Ig}_{m,n}^\Phi/\widetilde{P}_L^\mathrm{der}}
    	[0]$
    	are equivalent as
    	$\mathcal{O}_{\mathrm{Ig}_{m,n}^\Phi/\widetilde{P}_L^\mathrm{der}}$-modules,
    	thus we deduce the second isomorphism
    	for any $\lambda\in
    	X^\ast_\mathrm{dm}(\widetilde{T}_{\widetilde{P}})$.
    \end{proof}

    We can deduce the base change of cuspidal
    modular forms on
    the minimal compactification:
    \begin{proposition}
    	Assume that $K$ is a neat compact open subgroup of
    	$G(\mathbb{A}_\mathrm{f})$.
    	Then we have the following isomorphism
    	\begin{align*}
    	&
    	H^0
    	\bigg(
    	\mathrm{Spec}((Sh^{\mathrm{min}})_x^\wedge)
    	\otimes
    	\mathcal{O}_\mathfrak{p}/\mathfrak{p}^n,
    	\oint^\Sigma_\ast
    	\mathcal{R}_\infty^\mathrm{Ig}[\lambda^{-1}](-C^\Sigma)
    	\bigg)
    	\\
    	\simeq
    	&
    	H^0
    	\bigg(
    	\mathrm{Spec}((Sh^{\mathrm{min}})_x^\wedge),
    	\oint^\Sigma_\ast
    	\mathcal{R}_\infty^\mathrm{Ig}[\lambda^{-1}](-C^\Sigma)
    	\bigg)
    	\otimes
    	\mathcal{O}_\mathfrak{p}/\mathfrak{p}^n.
    	\end{align*}
    	Moreover,
    	if $x$ is in the $\mu$-ordinary locus,
    	then the following spaces are isomorphic
    	\begin{align*}
    	&
    	H^0
    	\bigg(
    	\mathrm{Spec}((Sh^{\mathrm{min}})_x^\wedge)
    	\otimes
    	\mathcal{O}_\mathfrak{p}/\mathfrak{p}^n,
    	\oint^\Sigma_\ast
    	\mathcal{R}_\infty^\mathrm{top,Ig}[\lambda^{-1}](-C^\Sigma)
    	\bigg)
    	\\
    	\simeq
    	&
    	H^0
    	\bigg(
    	\mathrm{Spec}((Sh^{\mathrm{min}})_x^\wedge),
    	\oint^\Sigma_\ast
    	\mathcal{R}_\infty^\mathrm{top,Ig}[\lambda^{-1}](-C^\Sigma)
    	\bigg)
    	\otimes
    	\mathcal{O}_\mathfrak{p}/\mathfrak{p}^n.
    	\end{align*}
    \end{proposition}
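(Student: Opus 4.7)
The plan is to reduce the assertion to the preceding proposition via the Fourier-Jacobi expansion at the cusp. Recall from \cite[5.2.8]{Madapusi2012} (quoted just before the statement of the previous proposition) that if $x\in Sh^{\mathrm{min}}(\overline{\mathbb{F}}_p)$ lifts to a geometric point $y$ in the stratum $Z_K([\Phi])$, then the complete local ring of $Sh^{\mathrm{min}}$ at $x$ is the $\triangle_K(\Phi)$-invariant part of the product $\prod_{l\in\mathbf{S}_K(\Phi)}\mathrm{FJ}_K^{(l)}(\Phi)_y^\wedge$, and there is an analogous description for sections of $\oint^\Sigma_\ast(\omega_K(\Phi)^{\otimes n})$. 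The cuspidal condition $(-C^\Sigma)$ on the sheaf forces the product to range over the subset of \emph{positive definite} $l\in\mathbf{P}_K(\Phi)$, which is the place where one can hope to apply vanishing theorems.

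First, I would globalize the Fourier-Jacobi description to the two sheaves at hand. Replacing $\omega_K(\Phi)^{\otimes n}$ by $\mathcal{R}_\infty^{\mathrm{Ig}}[\lambda^{-1}]$ (resp. by $\mathcal{R}_\infty^{\mathrm{top,Ig}}[\lambda^{-1}]$) and tensoring the ideal sheaf $\mathcal{O}(-C^\Sigma)$ with each $\mathrm{FJ}_K^{(l)}(\Phi)$ factor, the same proof as in \cite[\S5.1.5, \S5.2.8]{Madapusi2012} produces identifications
\[
H^0\bigl(\mathrm{Spec}((Sh^{\mathrm{min}})_x^\wedge),\oint^\Sigma_\ast\mathcal{R}_\infty^{\mathrm{Ig}}[\lambda^{-1}](-C^\Sigma)\bigr)
\simeq
\Bigl(\prod_{l\in\mathbf{P}_K(\Phi),\,l>0}H^0\bigl(Sh(G_{\Phi,h},D_{\Phi,h},K_{\Phi,h})_y,\mathrm{FJ}_K^{(l)}(\Phi)\otimes\omega_K(\Phi)[\lambda^{-1}]\bigr)\Bigr)^{\triangle_K(\Phi)},
\]
and similarly after replacing $\omega_K(\Phi)[\lambda^{-1}]$ by $\widetilde{\pi}_\ast(\mathcal{O}_{\mathrm{Ig}_\infty^\mu/\widetilde{P}_L}[\lambda^{-1}])$ in the $\mu$-ordinary case. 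The analogous identification holds after reduction modulo $\mathfrak{p}^n$, provided one can commute invariants with that reduction.

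Next, I apply the preceding proposition \emph{factor by factor}: for each positive definite $l$, that proposition yields
\[
H^0\bigl(Sh(G_{\Phi,h},D_{\Phi,h},K_{\Phi,h})_y\otimes\mathcal{O}_\mathfrak{p}/\mathfrak{p}^n,\mathrm{FJ}_K^{(l)}(\Phi)\otimes\omega_K(\Phi)[\lambda^{-1}]\bigr)
\simeq
H^0\bigl(\ldots,\mathrm{FJ}_K^{(l)}(\Phi)\otimes\omega_K(\Phi)[\lambda^{-1}]\bigr)\otimes\mathcal{O}_\mathfrak{p}/\mathfrak{p}^n,
\]
and the $\mu$-ordinary version in the same way. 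Taking the product over positive definite $l$ preserves this isomorphism because the product is $\mathfrak{p}$-adically convergent: the complete local ring $(Sh^{\mathrm{min}})_x^\wedge$ is naturally a power-series-type completion along the semi-monoid $\mathbf{P}_K(\Phi)$, so its mod $\mathfrak{p}^n$ reduction is the corresponding product of mod $\mathfrak{p}^n$ factors. At this stage the only remaining point is to pass from the product to the $\triangle_K(\Phi)$-invariants.

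The main obstacle, and the reason why the neatness hypothesis on $K$ is imposed, is exactly this last step: to conclude we need the functor of $\triangle_K(\Phi)$-invariants to commute with the flat base change $-\otimes_{\mathcal{O}_\mathfrak{p}}\mathcal{O}_\mathfrak{p}/\mathfrak{p}^n$. The group $\triangle_K(\Phi)$ acts on the Shimura variety $Sh(G_{\Phi,h},D_{\Phi,h},K_{\Phi,h})$ through a finite quotient $\triangle_K^{\mathrm{fin}}(\Phi)$, and when $K$ is neat one shows (as in \cite[(2.1.16)]{Madapusi2012}) that this finite quotient acts freely in the appropriate sense, so that invariants are exact and in particular commute with the flat base change. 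Combining this with the factor-wise isomorphism and the product expansion yields both isomorphisms claimed in the statement; the $\mu$-ordinary second statement is entirely parallel, using the second (ordinary) isomorphism of the preceding proposition in place of the first.
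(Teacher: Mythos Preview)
Your overall strategy---reduce to the Fourier--Jacobi expansion, apply the preceding proposition term by term over the positive definite $l$, and then pass to $\triangle_K(\Phi)$-invariants---is exactly the route the paper takes. The gap is in the last step, where you explain why neatness lets you commute $\triangle_K(\Phi)$-invariants with $-\otimes\mathcal{O}_\mathfrak{p}/\mathfrak{p}^n$.

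You argue that $\triangle_K(\Phi)$ acts on the boundary Shimura variety $Sh(G_{\Phi,h},D_{\Phi,h},K_{\Phi,h})$ through a finite quotient which, under neatness, acts freely, so invariants are exact. That is not the mechanism, and the freeness claim on the Shimura variety is not what \cite[(2.1.16)]{Madapusi2012} says (there the conclusion, under a splitting hypothesis, is that the action on the base is \emph{trivial}, not free). The correct point, and what the paper actually uses, is that $\triangle_K(\Phi)$ acts on the product $\prod_{l>0}(\cdots)$ by permuting the index set $\{l\}$ (together with its geometric action on each factor), and when $K$ is neat the stabilizer in $\triangle_K(\Phi)$ of any \emph{positive definite} $l$ is trivial. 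Hence the $\triangle_K(\Phi)$-invariant subspace is simply the sub-product indexed by a set of orbit representatives, with no genuine group cohomology entering; such a sub-product visibly commutes with any base change. So you should replace the ``free action on the Shimura variety'' argument by the observation that neatness forces trivial stabilizers on positive definite $l$, which makes the invariants a direct summand of the product.
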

    \begin{proof}
    	As in the proof of
    	\cite[Proposition 6.2]{Pilloni2012},
    	it suffices to show that
    	the action of
    	the group
    	$\triangle_K(\Phi)$ on these spaces of cuspidal
    	forms is trivial.
    	Indeed,
    	for $l$ positive definite,
    	its stabilizer in $\triangle_K(\Phi)$
    	is trivial by the assumption that
    	$K$ is neat.
    \end{proof}

    For a scheme $Y$ over
    $\mathbb{W}$,
    we write
    $i_m
    \colon
    Y_m
    \hookrightarrow
    Y$
    for the closed immersion of the reduction modulo
    $\mathfrak{p}^m$ of $Y$ into $Y$.
    Then we have the following commutative diagram
    \[
    \begin{tikzcd}
    Sh^\Sigma_m
    \arrow[r,"i_m"]
    \arrow[d,"\pi"]
    &
    Sh^\Sigma
    \arrow[d,"\pi"]
    \\
    Sh^\mathrm{min}_m
    \arrow[r,"i_m"]
    &
    Sh^\mathrm{min}
    \end{tikzcd}
    \]
    We have a similar commutative diagram if we take the
    $\mu$-ordinary locus of each space in the diagram.
    We write
    $i_m^\mu,\pi^\mu$
    for the induced morphisms.
    From the above proposition,
    we get
    \begin{corollary}
    	The following natural morphisms of automorphic sheaves
    	are isomorphisms
    	\[
    	i_m^\ast
    	\pi_\ast
    	\oint_\ast^\Sigma
    	\mathcal{R}_\infty^\mathrm{Ig}[\lambda^{-1}](-C^\Sigma)
    	\simeq
    	\pi_\ast
    	i_m^\ast
    	\oint_\ast^\Sigma
    	\mathcal{R}_\infty^\mathrm{Ig}[\lambda^{-1}](-C^\Sigma),
    	\]
    	\[
    	(i_m^\mu)^\ast
    	(\pi^\mu)_\ast
    	\oint_\ast^\Sigma
    	\mathcal{R}_\infty^\mathrm{top,Ig}[\lambda^{-1}](-C^\Sigma)
    	\simeq
    	(\pi^\mu)_\ast
    	(i_m^\mu)^\ast
    	\oint_\ast^\Sigma
    	\mathcal{R}_\infty^\mathrm{top,Ig}[\lambda^{-1}](-C^\Sigma).
    	\]
    \end{corollary}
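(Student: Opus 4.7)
The plan is to deduce this Corollary directly from the preceding Proposition by the standard sheaf-theoretic procedure: construct the natural base-change morphism via adjunction, reduce to checking an isomorphism of completed stalks on $Sh^{\mathrm{min}}$ (resp.\ $Sh^{\mathrm{min},\mu}$), and then invoke the formal functions theorem to identify those completed stalks with the spaces of sections on formal completions that appear in the preceding Proposition.

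First I would construct the two natural morphisms. Using the commutative square displayed just before the Corollary (i.e.\ $\pi\circ i_m = i_m\circ \pi$), together with the unit of the adjunction $(i_m^{\ast},(i_m)_{\ast})$, one obtains the canonical base-change maps $i_m^{\ast}\pi_{\ast}\mathcal{F}\to \pi_{\ast}i_m^{\ast}\mathcal{F}$ for any (formal) quasi-coherent sheaf $\mathcal{F}$; apply this to $\mathcal{F}=\oint^{\Sigma}_{\ast}\mathcal{R}_{\infty}^{\mathrm{Ig}}[\lambda^{-1}](-C^{\Sigma})$ and to $\mathcal{F}=\oint^{\Sigma}_{\ast}\mathcal{R}_{\infty}^{\mathrm{top},\mathrm{Ig}}[\lambda^{-1}](-C^{\Sigma})$. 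Since both the source and the target are coherent along $Sh^{\mathrm{min}}$ (being pushforwards of coherent sheaves by a proper morphism, in the $\mu$-ordinary case after restriction to the open formal subscheme $Sh^{\mathrm{min},\mu}_{\infty}$), to prove the map is an isomorphism it suffices to verify this on the completed stalk at every geometric closed point $x$ of $Sh^{\mathrm{min}}$ (resp.\ $Sh^{\mathrm{min},\mu}$).

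Next I would compute those completed stalks by the formal functions theorem applied to the proper morphism $\pi=\oint^{\Sigma}$. At a geometric point $x\in Z_K([\Phi])$, this identifies $(\pi_{\ast}\mathcal{F})^{\wedge}_x$ with $H^{0}$ on $\mathrm{Spec}((Sh^{\mathrm{min}})_{x}^{\wedge})$ of $\pi_{\ast}\mathcal{F}$, which matches exactly the expressions appearing in the statement of the preceding Proposition. Tensoring with $\mathcal{O}_{\mathfrak{p}}/\mathfrak{p}^{m}$ gives the completed stalk of $i_m^{\ast}\pi_{\ast}\mathcal{F}$, while proper base change along the closed immersion $i_m$ (or, equivalently, commuting $H^{0}$ with reduction modulo $\mathfrak{p}^{m}$ on $Sh_m^\mathrm{min}$) yields the completed stalk of $\pi_{\ast}i_m^{\ast}\mathcal{F}$. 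The compatibility of these two identifications with the base-change map constructed in the first step is formal. Substituting the two isomorphisms supplied by the preceding Proposition (the first for a general geometric point, the second for $x\in Sh^{\mathrm{min},\mu}(\overline{\mathbb{F}}_p)$) finishes the proof.

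The main obstacle I expect is not conceptual but bookkeeping: one must match the formal functions description of $(\pi_{\ast}\mathcal{F})^{\wedge}_x$ with Madapusi's explicit description of $\mathcal{O}_{Sh^{\mathrm{min}},x}^{\wedge}$ as the $\triangle_K(\Phi)$-invariants of the Fourier--Jacobi expansion along $Z_K([\Phi])$, and confirm that the cuspidal twist $(-C^{\Sigma})$ and the neatness of $K$ ensure that taking $\triangle_K(\Phi)$-invariants commutes with reduction modulo $\mathfrak{p}^{m}$. Both points were already handled inside the preceding Proposition (using that for $l$ positive-definite the stabilizer of $l$ in $\triangle_K(\Phi)$ is trivial by neatness), so once the identifications above are set up carefully no further input is required.
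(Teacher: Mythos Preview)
Your approach is correct and is essentially the intended one: the paper gives no proof for this Corollary precisely because it is the standard deduction you outline---construct the base-change map, reduce to completed stalks on $Sh^{\mathrm{min}}$ (resp.\ $Sh^{\mathrm{min},\mu}$) via the theorem on formal functions for the proper morphism $\oint^{\Sigma}$, and then invoke the preceding Proposition. Your remark about the neatness/$\triangle_K(\Phi)$ bookkeeping having already been absorbed into that Proposition is exactly right.
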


    From these isomorphisms,
    one can deduce
    \begin{proposition}\label{surjectivity of mod p, mu-ordinary}
    	For any
    	$\lambda\in X^\ast_\mathrm{dm}(\widetilde{T}_{\widetilde{P}})$
    	and any integer $m\geq0$
    	the following natural morphisms of reduction modulo
    	$\mathfrak{p}^m$ are surjective
    	\begin{align*}
    	H^0(\mathrm{Ig}_\infty/\widetilde{P}_L,
    	\mathcal{R}_\infty^\mathrm{Ig}[\lambda^{-1}]
    	(-C^\Sigma))
    	&
    	\rightarrow
    	H^0(\mathrm{Ig}_m/\widetilde{P}_L,
    	\mathcal{R}_m^\mathrm{Ig}[\lambda^{-1}]
    	(-C^\Sigma)),
    	\\
    	H^0(Sh^{\Sigma,\mu}_\infty,\mathcal{V}_\lambda
    	(-C^\Sigma))
    	&
    	\rightarrow
    	H^0(Sh^{\Sigma,\mu}_m,\mathcal{V}_\lambda
    	(-C^\Sigma)),
    	\\
    	\mathbb{V}_{\mathrm{cusp},\infty}^{\widetilde{P}_L^\mathrm{der}}
    	[\lambda^{-1}]
    	&
    	\rightarrow
    	\mathbb{V}_{\mathrm{cusp},m}^{\widetilde{P}_L^\mathrm{der}}[\lambda^{-1}].
    	\end{align*}
    \end{proposition}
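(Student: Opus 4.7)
The plan is to deduce all three surjectivity statements from a single input: the affineness of the $\mu$-ordinary locus $Sh^{\mathrm{min},\mu}$ of the minimal compactification, combined with the base change isomorphism in the preceding corollary that commutes reduction modulo $\mathfrak{p}^m$ with the pushforwards $\pi_\ast$ and $\oint^\Sigma_\ast$. Recall that $Sh^{\mathrm{min},\mu}$ is the non-vanishing locus of a section of the ample Hodge bundle (the Hasse invariant $H^\mathrm{min}$ together with its lift of a suitable power), hence is affine, so that $H^1$ of any coherent sheaf on $Sh^{\mathrm{min},\mu}_?$ vanishes.

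First I would treat the middle (cuspidal on $Sh^{\Sigma,\mu}$) assertion. Let $\mathcal{F} := (\oint^\Sigma)_\ast(\mathcal{V}_\lambda(-C^\Sigma))$ on $Sh^{\mathrm{min},\mu}$. Cuspidality plus the identification of $Sh^\mathrm{min}$ as $\mathrm{Proj}$ of the graded ring of sections of $\omega_K(\Sigma)^{\otimes\bullet}$ imply $\mathcal{F}$ is coherent and that $H^0(Sh^{\Sigma,\mu}_?, \mathcal{V}_\lambda(-C^\Sigma)) = H^0(Sh^{\mathrm{min},\mu}_?, i_?^\ast\mathcal{F})$. Tensoring the short exact sequence
\[
0 \longrightarrow \mathcal{F} \xrightarrow{\;\mathfrak{p}^m\;} \mathcal{F} \longrightarrow i_{m,\ast}i_m^\ast\mathcal{F} \longrightarrow 0
\]
with $H^0$ and using $H^1(Sh^{\mathrm{min},\mu},\mathcal{F}) = 0$ by affineness, combined with the base change isomorphism $i_m^\ast\oint^\Sigma_\ast = \oint^\Sigma_\ast i_m^\ast$ of the preceding corollary (for the cuspidal sheaf), gives surjectivity of $H^0(Sh^{\Sigma,\mu}_\infty,\mathcal{V}_\lambda(-C^\Sigma)) \to H^0(Sh^{\Sigma,\mu}_m,\mathcal{V}_\lambda(-C^\Sigma))$.

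For the third assertion, I would apply Proposition \ref{fundamental isom} to identify $\mathbb{V}^{\widetilde{P}_L^\mathrm{der}}_{\mathrm{cusp},?}[\lambda^{-1}]$ with $H^0$ on $\mathrm{Ig}_?/\widetilde{P}_L(\mathcal{O}_\mathfrak{p})$ of the sheaf $\mathrm{pr}_\ast(\mathcal{O}_{\widetilde{\mathrm{Ig}}_?^{\widetilde{P}_L}}[\lambda^{-1}])(-C^\Sigma)$, then push this sheaf down via $\widetilde\pi$ and $\oint^\Sigma$ to a sheaf $\mathcal{G}$ on $Sh^{\mathrm{min},\mu}$. The second base change isomorphism in the preceding corollary (involving $\mathcal{R}^{\mathrm{top},\mathrm{Ig}}_\infty[\lambda^{-1}]$) furnishes the analogue $i_m^\ast(\pi^\mu)_\ast\oint^\Sigma_\ast = (\pi^\mu)_\ast i_m^\ast \oint^\Sigma_\ast$, after which the same affineness-plus-short-exact-sequence argument yields surjectivity. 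The first assertion is essentially intermediate: push $\mathcal{R}_\infty^\mathrm{Ig}[\lambda^{-1}](-C^\Sigma)$ down via $\pi^\mu\circ\oint^\Sigma$ and apply the first base change isomorphism of that corollary together with the same affine vanishing.

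The main obstacle is that $\mathrm{Ig}_?/\widetilde{P}_L(\mathcal{O}_\mathfrak{p})$ is only pro-finite \'etale over $Sh^{\Sigma,\mu}_?$, so the relevant pushforwards are not coherent on the nose. To apply affine vanishing one must express them as filtered colimits over finite-level Igusa covers $\mathrm{Ig}_{?,n}/\widetilde{P}_L$, each of which is finite over the base (so the pushforward at finite level is coherent), run the short-exact-sequence argument at each finite $n$, and then pass to the limit using that filtered colimits commute with $H^0$ on quasi-compact quasi-separated schemes and that the transition maps in $n$ are injective on the $\lambda$-isotypic part by Proposition \ref{fundamental isom}. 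Once this coherence reduction is in place, the proof amounts to the affineness of $Sh^{\mathrm{min},\mu}$ and the base change statements already established.
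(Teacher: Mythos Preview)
Your proposal is correct and follows essentially the same approach as the paper: the paper's proof is the one-line observation that the $\mu$-ordinary locus $Sh^{\mathrm{min},\mu}$ of the minimal compactification is affine (as the non-vanishing locus of a section of an ample line bundle on a projective scheme), which together with the preceding base-change corollary yields the surjectivity. Your write-up spells out the short-exact-sequence/$H^1$-vanishing mechanism and the coherence reduction to finite Igusa level that the paper leaves implicit.
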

    \begin{proof}
    	This follows from the fact that
    	the $\mu$-ordinary locus on the minimal compactification
    	$Sh^\mathrm{min}$
    	is affine
    	because
    	the latter is projective by construction.
    \end{proof}

    Recall the Hodge line bundle
    $\omega$
    we used to construct the Hasse invariant $H$
    in §\ref{Hasse invariant}
    (using its power
    $\omega^{\otimes N_G}$).
    Similar to the above proposition,
    one has
    \begin{proposition}\label{surjectivity of mod p, without mu-ord}
    	For each
    	$\lambda\in X^\ast_\mathrm{dm}(\widetilde{T}_{\widetilde{P}})$,
    	there is a positive integer
    	$N(\lambda)$
    	such that 
    	for any $t\ge N(\lambda)$, the following reduction map is
    	surjective
    	\[
    	H^0(Sh^\Sigma_\infty,
    	\mathcal{V}_{\lambda+tN_G\lambda_G}(-C^\Sigma))
    	\rightarrow
    	H^0(Sh^\Sigma_m,
    	\mathcal{V}_{\lambda+tN_G\lambda_G}(-C^\Sigma)).
    	\]
    \end{proposition}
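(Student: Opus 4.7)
The plan is to reduce the statement to a cohomology vanishing result on the minimal compactification $Sh^{\mathrm{min}}$, where the Hodge line bundle is ample by construction. Concretely, I would prove that $H^1(Sh^\Sigma, \mathcal{V}_{\lambda + tN_G\lambda_G}(-C^\Sigma)) = 0$ for $t$ sufficiently large, and then extract surjectivity mod $\mathfrak{p}^m$ in the standard way from a short exact sequence together with a Mittag-Leffler argument for the formal scheme $Sh^\Sigma_\infty$.

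First I would observe that the construction of the automorphic sheaves via the contracted product along the $L$-torsor $\mathfrak{L}^\Sigma$, combined with the fact that $\lambda_G \in X^\ast(\widetilde{T}_{\widetilde{P}})$ is precisely the character cutting out the Hodge line bundle $\omega_{Sh^\Sigma}$, yields a canonical isomorphism
\[
\mathcal{V}_{\lambda + tN_G\lambda_G}^\Sigma(-C^\Sigma) \simeq \mathcal{V}_\lambda^\Sigma(-C^\Sigma) \otimes \omega_{Sh^\Sigma}^{\otimes tN_G}.
\]
The key external input is then that $\omega_{Sh^\Sigma}$ descends under $\pi^\Sigma \colon Sh^\Sigma \to Sh^{\mathrm{min}}$ to an ample line bundle on $Sh^{\mathrm{min}}$ (this is built into the Proj definition of $Sh^{\mathrm{min}}$ recalled in the previous subsection, together with finite generation of the graded ring of modular forms).

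Next, using properness of $\pi^\Sigma$ and the projection formula,
\[
R^k\pi^\Sigma_\ast\bigl(\mathcal{V}_\lambda(-C^\Sigma) \otimes (\pi^\Sigma)^\ast \omega^{\otimes tN_G}\bigr) \simeq R^k\pi^\Sigma_\ast \mathcal{V}_\lambda(-C^\Sigma) \otimes \omega^{\otimes tN_G},
\]
and each $R^k\pi^\Sigma_\ast\mathcal{V}_\lambda(-C^\Sigma)$ is a coherent $\mathcal{O}_{Sh^{\mathrm{min}}}$-module, vanishing outside a finite range of $k$. Applying Serre's vanishing theorem to each of these finitely many coherent sheaves on $Sh^{\mathrm{min}}$ produces an integer $N(\lambda)$ such that for all $t \geq N(\lambda)$ and all $j \geq 1$,
\[
H^j\bigl(Sh^{\mathrm{min}}, R^k\pi^\Sigma_\ast\mathcal{V}_\lambda(-C^\Sigma) \otimes \omega^{\otimes tN_G}\bigr) = 0.
\]
The Leray spectral sequence then collapses to give $H^i(Sh^\Sigma, \mathcal{V}_{\lambda + tN_G\lambda_G}(-C^\Sigma)) = 0$ for all $i \geq 1$ and all $t \geq N(\lambda)$.

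Finally, writing $\mathcal{V} := \mathcal{V}_{\lambda + tN_G\lambda_G}(-C^\Sigma)$ and considering the short exact sequence of sheaves on $Sh^\Sigma$
\[
0 \longrightarrow \mathcal{V} \xrightarrow{\,\mathfrak{p}^m\,} \mathcal{V} \longrightarrow \mathcal{V}|_{Sh^\Sigma_m} \longrightarrow 0,
\]
the vanishing of $H^1(Sh^\Sigma, \mathcal{V})$ yields surjectivity
$H^0(Sh^\Sigma, \mathcal{V}) \twoheadrightarrow H^0(Sh^\Sigma_m, \mathcal{V}|_{Sh^\Sigma_m})$ for every $m \geq 1$. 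The same argument applied to the exact sequences $0 \to \mathcal{V}|_{Sh^\Sigma_m} \xrightarrow{\mathfrak{p}} \mathcal{V}|_{Sh^\Sigma_{m+1}} \to \mathcal{V}|_{Sh^\Sigma_m} \to 0$ shows that the transition maps in the inverse system $\bigl(H^0(Sh^\Sigma_{m'}, \mathcal{V}|_{Sh^\Sigma_{m'}})\bigr)_{m'}$ are all surjective, so by Mittag-Leffler the formal cohomology $H^0(Sh^\Sigma_\infty, \mathcal{V})$ identifies with the inverse limit and surjects onto each term. The only point requiring care, and the minor obstacle, is verifying the identification $\mathcal{V}_{\lambda + tN_G\lambda_G} \simeq \mathcal{V}_\lambda \otimes \omega^{\otimes tN_G}$ at the level of torsors and the descent of $\omega_{Sh^\Sigma}^{\otimes N_G}$ to an ample bundle on $Sh^{\mathrm{min}}$; once those are in hand, everything else is formal.
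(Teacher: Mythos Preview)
Your proof is correct and is essentially a fleshed-out version of the paper's own argument: the paper's proof is a single line, ``The proof is the same as above by noting that $\omega$ is an ample line bundle,'' which refers back to the preceding proposition where affineness of $Sh^{\mathrm{min},\mu}$ gave the vanishing. Your use of Serre vanishing for $\omega$ on $Sh^{\mathrm{min}}$ together with the Leray spectral sequence is exactly the standard way to make this sentence precise, and the identification $\mathcal{V}_{\lambda+tN_G\lambda_G}\simeq\mathcal{V}_\lambda\otimes\omega^{\otimes tN_G}$ is immediate since $\lambda_G$ extends to a character of $L$.

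One minor simplification: the Mittag--Leffler step is unnecessary. Since $Sh^\Sigma$ is proper over $\mathcal{O}_\mathfrak{p}$ and $\mathcal{V}$ is coherent, the theorem on formal functions gives $H^0(Sh^\Sigma_\infty,\mathcal{V})=H^0(Sh^\Sigma,\mathcal{V})^\wedge_\mathfrak{p}$, which equals $H^0(Sh^\Sigma,\mathcal{V})$ as $\mathcal{O}_\mathfrak{p}$ is already complete; so the surjection $H^0(Sh^\Sigma,\mathcal{V})\twoheadrightarrow H^0(Sh^\Sigma_m,\mathcal{V})$ from $H^1$-vanishing is already the map in the statement.
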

    The proof is the same as above by noting that
    $\omega$ is an ample line bundle.

    Recall that we have a morphism
    $\widehat{\mathrm{HT}}^{\widetilde{P}_L}
    \colon
    \mathrm{Ig}/\widetilde{P}_L^\mathrm{der}(\mathcal{O}_\mathfrak{p})
    \xrightarrow{\widetilde{\mathrm{HT}}^{\widetilde{P}_L}}
    \widetilde{\mathrm{Ig}}^{\widetilde{P}_L}
    \rightarrow
    Sh^{\Sigma,\mu}$.
    We deduce the following important corollary
    \begin{corollary}
    	The module
    	\[
    	\bigg(
    	\bigoplus_{\lambda\in X^\ast_\mathrm{dm}(\widetilde{T}_{\widetilde{P}})}
    	(\widehat{\mathrm{HT}}^{\widetilde{P}_L})^\ast
    	H^0(Sh^{\Sigma,\mu},
    	\mathcal{V}^\Sigma[\lambda^{-1}])[\frac{1}{p}]
    	\bigg)
    	\bigcap
    	\mathbb{V}_{\mathrm{cusp},\infty}^{\widetilde{P}_L^\mathrm{der}}
    	\]
    	is dense in
    	$\mathbb{V}_{\mathrm{cusp},\infty}^{\widetilde{P}_L^\mathrm{der}}$.
    \end{corollary}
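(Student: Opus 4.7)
The plan is to prove the corollary by showing that for every $f\in\mathbb{V}_{\mathrm{cusp},\infty}^{\widetilde{P}_L^\mathrm{der}}$ and every integer $m\geq 1$, one can approximate $f$ modulo $\mathfrak{p}^m$ by an $E_\mathfrak{p}$-linear combination of images of classical cuspidal forms under $(\widehat{\mathrm{HT}}^{\widetilde{P}_L})^{\ast}$ that still lies in the integral lattice $\mathbb{V}_{\mathrm{cusp},\infty}^{\widetilde{P}_L^\mathrm{der}}$. Because $\mathbb{V}_{\mathrm{cusp},\infty}^{\widetilde{P}_L^\mathrm{der}}=\varprojlim_m\mathbb{V}_{\mathrm{cusp},m}^{\widetilde{P}_L^\mathrm{der}}$ is $\mathfrak{p}$-adically complete, such an approximation property is exactly the density statement.

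First I would reduce $f$ modulo $\mathfrak{p}^m$ and decompose it using the action of the finite torus $\widetilde{T}_{\widetilde{P}}(\mathcal{O}_\mathfrak{p}/\mathfrak{p}^m)$ on $\mathbb{V}_{\mathrm{cusp},m}^{\widetilde{P}_L^\mathrm{der}}$ into its finitely many isotypic components, indexed by characters $\chi$ of this finite group. Each such $\chi$ lifts to a (non-unique) continuous character of $\widetilde{T}_{\widetilde{P}}(\mathcal{O}_\mathfrak{p})$ of finite order, hence, by the algebraicity of such characters after composing with a suitable power map, can be represented modulo $\mathfrak{p}^m$ by an algebraic character $\lambda\in X^\ast(\widetilde{T}_{\widetilde{P}})$. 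Up to adding a large integer multiple of $N_G\lambda_G$, we may assume that each chosen $\lambda$ lies in $X^\ast_\mathrm{dm}(\widetilde{T}_{\widetilde{P}})$ and in fact in $X^\ast_\mathrm{dd}(\widetilde{T}_{\widetilde{P}})$; since the Hasse invariant $H\in H^0(Sh,\omega^{\otimes N_G})$ is invertible on the $\mu$-ordinary locus, this twist does not alter the $p$-adic interpretation on $\mathrm{Ig}$ (we can freely divide by $H^t$ at the end).

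Next I would apply Proposition \ref{surjectivity of mod p, without mu-ord}: for $t\geq N(\lambda)$ sufficiently large, the reduction map
\[
H^0(Sh^\Sigma_\infty,\mathcal{V}_{\lambda+tN_G\lambda_G}(-C^\Sigma))\longrightarrow H^0(Sh^\Sigma_m,\mathcal{V}_{\lambda+tN_G\lambda_G}(-C^\Sigma))
\]
is surjective. Combining this with Proposition \ref{surjectivity of mod p, mu-ordinary} and the isomorphisms of the previous section that relate the restrictions of these classical automorphic sheaves to the Igusa tower with the sheaves $\mathcal{R}^{\mathrm{Ig}}_{\infty}[(\lambda+tN_G\lambda_G)^{-1}](-C^\Sigma)$ via $\widehat{\mathrm{HT}}^{\widetilde{P}_L}$, each isotypic piece of $f\bmod\mathfrak{p}^m$ can be lifted to (the mod $\mathfrak{p}^m$ reduction of) the pullback of a classical cuspidal form in $H^0(Sh^{\Sigma,\mu},\mathcal{V}^\Sigma[(\lambda+tN_G\lambda_G)^{-1}])$, which upon inverting $p$ and dividing by $H^t$ produces an element of the direct sum appearing in the statement. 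Summing over the finitely many isotypic pieces yields the desired approximant.

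The main obstacle I foresee is bookkeeping the interplay between characters of the finite group $\widetilde{T}_{\widetilde{P}}(\mathcal{O}_\mathfrak{p}/\mathfrak{p}^m)$ and algebraic characters in $X^\ast(\widetilde{T}_{\widetilde{P}})$: one must choose algebraic lifts so that the resulting classical forms, after the $tN_G\lambda_G$ twist and the inverse multiplication by $H^t$, reassemble to give precisely $f\bmod\mathfrak{p}^m$ rather than merely a form with the same isotypic decomposition. This is handled by exploiting that $H$ acts invertibly on $\mu$-ordinary Igusa sections and is equivariant for the torus action (its weight is $N_G\lambda_G$), so the pullback and division by $H^t$ commute with the $\widetilde{T}_{\widetilde{P}}$-isotypic decomposition and give a bijection on the relevant weight spaces. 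Cuspidality is preserved throughout because both the Hodge–Tate pullback and multiplication by $H^t$ are compatible with vanishing along $C^\Sigma$.
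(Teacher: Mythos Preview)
Your overall strategy—reduce modulo $\mathfrak{p}^m$, decompose by weight, lift each piece using the surjectivity results—is the natural one and is surely what the paper has in mind (the paper gives no proof at all, merely stating the corollary as a consequence of Propositions~\ref{surjectivity of mod p, mu-ordinary} and~\ref{surjectivity of mod p, without mu-ord}). However, the isotypic decomposition in your first step does not work as written, and this is a genuine gap rather than mere bookkeeping.

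Concretely: an element $\bar f\in\mathbb{V}_{\mathrm{cusp},m}^{\widetilde P_L^{\mathrm{der}}}$ comes from some finite Igusa level $n$, so carries an action of the finite group $\widetilde T_{\widetilde P}(\mathbb{Z}_p/p^n)$. But this group has order divisible by $p$, so its group algebra over $\mathbb{W}_m$ is \emph{not} semisimple and there is no isotypic decomposition. Even the weaker claim you make—that every character $\chi$ of this finite group with values in $(\mathbb{W}_m)^\times$ agrees modulo $\mathfrak p^m$ with some algebraic $\lambda\in X^\ast(\widetilde T_{\widetilde P})$—is false: already for $\widetilde T_{\widetilde P}=\mathbb{G}_m$ and $m=2$, characters of $(\mathbb{Z}/p^n)^\times$ landing in $1+p\overline{\mathbb{F}}_p\subset\mathbb{W}_2^\times$ are parametrized by $\overline{\mathbb{F}}_p$, while those of the form $t\mapsto t^a$ only hit $1+p\mathbb{F}_p$. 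In particular your sentence ``by the algebraicity of such characters after composing with a suitable power map'' does not hold. The obstacle you flag in your last paragraph concerns the Hasse-invariant twist, which is a separate (and easier) matter; it does not touch this issue.

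The repair must use the $[\tfrac1p]$ in the statement in an essential way, not merely for bookkeeping. One first lifts $\bar f$ at finite Igusa level $n$ to characteristic zero (this is where the affineness underlying Proposition~\ref{surjectivity of mod p, mu-ordinary} enters). Over $\mathbb{W}[\tfrac1p]$ the restrictions $\lambda|_{\widetilde T_{\widetilde P}(\mathbb{Z}_p/p^n)}$ for $\lambda\in X^\ast(\widetilde T_{\widetilde P})$ do span all $\mathbb{W}[\tfrac1p]$-valued functions on that finite set: the relevant Vandermonde-type determinant is a nonzero power of $p$. This produces an expression of the lift as a finite $\mathbb{W}[\tfrac1p]$-linear combination of weight-$\lambda$ elements, which via Proposition~\ref{fundamental isom} and the Hodge--Tate map lies in the submodule appearing in the corollary; its reduction mod $\mathfrak p^m$ recovers $\bar f$.
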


    \subsection{Finite dimensionality}
    In this subsection,
    we will show that the space of $\mu$-ordinary
    $p$-adic modular forms is bounded in certain sense.
    The main strategy is to use
    Proposition \ref{Hasse invariant is reduced}
    (\textit{cf.}\cite[Appendice A.3]{Pilloni2012}).

	First note that for a $\mu$-ordinary modular form
	$f$,
	the multiplication by Hasse invariant
	$H(f)$ is again $\mu$-ordinary:
	\begin{lemma}\label{Hasse invariant commutes with Hecke operators}
		We fix a character
		$\lambda\in
		X^\ast_\mathrm{dm}(\widetilde{T}_{\widetilde{P}})$.
		For any cuspidal
		automorphic form
		$f$ in the space
		$H^0(Sh^\Sigma_1,
		\mathcal{V}_\lambda^\Sigma(-C^\Sigma))$
		and any Hecke operator
		$\mathbb{T}_\epsilon$
		with
		$\epsilon\in
		\widetilde{T}_{\widetilde{P}}^+(E_\mathfrak{p})$,
		we have
		\[
		H(\mathbb{T}_\epsilon f)
		=
		\mathbb{T}_\epsilon(Hf).
		\]
	\end{lemma}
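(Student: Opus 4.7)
The plan is to unwind $\mathbb{T}_\epsilon$ as in Definition~\ref{Definition of Hecke operator at p} and push both $H\cdot\mathbb{T}_\epsilon f$ and $\mathbb{T}_\epsilon(Hf)$ onto the correspondence
\[
\mathrm{Ig}_{\infty,n}^\mu/\widetilde{P}_L \xleftarrow{\mathrm{pr}_1} \mathrm{Ig}_{\infty,n}^\mu/\widetilde{P}_L(\epsilon) \xrightarrow{\mathrm{pr}_2} \mathrm{Ig}_{\infty,n}^\mu/\widetilde{P}_L,
\]
working throughout over the special fibre. Writing $\mathbb{T}_\epsilon = \mathrm{Tr}_\epsilon \circ (\pi^{G,\epsilon})^\ast \circ \mathrm{pr}_2^\ast$, the projection formula for the finite flat map $\mathrm{pr}_1$ gives
\[
H\cdot \mathbb{T}_\epsilon f \;=\; \mathrm{Tr}_\epsilon\bigl(\mathrm{pr}_1^\ast H \cdot (\pi^{G,\epsilon})^\ast \mathrm{pr}_2^\ast f\bigr).
\]
Since $(\pi^{G,\epsilon})^\ast$ is defined by the formula $F\mapsto (\mathbb{D}(f)\mapsto F(\mathbb{D}(\widetilde f)))$ (cf.\ the construction preceding Proposition~\ref{local-global commutativity}), it is simply a change of trivialization and therefore multiplicative with respect to the tensor products of automorphic sheaves. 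Hence
\[
\mathbb{T}_\epsilon(Hf) \;=\; \mathrm{Tr}_\epsilon\bigl((\pi^{G,\epsilon})^\ast\mathrm{pr}_2^\ast H \cdot (\pi^{G,\epsilon})^\ast \mathrm{pr}_2^\ast f\bigr).
\]

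The lemma thus reduces to the identity of sections
\[
(\pi^{G,\epsilon})^\ast \mathrm{pr}_2^\ast H \;=\; \mathrm{pr}_1^\ast H
\]
on the $\mu$-ordinary correspondence. To establish it, I would invoke the construction of $H$ recalled after Proposition~\ref{Hasse invariant is reduced}: the Hasse invariant is pulled back from the canonical section on the zip stack $[\mathbb{E}\backslash G]$ along the zip morphism $\zeta:Sh_1\to[\mathbb{E}\backslash G]$, which remembers only the $G$-zip structure attached to $\mathbb{D}(\mathcal{A}[p])$. By the definition of $\mathrm{Ig}_{\infty,n}^\mu/\widetilde{P}_L(\epsilon)$, the universal $p$-isogeny $\pi:\mathcal{A}^G\to\widetilde{\mathcal{A}}^G$ preserves the Hodge tensors $\mathfrak{t}$, and the induced map on Dieudonné crystals satisfies $\mathbb{D}(\widetilde{\psi}_\infty^G)^{-1}\circ\mathbb{D}(\pi)\circ\mathbb{D}(\psi_\infty^G)=\epsilon$; in particular, after the Igusa trivializations, it intertwines the Frobenius-plus-filtration data defining the two $G$-zip structures. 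Consequently $\zeta\circ\mathrm{pr}_1$ and $\zeta\circ\mathrm{pr}_2$ correspond to the same zip point on the correspondence once one identifies the two trivializations via $(\pi^{G,\epsilon})^\ast$, and the identity follows.

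The principal obstacle is the careful verification of this last functoriality, namely that the $p$-isogeny $\pi$, which is \emph{not} an isomorphism of $p$-divisible groups, nevertheless induces an \emph{isomorphism} of the $G$-zips modulo $p$ in the sense that matters for the pullback of Hasse. Here the $\mu$-ordinary hypothesis enters essentially: the slope filtration of $\mathbb{D}(\mathcal{A}^G[p^\infty])$ lifts canonically and $\pi$ is an isomorphism on each slope graded piece after inverting $p$, with the discrepancy captured precisely by the element $\epsilon\in\widetilde{T}_{\widetilde P}^+(E_\mathfrak p)$ acting through the slope cocharacter. Checking that this discrepancy acts trivially on the image of $\zeta$ (equivalently, that the power of $p$ by which $\epsilon$ dilates the top Hodge line cancels modulo $p$ against the normalization $1/m_\epsilon$ built into $\mathrm{Tr}_\epsilon$) is the crux of the verification; it follows from the combinatorial identification between $m_\epsilon$ and the $\lambda_G$-weight of $\epsilon$ implicit in Proposition~\ref{Integrality of Hecke operator}.
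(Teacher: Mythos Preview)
Your reduction in the first two paragraphs is correct: by the projection formula for $\mathrm{pr}_1$ and the multiplicativity of $(\pi^{G,\epsilon})^\ast$, the lemma does boil down to the identity $(\pi^{G,\epsilon})^\ast\mathrm{pr}_2^\ast H = \mathrm{pr}_1^\ast H$ on the correspondence over $Sh_1^\mu$. The paper makes the same reduction, phrased pointwise rather than through the projection formula.

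Where your argument breaks is the final paragraph. The proposed mechanism---a power of $p$ coming from $\epsilon$ that ``cancels modulo $p$ against the normalization $1/m_\epsilon$''---is spurious. The constant $m_\epsilon$ is the degree of $\mathrm{pr}_1$ and serves only to renormalize the trace; it has nothing to do with the comparison of Hasse invariants, which is a characteristic-$p$ identity that must hold \emph{before} one ever applies $\mathrm{Tr}_\epsilon$. There is no power of $p$ to cancel, and there is no ``combinatorial identification between $m_\epsilon$ and the $\lambda_G$-weight of $\epsilon$'' relevant here. If your identity $(\pi^{G,\epsilon})^\ast\mathrm{pr}_2^\ast H = \mathrm{pr}_1^\ast H$ failed by a nontrivial scalar, no later normalization of the trace could repair it, since $H$ is a fixed section and not part of what is being averaged.

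The paper's verification of the key identity is far more elementary than an appeal to the zip stack. At a $\mu$-ordinary geometric point $x$ one chooses a basis $\mathcal{B}=(e_1,\dots,e_n,e_1^\ast,\dots,e_n^\ast)$ of $\mathbb{D}_x$ in which $\mathrm{Fr}^w=\mathrm{diag}(1_n,p1_n)$; then $H(x)=\det(\overline{\mathrm{Fr}^w})$ computed on $\mathbb{D}_x/\mathrm{Fil}^1\mathbb{D}_x$. The Hecke operator replaces $\mathbb{D}_x$ by the subcrystal $\mathbb{D}_x'=\epsilon\mathbb{D}_x$, whose Hodge filtration is the one induced from $\mathbb{D}_x$ and whose Frobenius is the restriction. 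Since Frobenius is $\sigma$-linear and $\epsilon\in\widetilde{T}_{\widetilde P}^+$ respects the filtration, one has $\det(\overline{\mathrm{Fr}^w}')=\det(\overline{\mathrm{Fr}^w})$ on the quotient, and the identity follows directly. No zip functoriality and no bookkeeping with $m_\epsilon$ is needed; the whole point is that the inclusion $\epsilon\mathbb{D}_x\hookrightarrow\mathbb{D}_x$ is Frobenius- and filtration-compatible.
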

    \begin{proof}
    	Consider a $\mu$-ordinary point
    	$x\in
    	Sh^\mu$,
    	$\mathcal{A}_x$
    	the abelian scheme over $x$
    	and
    	$(\mathbb{D}_x,\mathrm{Fr})$
    	the $F$-crystal with $G$-structure associated to
    	the $p$-divisible group
    	$\mathcal{A}_x[p^\infty]$ of
    	$\mathcal{A}_x$.
    	One can choose a basis
    	\[
    	\mathcal{B}=\{e_1,\cdots,e_n,e_1^\ast,\cdots,e_n^\ast\}
    	\]
    	of $\mathbb{D}_x$ under which
    	$\mathrm{Fr}^w$ becomes
    	$\mathrm{diag}(1_n,p1_n)$.
    	Here
    	$2n=\mathrm{dim}(V)$
    	and
    	$\mathbb{F}_{p^w}$ is the splitting field of
    	the torus 
    	$(T_{p_{\overline{\mu}}})_{\mathbb{F}_p}$.
    	Then
    	$Hf(x)$
    	is the multiplication of
    	$f(x)$ by the determinant
    	$\mathrm{det}(\overline{\mathrm{Fr}^w})$
    	where
    	$\overline{\mathrm{Fr}^w}$
    	is the induced map on the quotient
    	$\mathbb{D}_x/\mathrm{Fil}^1\mathbb{D}_x$.
    	Applying the Hecke operator
    	$\mathbb{T}_\epsilon$
    	to the form
    	$f$
    	amounts,
    	in terms of Dieudonn\'{e} crystals,
    	to consider subcrystals
    	$\mathbb{D}_x'$ of
    	$\mathbb{D}_x$
    	such that under the basis $\mathcal{B}$
    	one has
    	$\epsilon\mathbb{D}_x=\mathbb{D}_x'$.
    	Since the Hodge filtration on
    	$\mathbb{D}_x'$ is induced from
    	$\mathbb{D}_x$,
    	we see that
    	the determinant
    	$\mathrm{det}(\overline{\mathrm{Fr}^w}')$
    	of the Frobenius on the quotient
    	$\mathbb{D}_x'/\mathrm{Fil}(\mathbb{D}_x')$
    	is the same as
    	$\mathrm{det}(\overline{\mathrm{Fr}^w})$.
    	Thus we conclude that
    	$H(\mathbb{T}_\epsilon f)
    	=
    	\mathbb{T}_\epsilon(Hf)$
    	for any $f$.
    \end{proof}

    Next we turn to canonical subgroups of
    the $p$-divisible group
    $\mathcal{A}[p^\infty]$
    of the abelian scheme
    $\mathcal{A}$ in a strict neighbourhood of the
    $\mu$-ordinary locus
    $Sh^\mu$.
    We first recall some definitions from
    \cite{Bijakowski2016}
    (see also\cite{Fargues2010}).
    Fix a finite extension $L$ of
    $\mathbb{Q}_p$
    and also a valuation
    $v$ on $L$
    such that
    $v(p)=1$.
    Let $H$ be a finite flat group scheme of
    $p$-power order over
    the ring of integers
    $\mathcal{O}_L$ of $L$.
    Write
    $\omega_H$
    for the co-normal module along the unit section of
    $H$.
    The degree of $H$ is defined to be 
    \[
    \mathrm{deg}(H)
    :=
    v(\mathrm{Fitt}(\omega_H)),
    \]
    where
    $\mathrm{Fitt}$
    is the Fitting ideal.
    For two finite flat subgroups
    $H_1,H_2$ of $H$,
    one has
    (\textit{cf.}\cite[Corollary 1.16]{Bijakowski2016}):
    \[
    \mathrm{deg}(H_1)+\mathrm{deg}(H_2)
    \leq
    \mathrm{deg}(H_1\cap H_2)+
    \mathrm{deg}(H_1+H_2).
    \]
    Write
    $\mathrm{ht}(H)$
    for the height of $H$.
    Then the slope of $H$ is related to
    its height and degree by the formula:
    \[
    \lambda(H)
    =
    \frac{\mathrm{deg}(H)}{\mathrm{ht}(H)}.
    \]
    Recall that the slopes of the abelian scheme
    $\mathcal{A}_{x_0}$
    over a point
    $x_0\in
    Sh^\mu$
    in the $\mu$-ordinary locus are
    $1\geq\lambda_1>\lambda_2\cdots>\lambda_r\geq0$.
    Write
    \[
    \delta
    =
    \frac{1}{4}
    \min_{0<i<r}
    \lambda_i-\lambda_{i+1}>0.
    \]
    Let
    $\mathcal{A}$
    be the abelian scheme
    over
    $Sh$.
    For a point $x\in
    Sh^\mu$
    in the $\mu$-ordinary locus,
    we have a slope filtration
    of the $p$-divisible group
    $\mathcal{D}_x$
    associated to the abelian scheme
    $\mathcal{A}_x$ over the point $x$:
    \[
    0
    \subset
    (\mathcal{D}_x)_1
    \subset
    (\mathcal{D}_x)_2
    \subset
    \cdots
    \subset
    (\mathcal{D}_x)_r
    \subset
    \mathcal{D}_x.
    \]
    Correspondingly we have a filtration for the
    $\mathfrak{p}$-torsion subgroups of 
    the above subgroups
    $0\subset
    (\mathcal{D}_x)_i[\mathfrak{p}]
    \subset
    \cdots
    \subset
    (\mathcal{D}_x)_r[\mathfrak{p}]
    \subset
    \mathcal{D}_x[\mathfrak{p}]$.
    We write
    $h_i$
    for the height of
    $(\mathcal{D}_x)_i[\mathfrak{p}]$
    ($i=1,\cdots,r$).
    Then
    one has
    \[
    \mathrm{deg}
    \big(
    (\mathcal{D}_x)_i[\mathfrak{p}]
    \big)
    =
    h_1\lambda_1+(h_2-h_1)\lambda_2+
    \cdots
    +(h_i-h_{i-1})\lambda_i
    =:
    d_i.
    \]

    Consider a maximal element
    $b\in
    B(G,X)\backslash\{\overline{\mu}\}$
    and a geometric point
    $x\in \mathcal{N}^b$
    in the Shimura scheme.
    Then the slopes of
    the abelian scheme
    $\mathcal{A}_x$
    are
    \begin{align*}
    &
    1\geq\lambda_1>\cdots>\lambda_{i-1}>
    (\lambda_i)>\lambda_i'>
    (\lambda_{i+1})
    >\lambda_{i+2}
    >
    \cdots
    \\
    >
    &
    \lambda_{r-i-1}
    >(\lambda_{r-i})>
    \lambda_{r-i}'
    >
    (\lambda_{r+1-i})
    >\lambda_{r+2-i}
    \cdots
    >
    \lambda_r
    \geq0
    \end{align*}
    where
    $(\lambda_i)$
    means that the slope
    $\lambda_i$ may not exist in the
    above sequence
    and
    $\lambda_i'=\frac{1}{2}(\lambda_i+\lambda_{i+1})$
    and
    similarly
    $\lambda_{r-i}'
    =\frac{1}{2}(\lambda_{r-i}+\lambda_{r+1-i})$.

    Now we have the following observation
    (\textit{cf.}
    \cite[Proposition 1.24]{Bijakowski2016}):
    \begin{lemma}\label{at most one subgroup}
    	Let
    	$b$ be 
    	$\overline{\mu}$
    	or a maximal element in
    	$
    	B(G,X)\backslash\{\overline{\mu}\}$
    	and consider a geometric point
    	$x\in \mathcal{N}^b$.
    	Then for any $i=1,\cdots,r$,
    	there is at most one subgroup
    	$H$ of
    	the finite flat group scheme
    	$\mathcal{D}_x[\mathfrak{p}]$
    	such that
    	$\mathrm{ht}(H)=h_i$
    	and
    	$\mathrm{deg}(H)
    	>
    	d_i-\delta$.
    \end{lemma}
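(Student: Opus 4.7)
My plan is to argue by contradiction using the submodularity of the degree function, together with the explicit knowledge of the slope structure of $\mathcal{D}_x$ provided by Proposition \ref{maximal element in B(G,mu)}. Suppose we have two distinct subgroups $H_1, H_2 \subset \mathcal{D}_x[\mathfrak{p}]$ with $\mathrm{ht}(H_1) = \mathrm{ht}(H_2) = h_i$ and $\deg(H_j) > d_i - \delta$ for $j=1,2$. Let $h^- = \mathrm{ht}(H_1 \cap H_2)$ and $h^+ = \mathrm{ht}(H_1 + H_2)$, so that $h^- + h^+ = 2 h_i$ and $k := h_i - h^- = h^+ - h_i \geq 1$. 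The submodularity recalled before the lemma gives
\[
\deg(H_1\cap H_2) + \deg(H_1+H_2) \geq \deg(H_1) + \deg(H_2) > 2(d_i - \delta).
\]

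The core estimate I would then establish is an upper bound for the degree of any subgroup of $\mathcal{D}_x[\mathfrak{p}]$ of a prescribed height, expressed purely in terms of the slope sequence of $\mathcal{D}_x$. Namely, for a $p$-divisible group over $\mathcal{O}_L$ with slope decomposition having slopes $\mu_1 > \mu_2 > \cdots > \mu_s$ and heights $m_1, m_2, \ldots$, the maximum degree of a subgroup of height $\eta$ of the $\mathfrak{p}$-torsion is achieved greedily by taking the highest slopes first. Applying this to the heights $h^-$ and $h^+$ and summing, the cross-terms cancel and the sum telescopes to $2 d_i$ minus a positive correction determined by the difference of consecutive slopes crossing the level $h_i$.

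Concretely, in the case $b = \overline{\mu}$, the slopes are $\lambda_1 > \cdots > \lambda_r$ with height jumps at $h_1 < \cdots < h_r$, and the greedy bounds give
\[
\deg(H_1\cap H_2) + \deg(H_1+H_2) \leq 2d_i - k(\lambda_i - \lambda_{i+1}) \leq 2d_i - 4\delta,
\]
by the definition of $\delta$, contradicting the lower bound $2(d_i - \delta)$. In the case where $b$ is maximal in $B(G,\mu)\setminus\{\overline{\mu}\}$, Proposition \ref{maximal element in B(G,mu)} together with the computation opening this subsection tells us that exactly one consecutive pair $(\lambda_j, \lambda_{j+1})$ is replaced by an intermediate slope $\lambda_j' = \tfrac12(\lambda_j + \lambda_{j+1})$ (with possibly some loss of multiplicity at $\lambda_j$ and $\lambda_{j+1}$); every pair of neighbouring slopes in $\mathcal{D}_x$ still differs by at least $\tfrac12\min_\ell(\lambda_\ell - \lambda_{\ell+1}) = 2\delta$. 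The greedy upper bound, summed over $h^-$ and $h^+$, still yields a loss of at least $2k\delta \geq 2\delta$, again contradicting $\deg(H_1\cap H_2) + \deg(H_1+H_2) > 2(d_i-\delta)$.

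The main obstacle will be the bookkeeping in the second case: one must check that for every possible value of $i$ (and hence where $h_i$ sits relative to the modified slope filtration of $\mathcal{D}_x$), the greedy upper bound separates from $2d_i$ by at least $2\delta$. This requires carefully matching the $\mu$-ordinary heights $h_i$ against the new slope sequence of $b$, in particular when $h_i$ coincides with the top of the intermediate block at slope $\lambda_j'$, where the computation is tightest. Once the case of simple indecomposable root systems is settled using Proposition \ref{maximal element in B(G,mu)} (the cases $E_8, F_4, G_2$ being vacuous, and all other cases following a uniform pattern since only one slope is split symmetrically), the general result follows by decomposing $G^{\mathrm{ad}}$ into simple factors, since the slope filtration of $\mathcal{D}_x$ is a product of the filtrations associated to each factor.
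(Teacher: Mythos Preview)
Your approach via submodularity of the degree together with the concave Harder--Narasimhan upper bound is exactly the route the paper takes; the paper simply unwinds the inequality $2d_i - Q(h^-) - Q(h^+) > 2\delta$ by an explicit case split on which linear pieces of the polygon contain $h^-$ and $h^+$, whereas you package it as a single convexity estimate.

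Two small points deserve attention. First, in the non-$\mu$-ordinary case the polarization forces the slope modification to occur at \emph{two} dual indices $i_0$ and $r-i_0$, not one; this does not affect your argument but should be said. Second, and more substantively, your sentence ``neighbouring slopes of $\mathcal{D}_x$ differ by at least $2\delta$, hence the loss is at least $2k\delta$'' is not literally what produces the bound when $i=i_0$: there $h_{i_0}$ sits in the \emph{interior} of a linear piece of the new polygon $Q$, so the concavity defect of $Q$ at $h_{i_0}$ is zero for small $k$. The cleanest fix---which is implicit in the paper's case computation---is to note that $Q\le P$ pointwise (the Newton polygon of $b$ lies under the $\mu$-ordinary polygon $P$), so that
\[
2d_i - Q(h^-) - Q(h^+) \;\ge\; 2P(h_i) - P(h^-) - P(h^+) \;\ge\; k(\lambda_i-\lambda_{i+1}) \;\ge\; 4k\delta,
\]
using only the concavity of $P$, for which $h_i$ is always a genuine breakpoint. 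This closes the case uniformly in $i$ and makes the final reduction to simple factors of $G^{\mathrm{ad}}$ unnecessary.
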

    \begin{proof}
    	Suppose that there are two such subgroups
    	$H_1,H_2$ of
    	$H:=(\mathcal{D}_x)[\mathfrak{p}]$.
    	Write
    	$h$ for the height of
    	$H_1\cap H_2$.
    	Then the height of
    	$H_1+H_2$ is
    	$2h_i-h$.

    	First we assume that
    	$b=\overline{\mu}$.
    	Then the degree of
    	$H_1\cap H_2$
    	is bounded as follows:
    	if
    	$h_k\le h\le h_{k+1}$
    	(set $h_0=0$),
    	then
    	$\mathrm{deg}(H_1\cap H_2)
    	\le
    	h_1\lambda_1+
    	(h_2-h_1)\lambda_2
    	+
    	\cdots
    	+
    	(h_k-h_{k-1})\lambda_k
    	+
    	(h-h_k)\lambda_{k+1}$.
    	On the other hand,
    	if
    	$h_l\le 2h_i-h\le h_{l+1}$,
    	then similarly
    	$\mathrm{deg}(H_1+H_2)
    	\le
    	h_1\lambda_1+(h_2-h_1)\lambda_2
    	+
    	\cdots
    	+
    	(h_l-h_{l-1})\lambda_l
    	+
    	(2h_i-h-h_l)\lambda_{l+1}$.
    	Suppose that
    	$h<h_i$,
    	then we have
    	$h_{k+1}\le h_i\le h_l$.
    	Write
    	\begin{align*}
    	\mathrm{deg}_{i;k,l}
    	=
    	&
    	h_1\lambda_1+
    	(h_2-h_1)\lambda_2
    	+
    	\cdots
    	+
    	(h_k-h_{k-1})\lambda_k
    	+
    	(h-h_k)\lambda_{k+1}
    	\\
    	&
    	+
    	h_1\lambda_1+(h_2-h_1)\lambda_2
    	+
    	\cdots
    	+
    	(h_l-h_{l-1})\lambda_l
    	+
    	(2h_i-h-h_l)\lambda_{l+1}.
    	\end{align*}
    	From the fact
    	$\mathrm{deg}(H_1)+\mathrm{deg}(H_2)
    	\leq
    	\mathrm{deg}(H_1\cap H_2)+
    	\mathrm{deg}(H_1+H_2)$,
    	and the assumption
    	$\mathrm{deg}(H_1),\mathrm{deg}(H_2)
    	>d_i-\delta$,
    	one gets
    	by a simple computation
    	the following inequality:
    	\begin{align*}
    	&
    	(h_{k+1}-h_k-(h-h_k))
    	(\lambda_{k+1}-\lambda_{k+2})
    	+
    	\cdots
    	+
    	(h_i-h_k-(h-h_k))
    	(\lambda_i-\lambda_{i+1})
    	\\
    	+
    	&
    	(h_i-h_k-(h-h_k+h_{i+1}-h_i))
    	(\lambda_{i+1}-\lambda_{i+2})
    	+
    	\cdots
    	+
    	(h_i-h_k-(h-h_k+h_l-h_i))(\lambda_l-\lambda_{l+1})
    	\\
    	+
    	&
    	(h_i-h)\lambda_{l+1}
    	<
    	2\delta
    	\end{align*}
    	which is impossible by the definition of
    	$\delta$.
    	Thus we get
    	$h=h_i$
    	and therefore
    	$H_1=H_2$.
    	
    	Next consider
    	$b\ne\overline{\mu}$.
    	Suppose that
    	the slopes of
    	$\mathcal{A}_x$
    	are
    	\[
    	\lambda_1>\cdots>\lambda_{i_0}>\lambda_{i_0}'>\lambda_{i_0+1}
    	\cdots
    	>
    	\lambda_{r-i_0-1}>\lambda_{r-i_0}
    	>\lambda_{r-i_0}'
    	>\lambda_{r-i_0+1}
    	>
    	\cdots
    	>
    	\lambda_r,
    	\]
    	and the corresponding heights are
    	\begin{align*}
    	&
    	h_1<\cdots<h_{i_0-1}<h_{i_0}-\Delta h<h_{i_0}+\Delta h<h_{i_0+1}
    	<
    	\cdots
    	\\
    	<
    	&
    	h_{r-i_0-1}<
    	h_{r-i_0}-\Delta h<
    	h_{r-i_0}+\Delta h<
    	h_{r-i_0+1}<
    	\cdots<
    	h_r.
    	\end{align*}
    	We then put
    	\begin{align*}
    	s_1
    	&
    	=
    	d_1,
    	\cdots,
    	s_{i_0-1}
    	=
    	d_{i_0-1},
    	\\
    	s_{i_0}
    	&
    	=
    	d_{i_0}
    	-\Delta h\lambda_{i_0},
    	s_{i_0}'
    	=
    	d_{i_0}
    	+\Delta h\lambda_{i_0+1},
    	s_{i_0+1}
    	=
    	d_{i_0+1}
    	-\Delta h(\lambda_{i_0}-\lambda_{i_0+1}),
    	\cdots,
    	\\
    	s_{r-i_0-1}
    	&
    	=
    	d_{r-i_0-1}
    	-\Delta h(\lambda_{i_0}-\lambda_{i_0+1}),
    	s_{r-i_0}
    	=
    	d_{r-i_0}
    	-\Delta h(\lambda_{i_0}-\lambda_{i_0+1})
    	-\Delta h\lambda_{r-i_0},
    	\\
    	s_{r-i_0}'
    	&
    	=
    	d_{r-i_0+1}-\Delta h(\lambda_{i_0}-\lambda_{i_0+1})
    	+
    	\Delta h\lambda_{r-i_0+1},
    	\\
    	s_{r-i_0+1}
    	&
    	=
    	d_{r-i_0+1}-\Delta h(\lambda_{i_0}-\lambda_{i_0+1})
    	-\Delta h(\lambda_{r-i_0}-\lambda_{r-i_0+1}),
    	\cdots
    	\\
    	s_r
    	&
    	=
    	d_r
    	-\Delta h(\lambda_{i_0}-\lambda_{i_0+1})
    	-\Delta h(\lambda_{r-i_0}-\lambda_{r-i_0+1}).
    	\end{align*}
    	These quantities are just the heights of
    	the subgroups of
    	$\mathcal{D}_x[\mathfrak{p}]$
    	of the corresponding height
    	and slopes as above.
    	Now the same reasoning as above shows that there is at most one subgroup
    	$H$ of
    	$\mathcal{D}_x[\mathfrak{p}]$
    	such that
    	$\mathrm{ht}(H)=h_i$,
    	resp.,
    	$=h_{i_0}'$,
    	$=h_{r-i_0}'$
    	and
    	$\mathrm{deg}(H)>s_i-\delta$,
    	resp.,
    	$>s_{i_0}'-\delta$,
    	$>s_{r-i_0}'-\delta$.
    	If
    	$i<i_0$,
    	then we are done by the definition of
    	$s_i$.
    	If
    	$i=i_0$,
    	then
    	(assume again that
    	$h_k\le h\le h_{k+1}$
    	and
    	$h_l\le2h_{i_0}-h\le h_{l+1}$)
    	\begin{align*}
    	&
    	2(d_{i_0}-\delta)
    	-2\Delta h\lambda_{i_0}
    	=
    	2(s_{i_0}-\delta)
    	<
    	\mathrm{deg}(H_1)+\mathrm{deg}(H_2)
    	\\
    	\le
    	&
    	\mathrm{deg}(H_1\cap H_2)
    	+
    	\mathrm{deg}(H_1+H_2)
    	\\
    	&
    	\le
    	\mathrm{deg}_{i_0;k,l}
    	-2\Delta h\lambda_{i_0}.
    	\end{align*}
    	The last inequality is true regardless of the relation between
    	$h$ and $\Delta h$.
    	Thus from the first part,
    	we know that this is impossible unless
    	$h=h_i$.    	
    	Similar reasoning applies to the remaining cases.
    \end{proof}

    Using the arguments from
    \cite[Corollary 1.26]{Bijakowski2016},
    we get the existence and uniqueness of canonical subgroups
    $0\subset
    H_1
    \subset
    H_2
    \subset
    \cdots
    \subset
    H_r    
    \subset
    \mathcal{D}_x[\mathfrak{p}]$
    with
    $x\in
    Sh$
    such that its reduction
    $\overline{x}\in
    \mathcal{N}^b$
    with $b\in B(G,X)$
    as in the above lemma,
    the maximal element or next to the maximal element.
    Here we require that
    each $H_i$ is of height $\mathrm{ht}(H_i)=h_i$ and of degree
    $\mathrm{deg}(H_i)>d_i-\delta$.
    Now one can define Hecke operators
    $\mathbb{T}_{H_i}$
    for each subgroup $H_i$
    as follows:
    we write
    $Sh_1^\diamond
    :=
    \cup_b\mathcal{N}^b$
    with $b$ running through the maximal or next-to-the-maximal
    elements in
    $B(G,X)$.
    Then set
    $
    Sh^\diamond
    \subset
    Sh
    $,
    consisting of those
    $x$
    whose reduction
    $\overline{x}\in Sh_1^\diamond$.
    For each
    $H_i$,
    there is
    a $p$-isogeny of kernel $H_i$:
    \[
    \pi_i
    \colon
    \mathcal{A}
    \rightarrow
    \mathcal{A}_i:=\mathcal{A}/H_i.
    \]
    We then define a correspondence
    $Sh(V)_{H_i}$
    over
    $Sh(V)$
    as follows:
    for any $\mathbb{Z}_p$-algebra $A$,
    $Sh(V)_{H_i}(A)$
    is the set of equivalence classes of triples
    $(\mathcal{A},H,\psi_{K_V})$
    where
    $\mathcal{A}/A$ is a
    principally polarized abelian scheme over $A$,
    $\psi_{K_V}$
    a level structure on $\mathcal{A}$ of level $K_V$
    and
    $H\subset
    \mathcal{A}[\mathfrak{p}]$
    a finite flat subgroup of height
    $\mathrm{ht}(H)=h_i$ and of degree
    $\mathrm{deg}(H)>d_i-\delta$.
    Then this correspondence is representable (again denoted) by
    $Sh(V)_{H_i}$.
    We write
    $Sh_{H_i}$ for
    the pull-back of this correspondence along
    the embedding
    $Sh
    \hookrightarrow
    Sh(V)$.
    We have two natural projections
    $\mathrm{pr}_1,\mathrm{pr}_2
    \colon
    Sh_{H_i}
    \rightarrow
    Sh$,
    $\mathrm{pr}_1((\mathcal{A},\psi_K,H))
    =(\mathcal{A},\psi_K)$
    and
    $\mathrm{pr}_2((\mathcal{A},\psi_K,H))=
    (\mathcal{A}/H,\psi_K)$.
    Write
    $\epsilon_{H_i}$
    for
    the element
    in
    $\widetilde{T}_{\widetilde{P}}^+(E_\mathfrak{p})$
    which induces the filtration
    $0\subset H_i\subset\mathcal{A}[\mathfrak{p}]$.
    Now for any $\mathcal{O}$-algebra
    $A$,
    consider the endomorphism
    $\mathbb{T}_{H_i}$
    of
    $H^0(Sh^\diamond,\mathcal{V}_\lambda)$
    given by
    the composition
    of the following morphisms
    \[
    H^0(Sh^\diamond_{/A},\mathcal{V}_\lambda)
    \rightarrow
    H^0(Sh_{H_i/A},
    \mathrm{pr}_2^\ast\mathcal{V}_\lambda)
    \xrightarrow{\pi^\ast}
    H^0(Sh_{H_i/A},\mathrm{pr}_1^\ast
    \mathcal{V}_\lambda)
    \xrightarrow{\frac{1}{m_{\epsilon_{H_i}}}
    	\mathrm{Tr}_{\mathrm{pr}_1}}
    H^0(Sh^\diamond_{/A},
    \mathcal{V}_\lambda).
    \]

    On the abelian scheme
    $\mathcal{A}$ over
    $Sh^\diamond$,
    there is a filtration of canonical subgroups
    as described above
    $0\subset
    H_1\subset
    H_2\subset
    \cdots
    \subset
    H_r
    \subset
    \mathcal{A}[\mathfrak{p}]$.
    For each $x\in
    Sh^\diamond$,
    we write
    $(H_i)_x$
    for the specialization of $H_i$ to $x$.
    Now
    recall the argument in
    the proof of
    Proposition \ref{Integrality of Hecke operator},
    for $x$ in the $\mu$-ordinary locus,
    $\pi^\ast$
    corresponds to the diagonal matrix
    $\epsilon_{H_i}$
    while
    for $x$ not in the $\mu$-ordinary locus,
    $\pi^\ast$
    corresponds to the diagonal matrix
    $\epsilon_{H_i}'$
    which is a product of
    $\epsilon_{H_i}$ and another diagonal matrix
    $M_i\in
    \mathrm{GSp}(V,\psi)$.
    Here
    on the diagonal of $M_i$,
    the $(h_i-1)$-th entry is given by
    $p^{-1+\mathrm{deg}(H_i)_x}$
    and the
    $(\mathrm{dim}V-h_i+1)$-th entry is given by
    $p^{1-\mathrm{deg}(H_i)_x}$,
    the remaining entries are all $1$.
    We put
    \[
    C=
    \max_i
    m_{\epsilon_{H_i}'}.
    \]
    Note that this constant
    $C$
    depends only on
    the Shimura datum
    $(G,X)$
    and the parabolic subgroup
    $\widetilde{P}_L$.

    The next lemma shows that for large weights
    $\lambda\in
    X^\ast(\widetilde{T}_{\widetilde{P}})$,
    the Hecke operators
    $\mathbb{T}_{H_i}$ act as zero
    on the non-$\mu$-ordinary locus
    $Sh_1^{\Sigma,n-\mu}
    =
    Sh_1^\Sigma
    \backslash
    Sh_1^\mu$
    (\textit{cf.}
    \cite[Proposition A.5]{Pilloni2012}):
    \begin{lemma}
    	For any
    	$i=1,2,\cdots,r$
    	and
    	any modular form
    	$f\in
    	H^0(Sh_1^\Sigma,\mathcal{V}_\lambda)$
    	of weight
    	$\lambda\in
    	X^\ast(\widetilde{T}_{\widetilde{P}})$
    	with
    	$\mathrm{val}_p(\lambda(\epsilon_{H_i}'))\ge
    	m_{\epsilon_{H_i}'}+1$,
    	one has
    	\[
    	(\mathbb{T}_{H_i}f)|_{Sh^{n-\mu}_1}=0.
    	\]
    	In particular,
    	for any
    	$f\in
    	H^0(Sh_1^\Sigma,\mathcal{V}_\lambda)$
    	of weight
    	$\lambda
    	\in
    	X^\ast(\widetilde{T}_{\widetilde{P}})$
    	with
    	$\min_i\mathrm{val}_p
    	(\lambda(\epsilon_{H_i}'))
    	\ge C+1$,
    	then
    	\[
    	(e_{\widetilde{P}}
    	f)|_{Sh_1^{n-\mu}}=0.
    	\]
    \end{lemma}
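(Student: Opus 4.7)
The plan is to analyze the Hecke operator $\mathbb{T}_{H_i}$ locally on the non-$\mu$-ordinary locus $Sh_1^{n-\mu}$ by computing how its three constituent morphisms (pullback via $\mathrm{pr}_2$, the isogeny action $\pi^\ast$, and the normalized trace along $\mathrm{pr}_1$) act on a classical modular form of weight $\lambda$. The guiding observation, which was recorded in the paragraph preceding Proposition~\ref{Hasse invariant is reduced} and reiterated just before the lemma, is that on a geometric point $x \in Sh_1^{n-\mu}$ lying in a next-to-maximal Newton stratum, the $p$-isogeny $\mathcal{A} \to \mathcal{A}/H_i$ induced by the canonical subgroup $H_i$ translates, on the level of Dieudonn\'e crystals with $G$-structure, into multiplication by $\epsilon_{H_i}' = \epsilon_{H_i} \cdot M_i$ rather than by $\epsilon_{H_i}$ alone, the correction $M_i$ being precisely the diagonal matrix recorded in the discussion after Lemma~\ref{at most one subgroup}. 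The uniqueness of the canonical subgroup on $Sh^\diamond$ (ensured by that same lemma) means there is no genuine summation over lifts in the fibre of $\mathrm{pr}_1$ above such a point, so the Hecke operator reduces to a single scalar action on the automorphic sheaf $\mathcal{V}_\lambda$.

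First, I would make precise that the pullback $\pi^\ast$ on $\mathcal{V}_\lambda$ at a non-$\mu$-ordinary point acts, through the $L$-torsor $\mathfrak{L}^\Sigma$ and the construction $\mathcal{V}_\lambda = \mathfrak{L}^\Sigma \times^L R_{\mathcal{O}_\mathfrak{p}}[\lambda^{-1}]$, as multiplication by $\lambda(\epsilon_{H_i}')$: the element $\epsilon_{H_i}'$ lies in $\widetilde{T}_{\widetilde{P}}(E_\mathfrak{p})$, and its action on the weight-$\lambda$ line in $R_{\mathcal{O}_\mathfrak{p}}[\lambda^{-1}]$ is tautologically by the character $\lambda$. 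Combining this with the definition $\mathrm{Tr}_{\epsilon} = \frac{1}{m_\epsilon}\mathrm{Tr}_{\mathrm{pr}_{1,\infty}}$ used to normalize the Hecke operator, and with the fact that the trace of the constant function $1$ along the degree-one map $\mathrm{pr}_1$ over $Sh_1^{n-\mu}$ is just $1$, one obtains
\[
(\mathbb{T}_{H_i} f)|_{Sh^{n-\mu}_1} \;=\; \frac{\lambda(\epsilon_{H_i}')}{m_{\epsilon_{H_i}'}}\, f|_{Sh^{n-\mu}_1}.
\]
Under the hypothesis $\mathrm{val}_p(\lambda(\epsilon_{H_i}')) \ge m_{\epsilon_{H_i}'}+1$ (read as a bound on $p$-adic valuations, comparing to $\mathrm{val}_p(m_{\epsilon_{H_i}'})$), the scalar is divisible by $p$, hence vanishes on $Sh_1^\Sigma$. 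This gives the first assertion.

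For the second statement, fix a set of generators $\epsilon_1,\dots,\epsilon_r$ of $\widetilde{T}_{\widetilde{P}}^+(E_\mathfrak{p})$ so that $e_{\widetilde{P}} = \lim_{n\to\infty}(\mathbb{T}_{\widetilde{P}})^{n!}$ with $\mathbb{T}_{\widetilde{P}} = \prod_i \mathbb{T}_{\epsilon_i}$. The lemma just proved, applied to each $\epsilon_{H_i}'$, together with the constant $C = \max_i m_{\epsilon_{H_i}'}$ introduced above, yields that $\mathbb{T}_{H_i}(f)|_{Sh_1^{n-\mu}} = 0$ whenever $\min_i \mathrm{val}_p(\lambda(\epsilon_{H_i}')) \ge C+1$. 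Using Lemma~\ref{Hasse invariant commutes with Hecke operators} (so that iterated application of the Hecke operators and the Hasse invariant interact well) and expanding $e_{\widetilde{P}}$ as a limit of products that each contain at least one $\mathbb{T}_{H_i}$ factor, one concludes $(e_{\widetilde{P}}f)|_{Sh_1^{n-\mu}}=0$.

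The main obstacle I anticipate is the careful bookkeeping of the non-$\mu$-ordinary contribution to $\pi^\ast$ — in particular, identifying the correction matrix $M_i$ and verifying that, thanks to the uniqueness provided by Lemma~\ref{at most one subgroup} and the reducedness of $V(H)$ from Proposition~\ref{Hasse invariant is reduced}, the fibre of $\mathrm{pr}_1$ over a point of $\mathcal{N}^b$ with $b$ next-to-maximal is a single (non-reduced) point whose length equals $m_{\epsilon_{H_i}'}$. Once this length computation is in place, the rest of the argument is a formal manipulation of scalars and $p$-adic valuations, strictly parallel to \cite[Proposition A.5]{Pilloni2012}.
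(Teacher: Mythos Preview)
Your approach is essentially the one the paper takes: localize at a geometric point of the next-to-maximal stratum, observe that on Dieudonn\'e crystals the isogeny $\pi$ is represented by $\epsilon_{H_i}'=\epsilon_{H_i}M_i$ rather than $\epsilon_{H_i}$, and conclude that the contribution to the weight-$\lambda$ sheaf carries $p$-adic valuation $\mathrm{val}_p(\lambda(\epsilon_{H_i}'))$, which beats the normalization and forces vanishing modulo $p$. The paper's execution differs in that it does not compute the fibre of $\mathrm{pr}_1$ or produce an explicit scalar; instead it writes the transformation rule $(\pi^\ast F)(\mathbb{D}\circ g)=F(\mathbb{D}'\circ M_i^{-1}\circ g)$, applies the Iwasawa decomposition $M_i^{-1}g=kan$ with $k$ integral, $a\in\widetilde{T}_{\widetilde{P}}(\mathbb{W}[\tfrac{1}{p}])$, $n$ unipotent, and reads off $\lambda(a)=\lambda(\epsilon_{H_i}')$. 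This gives directly that $\pi^\ast F$ is divisible by $p$, and the first assertion follows; the second is then immediate.

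Your explicit formula $(\mathbb{T}_{H_i} f)|_{Sh_1^{n-\mu}} = \dfrac{\lambda(\epsilon_{H_i}')}{m_{\epsilon_{H_i}'}}\, f|_{Sh_1^{n-\mu}}$ is stronger than what is needed and not quite justified: it presupposes the precise fibre-length computation you flag at the end as the main obstacle, and it places $m_{\epsilon_{H_i}'}$ in the denominator whereas the operator $\mathbb{T}_{H_i}$ is normalized by $m_{\epsilon_{H_i}}$. The paper sidesteps both issues by arguing only the valuation inequality for $\pi^\ast F$, which is all that is required; the Iwasawa decomposition is the cleaner device for this, and it makes the appeal to Lemma~\ref{Hasse invariant commutes with Hecke operators} in your second part unnecessary.
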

    \begin{proof}    
    	It suffices to show that
    	$\mathbb{T}_{H_i}f$
    	vanishes on
    	$Sh_1^\diamond\backslash Sh_1^\mu$.	
    	We have the following commutative diagram
    	at each point $x$ which is not in the
    	$\mu$-ordinary locus:
    	\[
    	\begin{tikzcd}
    	e^\ast H^1_\mathrm{dR}(\mathcal{A}_x)
    	\arrow[r,"\pi^\ast"]
    	\arrow[d,"\mathbb{D}"]
    	&
    	e^\ast H^1_\mathrm{dR}(\mathcal{A}_x/(H_i)_x)
    	\arrow[d,"\mathbb{D}'"]
    	\\
    	\Lambda_{\mathbb{W}}
    	\arrow[r,"M_i"]
    	&
    	\Lambda_{\mathbb{W}}
    	\end{tikzcd}
    	\]
    	For $F$ an element in
    	$H^0(Sh_{H_i},\mathcal{V}_\lambda)$,
    	we have the transformation rule:
    	for each $g\in G(\mathbb{W}[\frac{1}{p}])$
    	\[
    	(\pi^\ast F)(\mathbb{D}\circ g)
    	=
    	F(\mathbb{D}'\circ M_i^{-1}\circ g).
    	\]
    	By
    	the Iwasawa decomposition of
    	$G$,
    	we can write
    	$M_i^{-1}\circ g
    	=kan$
    	with $k\in G(\mathbb{W})$,
    	$a\in \widetilde{T}_{\widetilde{P}}(\mathbb{W}[\frac{1}{p}])$
    	and
    	$n$ a unipotent element.
    	Moreover,
    	we have
    	$\lambda(a)=\lambda(\epsilon_{H_i}')$.
        By assumption
    	$\mathrm{val}_p(\lambda(\epsilon_{H_i}'))
    	\ge
    	m_{\epsilon_{H_i}'}+1$,
    	thus
    	$\pi^\ast F$ is divisible by $p$,
    	which gives
    	the first part.
    	The second part follows from the first part.
    \end{proof}

    It follows from the definition that
    $\mathbb{T}_{H_i}$,
    when restricted to the $\mu$-ordinary locus,
    coincides with the Hecke operator
    $\mathbb{T}_{\epsilon_{H_i}}$
    as in
    Definition
    \ref{Definition of Hecke operator at p}.
    Since
    $Sh^\diamond$
    is dense in
    $Sh$ and
    $Sh^\Sigma$,
    we can 
    extend $\mathbb{T}_{H_i}$
    to act on
    $H^0(Sh,\mathcal{V}_\lambda(-C^\Sigma))
    =H^0(Sh^\Sigma,\mathcal{V}_\lambda(-C^\Sigma))$.
    Moreover,
    by
    Proposition
    \ref{surjectivity of mod p, without mu-ord},
    we know that for each
    $\lambda\in
    X^\ast_\mathrm{dm}(\widetilde{T}_{\widetilde{P}})$,
    there is a positive integer
    $N(\lambda)$
    such that for any
    $t\ge N(\lambda)$,
    the reduction map
    from
    $H^0(Sh_\infty^\Sigma,
    \mathcal{V}_{\lambda+tN_G\lambda_G}(-C^\Sigma))$
    to
    $H^0(Sh_1^\Sigma,
    \mathcal{V}_{\lambda+tN_G\lambda_G}(-C^\Sigma))$
    is surjective.
    Thus for such
    $\lambda$ and $t$,
    the action of $\mathbb{T}_{H_i}$
    descends to
    $H^0(Sh_1^\Sigma,
    \mathcal{V}_{\lambda+tN_G\lambda_G}(-C^\Sigma))$.

    By the relation
    $\lambda_i+\lambda_{r+1-i}=1$,
    it is easy to see that
    $\epsilon_{H_i}'\ne1$.
    We put
    \[
    n_G:=
    \min_i
    \mathrm{val}_p
    (\lambda_G(\epsilon_{H_i}'))>0,
    \]
    We deduce immediately the following
    corollary
    (\textit{cf.}
    \cite[Proposition A.6]{Pilloni2012}):
    \begin{corollary}
    	For
    	a dominant weight
    	$\lambda
    	\in
    	X^\ast_\mathrm{dm}(\widetilde{T}_{\widetilde{P}})$
    	and
    	$t\in\mathbb{N}$
    	with
    	$t\ge\max(N(\lambda),(C+1)/n_G)$,
    	the multiplication by the Hasse invariant
    	$H
    	\colon
    	H^0(Sh_1^\Sigma,\mathcal{V}_{\lambda+t\lambda_G}(-C^\Sigma))
        \rightarrow
        H^0(Sh_1^\Sigma,\mathcal{V}_{\lambda+(t+N_G)\lambda_G}(-C^\Sigma))$    	
    	induces an
    	isomorphism
    	on the subspaces of $\mu$-ordinary
    	cuspidal modular forms
    	\[
    	H
    	\colon
    	e_{\widetilde{P}}
    	H^0(Sh_1^\Sigma,\mathcal{V}_{\lambda+t\lambda_G}(-C^\Sigma))
    	\rightarrow
    	e_{\widetilde{P}}
    	H^0(Sh_1^\Sigma,\mathcal{V}_{\lambda+(t+N_G)\lambda_G}(-C^\Sigma)).
    	\]
    \end{corollary}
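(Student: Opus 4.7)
The plan is to derive this isomorphism from Proposition \ref{Hasse invariant is reduced}, Lemma \ref{Hasse invariant commutes with Hecke operators} and the unlabeled lemma immediately preceding the corollary, via a short exact sequence argument. First I would observe that since $Sh_1^\mu$ is open dense in $Sh_1$ and $H$ is not a zero divisor on locally free sheaves (because $V(H) = Sh_1^{n-\mu}$ is a Cartier divisor not containing any associated point), multiplication by $H$ is injective. By Proposition \ref{Hasse invariant is reduced}, $V(H)$ is reduced on the non-$\mu$-ordinary locus, so the cokernel of multiplication by $H$ fits into a short exact sequence of sheaves on $Sh_1^\Sigma$
\[
0 \to \mathcal{V}_{\lambda+t\lambda_G}(-C^\Sigma) \xrightarrow{\cdot H} \mathcal{V}_{\lambda+(t+N_G)\lambda_G}(-C^\Sigma) \to \mathcal{V}_{\lambda+(t+N_G)\lambda_G}(-C^\Sigma) \otimes \mathcal{O}_{Sh_1^{n-\mu}} \to 0.
\]

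Taking global sections and applying $e_{\widetilde{P}}$, which commutes with multiplication by $H$ thanks to Lemma \ref{Hasse invariant commutes with Hecke operators} and which is defined on both terms because of the hypothesis $t \geq N(\lambda)$, I would obtain an exact sequence
\[
0 \to e_{\widetilde{P}} H^0(Sh_1^\Sigma, \mathcal{V}_{\lambda+t\lambda_G}(-C^\Sigma)) \xrightarrow{\cdot H} e_{\widetilde{P}} H^0(Sh_1^\Sigma, \mathcal{V}_{\lambda+(t+N_G)\lambda_G}(-C^\Sigma)) \xrightarrow{\rho} e_{\widetilde{P}} H^0\bigl(Sh_1^{n-\mu}, \mathcal{V}_{\lambda+(t+N_G)\lambda_G}(-C^\Sigma)|_{Sh_1^{n-\mu}}\bigr).
\]
Injectivity of the first arrow is immediate. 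For surjectivity, the key point is that $\rho$ vanishes. By the preceding lemma, the assumption $t \geq (C+1)/n_G$ combined with $N_G \geq 0$ and the definition of $n_G$ forces $\min_i \mathrm{val}_p\bigl((\lambda+(t+N_G)\lambda_G)(\epsilon'_{H_i})\bigr) \geq C+1$, which guarantees $(e_{\widetilde{P}} g)|_{Sh_1^{n-\mu}} = 0$ for every global section $g$ of $\mathcal{V}_{\lambda+(t+N_G)\lambda_G}(-C^\Sigma)$. Hence $\rho$ kills the $e_{\widetilde{P}}$-image, so any $e_{\widetilde{P}} g$ vanishes along the reduced divisor $V(H)$ and is therefore of the form $H \cdot f$ for a unique section $f$; the identity $H \cdot f = e_{\widetilde{P}}(Hf) = H \cdot e_{\widetilde{P}} f$ together with injectivity of multiplication by $H$ yields $f = e_{\widetilde{P}} f$, completing surjectivity.

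The main obstacle — and the only subtle point — is the implication \emph{vanishing on $Sh_1^{n-\mu}$} $\Rightarrow$ \emph{divisibility by $H$}, which is exactly where reducedness of $V(H)$ is essential: without Proposition \ref{Hasse invariant is reduced} one would only get divisibility by some power of $H$. The two numerical hypotheses have logically distinct roles worth separating in the write-up: $t \geq N(\lambda)$ is what ensures Hecke operators at $p$ descend to characteristic $p$ cuspidal forms (so that $e_{\widetilde{P}}$ is defined on both sides), while $t \geq (C+1)/n_G$ is what activates the vanishing on $Sh_1^{n-\mu}$ needed to invert $H$ on the ordinary part. All other verifications — commutation of $e_{\widetilde{P}}$ with $H$, injectivity of $\cdot H$, and the identification of the cokernel sheaf with the restriction to $Sh_1^{n-\mu}$ — are routine once these two ingredients are in place.
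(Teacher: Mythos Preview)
Your proposal is correct and follows essentially the same route as the paper's own proof: commutation of $H$ with the Hecke operators (Lemma \ref{Hasse invariant commutes with Hecke operators}), vanishing of $\mu$-ordinary forms on $Sh_1^{n-\mu}$ via the preceding lemma, and then division by $H$ using the reducedness of $V(H)$ (Proposition \ref{Hasse invariant is reduced}). The paper compresses the last two steps into a single sentence (``vanishes on the non-$\mu$-ordinary locus and thus we can divide $f$ by $H$''), whereas you spell out the short exact sequence and the verification that the quotient $f$ lies in the $e_{\widetilde{P}}$-image; this is a welcome expansion rather than a different argument.
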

    \begin{proof}
    	By
    	Lemma
    	\ref{Hasse invariant commutes with Hecke operators},
    	we know that
    	this multiplication map commutes with
    	the Hecke operators
    	$\mathbb{T}_\epsilon$
    	for
    	$\epsilon\in\widetilde{T}_{\widetilde{P}}^+(E_\mathfrak{p})$.
    	By the preceding lemma,
    	we know that
    	any
    	$f\in
    	e_{\widetilde{P}}
    	H^0(Sh_1^\Sigma,\mathcal{V}_{\lambda+(t+N_G\lambda_G)}(-C^\Sigma))$
    	vanishes on
    	the non-$\mu$-ordinary locus
    	and thus we can divide
    	$f$ by $H$
    	to get a modular form of weight
    	$\lambda+t\lambda_G$.
    \end{proof}

    Recall the construction of the Hasse invariant
    from §\ref{Hasse invariant}.
    It is defined by the
    $N_G$-th power of the pull-back of
    the Hodge line bundle
    $\omega$ from
    $Sh(V)$
    to
    $Sh$.
    Then one has
    (\textit{cf.}
    \cite[Corollaire A.3]{Pilloni2012})
    \begin{proposition}\label{Finite dimension for large weight}
    	The space
    	$e_{\widetilde{P}}
    	H^0(Sh^{\Sigma}_1,
    	\mathcal{V}_\lambda^\Sigma(-C^\Sigma))$
    	is of finite dimension
    	over
    	$\overline{\mathbb{F}}_p$.
    	Moreover,
    	for $t\gg0$ and any
    	$\lambda\in X^\ast(\widetilde{T}_{\widetilde{P}})$,
    	we have the following
    	identities
    	$(r=1,2,\cdots,\infty)$:
    	\[
    	e_{\widetilde{P}}
    	H^0
    	\bigg(
    	Sh_r^{\Sigma,\mu},
    	\mathcal{V}_{\lambda+tN_G\lambda_G}
    	(-C^\Sigma)
    	\bigg)
    	=
    	e_{\widetilde{P}}
    	H^0
    	\bigg(
    	Sh_r^{\Sigma},
    	\mathcal{V}_{\lambda+tN_G\lambda_G}
    	(-C^\Sigma)
    	\bigg).
    	\]
    \end{proposition}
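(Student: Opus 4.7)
The plan is to follow the strategy of \cite[Corollaire A.3]{Pilloni2012} closely, feeding in as geometric input Proposition \ref{Hasse invariant is reduced} together with invertibility of multiplication by $H$ on ordinary forms, which is exactly the preceding corollary. Finite-dimensionality of $e_{\widetilde{P}} H^0(Sh_1^\Sigma, \mathcal{V}_\lambda^\Sigma(-C^\Sigma))$ is immediate: $Sh_1^\Sigma$ is proper over $\overline{\mathbb{F}}_p$, the sheaf $\mathcal{V}_\lambda^\Sigma(-C^\Sigma)$ is coherent, hence $H^0$ is finite-dimensional, and the idempotent preserves this. The real content of the proposition is the equality between the $\mu$-ordinary and full spaces of sections for sufficiently regular weights.

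I would first establish the identity for $r = 1$. The inclusion $e_{\widetilde{P}} H^0(Sh_1^\Sigma, \mathcal{V}_{\lambda + tN_G\lambda_G}(-C^\Sigma)) \hookrightarrow e_{\widetilde{P}} H^0(Sh_1^{\Sigma,\mu}, \mathcal{V}_{\lambda + tN_G\lambda_G}(-C^\Sigma))$ is simple restriction. For the reverse, let $f$ be a $\mu$-ordinary cuspidal form. Pushing forward to the minimal compactification along $\pi^\Sigma$, the section $f$ becomes a section of a coherent sheaf on the affine scheme $Sh_1^{\min,\mu}$ (affine because its complement is the vanishing locus of the ample section $H^{\min}$). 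Hence there exists $k \ge 0$ such that $(H^{\min})^k\! \cdot f$ extends over $Sh_1^{\min}$; pulling back and invoking Lemma \ref{Hasse invariant commutes with Hecke operators}, we find that $H^k f$ is an ordinary cuspidal form of weight $\lambda + (t + kN_G)N_G\lambda_G$ on all of $Sh_1^\Sigma$. By the preceding corollary iterated $k$ times (with $t$ large enough that each intermediate weight satisfies the hypothesis), multiplication by $H^k$ is a bijection of $e_{\widetilde{P}} H^0(Sh_1^\Sigma, \mathcal{V}_{\lambda + tN_G\lambda_G}(-C^\Sigma))$ onto $e_{\widetilde{P}} H^0(Sh_1^\Sigma, \mathcal{V}_{\lambda + (t+kN_G)N_G\lambda_G}(-C^\Sigma))$. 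So there is a unique $g$ on $Sh_1^\Sigma$ with $H^k g = H^k f$, and since $H$ is invertible on $Sh_1^{\Sigma,\mu}$ we get $g|_{Sh_1^{\Sigma,\mu}} = f$, which extends $f$ to the full compactification.

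The passage from $r = 1$ to general $r$ then goes by induction using the short exact sequence
\[
0 \to \mathcal{V}_\bullet \otimes_{\mathcal{O}_\mathfrak{p}} \mathfrak{p}^{r-1}/\mathfrak{p}^{r} \to \mathcal{V}_\bullet/\mathfrak{p}^{r}\mathcal{V}_\bullet \to \mathcal{V}_\bullet/\mathfrak{p}^{r-1}\mathcal{V}_\bullet \to 0
\]
on both $Sh_r^\Sigma$ and $Sh_r^{\Sigma,\mu}$. Taking $H^0$, applying $e_{\widetilde{P}}$ (which commutes with mod-$\mathfrak{p}^r$ reduction because the relevant Hecke operators are $p$-integral by Proposition \ref{Integrality of Hecke operator}), and invoking Propositions \ref{surjectivity of mod p, mu-ordinary} and \ref{surjectivity of mod p, without mu-ord} for surjectivity of the outer reduction maps, a five-lemma argument propagates the $r = 1$ isomorphism to every finite $r$; the case $r = \infty$ follows by passing to the inverse limit. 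The main obstacle I anticipate is uniformity in $t$: at each stage of the $k$-fold iteration above one needs $t + jN_G \ge \max\bigl(N(\lambda + jN_G\lambda_G), (C+1)/n_G\bigr)$ for all $j = 0, \ldots, k-1$, yet $k$ itself depends on the section $f$. I plan to control this by combining the ampleness of $\omega$, which bounds $N(\lambda + jN_G\lambda_G)$ uniformly in $j$, with the finite-dimensionality already established, which caps $k$ uniformly on a basis; a single $t \gg 0$ then handles all $f$ at once.
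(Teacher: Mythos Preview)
Your strategy matches the paper's: for $r=1$ one writes $H^0(Sh_1^{\Sigma,\mu},\mathcal{V}_\lambda(-C^\Sigma))=\bigcup_k H^{-k}H^0(Sh_1^{\Sigma},\mathcal{V}_{\lambda+kN_G\lambda_G}(-C^\Sigma))$ and invokes the preceding corollary to see the union stabilises on ordinary parts, and the passage to general $r$ uses the same surjectivity propositions you cite (the paper only spells out $r=\infty$, relying on Proposition~\ref{surjectivity of mod p, without mu-ord}).

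Your anticipated uniformity obstacle, however, is not real, and your proposed fix for it is circular. In the preceding corollary the threshold $\max(N(\lambda),(C+1)/n_G)$ involves only the \emph{fixed} base weight $\lambda$: when you iterate multiplication by $H$ you keep $\lambda$ unchanged and merely increase $t$, so no new constants $N(\lambda+jN_G\lambda_G)$ ever appear, and a single choice of $t\gg0$ makes every step of the iteration an isomorphism regardless of how large $k$ turns out to be. Your workaround via ``finite-dimensionality already established, which caps $k$ uniformly on a basis'' cannot work as stated: the only space whose finite-dimensionality is known a priori is $H^0(Sh_1^\Sigma,\ldots)$ on the proper scheme, whereas your $f$ ranges over $e_{\widetilde P}H^0(Sh_1^{\Sigma,\mu},\ldots)$, and the finite-dimensionality of \emph{that} space is precisely what the proposition asserts.
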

    \begin{proof}
    	We have,
    	by the properties of Hasse invariant
    	$H$,
    	\[
    	H^0
    	\left(
    	Sh_1^{\Sigma,\mu},
    	\mathcal{V}_\lambda(-C^\Sigma)
    	\right)
    	=
    	\bigcup_{t\in\mathbb{N}}
    	H^{-t}
    	H^0
    	\left(
    	Sh_1^{\Sigma},
    	\mathcal{V}_{\lambda+tN_G\lambda_G}(-C^\Sigma)
    	\right).
    	\]
    	Note that
    	the multiplication by
    	$H$
    	embeds
    	$H^0(Sh_1^{\Sigma,\mu},
    	\mathcal{V}_\lambda)$
    	into
    	$H^0
    	(Sh_1^{\Sigma,\mu},
    	\mathcal{V}_{\lambda+N_G\lambda_G})$,
    	thus for the first part in the proposition,
    	it suffices to
    	treat
    	$\lambda':=
    	\lambda+t_0N_G\lambda_G$
    	in place of
    	$\lambda$
    	with some
    	$t_0\gg0$.
    	Therefore we have
    	\[
    	e_{\widetilde{P}}
    	H^0
    	\left(
    	Sh_1^{\Sigma,\mu},\mathcal{V}_{\lambda'}(-C_\Sigma)
    	\right)
    	=
    	\bigcup_{t\in\mathbb{N}}
    	e_{\widetilde{P}}
    	H^0
    	\left(
    	Sh_1^{\Sigma},
    	\mathcal{V}_{\lambda'+tN_G\lambda_G}(-C_\Sigma)
    	\right).
    	\]
    	However,
    	the preceding corollary shows that
    	the RHS is equal to
    	$e_{\widetilde{P}}
    	H^0(Sh_1^\Sigma,
    	\mathcal{V}_{\lambda'}(-C^\Sigma))$,
    	thus we get,
    	for
    	$t\gg0$,
    	\[
    	e_{\widetilde{P}}
    	H^0(Sh_1^{\Sigma,\mu},
    	\mathcal{V}_{\lambda+tN_G\lambda_G}(-C^\Sigma))
    	=
    	e_{\widetilde{P}}
    	H^0(Sh_1^\Sigma,
    	\mathcal{V}_{\lambda+tN_G\lambda}(-C^\Sigma)).
    	\]
    	The case for
    	$r=\infty$
    	follows from
    	Proposition
    	\ref{surjectivity of mod p, without mu-ord}
    	with
    	$t\gg0$.
    	
    \end{proof}

	\section{Hida theory for Shimura varieties of Hodge type}
	\label{Hida theory for Hodge Shimura}
	\subsection{Control theorems}
	We retain the notations as in the preceding sections
	(\textit{cf.}§\ref{Notations on the Shimura datum}):
	$(G,X)$ is a mixed Shimura datum
	of Hodge type
	(with embedding
	$(G,X)
	\hookrightarrow
	(\mathrm{GSp}(V,\psi),S^\pm)$),
	$\widetilde{P}$ a parabolic subgroup of
	$G$
	(the restriction from a parabolic subgroup
	$\widetilde{P}_V$ of
	$\mathrm{GSp}(V,\psi)$
	to $G$),
	$\widetilde{T}_{\widetilde{P}}=\widetilde{P}/\widetilde{P}^\mathrm{der}
	=
	\widetilde{P}_L/\widetilde{P}_L^\mathrm{der}$
	where
	$\widetilde{P}_L=\widetilde{P}\cap L$.
	We need some more notations.
	Write
	($?=\emptyset,\mathrm{cusp}$)
	\begin{equation}\label{p-adic modular forms, big space}
	\mathcal{M}(\widetilde{P})_?
	:=
	\lim\limits_{\overrightarrow{m}}
	e_{\widetilde{P}}
	\mathbb{V}_{?,m}^{\widetilde{P}_L^\mathrm{der}},
	\quad
	\mathbb{M}(\widetilde{P})_?
	:=
	\mathrm{Hom}_{\mathcal{O}_\mathfrak{p}}
	(\mathcal{M}(\widetilde{P})_?,
	E_\mathfrak{p}/\mathcal{O}_\mathfrak{p})
	\end{equation}
	for the colimit
	and
	its
	Pontryagin dual.
	$\mathbb{M}(\widetilde{P})$
	is the space 
	of $\mu$-ordinary $p$-adic modular forms on
	$Sh^\Sigma$.
	We then put
	\[
	\widetilde{T}_{\widetilde{P}}^1
	:=
	\mathrm{Ker}
	(\widetilde{T}_{\widetilde{P}}(\mathcal{O}_\mathfrak{p})
	\rightarrow
	\widetilde{T}_{\widetilde{P}}(\mathcal{O}/\mathfrak{p})),
	\quad
	\mathcal{W}_{\widetilde{P}}
	:=
	\mathcal{O}_\mathfrak{p}
	[[\widetilde{T}_{\widetilde{P}}(\mathcal{O}_\mathfrak{p})]],
	\quad
	\mathcal{W}_{\widetilde{P}}^1
	:=
	\mathcal{O}_\mathfrak{p}
	[[\widetilde{T}_{\widetilde{P}}^1]].
	\]
	$\mathcal{W}_{\widetilde{P}}^1$
	is the weight space.
	The decomposition
	$\widetilde{T}_{\widetilde{P}}(\mathcal{O}_\mathfrak{p})
	=
	\widetilde{T}_{\widetilde{P}}(\mathcal{O}/\mathfrak{p})
	\times\widetilde{T}_{\widetilde{P}}^1$
	induced from the Teichm\"{u}ller lifting
	shows that we can view
	the Iwasawa weight algebra
	$\mathcal{W}_{\widetilde{P}}^1$
	as a subalgebra of
	$\mathcal{W}_{\widetilde{P}}$,
	and thus the
	$\mathcal{W}_{\widetilde{P}}$-module structures on
	$e_{\widetilde{P}}
	\mathbb{V}_{?,m}^{\widetilde{P}_L^\mathrm{der}}$,
	$\mathcal{M}(\widetilde{P})_?$
	and
	$\mathbb{M}(\widetilde{P})_?$
	($?=\emptyset,\mathrm{cusp}$)
	give rise to
	$\mathcal{W}_{\widetilde{P}}^1$-module
	structures on these spaces.
	Here we gather some of the results that we have
	proved or the corollaries of these results.
	
	\begin{theorem}
		\begin{enumerate}
			\item 
			For any character
			$\lambda\in
			X^{\ast}(\widetilde{T}_{\widetilde{P}})$,
			the $\mathcal{O}_\mathfrak{p}$-module
			$e_{\widetilde{P}}
			\mathbb{V}_{\mathrm{cusp},\infty}^{\widetilde{P}_L^\mathrm{der}}
			[\lambda^{-1}]$
			is free of finite rank
			(denoted by $\mathrm{rk}(\lambda)$).
			Moreover
			the rank
			$\mathrm{rk}(\lambda)$
			depends only on the image of
			$\lambda$
			by the natural projection
			map
			$X^{\ast}(\widetilde{T}_{\widetilde{P}})
			\rightarrow
			X^{\ast}(\widetilde{T}_{\widetilde{P}})
			/\mathbb{Z}N_G\lambda_G$;

			\item 
			For any
			$\lambda
			\in
			X_\mathrm{dm}^\ast(\widetilde{T}_{\widetilde{P}})$,
			we have an isomorphism
			\[
			e_{\widetilde{P}}
			H^0(\mathrm{Ig}_{\infty,1}^\mu/\widetilde{P}_L,
			\widetilde{\mathcal{R}}_\infty[\lambda^{-1}])
			\simeq
			e_{\widetilde{P}}
			\mathbb{V}_\infty^{\widetilde{P}_L^\mathrm{der}}[\lambda^{-1}].
			\]
			If moreover
			$\lambda
			\in
			X_\mathrm{dd}^\ast(\widetilde{T}_{\widetilde{P}})$,
			we can descent the isomorphism to
			$Sh^{\Sigma,\mu}_\infty$:
			\[
			e_{\widetilde{P}}
			H^0(Sh^{\Sigma,\mu}_\infty,
			\widetilde{\mathcal{R}}_\infty
			[\lambda^{-1}])
			\simeq
			e_{\widetilde{P}}
			\mathbb{V}_\infty^{\widetilde{P}_L^\mathrm{der}}[\lambda^{-1}].
			\]

			\item 
			For any
			$\lambda
			\in
			X^\ast(\widetilde{T}_{\widetilde{P}})$
			and
			$t\gg0$
			(depending on $\lambda$),
			we have an isomorphism
			of spaces of cuspidal modular forms:
			\[
			e_{\widetilde{P}}
			H^0(Sh_\infty^\mu,
			\widetilde{\mathcal{R}}_\infty
			[(\lambda+t\lambda_{G})^{-1}])
			\simeq
			e_{\widetilde{P}}
			\mathbb{V}_{\mathrm{cusp},\infty}^{\widetilde{P}_L^\mathrm{der}}
			[(\lambda+t\lambda_{G})^{-1}].
			\]

			\item 
			For any
			$\lambda\in
			X^{\ast}(\widetilde{T}_{\widetilde{P}})$,
			we have the following specialization isomorphism
			\[
			\mathbb{M}(\widetilde{P})_\mathrm{cusp}
			\otimes_{\mathcal{W}_{\widetilde{P}},\lambda}
			\mathcal{O}_\mathfrak{p}
			\simeq
			\mathrm{Hom}_{\mathcal{O}_\mathfrak{p}}
			(e_{\widetilde{P}}
			\mathbb{V}_{\mathrm{cusp},\infty}^{\widetilde{P}_L^\mathrm{der}}
			[\lambda^{-1}],\mathcal{O}_\mathfrak{p});
			\]

			\item 
			The
			$\mathcal{W}_{\widetilde{P}}^1$-module
			$e_{\widetilde{P}}
			\mathbb{V}_{\mathrm{cusp},\infty}^{\widetilde{P}_L^\mathrm{der}}$
			is free of finite rank.		
		\end{enumerate}
	\end{theorem}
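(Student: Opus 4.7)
The plan is to deduce (5) from the preceding parts of the theorem together with Pontryagin duality, topological Nakayama's lemma, and a Fitting ideal/specialization argument on the weight space.

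First, I would pass to the Pontryagin dual. Writing $M_m := e_{\widetilde{P}}\mathbb{V}_{\mathrm{cusp},m}^{\widetilde{P}_L^\mathrm{der}}$, we have $e_{\widetilde{P}}\mathbb{V}_{\mathrm{cusp},\infty}^{\widetilde{P}_L^\mathrm{der}} = \varprojlim_m M_m$, while by definition $\mathbb{M}(\widetilde{P})_\mathrm{cusp} = \varprojlim_m \mathrm{Hom}_{\mathcal{O}_\mathfrak{p}}(M_m, \mathcal{O}_\mathfrak{p}/\mathfrak{p}^m)$. Since $\mathcal{W}_{\widetilde{P}}^1$ is a complete Noetherian local ring, Pontryagin/Matlis duality is an exact anti-equivalence between compact and discrete $\mathcal{W}_{\widetilde{P}}^1$-modules which preserves freeness and rank, so it suffices to show that $\mathbb{M}(\widetilde{P})_\mathrm{cusp}$ is a free $\mathcal{W}_{\widetilde{P}}^1$-module of finite rank.

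Next I would apply topological Nakayama to obtain finite generation. The Teichm\"uller splitting $\widetilde{T}_{\widetilde{P}}(\mathcal{O}_\mathfrak{p}) = \widetilde{T}_{\widetilde{P}}(\kappa_\mathfrak{p}) \times \widetilde{T}_{\widetilde{P}}^1$ yields a decomposition
\[
\mathbb{M}(\widetilde{P})_\mathrm{cusp} = \bigoplus_{\chi} \mathbb{M}(\widetilde{P})_\mathrm{cusp}[\chi]
\]
indexed by characters $\chi$ of the finite group $\widetilde{T}_{\widetilde{P}}(\kappa_\mathfrak{p})$, each summand being compact over $\mathcal{W}_{\widetilde{P}}^1$. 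Specializing part (4) at an algebraic $\lambda$ with $\lambda|_{\widetilde{T}_{\widetilde{P}}(\kappa_\mathfrak{p})} = \chi$ and continuous part trivial gives
\[
\mathbb{M}(\widetilde{P})_\mathrm{cusp}[\chi] \otimes_{\mathcal{W}_{\widetilde{P}}^1} \kappa_\mathfrak{p} \;\simeq\; \mathrm{Hom}_{\mathcal{O}_\mathfrak{p}}\!\bigl(e_{\widetilde{P}}\mathbb{V}_{\mathrm{cusp},\infty}^{\widetilde{P}_L^\mathrm{der}}[\lambda^{-1}],\mathcal{O}_\mathfrak{p}\bigr)\otimes\kappa_\mathfrak{p},
\]
a finite-dimensional $\kappa_\mathfrak{p}$-vector space of dimension $r_\chi := \mathrm{rk}(\lambda)$ by part (1). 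Lifting a basis via topological Nakayama produces a continuous surjection $\phi_\chi : (\mathcal{W}_{\widetilde{P}}^1)^{r_\chi} \twoheadrightarrow \mathbb{M}(\widetilde{P})_\mathrm{cusp}[\chi]$.

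Finally, I would upgrade finite generation to freeness by showing $\ker\phi_\chi = 0$ via the $\mathrm{Tor}$ long exact sequence and a density argument. For any algebraic $\lambda'$ sharing residue character $\chi$, the specialization of $\mathbb{M}(\widetilde{P})_\mathrm{cusp}[\chi]$ at $\lambda'$ has $\mathcal{O}_\mathfrak{p}$-rank equal to $\mathrm{rk}(\lambda')$ by part (4), which by part (1) equals $r_\chi$ whenever $\lambda' - \lambda \in \mathbb{Z}N_G\lambda_G$. Combined with the surjection from $(\mathcal{O}_\mathfrak{p})^{r_\chi}$ and the fact that specializations are already known to be free of the correct rank (part (1)), each such $\lambda'$ forces $(\ker\phi_\chi) \otimes_{\mathcal{W}_{\widetilde{P}}^1,\lambda'} \mathcal{O}_\mathfrak{p}$ to be $\mathcal{O}_\mathfrak{p}$-torsion, and then torsion-free by a standard Tor argument. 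Running $\lambda'$ through a sufficiently rich sequence of algebraic translates — obtained by combining all $t$-translates via $\mathbb{Z}N_G\lambda_G$ with the freedom in parts (2)-(3) to absorb twists into larger dominant weights — one concludes that $\ker\phi_\chi$ vanishes at a set of specializations whose kernels intersect to zero in $\mathcal{W}_{\widetilde{P}}^1$, so that $\ker\phi_\chi = 0$. Summing over the finitely many $\chi$ gives freeness of total rank $\sum_\chi r_\chi$.

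The main obstacle will be the last density step: the algebraic characters in a single coset of $\mathbb{Z}N_G\lambda_G$ give only a rank-one family in $X^\ast(\widetilde{T}_{\widetilde{P}})$, so to ensure that their continuous restrictions to $\widetilde{T}_{\widetilde{P}}^1$ cut out the zero ideal in $\mathcal{W}_{\widetilde{P}}^1$ one must either sharpen part (1) to show rank constancy on a larger set of cosets (using the twists available from parts (2) and (3)), or replace the bare density argument by a Fitting-ideal computation: finite generation together with constancy of generic specialized rank forces the first Fitting ideal of $\mathbb{M}(\widetilde{P})_\mathrm{cusp}[\chi]$ to be contained in infinitely many height-one primes, whence zero, yielding the desired freeness.
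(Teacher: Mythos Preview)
Your approach is essentially the same as the paper's: pass to the dual, decompose by characters of the finite group $\widetilde{T}_{\widetilde{P}}(\kappa_\mathfrak{p})$, use Nakayama to produce a surjection $\phi_\chi$ from a free $\mathcal{W}_{\widetilde{P}}^1$-module of rank $r_\chi$, and then show $\phi_\chi$ is an isomorphism by checking enough specializations.

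The obstacle you flag at the end is not real, and dissolving it is exactly what makes the paper's argument shorter than yours. The point you are missing is that the proof of part~(1) actually yields a stronger statement than its formulation: the base-change isomorphism
\[
e_{\widetilde{P}}\mathbb{V}_{\mathrm{cusp},\infty}^{\widetilde{P}_L^\mathrm{der}}[\lambda^{-1}]
\otimes_{\mathcal{O}_\mathfrak{p}}\kappa_\mathfrak{p}
\;\simeq\;
e_{\widetilde{P}}\mathbb{V}_{\mathrm{cusp},1}^{\widetilde{P}_L^\mathrm{der}}[\lambda^{-1}]
\]
shows that $\mathrm{rk}(\lambda)$ depends only on the restriction $\lambda|_{\widetilde{T}_{\widetilde{P}}(\kappa_\mathfrak{p})}$, since over $\kappa_\mathfrak{p}$ any algebraic character of $\widetilde{T}_{\widetilde{P}}(\mathcal{O}_\mathfrak{p})$ factors through the finite quotient. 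Hence $\mathrm{rk}(\lambda')=r_\chi$ for \emph{every} algebraic $\lambda'$ with finite part $\chi$, not merely those in a single $\mathbb{Z}N_G\lambda_G$-coset. For each such $\lambda'$, the specialized map $\phi_\chi\otimes_{\mathcal{W}_{\widetilde{P}}^1,\lambda'}\mathcal{O}_\mathfrak{p}$ is a surjection between free $\mathcal{O}_\mathfrak{p}$-modules of the same rank, hence an isomorphism. As these $\lambda'$ are Zariski-dense in $\mathrm{Spec}(\mathcal{W}_{\widetilde{P}}^1)$, it follows that $\ker\phi_\chi=0$. No Fitting-ideal or $\mathrm{Tor}$ argument is needed, and parts~(2) and~(3) play no role in the proof of~(5).
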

    \begin{proof}
    	\begin{enumerate}
    		\item 
    		First note that
    		we have the base change isomorphism by
    		Proposition \ref{surjectivity of mod p, mu-ordinary}:
    		\[
    		e_{\widetilde{P}}
    		\mathbb{V}_{\mathrm{cusp},\infty}^{\widetilde{P}_L^\mathrm{der}}
    		[\lambda^{-1}]
    		\otimes_{\mathcal{O}_\mathfrak{p}}
    		\mathcal{O}/\mathfrak{p}
    		\simeq
    		e_{\widetilde{P}}
    		\mathbb{V}_{\mathrm{cusp},1}^{\widetilde{P}_L^\mathrm{der}}
    		[\lambda^{-1}].
    		\]
    		Now that the dimension of
    		$e_{\widetilde{P}}
    		\mathbb{V}_{\mathrm{cusp},1}^{\widetilde{P}_L^\mathrm{der}}
    		[\lambda^{-1}]$
    		depends on the image of
    		$\lambda$
    		in
    		$X^{\ast}(\widetilde{T}_{\widetilde{P}})
    		/\mathbb{Z}N_G\lambda_G$.
    		Thus we can assume that
    		$\lambda
    		\in
    		X^{\ast}_\mathrm{dm}(\widetilde{T}_{\widetilde{P}})$
    		and now we can apply
    		Proposition \ref{Finite dimension for large weight}
    		to such $\lambda$.

    		\item 
    		The first point is 
    		Proposition
    		\ref{comparison of forms on Igusa and p-adic}.
    		The second point follows from
    		Corollary \ref{descent from Igusa to Sh}.

    		\item
    		This is Proposition
    		\ref{Finite dimension for large weight}.
    		
    		\item 
    		By definition,
    		for any
    		$\lambda
    		\in
    		X^\ast(\widetilde{T}_{\widetilde{P}})$,
    		we have the following natural isomorphism
    		\begin{align*}
    		\mathbb{M}(\widetilde{P})_\mathrm{cusp}
    		\otimes_{\mathcal{W}_{\widetilde{P}},\lambda}
    		\mathcal{O}_\mathfrak{p}
    		&
    		\simeq
    		\mathrm{Hom}_{\mathcal{O}_\mathfrak{p}}
    		(\mathcal{M}(\widetilde{P})_\mathrm{cusp}[\lambda^{-1}],
    		E_\mathfrak{p}/\mathcal{O}_\mathfrak{p})
    		\\
    		&
    		\simeq
    		\mathrm{Hom}_{\mathcal{O}_\mathfrak{p}}
    		(e_{\widetilde{P}}
    		\mathbb{V}_{\mathrm{cusp},\infty}^{\widetilde{P}_L^\mathrm{der}}[\lambda^{-1}]
    		\otimes_{\mathcal{O}_\mathfrak{p}}
    		E_\mathfrak{p}/\mathcal{O}_\mathfrak{p},
    		E_\mathfrak{p}/\mathcal{O}_\mathfrak{p})
    		\\
    		&
    		\simeq
    		\mathrm{Hom}_{\mathcal{O}_\mathfrak{p}}
    		(e_{\widetilde{P}}\mathbb{V}_{\mathrm{cusp},\infty}^{\widetilde{P}_L^\mathrm{der}}
    		[\lambda^{-1}],
    		\mathcal{O}_\mathfrak{p}).
    		\end{align*}
    		
    		\item 
    		We follow
    		\cite[p.36]{Pilloni2012}.
    		For any character
    		$\chi
    		\colon
    		\widetilde{T}_{\widetilde{P}}
    		(\mathcal{O}_\mathfrak{p})
    		\rightarrow
    		\mathcal{O}_\mathfrak{p}^\times$
    		in
    		$X^\ast(\widetilde{T}_{\widetilde{P}})$,
    		we write
    		$r(\chi)$
    		for the
    		$\mathcal{O}_\mathfrak{p}$-rank of
    		$\mathbb{M}(\widetilde{P})_\mathrm{cusp}
    		\otimes_{\mathcal{W}_{\widetilde{P}},\chi}
    		\mathcal{O}_\mathfrak{p}$,
    		which is finite
    		by the points $1$ and $4$.
    		Then we can define a surjective
    		$\mathcal{W}_{\widetilde{P}}^1$-linear map
    		$f_\chi\colon
    		(\mathcal{W}_{\widetilde{P}}^1)^{r(\chi)}
    		\rightarrow
    		e_{\widetilde{P}}
    		\mathbb{V}_{\mathrm{cusp},\infty}^{\widetilde{P}_L^\mathrm{der}}
    		\otimes_{
    			\mathcal{O}_\mathfrak{p}
    			[\widetilde{T}_{\widetilde{P}}(\mathcal{O}/\mathfrak{p})],
    			\chi}
    		\mathcal{O}_\mathfrak{p}$.
    		Now for any other character
    		$\chi'\colon
    		\widetilde{T}_{\widetilde{P}}
    		(\mathcal{O}_\mathfrak{p})
    		\rightarrow
    		\mathcal{O}_\mathfrak{p}^\times$
    		such that
    		$\chi|_{\widetilde{T}_{\widetilde{P}}(\mathcal{O}/\mathfrak{p})}
    		=
    		\chi'|_{\widetilde{T}_{\widetilde{P}}(\mathcal{O}/\mathfrak{p})}
    		$,
    		the induced map
    		\[
    		f_\chi\otimes1
    		\colon
    		(\mathcal{W}_{\widetilde{P}}^1)^{r(\chi)}
    		\otimes_{\mathcal{W}_{\widetilde{P}}^1,\chi'}
    		\mathcal{O}_\mathfrak{p}
    		\rightarrow
    		e_{\widetilde{P}}
    		\mathbb{V}_{\mathrm{cusp},\infty}^{\widetilde{P}_L^\mathrm{der}}
    		\otimes_{\mathcal{W}_{\widetilde{P}},\chi}
    		\mathcal{O}_\mathfrak{p}
    		\]
    		is an isomorphism
    		since both sides are free of rank
    		$r(\chi)$
    		over
    		$\mathcal{O}_\mathfrak{p}$.
    		By the density of characters
    		$\chi'\in X^{\ast}(\widetilde{T}_{\widetilde{P}})$
    		in
    		$\mathrm{Spec}(\mathcal{W}_{\widetilde{P}}^1)$,
    		we see that
    		$f_\chi$
    		is in fact an isomorphism.
    		On the other hand,
    		we have a decomposition
    		\[
    		\mathbb{M}(\widetilde{P})_\mathrm{cusp}
    		=
    		\oplus_{\chi^\circ}
    		\mathbb{M}(\widetilde{P})_\mathrm{cusp}
    		\otimes_{
    			\mathcal{O}_\mathfrak{p}
    			[\widetilde{T}_{\widetilde{P}}(\mathcal{O}/\mathfrak{p})],\chi^\circ}
    		\mathcal{O}_\mathfrak{p},
    		\]
    		where
    		$\chi^\circ$
    		runs through characters
    		$\chi^\circ
    		\colon
    		\widetilde{T}_{\widetilde{P}}
    		(\mathcal{O}/\mathfrak{p})
    		\rightarrow
    		(\mathcal{O}/\mathfrak{p})^\times$.
    		Since there are only finitely many such
    		$\chi^\circ$,
    		we get immediately that
    		$\mathbb{M}(\widetilde{P})_\mathrm{cusp}$
    		is free of finite rank over
    		$\mathcal{W}_{\widetilde{P}}^1$.
    	\end{enumerate}
    \end{proof}

	\subsection{Hida families}	
	Recall that $G$ is a connected reductive group over
	$\mathbb{Q}$
	which embeds in
	$\mathrm{GSp}(V,\psi)$.
	We have also fixed a compact open subgroup
	$K$ of
	$G(\mathbb{A}_\mathrm{f})$.
	Let
	$S'$ be the subset of rational primes $\ell$ of $\mathbb{Q}$
	such that the $\ell$-th component of $K$ is not
	maximal open compact in $G(\mathbb{Q}_\ell)$.
	Write
	$S=S'\cup\{p\}$.
	For each rational prime $\ell$ of
	$\mathbb{Q}$ not in $S$
	such that $G$ is unramified over $\ell$,
	we fix
	one hyperspecial subgroup $\mathcal{K}_\ell$ of
	$G(\mathbb{Q}_\ell)$
	which is the intersection
	$G(\mathbb{Q}_\ell)
	\cap
	\mathcal{K}_{V,\ell}$
	where
	$\mathcal{K}_{V,\ell}=
	\mathrm{GSp}(V,\psi)(\mathbb{Z}_\ell)$
	is a maximal compact open subgroup of
	$\mathrm{GSp}(V,\psi)(\mathbb{Q}_\ell)$.
	Then
	we define the local commutative spherical Hecke algebra
	at $\ell$
	as usual
	in the following way:
	\[
	\mathcal{H}_\ell
	:=
	\mathbb{Z}[G(\mathbb{Q}_\ell)//\mathcal{K}_\ell].
	\]

	For each double coset
	$\mathcal{K}_\ell\beta\mathcal{K}_\ell$
	with $\beta\in G(\mathbb{Q}_\ell)$,
	one can define a spherical Hecke operator
	$\mathbb{T}_\beta$
	on the spaces of $\mu$-ordinary $p$-adic
	automorphic forms
	$\mathbb{M}(\widetilde{P})$
	and
	$\mathbb{M}(\widetilde{P})_\mathrm{cusp}$
	similar to Definition
	\ref{Definition of Hecke operator at p}
	as follows:
	we first define a correspondence
	$X_{V,\beta}$ over
	$Sh(V)$
	as in
	\textit{cf.}\cite[§VII.3]{FaltingsChai1990}.
	Then we write
	$X_\beta$ for the pull-back of
	$X_{V,\beta}$
	along the embedding
	$Sh
	\rightarrow
	Sh(V)$.
	We have two natural projections
	$\mathrm{pr}_1,\mathrm{pr}_2
	\colon
	X_\beta
	\rightarrow
	Sh$,
	a universal isogeny $\pi^{G,\beta}$
	between universal abelian schemes
	$\pi^{G,\beta}
	\colon
	\mathcal{A}
	\rightarrow
	\mathcal{A}'$.
	Then for any sheaf $\mathcal{F}$ over
	$Sh$,
	we put
	\[
	\mathbb{T}_\beta
	\colon
	H^0(Sh,\mathcal{F})
	\rightarrow
	H^0(X_\beta,\mathrm{pr}_2^\ast\mathcal{F})
	\xrightarrow{(\pi^{G,\beta})^\ast}
	H^0(X_\beta,\mathrm{pr}_1^\ast\mathcal{F})
	\xrightarrow{\mathrm{Tr}_{\mathrm{pr}_1}}
	H^0(Sh,\mathcal{F}).
	\]
	Similarly,
	one can define $\mathbb{T}_\beta$
	on the Igusa towers.
	We know moreover that these Hecke operators
	$\mathbb{T}_\beta$
	commutes with each other for different $\ell$ and
	also commutes with
	those operators
	$\mathbb{T}_\epsilon$
	for $\epsilon\in\widetilde{T}_{\widetilde{P}}^+(E_\mathfrak{p})$
	as well as the action of
	$\widetilde{T}_{\widetilde{P}}(\mathcal{O}_\mathfrak{p})$.
	\begin{definition}
		We define the cuspidal
		$\widetilde{P}$-ordinary Hecke algebra
		\[
		e_{\widetilde{P}}
		\mathcal{H}^{\widetilde{P}}_\mathrm{cusp}
		:=
		\mathcal{W}_{\widetilde{P}}
		\langle
		\mathcal{H}_\ell|\ell\notin S
		\rangle
		\subset
		\mathrm{End}_{\mathcal{W}_{\widetilde{P}}}
		(\mathbb{M}(\widetilde{P})_\mathrm{cusp}).
		\]
		
		Each irreducible component of
		$\mathrm{Spec}
		(e_{\widetilde{P}}
		\mathcal{H}_\mathrm{cusp}^{\widetilde{P}})$
		is called a \textbf{Hida family}.
	\end{definition}
	
	\begin{remark}
		
		By construction,
		the $\overline{\mathcal{O}_\mathfrak{p}}$-points of
		$\mathrm{Spec}
		(e_{\widetilde{P}}\mathcal{H}_\mathrm{cusp}^{\widetilde{P}})$
		correspond bijectively with the eigen-systems for
		the action of
		$e_{\widetilde{P}}\mathcal{H}_\mathrm{cusp}^{\widetilde{P}}$
		on the space
		$\mathbb{M}(\widetilde{P})_\mathrm{cusp}$.
		By the results in the preceding subsection,
		we know that the Hecke algebra
		$e_{\widetilde{P}}\mathcal{H}^{\widetilde{P}}_\mathrm{cusp}$
		is a faithfully flat
		$\mathcal{W}_{\widetilde{P}}^1$-algebra of finite rank.
		Thus we see that for any
		$\lambda\in
		X^{\ast}(\widetilde{T}_{\widetilde{P}})$,
		any eigenform
		$f\in
		e_{\widetilde{P}}
		\mathbb{V}_{\mathrm{cusp},\infty}^{\widetilde{P}_L^\mathrm{der}}
		[\lambda^{-1}]
		\otimes_{\mathcal{O}_\mathfrak{p}}
		\overline{\mathcal{O}_\mathfrak{p}}$
		for the Hecke algebra
		$e_{\widetilde{P}}\mathcal{H}_\mathrm{cusp}^{\widetilde{P}}$
		and any other weight
		$\lambda'\in
		X^{\ast}(\widetilde{T}_{\widetilde{P}})$
		with the same image as $\lambda$
		in
		$X^{\ast}(\widetilde{T}_{\widetilde{P}})
		/\mathbb{Z}N_G\lambda_G$,
		there is an eigenform
		$f'\in
		e_{\widetilde{P}}
		\mathbb{V}_{\mathrm{cusp},\infty}^{\widetilde{P}_L^\mathrm{der}}
		[(\lambda')^{-1}]
		\otimes_{\mathcal{O}_\mathfrak{p}}
		\overline{\mathcal{O}_\mathfrak{p}}
		$
		for this Hecke algebra
		such that the eigen-systems
		corresponding to $f$ and $f'$
		have the same image in
		$\mathrm{Spec}(e_{\widetilde{P}}
		\mathcal{H}_\mathrm{cusp}^{\widetilde{P}}
		\otimes_{\mathcal{O}_\mathfrak{p}}
		\overline{\mathbb{F}}_p)$.
		Therefore,
		one can choose a sequence
		of characters
		$\lambda_n
		\in
		X^{\ast}_\mathrm{dd}(\widetilde{T}_{\widetilde{P}})$
		which converge $p$-adically to $\lambda$
		and a sequence of classical
		modular
		forms
		$f_n\in
		e_{\widetilde{P}}
		\mathbb{V}_{\mathrm{cusp},\infty}^{\widetilde{P}_L^\mathrm{der}}
		[\lambda_n^{-1}]
		\otimes_{\mathcal{O}_\mathfrak{p}}
		\overline{\mathcal{O}_\mathfrak{p}}
		=
		e_{\widetilde{P}}
		H^0(Sh_\infty^\mu,
		\widetilde{\mathcal{R}}_\infty
		[\lambda_n^{-1}])
		\otimes_{\mathcal{O}_\mathfrak{p}}
		\overline{\mathcal{O}_\mathfrak{p}}$
		such that the corresponding eigen-systems of these $f_n$
		converge $p$-adically to that of $f$.
	\end{remark}

\end{document}